\title[Quantitative stability of Gel'fand's inverse boundary problem]{Quantitative stability of Gel'fand's inverse boundary problem}
\author[D. Burago]{Dmitri Burago}                                                          
\address{Dmitri Burago: Pennsylvania State University,                          
Department of Mathematics, University Park, PA 16802, USA}                      
\email{burago@math.psu.edu}                                                     
\author[S. Ivanov]{Sergei Ivanov}
\address{Sergei Ivanov:
St.Petersburg Department of Steklov Mathematical Institute,
Russian Academy of Sciences,
Fontanka 27, St.Petersburg 191023, Russia}
\email{svivanov@pdmi.ras.ru}
\author[M. Lassas]{Matti Lassas}
\address{Matti Lassas: Department of Mathematics and Statistics, University of Helsinki, FI-00014 Helsinki, Finland}
\email{matti.lassas@helsinki.fi}
\author[J. Lu]{Jinpeng Lu}
\address{Jinpeng Lu: Department of Mathematics and Statistics, University of Helsinki, FI-00014 Helsinki, Finland} 
\email{jinpeng.lu@helsinki.fi}
\date{}
\numberwithin{equation}{section}
\theoremstyle{definition}
\newtheorem{deferror}{Definition}[section]
\theoremstyle{plain}
\newtheorem{stability}{Theorem}
\newtheorem{Cor1}[stability]{Theorem}
\newtheorem{uc-intro}[stability]{Theorem}
\theoremstyle{definition}
\newtheorem{Def-CATradius}{Definition}[section]
\theoremstyle{plain}
\newtheorem{global1}[Def-CATradius]{Theorem}
\newtheorem{main1}{Theorem}[section]
\newtheorem{extensionmetric}[main1]{Lemma}
\newtheorem{extension}[main1]{Lemma}
\newtheorem{distances}[main1]{Lemma}
\newtheorem{sublemmainitial}{Sublemma}
\newtheorem{sublemmainitial2}[sublemmainitial]{Sublemma}
\newtheorem{sublemma0}[sublemmainitial]{Sublemma}
\newtheorem{sublemma1}[sublemmainitial]{Sublemma}
\newtheorem{sublemma2}[sublemmainitial]{Sublemma}
\newtheorem{initial}[main1]{Corollary}
\newtheorem{area}[main1]{Proposition}
\newtheorem{wholedomain}[main1]{Proposition}
\newtheorem{u0}{Lemma}[section]
\newtheorem{smallinitial}[u0]{Lemma}
\newtheorem{projection}[u0]{Proposition}
\newtheorem{measureerror}[u0]{Proposition}
\newtheorem{volume}{Lemma}[section]
\newtheorem{coordinate}[volume]{Lemma}
\newtheorem{volumebeta}[volume]{Lemma}
\newtheorem{approximation}[volume]{Proposition}
\newtheorem{2007}[volume]{Theorem}
\newtheorem{dhsC21}[main1]{Lemma}
\theoremstyle{definition}
\newtheorem{remark0}{Remark}
\newtheorem{remark}[remark0]{Remark}
\newtheorem{definition-Omega}[Def-CATradius]{Definition}
\newtheorem{definition}[main1]{Definition}
\newtheorem{def-section5}[volume]{Definition}
\theoremstyle{plain}
\newtheorem{riccati}{Lemma}[section]
\newtheorem{dhs}[riccati]{Lemma}
\newtheorem{dd}[riccati]{Lemma}
\newtheorem{mindistance}[riccati]{Lemma}
\newtheorem{geodiff}[riccati]{Lemma}
\newtheorem{areaLipschitz}[riccati]{Lemma}
\newtheorem{closeness-curve}[riccati]{Lemma}
\newtheorem{CATradius}[riccati]{Lemma}
\newcommand{\vol}{\textrm{vol}}
\subjclass[2010]{35R30, 58J50, 53C21, 58J45.}
\keywords{Gel'fand's inverse problem, stability, quantitative unique continuation, wave operator, boundary control method.}
\begin{document}
\maketitle

\vspace*{-0.5cm}
\begin{center}
\emph{Dedicated to the memory of Yaroslav Kurylev}
\end{center}

\begin{abstract}
In Gel'fand's inverse problem, one aims to determine the topology, differential structure and Riemannian metric of a compact manifold $M$ with boundary from the knowledge of the boundary $\partial M,$ the Neumann eigenvalues $\lambda_j$ and the boundary values of the eigenfunctions $\varphi_j|_{\partial M}$. We show that this problem has a stable solution with quantitative stability estimates in a class of manifolds with bounded geometry. More precisely, we show that finitely many eigenvalues and the boundary values of corresponding eigenfunctions, known up to small errors, determine a metric space that is close to the manifold in the Gromov-Hausdorff sense. We provide an algorithm to construct this metric space. This result is based on  an explicit estimate on the stability of the unique continuation for the wave operator.  
\end{abstract}

\section{Introduction} \label{section-intro}

Gel'fand's inverse problem, formulated by I. Gel'fand in \cite{G}, concerns finding the topology, differential structure and Riemannian metric of a compact manifold with boundary from the spectral data for the Neumann Laplacian on the boundary,
that is, the Neumann eigenvalues and the boundary values of the corresponding eigenfunctions. 
The problem is closely related to an inverse problem for the wave equation that can be solved using the boundary control method developed by  Belishev \cite{Bel1} on domains of $\mathbb R^n$. 
The uniqueness of Gel'fand's inverse problem on manifolds was proved in 1992
by Belishev and Kurylev  in \cite{BK}, see also \cite{AKKLT,Bel2,Bel3,Caday,KrKL,KOP}, in the form of an inverse spectral problem: the geometry of a compact Riemannian manifold with boundary is uniquely determined by the boundary spectral data for the Neumann Laplacian. 

On a given domain of the Euclidean space, Gel'fand's problem was reduced in \cite{NSU} to inverse coefficient problems for elliptic
equations which were solved in \cite{Astala,Nachman1,Nachman2,SyUl}, see also \cite{DosSantos,Guillarmou,Isozaki,KenigSalo,KSU,U}, and the stability of the solutions of these problems has been studied in \cite{A,AlS,SyUl2}.    
Gel'fand's inverse problem is ill-posed in the sense of Hadamard, as one can make large changes to the geometry of the interior without affecting the boundary spectral data much. One approach of stabilizing the inverse problem is to study the conditional stability by assuming \emph{a priori} knowledge of the desired quantities, for instance higher regularity of coefficients \cite{A}, and higher regularity of Riemannian metrics if they are close to Euclidean \cite{SU}. For a general Riemannian manifold, it is natural to impose \emph{a priori} bounds on geometric parameters such as the diameter, injectivity radius and sectional curvature. An abstract continuity result for the stability of the problem was proved in \cite{AKKLT}, however with no stability estimates, and the related determination of the smooth structure was shown in \cite{FIKLN}.
With additional geometric assumptions, strong stability estimates for this problem can be obtained, e.g. \cite{BD,SU2}, when the metric is close to simple (i.e., with strictly convex boundary and no conjugate points).
One could also consider the inverse interior problem, that is, an inverse problem on closed manifolds analogous to Gel'fand's problem. For the inverse interior problem where the eigenfunctions are measured in a ball of a closed manifold, the unique solvability of the problem was proved in \cite{KrKL} and a quantitative stability estimate for general metric has recently been obtained in \cite{BKL3}. 
A quantitative stability of Gel'fand's inverse problem for manifolds with boundary in the general case was yet unknown. 
The main purpose of the present paper is to provide an answer to this question.

The key result for establishing the uniqueness of Gel'fand's inverse problem was Tataru's unique continuation theorem \cite{T} for the wave operator. Its stability, i.e., quantitative unique continuation, is essential to the stability of the inverse problem. The quantitative unique continuation for the wave operator on Riemannian manifolds, from sets of the form $\Gamma \times [-T,T]$ where $\Gamma$ is the observation region, has been investigated independently in \cite{BKL2,BKL1} for closed manifolds, and in \cite{LL} when $T$ is larger than the diameter of the manifold. Using \cite{BKL2,BKL1}, the authors established a $\log$-$\log$ type of stability estimate \cite{BKL3} for the analogous inverse problem on a closed manifold where spectral data are measured in a ball. However, for manifolds with boundary, the quantitative unique continuation for arbitrary time $T$ is yet unclear, partly due to the lack of smoothness caused by geodesics touching the boundary.
This brought substantial difficulty into propagating the local unique continuation to a global one without losing any domain of dependence.
It turns out that it is beneficial for us to treat these geodesics as distance-minimizing paths in Alexandrov spaces with curvature bounded above, instead of handling them in boundary normal coordinates.
As our main technical task occupying most of Section \ref{section-uc} and \ref{auxiliary}, we focus on geometric issues brought by geodesics near the boundary, and give a fully explicit stability estimate for the unique continuation in the optimal domain of dependence.
Our result also makes it possible to obtain quantitative stability of other inverse problems that are solved using the boundary control method.

We hope our results may have applications in medicine, especially to cancer treatment, more concretely, to imaging necessary for radiation therapy
(e.g. the navigation of cyber knives) and for ultrasound surgery, see e.g. \cite{W}. In these treatment, many thin beams of X-rays or high amplitude ultrasound waves are concentrated in the cancerous tissue and the planning of the treatment requires stable imaging methods. 
A significant potential instance is the focused ultrasound surgery \cite{Tempany}, where a cancerous tissue is destroyed by an excessive
  heat dose generated by focused ultrasound waves. The location where the ultrasound waves are focused
  is determined by the intrinsic Riemannian metric corresponding to the wave speed of acoustic waves, see \cite{Dahl,L}.
   In particular, in an anisotropic medium where the inverse problem is not uniquely solvable in Euclidean coordinates, see \cite{Sylvester}, it is beneficial 
  to do imaging in the same Riemannian structure that determines the wave propagation. 
 The  imaging of the Riemannian metric associated with the wave propagation is an 
  inverse problem for the wave equation, which is equivalent, see  \cite{KKLM}, to
  Gel'fand's inverse problem studied in this paper. 
Numerical methods to solve these problems
have been studied in \cite{HoopOksanen1,HoopOksanen2}.
The quantitative stability of reconstruction from other types of data, e.g. the Dirichlet-to-Neumann map or the source-to-solution map for the wave equation, has not yet been studied; however, in the light of \cite{BKL3,KKLM}, we think a similar stability estimate might be possible.

\smallskip
Let $(M,g)$ be a compact, connected, orientable Riemannian manifold of dimension $n\geqslant 2$ with smooth boundary $\partial M$. We consider the manifold $M$ in the class $\mathcal{M}_n(D,K_1,K_2,i_0,r_0)$ of bounded geometry defined by the bounds on the diameter $\textrm{diam}(M)$, the injectivity radius $\textrm{inj}(M)$, the Riemannian curvature tensor $R_M$ of $M$, and the second fundamental form $S$ of the boundary $\partial M$ embedded in $M$:
$$\textrm{diam}(M)\leqslant D, \quad \textrm{inj}(M)\geqslant i_0,$$
$$\|R_M\|_{C^0}\leqslant K_1^2,\quad \|S\|_{C^0}\leqslant K_1,$$
\begin{equation}\label{boundedgeometry}
\sum_{i=1}^5 \|\nabla^i R_M\|_{C^0}\leqslant K_2,\quad \sum_{i=1}^4 \|\nabla^i S\|_{C^0}\leqslant K_2,
\end{equation}
where $\nabla^i$ denotes the $i$-th covariant derivative on $M$. The injectivity radius for a manifold with boundary is defined in Section \ref{subsection-bounded}. In addition, we impose the lower bound on the following quantity $r_{\textrm{CAT}}(M)$ (Definition \ref{Def-CATradius}):
\begin{equation}\label{bound-CATradius}
r_{\textrm{CAT}}(M)\geqslant r_0\, ,
\end{equation}
where $r_{\textrm{CAT}}(M)$ is defined as the largest number $r$, such that any pair of points with distance less than $r$ is connected by a unique distance-minimizing geodesic (possibly touching the boundary) of $M$. This quantity is known to be positive for a compact Riemannian manifold with smooth boundary. For Riemannian manifolds without boundary, the condition (\ref{bound-CATradius}) is already incorporated in the lower bound for the injectivity radius.

Denote by $\lambda_j$ ($j\geqslant 1$) the $j$-th eigenvalue of the (nonnegative) Laplace-Beltrami operator $-\Delta_g$ on $(M,g)$ with the Neumann boundary condition at $\partial M$, and by $\varphi_j$ an (smooth) eigenfunction with respect to $\lambda_j$. We know that $0=\lambda_1<\lambda_2\leqslant \cdots \leqslant \lambda_j\leqslant \lambda_{j+1}\leqslant \cdots$, and $\lambda_j\to +\infty$ as $j\to +\infty$. Assume the eigenfunctions are orthonormalized with respect to the $L^2$-norm of $M$. In particular $\varphi_1=\textrm{vol}_n(M)^{-1/2}$. The \emph{Neumann boundary spectral data} of $M$ refers to the collection of data 
$$\Big(\partial M,g_{_{\partial M}}, \{\lambda_j,\varphi_j|_{\partial M}\}_{j=1}^{\infty}\Big),$$ 
which consists of the boundary $\partial M$ and its intrinsic metric $g_{_{\partial M}}$, the Neumann eigenvalues and the boundary values of a choice of orthonormalized Neumann eigenfunctions.

\begin{deferror}\label{deferror}
We say a collection of data $\big(\partial M,g_{_{\partial M}},\{\lambda_j^a,\varphi_j^a|_{\partial M}\}_{j=1}^{J}\big)$ is a $\delta$-approximation of the Neumann boundary spectral data of $(M,g)$ (in $C^2$) for some $\delta\geqslant J^{-1}$, if there exists a choice of Neumann boundary spectral data $\{\lambda_j,\varphi_j|_{\partial M}\}_{j=1}^{\infty}$ such that the following three conditions are satisfied for all $j\leqslant \delta^{-1}$:
\begin{enumerate}[(1)]
\item $\lambda_j^a\in [0,\infty)$, $\varphi_j^a |_{\partial M}\in C^2(\partial M)$; 
\item $\big|\sqrt{\lambda_j}-\sqrt{\lambda_j^a} \big|<\delta$;
\item $\big\|\varphi_j - \varphi_j^a \big\|_{C^{0,1}(\partial M)}+ \big\|\nabla_{\partial M}^2 (\varphi_j- \varphi_j^a)|_{\partial M} \big\|_{C^0}< \delta$, where $\nabla_{\partial M}^2$ denotes the second covariant derivative with respect to the induced metric $g_{_{\partial M}}$ on $\partial M$.
\end{enumerate}

Let $M_1,M_2$ be two Riemannian manifolds with isometric boundaries, and let $\Phi:\partial M_1\to \partial M_2$ be the Riemannian isometry (diffeomorphism) between boundaries. We say the Neumann boundary spectral data of $M_1,M_2$ are $\delta$-close, if the pull-back via $\Phi$ of the Neumann boundary spectral data of $M_2$ (or $M_1$) is a $\delta$-approximation of the Neumann boundary spectral data of $M_1$ (or $M_2$).
\end{deferror}

Note that the definition above is coordinate-free. The second covariant derivative of a function is called the Hessian of the function, which is a symmetric (0,2)-tensor. In a local coordinate on $\partial M$, Definition \ref{deferror}(3) translates to $(\varphi_j- \varphi_j^a)|_{\partial M}$ having small $C^2$-norm. A similar definition in $L^2$-norm was seen in \cite{BKL3}.

If finite boundary spectral data $\{\lambda_j,\varphi_j|_{\partial M}\}_{j=1}^{J}$ are known without error, then this set of finite data is a $\delta$-approximation of the Neumann boundary spectral data with $\delta=J^{-1}$ by definition. If we are given a certain choice of Neumann boundary spectral data, then Definition \ref{deferror}(3) is equivalent to the existence of orthogonal matrices acting on eigenfunctions in eigenspaces, such that the condition is satisfied by the given spectral data after applying these matrices.

\medskip
The main purpose of this paper is to prove the following stability estimate for the reconstruction of a manifold from the Neumann boundary spectral data.

\begin{stability}\label{stability}
There exists $\delta_0=\delta_0(n,D,K_1,K_2,i_0,r_0) >0$ such that the following holds. If we are given a $\delta$-approximation of the Neumann boundary spectral data of a Riemannian manifold with boundary $M\in \mathcal{M}_n(D,K_1,K_2,$ $i_0,r_0)$ for $\delta<\delta_0$, then one can construct a finite metric space $X$ directly from the given boundary data such that
$$d_{GH}(M,X)< C_1\Big( \log\big(|\log\delta|\big) \Big)^{-C_2},$$
where $d_{GH}$ denotes the Gromov-Hausdorff distance between metric spaces. The constant $C_1$ depends on $n,D,K_1,K_2,i_0,r_0$, and the constant $C_2$ explicitly depends only on $n$.
\end{stability}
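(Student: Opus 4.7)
The plan is to quantify the classical Belishev–Kurylev boundary control reconstruction at every step and then propagate the errors using the quantitative unique continuation results cited in the excerpt. The starting observation is Blagoveshchensky's identity: if $u^f(t)$ denotes the solution of the Neumann wave equation on $M$ with source $f$ on $\partial M\times[0,T]$ and zero Cauchy data, then $\langle u^f(T),u^h(T)\rangle_{L^2(M)}$ admits an explicit expression as a sum over the spectral data $(\lambda_j,\varphi_j|_{\partial M})$ paired against $f$ and $h$. Given only the $\delta$-approximate data of Definition \ref{deferror} and truncating the sum at a suitably chosen spectral cutoff, one obtains a computable bilinear form $B_\delta(f,h)$ differing from $\langle u^f(T),u^h(T)\rangle$ by a small error: Weyl's law together with Sobolev bounds on boundary sources controls the high-frequency tail, while the eigenvalue and eigenfunction errors are propagated through the truncated sum using the bounded geometry assumptions.

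\smallskip

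Next I would fix an $\varepsilon$-net $\{z_i\}$ on $\partial M$, associated boundary patches $\Gamma_i$ of radius $\varepsilon$, and a time grid $\{t_k\}$ of step $\varepsilon$, and approximately compute the volume of each domain of influence $M(\Gamma_i,t_k):=\{p\in M:d(p,\Gamma_i)\leqslant t_k\}$. These volumes admit a variational characterization as an optimization over boundary controls $f$ supported in $\Gamma_i\times[T-t_k,T]$ of a linear functional of $u^f(T)$ together with the $L^2$-norm of $u^f(T)$; both objects are expressible through the inner products of the previous paragraph, so the optimization reduces to a finite-dimensional problem in the approximate data $B_\delta$. The crucial input here is the quantitative unique continuation from a boundary patch, namely Theorem \ref{main1} and Proposition \ref{wholedomain}, which guarantees that within a controlled finite-dimensional space of sources the indicator $\chi_{M(\Gamma_i,t_k)}$ can be $L^2$-approximated by some $u^f(T)$ with an explicit error of order $\big(\log|\log\delta|\big)^{-C_2}$. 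This step, converting the approximate bilinear form $B_\delta$ into approximate volumes $V_{i,k}$, is by far the main obstacle: it is the source of the double-logarithmic rate, and all geometric constants must be tracked through the UC estimates so as to depend only on $(n,D,K_1,K_2,i_0,r_0)$, with the exponent $C_2$ depending only on $n$.

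\smallskip

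With approximate volumes in hand, reconstruction proceeds through the boundary distance representation $R:M\to L^\infty(\partial M)$, $p\mapsto d_M(p,\cdot)$, which is a bilipschitz embedding on the class $\mathcal{M}_n(D,K_1,K_2,i_0,r_0)$. Knowledge of $V_{i,k}$ on a sufficiently dense grid determines, up to the same error, which abstract functions $r\in L^\infty(\partial M)$ can be realized as $R(p)$, essentially because the map $t\mapsto \mathrm{vol}(M(\Gamma_i,t))$ has a quantitative modulus of injectivity controlled by the volume comparison provided by the curvature bound and by the CAT radius assumption (\ref{bound-CATradius}). Taking a maximal $\varepsilon$-separated collection of feasible approximate values of $R$ produces the finite metric space $X$, equipped with the distance inherited from the $L^\infty$-norm. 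A standard argument bounds $d_{GH}(X,M)$ by $\varepsilon$ plus the per-point error in reading off the boundary distance functions; balancing $\varepsilon$ against the double-logarithmic wave-reconstruction error of the previous step yields the estimate $C_1\big(\log|\log\delta|\big)^{-C_2}$ claimed in the theorem.
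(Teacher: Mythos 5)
Your outline differs from the paper's construction at both of its technical hinges, and one of those differences hides a genuine gap.

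On the first hinge, the paper does not go through Blagoveshchensky's identity or the variational characterization of $u^f(T)$ by boundary controls. Instead it fixes a smooth function (ultimately $u=\varphi_1$), projects it onto $\mathcal V_J=\mathrm{span}\{\varphi_1,\dots,\varphi_J\}$, and solves a minimization problem (Propositions~\ref{projection},~\ref{measureerror}) over the constraint set $\mathcal U$ of Fourier coefficient vectors whose associated waves have small $H^{2,2}$ Cauchy data on the chosen boundary patches; the stability of unique continuation (Proposition~\ref{wholedomain}) is then invoked to show every minimizer is close to $\chi_{M_\alpha}u$ in $L^2$. This is the ``new feature'' the paper advertises, and it sidesteps the regularity issue you would face trying to $L^2$-approximate the non-smooth indicator $\chi_{M(\Gamma_i,t_k)}$ by $u^f(T)$ with quantitatively controlled $H^1$ norm: working with $\chi_{M_\alpha}\varphi_1$ rather than the indicator itself is what makes Proposition~\ref{area} and the Sobolev embedding do the work. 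Your Blagoveshchensky route is the older BC-method implementation and in principle can be quantified, but you would have to supply that regularization step; as written it is not.

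The more serious gap is in your third paragraph. Knowing, up to error, each marginal volume $t\mapsto \mathrm{vol}(M(\Gamma_i,t))$ does \emph{not} determine which $r\in L^\infty(\partial M)$ arise as $R(p)$: for that one must slice simultaneously in several boundary directions, that is, one needs approximate volumes of \emph{intersections} $\bigcap_i\{d(\cdot,\Gamma_i)<t_i\}$, and an existential test ``does this simultaneous slab have positive measure?''. If you run that test over all $N\sim \eta^{1-n}$ boundary patches at once, the inclusion–exclusion needed to express these intersection volumes from the data of Lemma~\ref{volume} has $2^N$ terms, and propagating the $\delta$-errors through them destroys the $\log\log$ rate and produces a third logarithm (the paper says so explicitly in Remark~\ref{remark-thirdlog} and the end of Appendix~\ref{constants}). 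The paper's fix is Lemma~\ref{coordinate} (boundary distance coordinates, giving a \emph{fixed} number $L=L(n,D,K_1,K_2,i_0,vol(\partial M))$ of patches that already bi-Lipschitz-parametrize the interior) together with the two-type slicing of Definition~\ref{Def-Mbeta}: interior points are located using only the $L$ coordinate patches plus one, and near-boundary points using the boundary normal neighborhood with only two patches, so that each volume test in Lemma~\ref{volumebeta} involves at most $2^{L+1}$ or $4$ terms, independent of $\eta$. Your ``quantitative modulus of injectivity'' argument neither supplies these fixed coordinates nor the near-boundary trick, and so the rate $C_1\big(\log|\log\delta|\big)^{-C_2}$ is not actually attained by your scheme.
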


Theorem \ref{stability} implies the stability of Gel'fand's inverse problem.

\begin{Cor1}\label{Cor1}
There exists $\delta_0=\delta_0(n,D,K_1,K_2,i_0,r_0)>0$ such that the following holds. Suppose two Riemannian manifolds $M_1,M_2\in \mathcal{M}_n(D,K_1,K_2,$ $i_0,r_0)$ have isometric boundaries and their Neumann boundary spectral data are $\delta$-close for $\delta<\delta_0$. Then $M_1$ is diffeomorphic to $M_2$, and
$$d_{GH}(M_1,M_2)< C_1 \Big( \log\big(|\log\delta|\big) \Big)^{-C_2}.$$
\end{Cor1}

\begin{remark0}
The dependency of $C_1,\delta_0$ is not explicit. An explicit estimate with dependence additionally on $\vol_n(M),\vol_{n-1}(\partial M)$ can be obtained, but this process results in a third logarithm. More details can be found in Appendix \ref{constants}.

If any explicitness for the results is not of interest, the bounds (\ref{boundedgeometry}) we assumed on the Riemannian curvature tensor and the second fundamental form can be relaxed to bounds on Ricci curvatures of $M,\partial M$ and the mean curvature of $\partial M$, due to Corollary 2 in \cite{KKL2}.
\end{remark0}

We do not know if the $\log$-$\log$ type of estimates above is optimal.
While strong (H\"older-type) stability results \cite{BD,SU,SU2} were known near simple metrics, the stability of the problem is likely weak in the general case, see \cite{KRS,M}.

\smallskip
The key result in proving Theorem \ref{stability} is a uniform stability estimate for the unique continuation in the class of Riemannian manifolds with bounded geometry, and without loss of domain in the domain of dependence.
Let $\Gamma$ be an open subset of the boundary $\partial M$ and $T>0$. The \emph{domain of influence} of the set $\Gamma$ at a time $t\in [0,T]$ is defined as
\begin{equation}\label{def-Mt}
M(\Gamma,t)=\big\{x\in M: d(x,\Gamma)<t \big\},
\end{equation}
where $d$ is the intrinsic distance function of $M$. 
The \emph{double cone of influence} of $\Gamma\times [-T,T]$ is defined as
\begin{equation}\label{def-Kcone}
K(\Gamma,T)=\big\{(x,t)\in M\times [-T,T] : d(x,\Gamma)< T-|t| \big\}.
\end{equation}
Recall Tataru's unique continuation theorem in \cite{T}: if the Cauchy boundary data of a wave $u$ vanish on $\Gamma\times [-T,T]$, i.e.,
$$u|_{\Gamma\times [-T,T]}=0,\quad \partial_{\textbf{n}}u |_{\Gamma \times [-T,T]}=0,$$
then the wave $u$ vanishes in the double cone of influence $K(\Gamma,T)$, and in particular, the initial value $u(\cdot,0)$ vanishes in the domain of influence $M(\Gamma,T)$. 
Note that the domain $K(\Gamma,T)$ (and $M(\Gamma,T)$ for the initial value) in this result is optimal due to finite speed of propagation of waves.
The stability of the unique continuation, i.e., quantitative unique continuation, asks if $u$ is small when the Cauchy boundary data are small.

\begin{uc-intro}\label{uc-intro}
Let $M$ be a compact, orientable Riemannian manifold with smooth boundary $\partial M$, and let $\Gamma$ (possibly $\Gamma=\partial M$) be a connected open subset of $\partial M$ with smooth boundary. Suppose $u\in H^2(M\times[-T,T])$ is a solution of the wave equation $(\partial_t^2-\Delta_g) u(x,t)=0$ with the Neumann boundary condition $\partial_{\bf n}u|_{\partial M \times [-T,T]}=0$ and the initial condition $\partial_t u (\cdot,0)=0$. Assume the Dirichlet boundary value of $u$ satisfies
$$u|_{\partial M\times [-T,T]}\in H^{2}(\partial M \times [-T,T]).$$
If  
$$\|u(\cdot,0)\|_{H^1(M)}\leqslant \Lambda,\quad \|u\|_{H^{2}(\Gamma\times [-T,T])}\leqslant \varepsilon_0,$$
then for $0<h<h_0$, the following estimate holds:
$$\|u(\,\cdotp,0)\|_{L^2(M(\Gamma,T))} \leqslant C_3^{\frac{1}{3}}h^{-\frac{2}{9}}\exp(h^{-C_4 n}) \frac{\Lambda+h^{-\frac{1}{2}}\varepsilon_0}{\big(\log (1+h+h^{\frac{3}{2}}\frac{\Lambda}{\varepsilon_0})\big) ^{\frac{1}{6}}}+C_5\Lambda h^{\frac{1}{3\max{\{n,3\}}}}.$$
The constants $h_0,C_3,C_4,C_5$ explicitly depend only on intrinsic geometric parameters of $M$ and $\Gamma$ (in particular, independent of $\varepsilon_0$).
\end{uc-intro}

\begin{figure}[h]
  \begin{center}
    \includegraphics[width=0.35\linewidth]{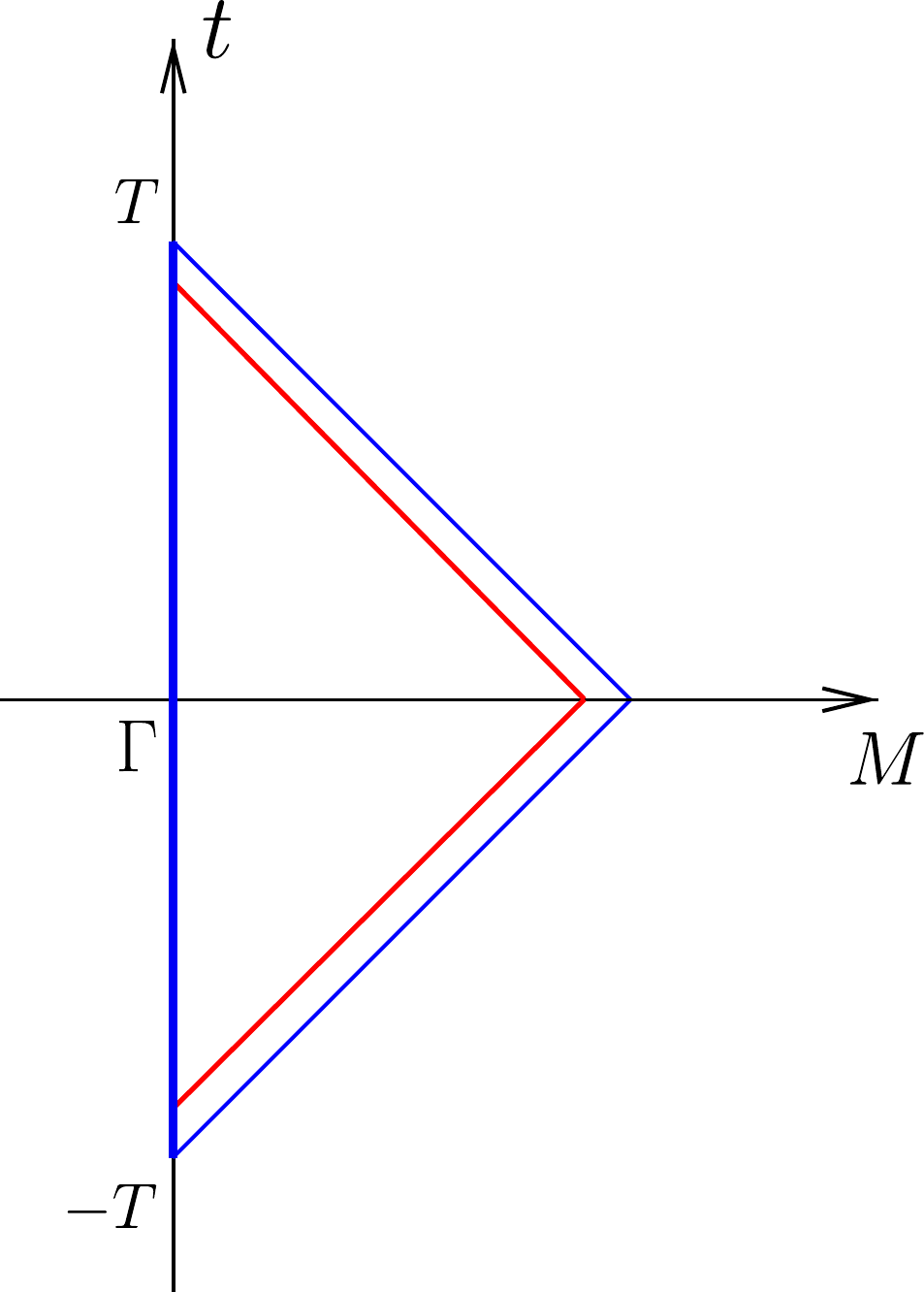}
    \caption{Domains of unique continuation.
    The blue vertical line is $\Gamma\times [-T,T]$.
    The domain enclosed by the blue lines is the optimal domain $K(\Gamma,T)$.
    The domain enclosed by the red lines is $\Omega (h)$ defined in \eqref{Omegaht}, obtained by propagating local unique continuation.
    The distance between the blue and red lines is $\sqrt{h}$.
     }
    \label{fig_domains}
  \end{center}
\end{figure}

Quantitative unique continuation for the wave operator has been investigated independently in \cite{BKL2,BKL1} for closed manifolds and in \cite{LL}, both inspired by Tataru's ideas in \cite{Tataru-preprint}.
In particular, the case for manifolds with boundary for large $T$ was studied in \cite{LL}, however without addressing how the geometry of the manifold affects the estimate.
Our result explicitly shows how the constants depend on the geometry and how close the domain of quantitative unique continuation can approach the optimal domain.
These are crucial questions frequently showing up in the stability of inverse problems.
In Theorem \ref{uc-intro}, the stability estimate is obtained up to the optimal domain for arbitrary $T$, and can be made fully explicit only in terms of intrinsic geometric parameters.
The estimate comprises two parts. One is by propagating local unique continuation up to the $\sqrt{h}$-neighborhood of the boundary of the optimal domain. This is the most technical part of the paper and gives a global estimate (Theorem \ref{main1}) on a domain arbitrarily close to the optimal domain, see Figure \ref{fig_domains}, since $h$ is a small parameter one can freely choose in advance.
The second part is to estimate the $L^2$-norm on the region which the first part does not reach.
Once we prove that this region has uniformly controlled volume (Proposition \ref{area}), the second part of the estimate immediately follows from the \emph{a priori} $H^1$-norm.
We remark that one can also balance the parameters $\varepsilon_0,h$ in Theorem \ref{uc-intro} and arrive at a $\log$-$\log$ type of estimate with a single parameter $\varepsilon_0$.

Equipped with Theorem \ref{uc-intro}, we can adopt the approach introduced in \cite{KKL0} to obtain a stability estimate for Gel'fand's inverse problem. Namely, we apply a quantitative version of the boundary control method to evaluate an approximate volume for the domain of dependence. The error of the approximate volume can be made arbitrarily small as long as sufficient boundary spectral data are known. Then we define approximations to the boundary distance functions through slicing procedures, from which the manifold can be reconstructed (\cite{KKL2}).

The method we use to obtain the quantitative unique continuation may be of independent interest. Essentially it is proved by propagating local stability estimates to obtain a global estimate. However, the presence of general manifold boundaries brings significant trouble in defining the process, especially when the path of propagation touches the boundary. One straightforward approach would be to avoid the boundary. Namely, one can approximate a geodesic touching the boundary with a curve in the interior of the manifold, and propagate local estimates through balls along this curve. This approach works well if the time domain is larger than the diameter of the manifold, in which case the domain of dependence is smooth, i.e. the whole manifold. However, difficulties arise for an arbitrary time domain, where the domain of dependence in the manifold has corners. An estimate obtained with this approach may not be uniform in a class of manifolds.

Our method directly defines a series of non-characteristic domains through which local estimates are propagated, using the intrinsic distance of the manifold and the distance to the boundary. This is made possible by directly handling geodesics near the boundary. These domains are globally defined in a coordinate-free way. The boundaries of these domains normally have the shape of a hyperboloid  and warp quickly near the boundary (and the injectivity radius). In this way, the local estimates propagate (almost) along distance-minimizing geodesics, and naturally produce a uniform global estimate depending only on intrinsic geometric parameters.

\smallskip
This paper is organized as follows. We review relevant concepts and the unique continuation in Section \ref{pre}. Section \ref{section-uc} is devoted to proving Theorem \ref{uc-intro}, an explicit stability estimate for the unique continuation from a subset of the boundary. Section \ref{section-uc} uses several technical lemmas and their proofs can be found in Section \ref{auxiliary}. In Section \ref{section-projection}, we apply Theorem \ref{main1} to introduce
the essential step of our reconstruction method where we compute, in a stable way, how the Fourier coefficients of a function (with respect to the basis of eigenfunctions) change, when the function is multiplied by an indicator function of a union of balls with center points on the boundary. 
 The new feature of this method is that it is directly based on the unique continuation theorem.
 The main results Theorem \ref{stability} and Theorem \ref{Cor1} are proved in Section \ref{section-appro}, with the dependency of constants on geometric parameters derived in Appendix \ref{constants}.

\smallskip
\noindent \textbf{Acknowledgement.} Yaroslav Kurylev worked with us until the middle of this project and quit when he could not work anymore, just a couple of months before he passed away. We cannot formally include him as a co-author, but we still consider him to be one of the authors of this paper.

We would like to thank A. Petrunin for helpful discussions. D. Burago was partially supported by NSF grant DMS-1205597. S. Ivanov was partially supported by RFBR grant 20-01-00070. M. Lassas and J. Lu were partially supported by Academy of Finland, grants 273979, 284715, 312110, and Finnish Centre of Excellence in Inverse Modelling and Imaging.

\section{Preliminaries}\label{pre}

\subsection{Bounded geometry} \label{subsection-bounded}

Let $(M,g)\in \mathcal{M}_n(D,K_1,K_2,i_0,r_0)$ be a compact, connected, orientable Riemannian manifold of dimension $n\geqslant 2$ with smooth boundary $\partial M$. The $C^0$-norm of the Riemannian curvature tensor $R_M$ appeared in (\ref{boundedgeometry}) is defined as
$$\|R_M\|_{C^0}=\sup_{x\in M}\big|R_M|_x \big|\, ,$$
where $\big|R_M|_x\big|$ denotes the operator norm of $R_M$ at $x\in M$ as a multi-linear operator to $\mathbb{R}$. The $C^0$-norms of $S$ and the covariant derivatives are defined in the same way. In this paper, we usually omit the subscript $C^0$ for brevity.

Since the Riemannian curvature tensor is completely determined by the sectional curvatures, assuming a bound on the curvature tensor is equivalent to assuming a bound on sectional curvatures. By the Gauss equation, the bounds on the curvature tensor of $M$ and the second fundamental form of $\partial M$ yield a bound on the curvature tensor $R_{\partial M}$ of $\partial M$ (when $\partial M$ is at least two-dimensional), also denoted by $K_1^2$. Without loss of generality, assume $K_1,K_2>0$.

From now on, we denote $\|A \|=\|A\|_{C^0}$ for a tensor field $A$ on $M$.
For convenience, we denote
$$\|R_M\|_{C^k}=\|R_M\|+\sum_{i=1}^k \|\nabla^i R_M\|,\quad \|S\|_{C^k}=\|S\|+\sum_{i=1}^k \|\nabla^i S\|.$$
Then the curvature bound assumptions in (\ref{boundedgeometry}) are written as
$$\|R_M\|\leqslant K_1^2,\quad \|S\|\leqslant K_1,\quad \|R_{\partial M}\|\leqslant K_1^2,$$
$$\|R_M\|_{C^5}\leqslant K_1^2+K_2,\quad \|S\|_{C^4}\leqslant K_1+K_2,\quad \|R_{\partial M}\|_{C^4}\leqslant C(K_1,K_2).$$

The boundary $\partial M$ is said to admit a boundary normal neighborhood of width $r$ if the exponential map $(z,s)\mapsto \exp_z(s\textbf{n}_z)$ defines a homeomorphism from $\partial M\times [0,r]$ to the $r$-neighborhood of $\partial M$, where $\textbf{n}_z$ denotes the inward-pointing unit normal vector at $z\in\partial M$ (see e.g. Section 2.1.16 in \cite{KKL}). The \emph{boundary injectivity radius} $i_b(M)$ of $M$ is defined as the largest number with the following property that $\partial M$ admits a boundary normal neighborhood of width $r$ for any $r<i_b(M)$. The injectivity radius $\textrm{inj}(M)$ of $M$ is usually defined as the largest number $r\leqslant \min\{\textrm{inj}(\partial M),i_b(M)\}$ satisfying the following condition: the open ball $B_{r}(x)$ of radius $r$ is a domain of Riemannian normal coordinates on $M$ centered at any $x\in M$ with $d(x,\partial M)\geqslant r$.

This definition of the injectivity radius for a manifold with boundary gives little information on the geometry near the boundary. We find it convenient to consider the following quantity.

\begin{Def-CATradius}\label{Def-CATradius}
For $x\in M$, $r_{\textrm{CAT}}(x)$ is defined to be the largest number $r$, such that the (distance-)minimizing geodesic of $M$ connecting $x$ and any $y\in B_r(x)$ is unique. Define 
$$r_{\textrm{CAT}}(M)=\inf_{x\in M} r_{\textrm{CAT}}(x).$$
We call this quantity the radius of radial uniqueness (or CAT radius).
\end{Def-CATradius}
The radius of radial uniqueness is positive for a compact Riemannian manifold with smooth boundary (Lemma \ref{CATradius}(1)). This definition is a natural extension of the injectivity radius for manifolds without boundary. More precisely, for a Riemannian manifold without boundary, $\min\{\pi/\sqrt{K},r_{\textrm{CAT}}\}$ gives a lower bound for the injectivity radius, where $K$ is the upper bound for the sectional curvatures.

The radius of radial uniqueness has an immediate connection with metric spaces of curvature bounded above in the sense of Alexandrov. A metric space has curvature bounded above (globally) by $K>0$ if every minimizing geodesic triangle in the space has perimeter less than $2\pi/\sqrt{K}$, and has each of its angles at most equal to the corresponding angle in a triangle with the same side-lengths in the surface of constant curvature $K$. This space is denoted by CAT$(K)$. A CAT$(K)$ space has the property that any pair of points with distance less than $\pi/\sqrt{K}$ is connected by a unique (within the space) minimizing geodesic, and the geodesic continuously depends on its endpoints. It is well-known that a Riemannian manifold $M$ with smooth boundary is locally CAT$(K)$, where $K$ is the upper bound for the sectional curvatures of $M$ and the second fundamental form of $\partial M$ (the Characterization Theorem in \cite{ABB2}). In fact, more is known: the open ball around any point in $M$ of the radius $\min\{\pi/2\sqrt{K},r_{\textrm{CAT}}(M)\}$ is CAT$(K)$ (Theorem 4.3 in \cite{AB}). This is where the notation $r_{\textrm{CAT}}$ comes from. The CAT space provides useful non-differential tools to work with manifold boundaries where the standard differential machinery is often problematic. 

\subsection{Wave operator and the unique continuation}

The Laplace-Beltrami operator $\Delta_g$ with respect to the metric $g$ has the following form in local coordinates $(x^1,\cdots,x^n)$:
\begin{equation}\label{Laplacian}
\Delta_g=\frac{1}{\sqrt{\det(g_{ij})}} \sum_{i,j=1}^n \frac{\partial}{\partial x^i} \Big(\sqrt{\det(g_{ij})} g^{ij}\frac{\partial}{\partial x^j}\Big).
\end{equation}
Then the wave operator $P=\partial_t^2-\Delta_g$ has the following form in local coordinates:
\begin{eqnarray}\label{Pdef}
P&=& \frac{\partial^2}{\partial t^2}-\frac{1}{\sqrt{\det(g_{ij})}} \sum_{i,j=1}^n \frac{\partial}{\partial x^i} \Big(\sqrt{\det(g_{ij})} g^{ij}\frac{\partial}{\partial x^j}\Big) \\ 
&=&\frac{\partial^2}{\partial t^2}-\sum_{i,j=1}^n g^{ij}\frac{\partial^2}{\partial x^i \partial x^j} + \textrm{lower order terms}. \nonumber
\end{eqnarray}
The Riemannian metric $g$ approximates the standard Euclidean metric in small scale. In sufficiently small coordinate charts, the Laplace-Beltrami operator is a strongly elliptic operator given by the formula (\ref{Laplacian}). However, the wave operator of the form above is only locally defined on manifolds, different from the wave operator on Euclidean spaces with global coefficients. 

In the boundary normal neighborhood of $\partial M$, it is convenient to use the boundary normal coordinate $(x^1,\cdots,x^{n-1},x^n)$, where $(x^1,\cdots,x^{n-1})$ is a choice of coordinate at the nearest point on $\partial M$ and $x^n=d(x,\partial M)$. In other words, the coordinate $(x^1,\cdots,x^{n-1},d(x,\partial M))$ is defined by pushing forward the local coordinate $(x^1,\cdots,x^{n-1})$ on $\partial M$ via the family of exponential maps $z\mapsto\exp_z(s\textbf{n}_z)$ from the boundary in the normal direction. Note that the choice of coordinate on $\partial M$ is fixed. Hence by the Gauss lemma, the metric $g$ has the form of a product metric in such coordinate: 
$$g=(d x^n)^2+\sum_{\alpha,\beta=1}^{n-1}g_{\alpha\beta}dx^{\alpha}dx^{\beta}.$$

On the boundary $\partial M$, two frequent choices of coordinates are the geodesic normal coordinates and the harmonic coordinates. In this paper, we use the geodesic normal coordinates of $\partial M$. Namely, at any point on $\partial M$, we have a geodesic normal coordinate $(x^{\alpha})_{\alpha=1}^{n-1}$ in the ball (of $\partial M$) of a sufficiently small radius, such that 
\begin{equation}\label{coorb}
\frac{1}{2}|\xi|^2\leqslant \sum_{\alpha,\beta=1}^{n-1} g^{\alpha\beta}\xi_{\alpha}\xi_{\beta} \leqslant 2|\xi|^2\;\; (\xi\in\mathbb{R}^{n-1}),
\end{equation}
$$\|g_{\alpha\beta}\|_{C^{1}}\leqslant 2,\quad  \|g_{\alpha\beta}\|_{C^{4}}\leqslant C(n,K_1,K_2,i_0).$$
It is known that the radius of the ball in which the conditions above are satisfied is uniformly bounded below by a positive number explicitly depending on $n, \|R_{\partial M}\|_{C^1},i_0$ (Lemma 8 in \cite{HV} and Theorem A in \cite{E}). We denote this uniform radius by $r_g(\partial M)$. 

\smallskip
Recall that the wave operator $P$ enjoys the unique continuation property from the boundary, namely if the Cauchy boundary data of a wave $u$ (a solution of the wave equation $Pu=0$) vanish on $\Gamma\times [-T,T]$, i.e.,
$$u|_{\Gamma\times [-T,T]}=0,\quad \frac{\partial u}{\partial \textbf{n}} \big|_{\Gamma \times [-T,T]}=0,$$
then the wave vanishes in the double cone of influence $K(\Gamma,T)$ defined in \eqref{def-Kcone}, see \cite{T} or e.g. Theorem 3.16 in \cite{KKL}. Here $\textbf{n}$ denotes the unit normal vector field on $\partial M$ pointing inwards. We are interested in its stability: when the Cauchy boundary data are small on $\Gamma\times [-T,T]$, we consider if the wave is small in the double cone. The following global stability result on Tataru's unique continuation principle (\cite{T}) was proved in \cite{BKL2}, from which the stability of the unique continuation from a ball on a closed Riemannian manifold can be obtained (Theorem 3.3 in \cite{BKL2}).
\begin{global1}\label{global}(Theorem 1.2 in \cite{BKL2})
Let $\Omega_{bd}$ be a bounded connected open subset of $\mathbb{R}^n\times \mathbb{R}$ and $P$ be the wave operator (\ref{Pdef}). Assume $u\in H^1(\Omega_{bd})$ and $Pu\in L^2(\Omega_{bd})$. In $\Omega_{bd}$, we assume the existence of a finite number of connected open subsets $\Omega_{j}^0$ and $\Omega_{j}$, $j=1,2,\dots,J$ a connected set $\Upsilon$ and functions $\psi_j$ satisfying the following assumptions.
\begin{enumerate}[(1)]
\item $\psi_j\in C^{2,1}(\Omega_{bd})$; $p(\cdot,\nabla \psi_{j})\neq 0$ and $\nabla \psi_j\neq 0$ in $\Omega_{j}^0$, where $p$ denotes the principle symbol of the wave operator $P$.
\item ${\rm supp}(u)\cap \Upsilon=\emptyset$; there exists $\psi_{max,j}\in\mathbb{R}$ such that $\emptyset\neq\{y\in \Omega_{j}^0: \psi_j(y)> \psi_{max,j}\}\subset \overline{\Upsilon}_j$, where $\Upsilon_j=\Omega_{j}^0\cap(\cup_{l=1}^{j-1}\Omega_l\cup \Upsilon)$.
\item $\Omega_j=\{y\in \Omega_{j}^0-\overline{\Upsilon}_j: \psi_j(y) > \psi_{min,j}\}$ for some $\psi_{min,j}\in\mathbb{R}$, and $dist(\partial\Omega_{j}^0,\Omega_j)>0$.
\item $\overline{\Omega}$ is connected, where $\Omega=\cup_{j=1}^J \Omega_j$.
\end{enumerate}
Then the following estimate holds for $\Omega$ and $\Omega^0=\cup_{j=1}^J\Omega_{j}^0$:
$$\|u\|_{L^2(\overline{\Omega})}\leqslant C \frac{\|u\|_{H^1(\Omega^0)}}{\Big(\log\big(1+\frac{\|u\|_{H^1(\Omega^0)}}{\|Pu\|_{L^2(\Omega^0)}}\big)\Big)^{\theta}}\, ,$$
where $\theta\in(0,1)$ is arbitrary, and the constant $C$ explicitly depends on $\theta$, $\psi_j$, $ dist(\partial\Omega_{j}^0,\Omega_j)$, $\|g^{ij}\|_{C^1}$, $\vol_{n+1}(\Omega_{bd})$.
\end{global1}

The intuition behind this result is propagating the unique continuation step by step to cover a large domain, as long as the error introduced in each step is small. The set $\Upsilon$ is the initial domain where the function $u$ vanishes, and $\Omega_j$ is the domain propagated by the unique continuation at the $j$-th step. The estimate is obtained by propagating local stability estimates, and the assumptions make sure that certain support conditions (Assumption A1 in \cite{BKL1}) required by the local stability estimates are satisfied at every step. For some simple cases, one choice of the domains and functions is enough, for example if the function $u$ initially vanishes over a ball in $\mathbb{R}^{n}$. However, these assumptions are rather restrictive for general cases, and multiple iterations of the domains and functions need to be carefully constructed to handle the difficulties brought by the geometry of the boundary and the injectivity radius. Note that the constant in the estimate depends on higher derivatives of $\psi_j$ in $\Omega_{j}^0$. It is crucial to construct the required domains where $\psi_j$ has uniformly bounded higher derivatives. Although Theorem \ref{global} is formulated in Euclidean spaces, it applies to manifolds since it is obtained by propagating local stability estimates, which can be done in local coordinate charts.

\subsection{Notations}\label{subsection-notations}
We introduce several notations that we frequently use in this paper. Denote by $\vol_k$ the $k$-dimensional Hausdorff measure on $M$. When the Hausdorff dimension of a set in question is clear, we omit the subscript $k$. In particular, we denote by $\vol(M)$ the Riemannian volume of $M$, and by $\vol(\partial M)$ the Riemannian volume of $\partial M$ with respect to the induced metric on $\partial M$.

Given an open subset $\Gamma\subset\partial M$, we define the following domain with a positive parameter $h<1$ by
\begin{equation}\label{Omegaht}
\Omega_{\Gamma,T}(h)=\big\{(x,t)\in M\times [-T,T]: T-|t|-d(x,\Gamma) >\sqrt{h},\; d(x,\partial M-\Gamma)>h \big\},
\end{equation}
and we write $\Omega(h)$ for short. Note that $\Omega(h)$ is a subset of the double cone of influence $K(\Gamma,T)$, and $\Omega(h)$ approximates $K(\Gamma,T)$ as $h\to 0$. If $\Gamma=\partial M$, the set above is defined with the last condition dropped. In this paper, our consideration always includes the possibility that $\Gamma=\partial M$. For the sole purpose of incorporating this special case notation-wise in later proofs, we set any distance from the empty set to be infinity.

Given a function $u:\partial M\times [-T,T]\to \mathbb{R}$ and an open subset $\Gamma\subset \partial M$, we define the following norm
\begin{equation}\label{H21}
\|u\|_{H^{2,2}(\Gamma \times [-T,T])}^2=\int_{-T}^T \big(\|u(\cdot,t)\|_{H^2(\Gamma)}^2 +\|\partial_t u(\cdot,t)\|_{L^2(\Gamma)}^2+\|\partial_t^2 u(\cdot,t)\|_{L^2(\Gamma)}^2\big)\, dt,
\end{equation}
if $u(\cdot,t)\in H^2(\Gamma)$ and $\partial_t u(\cdot,t),\,\partial_t^2 u(\cdot,t) \in L^2(\Gamma)$ for all $|t|\leqslant T$. We say $u\in H^{2,2}(\Gamma\times[-T,T])$ if the norm above is finite, and we call it the $H^{2,2}$-norm.

\section{Stability of the unique continuation}\label{section-uc}

In this section, we obtain an explicit estimate on the stability of the unique continuation for the wave operator, provided small Cauchy data on a connected open subset of the manifold boundary.  First we state this result as follows.

\begin{main1}\label{main1}
Let $M\in \mathcal{M}_n(D,K_1,K_2,i_0,r_0)$ be a compact, orientable Riemannian manifold with smooth boundary $\partial M$, and let $\Gamma$ (possibly $\Gamma=\partial M$) be a connected open subset of $\partial M$ with smooth boundary. Denote by $i_b(\overline{\Gamma})$ the boundary injectivity radius of $\overline{\Gamma}$.
Then there exist a constant $C_3>0$, that explicitly depends on $n,T,D,K_1,\|\nabla R_M\|_{C^0},\|\nabla S\|_{C^0},$ $i_0,r_0,\hbox{vol}_n(M),\hbox{vol}_{n-1}(\Gamma)$,  an absolute constant
 $C_4>0$, 
and a 
 sufficiently small constant $h_0>0$, that explicitly depends on $n,T,K_1,K_2,i_0,r_0,i_b(\overline{\Gamma}),$ $\vol_{n-1}(\partial M)$,
 such that  the following holds.

 Suppose $u\in H^2(M\times[-T,T])$ is a solution of the non-homogeneous wave equation $Pu=f$ with $f\in L^2(M\times [-T,T])$. Assume the Cauchy data satisfy
\begin{equation}\label{smoothness of C-data}
u|_{\partial M\times [-T,T]}\in H^{2,2}(\partial M \times [-T,T]),\quad \frac{\partial u}{\partial \mathbf{n}} \in H^{2,2}(\partial M \times [-T,T]).
\end{equation}
If 
\begin{equation}\label{quantitative smoothness}
\|u\|_{H^1(M\times[-T,T])}\leqslant \Lambda_0,\quad \|u\|_{H^{2,2}(\Gamma\times [-T,T])}+\big\|\frac{\partial u}{\partial \mathbf{n}}\big\|_{H^{2,2}(\Gamma\times [-T,T])}\leqslant \varepsilon_0,
\end{equation}
then for $0<h<h_0$, we have
$$\|u\|_{L^2(\Omega(h))}\leqslant C_3 \exp(h^{-C_4 n})\frac{\Lambda_0+h^{-\frac{1}{2}}\varepsilon_0}{\bigg(\log \big(1+\frac{\Lambda_0+h^{-\frac{1}{2}}\varepsilon_0}{\|Pu\|_{L^2(M\times[-T,T])}+h^{-\frac{3}{2}}\varepsilon_0}\big)\bigg) ^{\frac{1}{2}}}\, .$$
The domain $\Omega(h)$ and the $H^{2,2}$-norm are defined in Section \ref{subsection-notations}.

As a consequence, the following estimate holds for any $\theta\in (0,1)$ by interpolation:
$$\|u\|_{H^{1-\theta}(\Omega(h))}\leqslant C_3^{\theta}\exp(h^{-C_4 n})\frac{\Lambda_0+h^{-\frac{1}{2}}\varepsilon_0}{\bigg(\log \big(1+\frac{\Lambda_0+h^{-\frac{1}{2}}\varepsilon_0}{\|Pu\|_{L^2(M\times[-T,T])}+h^{-\frac{3}{2}}\varepsilon_0}\big)\bigg) ^{\frac{\theta}{2}}}\, .$$
\end{main1}

\begin{remark}
In Theorem \ref{main1}, the different smoothness indexes of the Sobolev spaces in the qualitative smoothness assumption $u\in H^2(M\times [-T,T])$ and  in the quantitative bounds for the Sobolev norms \eqref{quantitative smoothness} are related to the smooth extension of the weak solution of the wave equation to a boundary layer. We note that the non-uniform smoothness assumptions
are typical, and sometimes also optimal, for the weak solutions of the wave equation with the Neumann boundary condition, see \cite{LT}.
We also note that in Theorem \ref{main1}, the assumption $u\in H^2(M\times [-T,T])$  can be relaxed to 
the assumption that  $u$ is a weak solution of the wave equation $Pu=f$ with the Neumann boundary condition, where $f\in L^2(M \times [-T,T])$, and $u$ and its Neumann boundary value $\partial_{\bf n}u|_{\partial M \times [-T,T]}$
satisfy 
$$u \in C([-T,T];H^1(M))\cap C^1([-T,T];L^2(M)),$$ 
$$\partial_{\bf n}u|_{\partial M \times [-T,T]}\in L^2(\partial M \times [-T,T]).$$
Then, by \cite[Thm.\ A]{LT},  the Dirichlet boundary value is a well-defined function
$u|_{\partial M \times [-T,T]}\in L^2(\partial M \times [-T,T])$. In this case,
\eqref{smoothness of C-data} can be viewed as an additional smoothness requirement
for the Dirichlet and the Neumann boundary values of $u$.
This relaxation of the smoothness assumptions only affects the last part of the proof of  Lemma \ref{extension}, and this lemma can be proved via the weak version of Green's formula.
\end{remark}

Our method can also be used to derive a stability estimate for the unique continuation from any open domain in the interior of $M$, as long as the boundary of the domain is smoothly embedded in $M$. In this way, a stability estimate can be obtained on domains arbitrarily close to the double cone of influence from the interior domain in question, which provides a generalization of Theorem 3.3 in \cite{BKL2}. We remark that as the domain approaches the double cone of influence, the estimate above grows exponentially. This $\exp$-dependence and the $\log$-type of the estimate itself eventually lead to the two logarithms in Theorem \ref{stability}. We also mention that Proposition \ref{area} may be of independent interest, which provides an explicit uniform bound for the Hausdorff measure of the boundary of the domain of influence.

Most of this section is occupied by the proof of Theorem \ref{main1}. First we properly extend the manifold, the wave operator $P$ and the wave $u$, so that $Pu$ stays small on the manifold extension over $\Gamma$, given sufficiently small Cauchy data on $\Gamma$. The extension of $u$ is cut off near the boundary in the manifold extension, from which we start propagating the unique continuation. Then we carefully construct a series of domains satisfying the assumptions in Theorem \ref{global}, such that the union of these domains approximates the double cone of influence. Thus Theorem \ref{global} gives a stability estimate on domains arbitrarily close to the double cone of influence. 

The main difficulty lies in actually finding that series of domains satisfying the properties stated above, as the assumptions in Theorem \ref{global} (essentially assumptions for local estimates) are rather restrictive for a general manifold with boundary. This requires us to directly deal with the intrinsic distance and (distance-minimizing) geodesics of the manifold. In this section, we use several technical lemmas and their proofs can be found in Section \ref{auxiliary}.

\smallskip
Theorem \ref{main1} yields the following stable continuation result on the whole domain of influence $M(\Gamma,T)$.

\begin{wholedomain}\label{wholedomain}
Let $M\in \mathcal{M}_n(D,K_1,K_2,i_0,r_0)$ be a compact Riemannian manifold with smooth boundary $\partial M$, and let $\Gamma$ (possibly $\Gamma=\partial M$) be a connected open subset of $\partial M$ with smooth boundary. Suppose $u\in H^2(M\times[-T,T])$ is a solution of the wave equation $Pu(x,t)=0$ with the Neumann boundary condition $\partial_{\bf n}u|_{\partial M \times [-T,T]}=0$ and the initial condition $\partial_t u(\cdot,0)=0$. Assume the Dirichlet boundary value of $u$ satisfies
$$u|_{\partial M\times [-T,T]}\in H^{2,2}(\partial M \times [-T,T]).$$
If  
$$\|u(\cdot,0)\|_{H^1(M)}\leqslant \Lambda,\quad \|u\|_{H^{2,2}(\Gamma\times [-T,T])}\leqslant \varepsilon_0,$$
then for $0<h<h_0$, the following estimate holds:
$$\|u(\,\cdotp,0)\|_{L^2(M(\Gamma,T))} \leqslant C_3^{\frac{1}{3}}h^{-\frac{2}{9}}\exp(h^{-C_4 n}) \frac{\Lambda+h^{-\frac{1}{2}}\varepsilon_0}{\big(\log (1+h+h^{\frac{3}{2}}\frac{\Lambda}{\varepsilon_0})\big) ^{\frac{1}{6}}}+C_5\Lambda h^{\frac{1}{3\max{\{n,3\}}}}.$$
Here $C_3$ explicitly depends on $n,T,D,\|R_M\|_{C^1},\|S\|_{C^1},i_0,r_0,\vol(M),\vol_{n-1}(\Gamma)$; $C_4$ is an absolute constant; $C_5$ explicitly depends on $n,\|R_M\|_{C^1},\|S\|_{C^1},i_0, \vol(M), \vol(\partial M)$; $h_0>0$ is a sufficiently small constant explicitly depending on $n,T,K_1,K_2,i_0,r_0,i_b(\overline{\Gamma})$, $\vol(\partial M)$.
\end{wholedomain}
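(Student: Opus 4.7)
The plan is to split the domain of influence as $M(\Gamma,T) = B_h \cup A_h$, where $B_h := \Omega(h)|_{t=0}$ is the time-zero slice (on which Theorem \ref{main1} will be applied) and $A_h := M(\Gamma,T) \setminus B_h$ is a thin collar, and to estimate each piece separately. The collar $A_h$ is contained in the union of the shell $\{x \in M(\Gamma,T) : T - d(x,\Gamma) \leq \sqrt{h}\}$ and the boundary tube $\{x \in M : d(x,\partial M \setminus \Gamma) \leq h\}$. Proposition \ref{area} bounds the $(n-1)$-dimensional measure of $\partial M(\Gamma,T) \cap \operatorname{int}(M)$, and a standard tubular-neighborhood estimate in bounded geometry handles the boundary tube, yielding $\operatorname{vol}_n(A_h) \leq C \sqrt{h}$ uniformly in the class $\mathcal{M}_n(D,K_1,K_2,i_0,r_0)$.

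To bound $\|u(\cdot,0)\|_{L^2(A_h)}$, I would combine the Sobolev embedding $H^1(M) \hookrightarrow L^p(M)$ (with $p = 2n/(n-2)$ for $n \geq 3$, or any fixed $p > 2$ for $n = 2$) with H\"older's inequality to obtain $\|u(\cdot,0)\|_{L^2(A_h)} \leq \|u(\cdot,0)\|_{L^p(M)} \operatorname{vol}_n(A_h)^{1/2-1/p} \leq C\Lambda\, h^{\alpha}$ for an explicit $\alpha > 0$; matching powers produces the tail term $C_5 \Lambda h^{1/(3\max\{n,3\})}$, with $C_5$ depending only on the Sobolev constant on $M$ (uniform by bounded geometry) and on $\operatorname{vol}(M),\operatorname{vol}(\partial M)$.

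For $\|u(\cdot,0)\|_{L^2(B_h)}$ I first note that $Pu = 0$, $\partial_{\mathbf n} u|_{\partial M} = 0$ and $\partial_t u(\cdot,0) = 0$ together with the Neumann-wave energy identity give $\|u(\cdot,t)\|_{H^1(M)}^2 + \|\partial_t u(\cdot,t)\|_{L^2(M)}^2 \equiv \|u(\cdot,0)\|_{H^1(M)}^2 \leq \Lambda^2$ for all $t$, hence $\|u\|_{H^1(M \times [-T,T])} \leq C\sqrt{T}\Lambda$. Since $B_h \times [-\sqrt{h}/2,\sqrt{h}/2] \subset \Omega(h/4)$, Theorem \ref{main1} applies on $\Omega(h/4)$ with $\Lambda_0 \asymp \Lambda$ and $\|Pu\|_{L^2} = 0$; using its $H^{1-\theta}$ version at $\theta = 1/3$ controls $\|u\|_{H^{2/3}(\Omega(h/4))}$ by the expression featuring the $C_3^{1/3}$ and $(\log)^{-1/6}$ factors that appear in the statement. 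To descend from the spacetime estimate to the spatial slice at $t=0$, I would rescale $t = \tau s$ with $\tau = \sqrt{h}/2$ to the unit cylinder $B_h \times [-1,1]$, apply the trace inequality $\|\tilde u(\cdot,0)\|_{L^2(B_h)} \leq C\|\tilde u\|_{H^{2/3}(B_h \times [-1,1])}$, and use the interpolation $\|\tilde u\|_{H^{2/3}} \leq C \|\tilde u\|_{L^2}^{1/3}\|\tilde u\|_{H^1}^{2/3}$, controlling the $L^2$-factor via Theorem \ref{main1} and the $H^1$-factor via the energy bound $\Lambda$. Tracking the time-rescaling factors produces the prefactor $h^{-2/9}$.

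The hard part will be keeping all constants uniform across the class $\mathcal{M}_n(D,K_1,K_2,i_0,r_0)$: for the collar this relies essentially on Proposition \ref{area}, and for $B_h$ the Sobolev, interpolation and trace constants on $M$ are made uniform by the bounded-geometry hypotheses. Once these uniformities are in place, the triangle inequality combining the $B_h$ and $A_h$ estimates yields the statement.
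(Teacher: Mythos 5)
Your decomposition into a time-zero slice (controlled by Theorem~\ref{main1} together with an interpolation/trace argument) and a thin residual collar (controlled by Proposition~\ref{area} plus Sobolev embedding), with the wave energy bound converting $\|u(\cdot,0)\|_{H^1(M)}\leqslant\Lambda$ into $\|u\|_{H^1(M\times[-T,T])}\lesssim\Lambda$, is exactly the route taken in the paper (via Corollary~\ref{initial}). Two small inaccuracies worth noting: the paper uses a collar of thickness $\sim h^{1/3}$ (the domain $\Omega(2h,0,3)$), which is what produces the stated prefactor $h^{-2/9}$ and tail exponent $h^{1/(3\max\{n,3\})}$, whereas your $\sqrt{h}$-collar would give $h^{-1/3}$ and $h^{1/(2n)}$ instead (still sufficient, but not matching the displayed constants); and the claimed energy identity $\|u(\cdot,t)\|_{H^1}^2+\|\partial_t u(\cdot,t)\|_{L^2}^2\equiv\|u(\cdot,0)\|_{H^1}^2$ is not exact since $\|u(\cdot,t)\|_{L^2}$ is not conserved under the Neumann wave flow — one needs the Gronwall-type inequality $\max_{|t|\leqslant T}\|u(\cdot,t)\|_{H^1(M)}\leqslant C(T)\|u(\cdot,0)\|_{H^1(M)}$ used in the paper, which is what you actually require.
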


We postpone the proof of Proposition \ref{wholedomain}
after the proof of Theorem \ref{main1}.

\subsection{Extension of manifolds} \label{subsection-extension} \hfill

\smallskip
Let $(M,g)\in \mathcal{M}_n(D,K_1,K_2,i_0,r_0)$ be a compact, orientable Riemannian manifold with bounded geometry defined in Section \ref{section-intro}.

\begin{extensionmetric}\label{extensionmetric}
For sufficiently small $\delta_{ex}$ explicitly depending on $n,K_1,K_2,i_0,$ $\vol(\partial M)$, we can extend $(M,g)$ to a Riemannian manifold $(\widetilde{M},\widetilde{g})$ with smooth boundary such that the following properties are satisfied.
\begin{enumerate}[(1)]
\item $\widetilde{M}-M$ lies in a normal neighborhood of $\partial M$ in $\widetilde{M}$, and $\widetilde{d}(x,\partial M)=\delta_{ex}$ for any $x\in \partial \widetilde{M}$, where $\widetilde{d}$ denotes the distance function of $\widetilde{M}$.
\item $\widetilde{g}$ is of $C^{3,1}$ in some atlas on $\widetilde{M}$, in which 
$$\|\widetilde{g}_{ij}|_{\widetilde{M}-M}\|_{C^1}\leqslant C(K_1),\quad \|\widetilde{g}_{ij}|_{\widetilde{M}-M}\|_{C^4}\leqslant C(n,K_1,K_2,i_0).$$
\item $\|R_{\widetilde{M}}\|\leqslant 2K_1^2$, $\|S_{\partial \widetilde{M}}\|\leqslant 2K_1$ and $\|\nabla R_{\widetilde{M}}\|\leqslant 2K_2$, where $S_{\partial \widetilde{M}}$ denotes the second fundamental form of $\partial \widetilde{M}$ in $\widetilde{M}$.
\end{enumerate}

As a consequence, we have\\
(4) \,$r_{\textrm{CAT}}(\widetilde{M})\geqslant \min \big\{C(K_1),i_0/4,r_0/2 \big\}$.
\end{extensionmetric}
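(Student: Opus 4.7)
The plan is to construct $\widetilde M$ by gluing a thin collar of width $\delta_{ex}$ onto $\partial M$ and extending $g$ across $\partial M$ via a Seeley-type linear extension of the metric coefficients in the boundary-normal direction.

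By the bounded geometry hypotheses, $\partial M$ admits a boundary normal neighborhood in $M$ of uniform width $r_{nb}=r_{nb}(n,K_1,i_0)>0$. Fix $\delta_{ex}\ll r_{nb}$ and define
\[
\widetilde M=M\sqcup\bigl(\partial M\times[-\delta_{ex},0]\bigr)/\sim
\]
by identifying $\partial M$ with $\partial M\times\{0\}$. In boundary normal coordinates $(x',x^n)$ near $\partial M$ in $M$,
\[
g=(dx^n)^2+g_{\alpha\beta}(x',x^n)\,dx^\alpha dx^\beta,
\]
and each $g_{\alpha\beta}(x',\cdot)$ is defined on $[0,r_{nb}]$. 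Apply a Seeley extension operator $E$ in the $x^n$-variable componentwise to produce coefficients $\widetilde g_{\alpha\beta}(x',x^n)$ on $[-\delta_{ex},r_{nb}]$, and set $\widetilde g=(dx^n)^2+\widetilde g_{\alpha\beta}\,dx^\alpha dx^\beta$ on the collar, $\widetilde g=g$ on $M$. Since $E$ matches all traces at $x^n=0$ and is bounded $C^m([0,r_{nb}])\to C^m([-\delta_{ex},r_{nb}])$ for each $m\leq 4$, the resulting tensor is $C^{3,1}$ in the combined atlas.

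Property (1) follows from the Gauss lemma in the extended boundary normal coordinates, which gives $\widetilde d(\cdot,\partial M)=|x^n|$ on the collar and hence $\widetilde d(\partial\widetilde M,\partial M)=\delta_{ex}$. Property (2) is a direct consequence of the Seeley bound $\|Ef\|_{C^m}\leq C(m)\|f\|_{C^m}$ combined with the uniform $C^m$ estimates on $g_{\alpha\beta}$ in boundary normal coordinates that follow from \eqref{boundedgeometry} and the discussion in Section \ref{subsection-bounded}. For property (3), the tensors $R_{\widetilde M}$, $\nabla R_{\widetilde M}$, and $S_{\partial\widetilde M}$ are algebraic expressions in $\widetilde g_{\alpha\beta}$, $\widetilde g^{\alpha\beta}$, and their derivatives of order at most $3$; they agree at $x^n=0$ with $R_M$, $\nabla R_M$, and $S$ respectively. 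The uniform $C^{3,1}$ control from the previous step forces these tensors to deviate from their boundary values on the collar by at most $C\delta_{ex}$, so shrinking $\delta_{ex}$ below a threshold depending only on $n,K_1,K_2,i_0,vol(\partial M)$ gives the factor-$2$ slack claimed.

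For property (4), combine (3) with the reasoning of Lemma \ref{CATradius}: the curvature upper bound $2K_1^2$ on $\widetilde M$ and the fact that $\widetilde M-M$ lies in a collar of width $\delta_{ex}\ll r_0$ imply that any minimizing segment of $\widetilde M$ of length below $\min\{C(K_1),i_0/4,r_0/2\}$ either lies entirely in $M$, where $r_{\textrm{CAT}}(M)\geq r_0$ gives uniqueness, or is contained in a small ball on which the CAT$(2K_1^2)$ property (the Characterization Theorem in \cite{ABB2} together with Theorem 4.3 in \cite{AB}) forces uniqueness. The main obstacle is this last step: radial uniqueness must transfer across the glued slice $\partial M\times\{0\}$ where $\widetilde g$ is only $C^{3,1}$, but the CAT comparison argument depends only on curvature upper bounds and not on higher smoothness of the metric, which is precisely why the Seeley extension giving $C^{3,1}$ regularity is sufficient.
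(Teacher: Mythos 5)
Your construction via a Seeley-type extension in the normal direction is a legitimate alternative to the paper's proof, which instead truncates the $\rho$-Taylor series of $g_{\alpha\beta}$ to cubic order and glues the resulting local extensions with a partition of unity subordinate to a cover of $\partial M$. Your route is arguably cleaner on the gluing side: the normal-direction Seeley extension commutes with tangential coordinate changes, so the resulting tensor is well-defined chart by chart with no partition of unity needed. Both approaches then rely on the same mechanism (shrinking $\delta_{ex}$) to make the extension a genuine metric and to control how far curvature on the collar drifts from its boundary trace. Property (1) is fine in your version since the extended metric has product form on the collar, exactly as in the paper.

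However, your argument for property (2) proves less than what is asserted. The Seeley bound $\|Ef\|_{C^m}\le C(m)\|f\|_{C^m}$ together with Lemma \ref{riccati} only yields $\|\widetilde g_{ij}|_{\widetilde M-M}\|_{C^1}\le C(n,K_1,K_2,i_0)$, because $\|g_{ij}\|_{C^1}$ in boundary normal coordinates already involves $\|R_M\|_{C^1}$ and $\|S\|_{C^1}$, which carry $K_2$. The lemma claims the sharper $C(K_1)$ bound, with no $K_2$-dependence. To get that, you must use the same ``shrink $\delta_{ex}$'' mechanism you invoke for (3): the extension matches the trace data at $\rho=0$ (which is controlled by $K_1$ alone, via $\partial_\rho g_{\alpha\beta}|_{\partial M}=2S_{\alpha\beta}$ and \eqref{coorb}), and deviates from it by $O(\delta_{ex})\cdot C(n,K_1,K_2,i_0)$ on the collar, so taking $\delta_{ex}$ small relative to $K_2$ makes the $C^1$-norm depend only on $K_1$. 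As written, you apply this observation only to (3).

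There is a more serious gap in property (4). You cite Theorem 4.3 in \cite{AB} together with the Characterization Theorem of \cite{ABB2} to argue that a minimizing segment leaving $M$ sits inside a small ball on which $\mathrm{CAT}(2K_1^2)$ forces uniqueness, but Theorem 4.3 in \cite{AB} says the ball of radius $\min\{\pi/(2\sqrt{2}K_1),\,r_{\mathrm{CAT}}(\widetilde M)\}$ is CAT --- it needs $r_{\mathrm{CAT}}(\widetilde M)$ as input, which is precisely what you are trying to bound. The Characterization Theorem alone gives local CAT but says nothing about the size of the good neighborhoods, so your argument is circular. The paper's route, Lemma \ref{CATradius}(2), avoids this by a different mechanism: suppose for contradiction $r_{\mathrm{CAT}}(\widetilde M)$ is too small, form the resulting short closed geodesic of $\widetilde M$, push it inward by $\delta_{ex}$ along normal geodesics to obtain a closed $C^{1,1}$-curve with controlled geodesic curvature lying entirely in $M$, and then apply CAT comparison \emph{to $M$} (where $r_{\mathrm{CAT}}(M)\ge r_0$ is part of the hypotheses) to get a lower bound on its length and hence a contradiction. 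You should cite that lemma rather than attempt a direct CAT argument on $\widetilde M$.
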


\begin{proof}
We glue a collar $\partial M \times [-\delta_{ex},0]$ for $0<\delta_{ex}<\min\{1,i_0/2\}$ onto $M$ by identifying $\partial M\times \{0\}$ of the collar with $\partial M$. Denote the topological space after the gluing procedure by $\widetilde{M}$. Any $(y,\rho)\in \partial M \times [-\delta_{ex},0]$ admits coordinate charts by extending boundary normal coordinate charts at $(y,-\rho)\in M$. The transition maps are clearly smooth and therefore $\widetilde{M}$ is a smooth manifold.

Let $\{y_i\}$ be a maximal $r_g(\partial M)/2$-separated set (and hence an $r_g(\partial M)/2$-net) in $\partial M$. Let $U_i$ be the ball of radius $r_g(\partial M)$ in $\partial M$ around $y_i$, and therefore $\{U_i\}$ is an open cover of $\partial M$. We take a partition of unity $\{\phi_i\}$ subordinate to $\{U_i\}$ satisfying 
$$\|\phi_i\|_{C^s}\leqslant C\,r_g(\partial M)^{-s},\textrm{ for }s\in [1,4].$$
Then $\{\widetilde{U}_i:=U_i\times [-\delta_{ex},0]\}$ is an open cover of the collar $\partial M \times [-\delta_{ex},0]$, and $\{\widetilde{\phi}_i\}$ is a partition of unity subordinate to this cover satisfying the same bound on $C^s$-norm, where $\widetilde{\phi}_i$ is defined by $\widetilde{\phi}_i(y,\rho)=\phi_i(y)$ for $(y,\rho)\in \partial M \times [-\delta_{ex},0]$.

We choose the geodesic normal coordinate $(y^{\alpha})_{\alpha=1}^{n-1}$ on each $U_i$ such that (\ref{coorb}) holds. Within each coordinate chart $\widetilde{U}_i$, we define the metric components at $(y,\rho)\in \widetilde{U}_i$ as follows: $\widetilde{g}^{(i)}_{\rho\rho}=1$, and $\widetilde{g}^{(i)}_{\alpha \rho}=0$ for $\alpha,\beta=1,\cdots,n-1$, and 
\begin{equation*}
\widetilde{g}^{(i)}_{\alpha\beta}(y,\rho)=g^{(i)}_{\alpha\beta}(y,0)+\rho \frac{\partial g^{(i)}_{\alpha\beta}}{\partial \rho} (y,0)+\frac{\rho^2}{2} \frac{\partial^2 g^{(i)}_{\alpha\beta}}{\partial \rho^2} (y,0)+ \frac{\rho^3}{6} \frac{\partial^3 g^{(i)}_{\alpha\beta}}{\partial \rho^3} (y,0), \;\textrm{ for }\rho\leqslant 0.
\end{equation*}
Then one can define a Riemannian metric $\widetilde{g}$ on $\partial M\times [-\delta_{ex},0]$ through partition of unity:
\begin{equation}\label{metricpartition}
\widetilde{g}|_{(y,\rho)}=\sum_i \widetilde{\phi}_i(y,\rho) g^{(i)}|_{(y,\rho)}=\sum_i \phi_i(y) g^{(i)}|_{(y,\rho)},\;\textrm{ for }\rho\leqslant 0.
\end{equation}
At $(y,\rho\in \mathbb{R}_{+})\in M$ with respect to the boundary normal coordinate of $\partial M$ in $M$, define $\widetilde{g}=g$. Due to the Riccati equation (e.g. Theorem 2 in \cite{PP}, p44), 
the derivatives of $g^{(i)}_{\alpha\beta}$ with respect to $\rho$ at $\rho=0$ up to the third order can be expressed in terms of the components of $S$, $R_M$ and $\nabla R_{M}$. Then the curvature bound assumptions (\ref{boundedgeometry}) implies that $\widetilde{g}_{\alpha\beta}^{(i)}$ is of $C^4$ within each coordinate chart $\widetilde{U}_i$. 

Now let us consider the coordinate charts $U_i\times [-\delta_{ex},i_0)$. In this coordinate, the components $\widetilde{g}_{\alpha\beta}^{(i)}$ is of $C^{3,1}$ in the normal direction, and $C^4$ in other directions. Therefore $\widetilde{g}$ is of $C^{3,1}$ in the local coordinate charts $\{U_i\times [-\delta_{ex},i_0)\}$.

Furthermore, it follows from a straightforward calculation that for $\rho\leqslant 0$,
\begin{equation*}
\bigg|\frac{\partial^{k+l} \widetilde{g}^{(i)}_{\alpha\beta}}{\partial  x_T^k \partial \rho^l}(y,\rho)-\frac{\partial^{k+l} g^{(i)}_{\alpha\beta}}{\partial x_T^k \partial \rho^l}(y,0)\bigg|\leqslant C(\|R_M\|_{C^5},\|S\|_{C^4}) |\rho|, \;\textrm{ for }k+l\leqslant 4,\, l\leqslant 3.
\end{equation*}
Note that $\partial^4 \widetilde{g}^{(i)}_{\alpha\beta}/\partial \rho^4=0$ by definition. Recall that the $C^4$-norm of $\phi_i$ is uniformly bounded by $C\,r_g(\partial M)^{-4}$, and $r_g(\partial M)$ explicitly depends on $n,\|R_{\partial M}\|_{C^1},i_0$. Furthermore, the total number of coordinate charts $U_i$ is bounded by $C(n,K_1)\vol(\partial M)$ $r_g(\partial M)^{-n+1}$. Hence by (\ref{metricpartition}), the estimates above hold for $\widetilde{g}_{\alpha\beta}$ and $g_{\alpha\beta}$ with another constant $C(n,\|R_M\|_{C^5},\|S\|_{C^4},$ $i_0,\vol(\partial M))$.

Therefore we can restrict the extension width $\delta_{ex}$ to be sufficiently small explicitly depending only on $n,K_1,K_2,i_0,\vol(\partial M)$, such that the matrix $(\widetilde{g}_{\alpha\beta})$ is nondegenerate and hence a metric, and 
\begin{equation}\label{curvatureextended}
\|\widetilde{g}_{\alpha\beta}|_{\widetilde{M}-M}\|_{C^1}\leqslant 4 K_1+4,\quad \|\widetilde{g}_{\alpha\beta}|_{\widetilde{M}-M}\|_{C^4}\leqslant C(n,K_1,K_2,i_0),
\end{equation}
$$\|R_{\widetilde{M}}\|\leqslant 2K_1^2, \quad \|S_{\partial \widetilde{M}}\|\leqslant 2K_1, \quad \|\nabla R_{\widetilde{M}}\|\leqslant 2K_2.$$
Here the first inequality is due to (\ref{coorb}) and the definition that $\partial_{\rho} g_{\alpha\beta} |_{\partial M}=2S_{\alpha\beta}$,
where $S_{\alpha\beta}$ denotes the components of the second fundamental form $S$ of $\partial M$. The bound on $S_{\partial \widetilde{M}}$ follows from the bound on $\partial_{\rho} \widetilde{g}_{\alpha\beta}|_{\widetilde{M}-M}$.

With this type of extension, $\widetilde{g}$ is also a product metric in the collar, which implies that the integral curve of $\partial/\partial \rho$ minimizes length and is hence a minimizing geodesic. This shows that for any $x=(y,\rho)\in \partial M \times[-\delta_{ex},0]$, we have $\widetilde{d}(x,\partial M)=-\rho$, which yields property (1). The property (4) is due to properties (1-3) and Lemma \ref{CATradius}(2).
\end{proof}


\noindent \textbf{Coordinate system.} From now on, we extend the manifold $(M,g)$ to $(\widetilde{M},\widetilde{g})$ such that Lemma \ref{extensionmetric} holds. We say $(\widetilde{M},\widetilde{g})$ is an extension of $(M,g)$ with the extension width $\delta_{ex}$. We choose a coordinate system on $\widetilde{M}$ as follows.

In the boundary normal (tubular) neighborhood of $\partial M$, we choose the boundary normal coordinate of $\partial M$. Let $\{y_i\}$ be a maximal $r_g(\partial M)/2$-separated set in $\partial M$, and $U_i$ be the ball of radius $r_g(\partial M)$ in $\partial M$ around $y_i$. The proof of Lemma \ref{extensionmetric} shows that $\widetilde{g}$ is of $C^{3,1}$ in the coordinate charts $U_i\times [-\delta_{ex},i_0)$. In each coordinate chart, we choose the boundary normal coordinate $(x^1,\cdots,x^{n-1},\rho(x))$ of $\partial M$, where $(x^1,\cdots,x^{n-1})$ is the geodesic normal coordinate of $\partial M$ such that (\ref{coorb}) holds. The coordinate function $\rho(x)$ in the normal direction is defined as
\begin{equation}\label{def-rhox}
\rho(x) = \left\{ \begin{array}{ll}
         d(x,\partial M), & \mbox{if $x\in M$}; \\
         -\widetilde{d}(x,\partial M), & \mbox{if $x\in \widetilde{M}-M$}.\end{array} \right. 
\end{equation}
Note that $\widetilde{d}(x,\partial M)=d(x,\partial M)$ for $x\in M$. Lemma \ref{extensionmetric}(2) shows that the metric components on $\widetilde{M}-M$ have uniformly bounded $C^4$-norm. On the other side, due to Lemma \ref{riccati}, we can find a uniform width $r_b=r_b(K_1,i_0)$, such that the $C^4$-norm of metric components is uniformly bounded by $C(n,K_1,K_2,i_0)$ in the boundary normal coordinate of width $r_b$ in $M$. Consequently, we have a uniform bound for the $C^{3,1}$-norm of metric components in the coordinate charts $U_i\times [-\delta_{ex},i_0)$.

For any point $x\in M$ with $d(x,\partial M)> r_b/2$, we choose the geodesic normal coordinate of $M$ around $x$ of the radius $ \min\{r_b/2,r_g(x)\}$, such that the $C^4$-norm of metric components is uniformly bounded. By Lemma 8 in \cite{HV} and Theorem A in \cite{E}, this radius is uniformly bounded below by $n,\|R_M\|_{C^1},i_0,r_b$. Denote by $r_g$ the minimum of this radius and $r_g(\partial M)$, and therefore $r_g$ explicitly depends only on $n,\|R_M\|_{C^1},\|S\|_{C^1},i_0$. 

Combining these two types of coordinates, we have a coordinate system on $\widetilde{M}$ in which the metric components satisfy the following properties:
$$\frac{1}{4}|\xi|^2\leqslant \sum_{i,j=1}^{n} \widetilde{g}^{ij}\xi_{i}\xi_{j} \leqslant 4|\xi|^2\; (\xi\in\mathbb{R}^n),$$
\begin{equation}\label{metricboundex}
\|\widetilde{g}_{ij}\|_{C^{1}}\leqslant C(n,\|R_{M}\|_{C^1},\|S\|_{C^1}),\quad \|\widetilde{g}_{ij}\|_{C^{3,1}}\leqslant C(n,K_1,K_2,i_0).
\end{equation}

Observe that for any $x\in \widetilde{M}$, the ball $\widetilde{B}_{r_g/2}(x)$ of $\widetilde{M}$ or the cylinder $B_{\partial M}(y,r_g/2)\times (\rho-r_g/2,\rho+r_g/2)$ is contained in at least one of the coordinate charts defined above, where $x=(y,\rho)$ if $x$ is in the boundary normal coordinate of $\partial M$. To see this, it suffices to show that for any $y\in \partial M$, the ball $B_{\partial M}(y,r_g/2)$ of $\partial M$ is contained in at least one of $U_i$. The latter statement is a direct consequence of the fact that $\{y_i\}$ is an $r_g(\partial M)/2$-net in $\partial M$.


\subsection{Extension of functions} \hfill

\smallskip
Let $(\widetilde{M},\widetilde{g})$ be an extension of $(M,g)$ satisfying Lemma \ref{extensionmetric} with the extension width $\delta_{ex}$. Points in the boundary normal neighborhood of $\partial M$ have coordinates $(x^1,\cdots,x^{n-1},\rho(x))$, where $\rho(x)$ is defined in (\ref{def-rhox}). We write the coordinate as $(x_T,\rho(x))$ for short, where $x_T=(x^1,\cdots,x^{n-1})$ denotes the tangential coordinate.

We define an extension of functions on $M$ to $\widetilde{M}$ as follows. Given a function $u$ on $M$ and its Cauchy data $u,\, \frac{\partial u}{\partial \mathbf{n}}$ on $\partial M$, we extend $u$ to a function $\widetilde{u}_{ex}$ on $\widetilde{M}$ by
\begin{equation*}
\widetilde{u}_{ex}(x_T,\rho,t) = \left\{ \begin{array}{ll}
         u(x_T,\rho,t), & \mbox{if $\rho \geqslant 0$};\\
         u(x_T,0,t)+\rho \frac{\partial u}{\partial \mathbf{n}}(x_T,0,t), & \mbox{if $\rho<0$}.\end{array} \right. 
\end{equation*}
For $0<h<\delta_{ex}$, we define another function $\widetilde{u}:\widetilde{M}\times [-T,T]\to \mathbb{R}$ by $\widetilde{u}=u$ on $M\times [-T,T]$, and
\begin{equation}\label{extensionu}
\widetilde{u}(x_T,\rho,t)=\phi(\frac{\rho}{h}) \widetilde{u}_{ex} (x_T,\rho,t), \; \textrm{ for }\rho< 0,
\end{equation}
where $\phi$ is a monotone increasing smooth function vanishing on $(-\infty,-1]$ and equals to $1$ on $[0,\infty)$ with $\|\phi\|_{C^2}\leqslant 8$. Then $\widetilde{u}=0$ when $\rho\leqslant -h$.

\begin{extension}\label{extension}
Let $(\widetilde{M},\widetilde{g})$ be an extension of $(M,g)$ satisfying Lemma \ref{extensionmetric} with the extension width $\delta_{ex}$. Let $\Gamma$ be a connected open subset of $\partial M$. Assume
$$u|_{\partial M\times [-T,T]}\in H^{2,2}(\partial M \times [-T,T]),\quad \frac{\partial u}{\partial \mathbf{n}} \in H^{2,2}(\partial M \times [-T,T]).$$
Then we have 
$$\|\widetilde{u}\|^2_{H^1(\Omega_{\Gamma}\times[-T,T])}\leqslant Ch^{-1}\|u\|^2_{H^1(\Gamma\times [-T,T])}+Ch \big\|\frac{\partial u}{\partial \mathbf{n}} \big\|^2_{H^1(\Gamma\times [-T,T])},$$
and
\begin{eqnarray*}
\|(\partial_{t}^2 -\Delta_{\widetilde{g}})\widetilde{u}\|^2_{L^2(\Omega_{\Gamma}\times [-T,T])}
\leqslant Ch^{-3}\|u\|^2_{H^{2,2}(\Gamma\times [-T,T])} + Ch^{-1}\big\|\frac{\partial u}{\partial \mathbf{n}} \big\|^2_{H^{2,2}(\Gamma\times [-T,T])},
\end{eqnarray*}
where $\Omega_{\Gamma}=\Gamma\times[-\delta_{ex},0]$ denotes the part of the manifold extension over $\Gamma$, and the constants explicitly depend on $n,K_1$.

Furthermore, suppose $u\in H^2(M\times[-T,T])$ is a solution of the non-homogeneous wave equation $Pu=f$ with $f\in L^2(M\times [-T,T])$. Then $\widetilde{u}\in H^1(\widetilde{M}\times [-T,T])$ and $(\partial_{t}^2 -\Delta_{\widetilde{g}})\widetilde{u} \in L^2(\widetilde{M}\times [-T,T])$.
\end{extension}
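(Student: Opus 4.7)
The plan is to work entirely in the boundary normal coordinate $(x_T,\rho)$ on a neighborhood of $\Gamma$, where by Lemma \ref{extensionmetric} the extended metric has product form $\widetilde g = d\rho^2 + \widetilde g_{\alpha\beta}(x_T,\rho)\,dx^\alpha dx^\beta$ with $\|\widetilde g_{ij}\|_{C^1}\leqslant C(n,K_1)$. Since $\widetilde u=\phi(\rho/h)\widetilde u_{ex}$ with
$\widetilde u_{ex}(x_T,\rho,t)=u(x_T,0,t)+\rho\,(\partial_{\mathbf n}u)(x_T,0,t)$,
the crucial structural observation is that $\widetilde u_{ex}$ is \emph{affine} in $\rho$: in particular $\partial_\rho^2\widetilde u_{ex}\equiv 0$, all tangential and time derivatives of $\widetilde u_{ex}$ are affine combinations of the derivatives of the two traces $u|_{\partial M}$ and $\partial_{\mathbf n}u|_{\partial M}$, and $|\rho|\leqslant h$ on the support of $\phi(\rho/h)$. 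Each derivative falling on $\phi(\rho/h)$ produces a factor $h^{-1}$, and $\int_{-h}^{0}d\rho$ contributes at most $h$; this is the only source of $h$-scaling in the estimates.

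For the $H^1$ bound I apply the Leibniz rule:
$\partial_\rho\widetilde u = h^{-1}\phi'(\rho/h)\widetilde u_{ex}+\phi(\rho/h)\,\partial_{\mathbf n}u$,
while $\partial_{x^\alpha}\widetilde u$ and $\partial_t\widetilde u$ fall on the traces only. Using $|\widetilde u_{ex}|^2\leqslant 2|u|^2+2\rho^2|\partial_{\mathbf n}u|^2$, taking $L^2$ in $\rho$ over $(-h,0)$, and including the bounded metric factors, the $h^{-1}\phi'\widetilde u_{ex}$ piece contributes $\sim h\cdot h^{-2}\|u\|^2 + h^{-1}\rho^2\|\partial_{\mathbf n}u\|^2\lesssim h^{-1}\|u\|^2 + h\|\partial_{\mathbf n}u\|^2$, while the $\phi\,\partial_{\mathbf n}u$ piece and the tangential/time derivatives give terms bounded by the same right hand side. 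Integrating in $x_T\in\Gamma$ and $t\in[-T,T]$ yields the stated first bound.

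For the wave-operator bound, in this coordinate $\Delta_{\widetilde g}=\partial_\rho^2+a(x_T,\rho)\partial_\rho+\Delta_{\widetilde g_T(\rho)}$ with $\|a\|_\infty$ and the metric coefficients uniformly bounded. Since $\partial_\rho^2\widetilde u_{ex}=0$, one finds $\partial_\rho^2\widetilde u=h^{-2}\phi''(\rho/h)\widetilde u_{ex}+2h^{-1}\phi'(\rho/h)\partial_{\mathbf n}u$; the dominant $h^{-2}\phi''\widetilde u_{ex}$ term, squared and integrated in $\rho$, produces exactly $h^{-3}\|u\|_{L^2(\Gamma)}^2 + h^{-1}\|\partial_{\mathbf n}u\|_{L^2(\Gamma)}^2$ (the $\rho^2$ in $|\widetilde u_{ex}|^2$ shaving off two powers of $h^{-1}$ in the second term). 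The tangential second derivatives feed $\|u\|_{H^2(\Gamma)}^2$ and $\|\partial_{\mathbf n}u\|_{H^2(\Gamma)}^2$ into the estimate with at most an $h^{-1}$ prefactor; $\partial_t^2\widetilde u$ commutes with $\phi(\rho/h)$ and introduces $\|\partial_t^2 u\|_{L^2(\Gamma)}^2+h^2\|\partial_t^2\partial_{\mathbf n}u\|_{L^2(\Gamma)}^2$; the first-order $a\partial_\rho\widetilde u$ contribution is absorbed by the $H^1$ estimate. Integrating in $t$ and using the definition \eqref{H21} of the $H^{2,2}$-norm gives the second bound.

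For the last sentence, $u\in H^2(M\times[-T,T])$ produces via the trace theorem boundary traces of $u$ and $\partial_{\mathbf n}u$ in Sobolev spaces sufficient for $\widetilde u_{ex}$ and hence $\widetilde u$ to lie in $H^1$ on the slab $\Omega_\Gamma\times[-T,T]$. The design of $\phi$ gives $\phi(0)=1$ and $\phi'(0)=0$, so at $\rho=0$ the one-sided limits agree: $\widetilde u(x_T,0^-,t)=u(x_T,0,t)$ and $\partial_\rho\widetilde u(x_T,0^-,t)=\partial_{\mathbf n}u(x_T,0,t)$, matching the interior values of $u$ and $\partial_\rho u$. Thus $\widetilde u\in H^1(\widetilde M\times[-T,T])$ with no distributional jump across $\partial M$, and the Green-type identity on $\widetilde M$ combined with $Pu=f\in L^2(M\times[-T,T])$ and the second estimate on the slab yields $(\partial_t^2-\Delta_{\widetilde g})\widetilde u\in L^2(\widetilde M\times[-T,T])$. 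The main obstacle in all of the above is the clean bookkeeping of powers of $h$ in the Leibniz expansion of the wave operator against the cutoff; the matching of Dirichlet and Neumann traces at $\rho=0$, which is what ensures no singular measure appears in $P\widetilde u$, is the technically delicate final point.
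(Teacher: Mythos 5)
Your proposal is correct and follows essentially the same route as the paper: compute $\partial_\rho\widetilde u$, $\partial_\rho^2\widetilde u$ via the Leibniz rule against the cutoff $\phi(\rho/h)$, use that $\widetilde u_{ex}$ is affine in $\rho$ (so $\partial_\rho^2\widetilde u_{ex}=0$) and $|\rho|\leqslant h$ on the support to track powers of $h$, split $\Delta_{\widetilde g}$ into normal and tangential parts in the product boundary normal coordinate, and use the matching of the Dirichlet trace and normal derivative at $\rho=0$ (via $\phi(0)=1$, $\phi'(0)=0$) together with Green's formula on $M$ and $\widetilde M-M$ separately to show $P\widetilde u\in L^2(\widetilde M\times[-T,T])$ with no singular part on $\partial M$.
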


\begin{proof}
First we estimate the $H^1$-norm of $\widetilde{u}$ over $\Omega_{\Gamma}$. Here we only estimate the dominating term in $h$; the other terms can be done in the same way. Denote by $\partial_{\alpha},\partial_{n},\partial_t$ the derivatives with respect to $x^{\alpha},x^n$ coordinates and time $t$, respectively. We denote $\partial_{\alpha} u, \partial_{n} u,\partial_t u$ evaluated at $(x_T,0,t)$ by $u_{\alpha},u_{n},u_t$ and $ \phi^{\prime}(s)=\frac{d}{ds}\phi(s)$, evaluated at $s=\rho/h$. In addition, whenever we write the function $u$ without specifying where it is evaluated,  the evaluation is also done at $(x_T,0,t)$. By the definition of $\widetilde{u}$,
\begin{equation}\label{un}
(\partial_n \widetilde{u})(x_T,\rho,t)=h^{-1} (u+\rho u_n) \phi^{\prime}  + u_n\phi.
\end{equation}
Since $\widetilde{u}$ vanishes unless $\rho\in [-h,0]$, we have
\begin{eqnarray*}
\|\partial_n \widetilde{u}\|^2_{L^2(\Omega_{\Gamma}\times [-T,T])} &=& \int_{-T}^{T}  \int_{\Gamma}\int_{-\delta_{ex}}^0   \big| h^{-1}  (u+\rho u_n) \phi^{\prime} + u_n\phi \big|^2 dx_T d\rho dt\\
&\leqslant& C\int_{-T}^{T}  \int_{\Gamma}\int_{-h}^0 (h^{-2}u^2+h^{-2}\rho^2 u_n^2+u_n^2) dx_Td\rho dt \\
&\leqslant& Ch^{-1}\|u\|_{L^2(\Gamma\times [-T,T])}^2+Ch\big\|\frac{\partial u}{\partial \mathbf{n}} \big\|_{L^2(\Gamma\times [-T,T])}^2.
\end{eqnarray*}

Next we estimate the Laplacian of $\widetilde{u}$ over $\Omega_{\Gamma}$ for $\rho\in[-h,0]$. In the boundary normal coordinates of our choice, by definition (\ref{Laplacian}) we have
\begin{eqnarray*}
\Delta_{\widetilde{g}}\widetilde{u}&=&\sum_{i,j=1}^n \frac{1}{\sqrt{|\widetilde{g}|}} \partial_i \big(\sqrt{|\widetilde{g}|} \widetilde{g}^{ij}\partial_j \widetilde{u} \big) \\
&=& \frac{1}{\sqrt{|\widetilde{g}|}} \partial_{n} \big(\sqrt{|\widetilde{g}|} \widetilde{g}^{nn}\partial_{n} \widetilde{u}\big)+
\sum_{\alpha,\beta=1}^{n-1} \frac{1}{\sqrt{|\widetilde{g}|}} \partial_{\alpha} \big(\sqrt{|\widetilde{g}|} \widetilde{g}^{\alpha\beta}\partial_{\beta} \widetilde{u} \big) \\
&=& A_1+A_2,
\end{eqnarray*}
where $|\widetilde{g}|$ denotes the determinant of the matrix $(\widetilde{g}_{ij})$. We estimate $A_2$ as follows.
\begin{eqnarray*}
A_2(x_T,\rho,t) &=& \sum_{\alpha,\beta=1}^{n-1} \frac{1}{\sqrt{|\widetilde{g}|}} \partial_{\alpha} \big(\sqrt{|\widetilde{g}|} \widetilde{g}^{\alpha\beta}\partial_{\beta} \widetilde{u} \big) \\
&=& \sum_{\alpha,\beta} \frac{\partial_{\alpha} |\widetilde{g}|}{2|\widetilde{g}|}  \widetilde{g}^{\alpha \beta} \partial_{\beta}\widetilde{u}+ (\partial_{\alpha} \widetilde{g}^{\alpha \beta})( \partial_{\beta}\widetilde{u})+ \widetilde{g}^{\alpha \beta} \partial_{\alpha}\partial_{\beta}\widetilde{u}.
\end{eqnarray*}
Hence we have
\begin{eqnarray*}
|A_2(x_T,\rho,t)| &\leqslant& C \sum_{\alpha,\beta} (|u_{\beta}|+h |u_{n\beta}|)+C \sum_{\alpha,\beta}|\partial_{\alpha}\partial_{\beta}(u+\rho u_n)|(x_T,0,t)\\
&\leqslant& C\sum_{\alpha,\beta} (|u_{\alpha\beta}|+h |u_{n\alpha\beta}|)+C\sum_{\beta}(|u_{\beta}|+h|u_{n\beta}|),
\end{eqnarray*}
where the constants explicitly depend on $n,K_1$ due to the $C^1$ metric bound (\ref{curvatureextended}).

Finally we estimate $A_1$ and the time derivatives.
Since $\widetilde{g}^{nn}=1$, we know that 
$$A_1(x_T,\rho,t)=\frac{\partial_n |\widetilde{g}|}{2|\widetilde{g}|} \partial_{n}\widetilde{u} +  \partial_{n}^2\widetilde{u}.$$
We differentiate (\ref{un}) again:
\begin{eqnarray*}
(\partial_{n}^2\widetilde{u})(x_T,\rho,t) = h^{-2}(u+\rho u_n)\phi^{\prime\prime}+ 2h^{-1}u_n\phi^{\prime}.
\end{eqnarray*}
Hence we have
\begin{eqnarray*}
\big| \big((\partial_{t}^2 -\partial_{n}^2)\widetilde{u} \big)(x_T,\rho,t)\big|&=&\big| (u_{tt}+\rho u_{ntt})\phi-(\partial_{n}^2\widetilde{u})(x_T,\rho,t) \big|\\
&\leqslant& Ch^{-2}|u|+Ch^{-1}|u_n| +C|u_{tt}|+Ch|u_{ntt}|,
\end{eqnarray*}
which leads to a similar estimate for $(\partial_{t}^2 \widetilde{u}-A_1)(x_T,\rho,t)$ by (\ref{un}). Thus,
\begin{eqnarray*}\label{waveextension}
\big| \big((\partial_{t}^2 -\Delta_{\widetilde{g}})\widetilde{u} \big)(x_T,\rho,t)\big|&\leqslant& Ch^{-2}|u|+Ch^{-1}|u_n| +C(|u_{tt}|+h|u_{ntt}|) \nonumber \\
&+&C\sum_{\alpha,\beta} \big(|u_{\alpha}|+|u_{\alpha\beta}|+h|u_{n\alpha}|+h|u_{n\alpha\beta}| \big),
\end{eqnarray*}
where all terms on the right-hand side are boundary data evaluated at $(x_T,0,t)$. Then the second estimate of the lemma immediately follows from integrating the last inequality.

\smallskip
Now we additionally assume that $u\in H^2(M\times[-T,T])$ is a (strong) solution of the non-homogeneous wave equation $Pu=f$ with $f\in L^2(M\times [-T,T])$. 
By the regularity result for the wave equation (e.g. Theorem 2.30 in \cite{KKL}), the solution $u$ is in the energy class
$$u \in C([-T,T];H^1(M))\cap C^1([-T,T];L^2(M)).$$
From the definition (\ref{extensionu}), the weak derivatives of $\widetilde{u}(\cdot,t)$ exist on $\widetilde{M}$ for any fixed $t\in [-T,T]$. Since the Cauchy data are in $H^{2,2}$, we have $\widetilde{u}(\cdot,t)\in H^1(\widetilde{M})$ for all $t$ directly by definition (\ref{extensionu}), and therefore $\widetilde{u}\in H^1(\widetilde{M}\times [-T,T])$.

Since the Cauchy data are in $H^{2,2}$, the definition (\ref{extensionu}) also indicates that $\widetilde{u}\in H^{2,2}\big((\widetilde{M}-M)\times [-T,T]\big)$. Hence over $\widetilde{M}-M$,
$$\widetilde{f}_{ex}:=(\partial_{t}^2 -\Delta_{\widetilde{g}})\widetilde{u} \in L^2\big((\widetilde{M}-M) \times [-T,T]\big).$$
Define a function $\widetilde{f}:\widetilde{M}\times [-T,T]\to \mathbb{R}$ by $\widetilde{f}=f$ over $M$ and $\widetilde{f}=\widetilde{f}_{ex}$ over $\widetilde{M}-M$. Clearly $\widetilde{f}\in L^2(\widetilde{M}\times [-T,T])$. Thus the only part left is to show that $(\partial_{t}^2 -\Delta_{\widetilde{g}})\widetilde{u}=\widetilde{f}$ on $\widetilde{M}\times [-T,T]$ in the weak form. Observe that the wave equation on either $M$ or $\widetilde{M}-M$ is well-defined pointwise. Then for any test function $\varphi\in H_0^{1}(\widetilde{M}\times [-T,T])$, by applying the wave equation separately on $M,\,\widetilde{M}-M$ and Green's formula, we have
\begin{eqnarray*}
&&\int_{-T}^T \int_{\widetilde{M}} \Big(-\partial_{t} \widetilde{u}\, \partial_t \varphi  + \langle \nabla \widetilde{u},  \nabla \varphi \rangle_{\widetilde{g}} \Big) = \int_{-T}^T \int_{M\cup(\widetilde{M}-M)}  \Big(-\partial_{t} \widetilde{u}\, \partial_t \varphi  + \langle \nabla \widetilde{u},  \nabla \varphi \rangle_{\widetilde{g}} \Big) \\
&&= \int_{-T}^T \int_{M} f\varphi - \int_{-T}^T \int_{\partial M} \frac{\partial u}{\partial \mathbf{n}} \varphi+
 \int_{-T}^T \int_{\widetilde{M}-M}\widetilde{f}_{ex} \varphi+\int_{-T}^T \int_{\partial M} \frac{\partial \widetilde{u}}{\partial \mathbf{n}} \varphi\, .
\end{eqnarray*}
Due to the definition (\ref{extensionu}), the normal derivative of $\widetilde{u}$ from either side of $\partial M$ coincides and hence the boundary terms cancel out. This shows that the wave equation is satisfied on $\widetilde{M}\times[-T,T]$ in the weak form, with the source term in $L^2(\widetilde{M}\times[-T,T])$.
\end{proof}

\subsection{Distance functions} \hfill 

\smallskip
Later in the proof of Theorem \ref{main1}, we will need to switch back and forth to different distance functions. The following lemma shows relations between distance functions.

\begin{distances}\label{distances}
Let $(\widetilde{M},\widetilde{g})$ be an extension of $(M,g)$ satisfying Lemma \ref{extensionmetric} with the extension width $\delta_{ex}$. Denote the distance functions of $M$ and $\widetilde{M}$ by $d$ and $\widetilde{d}$, respectively. Then there exists a uniform constant $r_b$ explicitly depending only on $K_1,i_0$, such that the following inequality holds for any $x,y\in M$ as long as $\delta_{ex}\leqslant r_b$:
$$\widetilde{d}(x,y)\leqslant d(x,y)\leqslant (1+3K_1\delta_{ex})\widetilde{d}(x,y)\, .$$
If $x,y\in \widetilde{M}-M$, then the second inequality holds after replacing $d(x,y)$ with $d(x^{\perp},y^{\perp})$, where $x^{\perp}$ denotes the normal projection of $x$ onto $\partial M$. If $x\in \widetilde{M}-M,\,y\in M$, then the second inequality holds for $d(x^{\perp},y)$.

Furthermore, if a minimizing geodesic of $\widetilde{M}$ between $x,y \in \widetilde{M}$ lies in the boundary normal (tubular) neighborhood of $\partial M$ of width $\delta_{ex}$, then we have
$$d_{\partial M}(x^{\perp},y^{\perp})\leqslant (1+3K_1\delta_{ex})\widetilde{d}(x,y)\, ,$$
where $d_{\partial M}$ denotes the intrinsic distance function of $\partial M$.
\end{distances}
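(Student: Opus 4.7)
The plan is direct: the first inequality is tautological, and the reverse inequality is obtained by projecting a minimizing geodesic of $\widetilde{M}$ onto $M$ via the boundary normal projection, then comparing lengths using a Riccati/Taylor expansion of the metric at $\partial M$. Since the inclusion $M\hookrightarrow \widetilde{M}$ is an isometric embedding by construction in Lemma \ref{extensionmetric}, any $M$-curve from $x$ to $y$ is admissible in $\widetilde{M}$ with the same length, which gives $\widetilde{d}(x,y)\leqslant d(x,y)$ at once.

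For the reverse inequality, I would take a minimizing geodesic $\widetilde{\gamma}$ of $\widetilde{M}$ from $x$ to $y$. Because $\widetilde{M}-M$ is contained in the $\delta_{ex}$-boundary normal tubular neighborhood of $\partial M$ (Lemma \ref{extensionmetric}(1)), every connected arc of $\widetilde{\gamma}$ lying outside $M$ has its interior in this tubular neighborhood and its endpoints either on $\partial M$, or equal to $x$ or $y$ themselves (in which case the relevant endpoints are the normal projections $x^{\perp}$ or $y^{\perp}$). The normal projection $\pi:(x_T,\rho)\mapsto(x_T,0)$ is well defined on this tubular neighborhood; I would apply $\pi$ to every outside-$M$ arc and concatenate the results with the untouched pieces of $\widetilde{\gamma}$ that already lie in $M$, obtaining an admissible curve in $M$ joining the appropriate endpoints in all three cases of the lemma. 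The final assertion is the special case in which $\widetilde{\gamma}$ lies entirely in the tubular neighborhood, so that $\pi\circ\widetilde{\gamma}$ is a curve confined to $\partial M$.

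The heart of the proof is the pointwise inequality $|\pi_* v|_{g_{\partial M}}\leqslant (1+3K_1\delta_{ex})\,|v|_{\widetilde{g}}$ for tangent vectors $v$ in the tubular neighborhood. In boundary normal coordinates the metric splits by the Gauss lemma as $\widetilde{g}=d\rho^2+\widetilde{g}_{\alpha\beta}(x_T,\rho)\,dx^\alpha dx^\beta$, and a Riccati/Taylor expansion at $\rho=0$, using $\partial_\rho \widetilde{g}_{\alpha\beta}\big|_{\rho=0}=2S_{\alpha\beta}$ together with $\|S\|\leqslant K_1$ and $\|R_M\|\leqslant K_1^2$, yields
$$\widetilde{g}_{\alpha\beta}(x_T,\rho)=g_{\alpha\beta}(x_T,0)+2\rho\, S_{\alpha\beta}(x_T)+O(K_1^2\rho^2).$$
Since $\pi$ discards the $v_\rho$ component, $|v|^2_{\widetilde{g}}\geqslant \widetilde{g}_{\alpha\beta}(x_T,\rho)\,v_T^\alpha v_T^\beta$, and inverting the expansion above then gives $|\pi_* v|^2_{g_{\partial M}}\leqslant (1+4K_1|\rho|+O(K_1^2\rho^2))\,|v|^2_{\widetilde{g}}$. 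Taking square roots and choosing the universal width $r_b=r_b(K_1,i_0)$ small enough to absorb the quadratic remainder into $K_1|\rho|$ produces the constant $1+3K_1|\rho|\leqslant 1+3K_1\delta_{ex}$.

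Integrating this estimate along the projected arcs and leaving the $M$-pieces of $\widetilde{\gamma}$ untouched, one obtains an $M$-curve (respectively $\partial M$-curve) of length at most $(1+3K_1\delta_{ex})\widetilde{d}(x,y)$ between the correct endpoints, and the infimum definition of $d$ and $d_{\partial M}$ delivers all the claimed inequalities. The chief technical obstacle is calibrating $r_b$ so that the specific coefficient $3$ appears — balancing the linear Riccati term against the quadratic correction is what forces $r_b$ to depend on $K_1$ and $i_0$ — while the rest is a straightforward concatenation argument.
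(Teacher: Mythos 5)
Your proof is correct and follows essentially the same route as the paper: project the arcs of the minimizing $\widetilde{M}$-geodesic lying outside $M$ via the boundary normal map, bound the pointwise stretching factor of $\pi_*$ by a Riccati-type estimate for $\partial_\rho \widetilde{g}_{\alpha\beta}$, and concatenate. The paper reaches the same stretching bound via the shape-operator identity and Gronwall (with the shape operators of equidistant hypersurfaces controlled by Lemma \ref{riccati}), whereas you use a second-order Taylor expansion with explicit remainder absorption — a presentational rather than substantive difference.
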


\begin{proof}
The first inequality is trivial and we prove the second inequality. Consider any (distance) minimizing geodesic $\widetilde{\gamma}$ of $\widetilde{M}$ from $x$ to $y$, and its length $L(\widetilde{\gamma})$ satisfies $L(\widetilde{\gamma})=\widetilde{d}(x,y)$ by definition. It is known that $\widetilde{\gamma}$ is a $C^1$ curve with arclength parametrization (e.g. Section 2 in \cite{ABB}). Observe that the second inequality follows trivially if $\widetilde{\gamma}$ lies entirely in $M$. Since the statement of the lemma is independent of the choice of coordinates, we work in the boundary normal coordinate $(x^1,\cdots,x^{n-1},\rho(x))$ of $\partial M$.

Suppose $\widetilde{\gamma}$ lies entirely in $\widetilde{M}-\textrm{int}(M)$ with both endpoints $x,y$ on $\partial M$. Consider the normal projection, denoted by $\gamma$, of $\widetilde{\gamma}$ onto the boundary $\partial M$ with respect to the boundary normal coordinate. More precisely, if $\widetilde{\gamma}(s)=(x_1(s),\cdots,x_{n-1}(s),x_n(s))$ in a boundary normal coordinate near a point on $\widetilde{\gamma}$, then its normal projection has the form $\gamma(s)=(x_1(s),\cdots,x_{n-1}(s),0)$. The fact that $\widetilde{\gamma}$ is of $C^1$ implies that $x_i(s)$ is a $C^1$ function for any $i$. Hence $\gamma$ is a $C^1$ (possibly not regular or simple) curve in $\partial M$ from $x$ to $y$ with the induced parametrization from $\widetilde{\gamma}$. Note that $\gamma$ may not be differentiable with respect to its own arclength parameter. 

As a consequence, the length $L(\gamma)$ of $\gamma$ can be written as:
$$L(\gamma)=\int_0^{L(\widetilde{\gamma})} \sqrt{g(\gamma^{\prime}(s),\gamma^{\prime}(s))} \,ds = \int_0^{L(\widetilde{\gamma})} \sqrt{g \big(\widetilde{\gamma}^{\prime}_T(s)|_{\gamma(s)},\widetilde{\gamma}^{\prime}_T(s)|_{\gamma(s)} \big)} \,ds,$$
where $\widetilde{\gamma}^{\prime}_T(s)$ denotes the vector field with constant coefficients in the frame $(\frac{\partial}{\partial x^1},\cdots,$ $\frac{\partial}{\partial x^{n-1}})$, with the coefficients being the tangential components of the tangent vector $\widetilde{\gamma}^{\prime}(s)$ of $\widetilde{\gamma}$.
Note that $\widetilde{\gamma}^{\prime}_T(s)$ is a Jacobi field for the normal coordinate function $\rho(x)$. For every fixed $s$,
by the definition of the second fundamental form (more precisely the shape operator),
$$\frac{\partial}{\partial \rho}\widetilde{g}_{\rho}(\widetilde{\gamma}^{\prime}_T,\widetilde{\gamma}^{\prime}_T) = 2\widetilde{g}_{\rho}(S_{\rho}(\widetilde{\gamma}^{\prime}_T),\widetilde{\gamma}^{\prime}_T),$$
where $\widetilde{g}_{\rho}$ and $S_{\rho}$ denote the metric and the shape operator of the equidistant hypersurface from $\partial M$ (in $\widetilde{M}-M$) with distance $|\rho|$ (i.e. the level set $\widetilde{d}(\cdot,\partial M)=|\rho|$). Observe that Lemma \ref{riccati} holds in the boundary normal neighborhood of $\partial M$ regardless of which side the neighborhood extends to, thanks to Lemma \ref{extensionmetric}(3). Then the first part of Lemma \ref{riccati} indicates that for sufficiently small $|\rho|$ depending only on $K_1,i_0$,
$$\big|\frac{\partial}{\partial \rho}\widetilde{g}_{\rho}(\widetilde{\gamma}^{\prime}_T,\widetilde{\gamma}^{\prime}_T) \big| \leqslant 4K_1\widetilde{g}_{\rho}(\widetilde{\gamma}^{\prime}_T,\widetilde{\gamma}^{\prime}_T).$$
Thus by Gronwall's inequality, we have
$$g(\widetilde{\gamma}^{\prime}_T|_{\gamma},\widetilde{\gamma}^{\prime}_T|_{\gamma}) \leqslant \widetilde{g}_{\rho}(\widetilde{\gamma}^{\prime}_T,\widetilde{\gamma}^{\prime}_T) e^{4K_1|\rho|}.$$
Since the extended metric $\widetilde{g}$ is a product metric in the boundary normal coordinate, then $\widetilde{g}(\widetilde{\gamma}^{\prime}_T|_{\widetilde{\gamma}},\widetilde{\gamma}^{\prime}_T|_{\widetilde{\gamma}})\leqslant \widetilde{g}(\widetilde{\gamma}^{\prime},\widetilde{\gamma}^{\prime})$. Hence for sufficiently small $\delta_{ex}$ depending only on $K_1$ and $|\rho|\leqslant \delta_{ex}$, we obtain
\begin{eqnarray*}
L(\gamma) &\leqslant& e^{2K_1|\rho|} \int_0^{L(\widetilde{\gamma})} \sqrt{\widetilde{g}_{\rho}(\widetilde{\gamma}^{\prime}_T(s),\widetilde{\gamma}^{\prime}_T(s))}\, ds\\
&\leqslant& e^{2K_1 \delta_{ex}} \int_0^{L(\widetilde{\gamma})} \sqrt{\widetilde{g}(\widetilde{\gamma}^{\prime}(s),\widetilde{\gamma}^{\prime}(s))} \, ds \leqslant (1+3K_1\delta_{ex})\widetilde{d}(x,y),
\end{eqnarray*}
which yields the second inequality by definition.

In general, if $\widetilde{\gamma}$ crosses $\partial M$ with both endpoints in $M$, we can divide $\widetilde{\gamma}$ into segments in $M$ and segments in $\widetilde{M}-M$. The lemma is trivially satisfied for the endpoints of any segment in $M$. Any (continuous) segment in $\widetilde{M}-M$ has endpoints on $\partial M$ and lies entirely in $\widetilde{M}-\textrm{int}(M)$. Thus we apply the argument above for every segment in $\widetilde{M}-M$ and the estimate follows. Finally, if the endpoints of $\widetilde{\gamma}$ are not both in $M$, then its projection $\gamma$ is a curve between the projections of the endpoints of $\widetilde{\gamma}$ onto $M$. This concludes the proof for the first part of the lemma.

Now we prove the second part of the lemma. Let $\widetilde{\gamma}$ be the minimizing geodesic of $\widetilde{M}$ from $x$ to $y$ lying in the boundary normal tubular neighborhood of $\partial M$. If $\widetilde{\gamma}$ lies entirely in $M$ or $\widetilde{M}-\textrm{int}(M)$, one can use the previous argument to project $\widetilde{\gamma}$ to a curve on $\partial M$ and show the same estimate as the first part. The only difference is that when $x,y$ are not in $\partial M$, the projection $\gamma$ is a curve on $\partial M$ from $x^{\perp}$ to $y^{\perp}$. In general, the estimate follows from dividing $\widetilde{\gamma}$ into segments in $M$ and in $\widetilde{M}-M$, and projecting both types of segments onto $\partial M$.
\end{proof}

\begin{definition}\label{Mhdh}
For $h<i_0/2$, we consider the submanifold
$$M_{h}=\big\{x\in M: d(x, \partial M)\geqslant h \big\}.$$
Denote by $d_h:M_h\times M_h \to \mathbb{R}$ the intrinsic distance function of the submanifold $M_h$, and we extend it to any point $x\in \widetilde{M}-M_h$ by 
\begin{equation}\label{dh}
d_h (x,z)=d_h (x^{\perp_h},z)+h^{-1}\widetilde{d}(x,x^{\perp_h}), \textrm{ for }z\in M_h,\, x\in \widetilde{M}-M_h,
\end{equation}
where $x^{\perp_h}\in \partial M_h$ is the unique normal projection of $x\in \widetilde{M}-M_h$ onto $\partial M_h$ within the boundary normal neighborhood of $\partial M$ such that $\widetilde{d}(x,x^{\perp_h})=\widetilde{d}(x,\partial M_h)$. In this definition we require at least one of the points belongs to $M_h$. Note that a similar notation $x^{\perp}$ denotes the normal projection of $x$ onto $\partial M$.
\end{definition}

Thus the path between $z\in M_h$ and a point $x\in \widetilde{M}-M_h$ realizing $d_h(x,z)$ is a broken curve consisting of a geodesic of $M_h$ and a vertical line of the boundary neighborhood (see Figure \ref{figure0}). 

In general, the intrinsic distance function of a manifold with boundary is at most of $C^{1,1}$: the function $d_h(\cdot,z)$ is at most of $C^{1,1}$ even on $M_h-\{z\}$. We need to smoothen it in order to match the $C^{2,1}$ regularity required by Theorem \ref{global}.

\begin{definition}\label{definition-dhs}
For a fixed $z\in M_h$ and any $x\in M$, we denote by $d_h^s (x,z)$ the smoothening of $d_h(x,z)$ via convolution in a ball of radius $r<\delta_{ex}/2$ around the center $x$ with respect to the distance $\widetilde{d}$ of $\widetilde{M}$. More precisely,
\begin{equation}\label{dhsdef}
d_h^s(x,z)=c_{n}r^{-n}\int_{\widetilde{M}}k_1\big(\frac{\widetilde{d}(y,x)}{r}\big)d_h(y,z)dy,
\end{equation}
where $k_1:\mathbb{R}\to \mathbb{R}$ is a nonnegative smooth mollifier supported on $[1/2,1]$, and $dy$ denotes the Riemannian volume form on $\widetilde{M}$. The constant $c_n$ is the normalization constant such that 
\begin{equation}\label{normalization}
c_{n}r^{-n}\int_{\mathbb{R}^n} k_1\big(\frac{|v|}{r}\big)dv=1,
\end{equation}
where $dv$ denotes the Euclidean volume form on $\mathbb{R}^n$.
\end{definition}

\begin{dhsC21}\label{dhsC21}
Let $\delta_{ex}$ be sufficiently small determined in Lemma \ref{extensionmetric}. For sufficiently small $r$ depending on $n,K_1,K_2,i_0,r_0,r_g$, the function $d_h^s(\cdot,z)$ is of $C^{2,1}$ on $M$ for any fixed $z\in M_h$. Furthermore, in the coordinates of our choice, the $C^{2,1}$-norm of $d_h^s(\cdot,z)$ is uniformly bounded explicitly depending on $r,n,\|R_M\|_{C^1}$.
\end{dhsC21}
\begin{proof}
By Lemma \ref{extensionmetric}(4), for sufficiently small $\delta_{ex}$, we know $r_{\textrm{CAT}}(\widetilde{M})$ is bounded below by $C(K_1,i_0,r_0)$. We restrict the smoothening radius to be less than this lower bound: $r<C(K_1,i_0,r_0)$. Then for any $y\in \widetilde{B}_r(x)$, there is a unique minimizing geodesic between $x$ and $y$. Furthermore, no conjugate points occur along geodesics of length less than $\pi/2K_1$ (Corollary 3 in \cite{ABB2}). Since $\widetilde{B}_r(x)\cap \partial \widetilde{M}=\emptyset$ for any $x\in M$ as $r<\delta_{ex}/2$, then $\widetilde{d}(\cdot,x)$ is simply a geodesic distance function in the ball of the smoothening radius around any $x\in M$. As a consequence, $\widetilde{d}(\cdot,x)$ is differentiable on $\widetilde{B}_r(x)$ and $|\nabla \widetilde{d}(\cdot,x)|=1$.

By our choice of coordinate charts in Section \ref{subsection-extension}, for any $x^{\prime}\in \widetilde{M}$, the ball $\widetilde{B}_{r_g/2}(x^{\prime})$ or the cylinder $B_{\partial M}(y,r_g/2)\times (\rho-r_g/2,\rho+r_g/2)$ is contained in at least one of the coordinate charts defined in Lemma \ref{extensionmetric}, where $x^{\prime}=(y,\rho)$ if $x^{\prime}$ is in the boundary normal coordinate of $\partial M$. Then by Lemma \ref{distances}, the ball $\widetilde{B}_{r_g/4}(x^{\prime})$ of $\widetilde{M}$ is contained in one of the coordinate charts if we choose a smaller $r_b$ depending on $K_1$. Hence for $r<r_g/4$, $\widetilde{B}_r(x)$ is contained in one of these coordinate charts for any $x\in M$, and therefore $\widetilde{d}(\cdot,x)$ is of $C^{2,1}$ on $\widetilde{B}_r(x)-\{x\}$ by Lemma \ref{extensionmetric}(2) and Theorem 2.1 in \cite{DK}. Observe that $\widetilde{d}(\cdot,x)$ is bounded below by $r/2$ in the support of $k_1$, which yields a bound on higher derivatives of $\widetilde{d}(\cdot,x)$. This shows that the function $d_h^s(\cdot,z)$ is of $C^{2,1}$.

To estimate the $C^{2,1}$-norm of $d_h^s(\cdot,z)$, it suffices to estimate the $C^{2,1}$-norm of $\widetilde{d}(\cdot,y)$ on the annulus $\widetilde{B}_{r}(y)-\widetilde{B}_{r/2}(y)$. Due to the Hessian comparison theorem (e.g. Theorem 27 in \cite{PP}, p175), for sufficiently small $r$ depending on $K_1$, we have $\|\widetilde{\nabla}^2 \widetilde{d}(\cdot,y) \|\leqslant 4r^{-1}$ on the annulus, where $\widetilde{\nabla}^2$ denotes the second covariant derivative on $\widetilde{M}$. In a local coordinate $(x^1,\cdots,x^n)$ on $\widetilde{M}$, the covariant derivative has the form (e.g. Chapter 2 in \cite{PP}, p32)
\begin{equation}\label{Hessian-local}
\big(\widetilde{\nabla}^2 \widetilde{d}(\cdot,y)\big)(\frac{\partial}{\partial x^k},\frac{\partial}{\partial x^l})=\frac{\partial^2}{\partial x^k \partial x^l} \widetilde{d}(\cdot,y)-\sum_{i=1}^n \widetilde{\Gamma}_{kl}^i \frac{\partial}{\partial x^i} \widetilde{d}(\cdot,y), \quad k,l=1,\cdots,n.
\end{equation}
Hence in the coordinate charts of our choice, for sufficiently small $r$, (\ref{metricboundex}) yields
\begin{equation}\label{dC2}
\|\widetilde{d}(\cdot,y)\|_{C^2} \leqslant C r^{-1},\textrm{ on } \widetilde{B}_{r}(y)-\widetilde{B}_{r/2}(y).
\end{equation}

An estimate on the $C^{2,1}$-norm can be obtained by differentiating the Riccati equation in polar coordinates $\widetilde{g}=dr^2+\widetilde{g}_r$ around $y$, where $\partial_r$ is the radial direction in the geodesic normal coordinate.
Then on the annulus, examining the proof of Lemma 8 in \cite{HV} gives a bound
$$\|\widetilde{\nabla}^3\widetilde{d}(\cdot,y)\|\leqslant C(n, \|R_{\widetilde{M}}\|_{C^1})r^{-2}.$$
Hence by differentiating the formula (\ref{Hessian-local}), for sufficiently small $r$ depending on $n,K_1,K_2,i_0$, we obtain
\begin{equation}\label{dC21}
\|\widetilde{d}(\cdot,y)\|_{C^{2,1}}\leqslant C(n, \|R_{\widetilde{M}}\|_{C^1})r^{-2},\textrm{ on } \widetilde{B}_{r}(y)-\widetilde{B}_{r/2}(y).
\end{equation}
Then a straightforward differentiation yields an estimate on the $C^{2,1}$-norm of $d_h^s(\cdot,z)$.
\end{proof}

\subsection{Proof of Theorem \ref{main1}} \label{subsection3.4} \hfill 

\smallskip
Now we prove the main technical result Theorem \ref{main1}, by constructing the functions and domains assumed in Theorem \ref{global}. The proof consists of several parts.

To begin with, let $h$ be a positive number satisfying $h<\min\{1/5,i_0/10,$ $r_b/10\}$, where $r_b=r_b(K_1,i_0)$ is the width of the boundary normal neighborhood determined in Lemma \ref{riccati}. For sufficiently small $h$ only depending on $n,K_1,K_2,i_0,\vol(\partial M)$, we extend $(M,g)$ to $(\widetilde{M},\widetilde{g})$ with the extension width $\delta_{ex}=5h$ such that Lemma \ref{extensionmetric} holds. Then we extend $u$ to $\widetilde{u}$ by (\ref{extensionu}) with the cut-off width $h$. Let $r_g$ be the uniform radii of $C^1$ geodesic normal coordinates of $M$ and $\partial M$ such that metric bounds (\ref{metricboundex}) hold. We have shown that $r_g$ explicitly depends on $n,\|R_M\|_{C^1},\|S\|_{C^1},i_0$. Now we collect all these relevant parameters and impose the following requirements on the choice of $h$ due to technical reasons:
\begin{equation}
0<h<\min\big\{\frac{1}{10}, \frac{T}{8},\frac{i_0}{10},\frac{r_0}{10},\frac{r_g}{10},\frac{r_b}{10},\frac{i_b(\overline{\Gamma})}{10},\frac{\pi}{12K_1}\big\}.
\end{equation}
The part of the manifold extension over $\Gamma$ is denoted by $\Omega_{\Gamma}=\Gamma\times [-5h,0]$. The number $\min\{1,T^{-1}\}$ will be frequently used in this proof and we denote it by
\begin{equation}\label{aT}
a_T=\min\{1,T^{-1}\}.
\end{equation}

We restrict the choice of $h$ once again, such that for sufficiently small $h$,
\begin{equation}\label{CATchoice}
r_{\textrm{CAT}}(M_h)\geqslant \min\big\{\frac{2}{3}r_0,\frac{\pi}{2K_1}\big\},\quad r_{\textrm{CAT}}(\widetilde{M})\geqslant \min\big\{\frac{2}{3}r_0,\frac{\pi}{2K_1}\big\}.
\end{equation}
This is possible due to Lemma \ref{CATradius}. We remark that the dependency of $h$ is not explicit in Lemma \ref{CATradius}(3), and one can instead use the explicit lower bound in Lemma \ref{CATradius}(2).

With the choice of $\delta_{ex}=5h$ and $h$ as above, the function $d_h(\cdot,z)$ defined in (\ref{dh}) is Lipschitz with a Lipschitz constant $2h^{-1}$ (Lemma \ref{dhs}(3)). In Definition \ref{definition-dhs}, we set the smoothening radius to be $r=a_T h^3$. Then it follows that $|d_h^s(x,z)-d_h(x,z)| < 2a_T h^2$ for any $x\in M$ (Lemma \ref{dhs}(4)).

Assume $h$ is sufficiently small so that Lemma \ref{dhsC21} holds. For any $z\in M_h$ and $x\in M$ satisfying $h/4\leqslant d_h (x,z)\leqslant \min\{i_0/2,r_0/2,\pi/6K_1\}$, we have $|\nabla_x d_h^s(x,z)|> 1-2h$ (Lemma \ref{dd}). Outside the injectivity radius this gradient can be 0 if cut points are involved. This lower bound being close to 1 is crucial for our method to ensure no loss of domain, and we define $d_h$ (\ref{dh}) with the $h^{-1}$ scaling in the boundary neighborhood specifically to guarantee it. While this lower bound is almost trivial when $z$ is far from $\partial M_h$, careful treatment is required when the manifold boundary is involved. 

\smallskip
For $|b|\leqslant 5h$, we define the following set:
\begin{equation}\label{Gammabh}
\Gamma_{b}(h)=\big\{x\in \widetilde{M}: \rho(x)=b, \, x^{\perp}\in \Gamma, \, d_{\partial M}(x^{\perp},\partial \Gamma)\geqslant h \big\},
\end{equation}
where $\partial \Gamma$ denotes the boundary of $\Gamma$ in $\partial M$. The function $\rho(x)$ is the coordinate function in the normal direction defined in (\ref{def-rhox}). Note that if $\Gamma=\partial M$, the last two conditions above automatically satisfy, and then the set above is simply the level set of the normal coordinate function.

Recall that $\widetilde{u}$, the extension of $u$ to $\widetilde{M}$ defined by (\ref{extensionu}), vanishes on $\Gamma_b(0)$ for all $b\leqslant -h$. The set $\Gamma_{-2h}(0)$ is the set from which we intend to propagate the unique continuation. More precisely, we start the propagation from an $h$-net in $\Gamma_{-2h}(8h)$. The reason of this specific choice is the following.
\begin{sublemmainitial}\label{sublemmainitial}
For sufficiently small $h$ only depending on $K_1$, we have
$$\widetilde{d}\big(z,\partial (M\cup \Omega_{\Gamma})-\partial\widetilde{M} \big)\geqslant 7h, \textrm{ for any }z\in \Gamma_{-2h}(8h),$$
where $\Omega_{\Gamma}=\Gamma\times [-5h,0]$ is the part of the manifold extension over $\Gamma$.
\end{sublemmainitial}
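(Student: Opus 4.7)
My plan is to show that any point $p$ of $S:=\partial(M\cup\Omega_\Gamma)-\partial\widetilde M$ satisfies $\widetilde d(z,p)\geq 7h$ by (a) locating $S$ inside two natural pieces, (b) bounding the intrinsic $\partial M$-distance from $z^\perp$ to the projection $p^\perp$, and (c) converting that bound into one for $\widetilde d$.

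For (a), I would first observe that $\partial(M\cup\Omega_\Gamma)$ consists of the old boundary $\partial M\setminus\Gamma$, the collar end $\overline\Gamma\times\{-5h\}\subset\partial\widetilde M$, and the lateral walls $\partial\Gamma\times[-5h,0]$; subtracting $\partial\widetilde M$ removes the collar end. Hence any $p\in S$ satisfies $p^\perp\in\partial M\setminus\Gamma$ (with $p^\perp\in\partial\Gamma$ when $p$ lies on a lateral wall). A standard path-crossing argument using that $\Gamma$ is open in $\partial M$ shows that any continuous curve in $\partial M$ from $z^\perp\in\Gamma$ to $p^\perp\notin\Gamma$ must meet $\partial\Gamma$, so
$$d_{\partial M}(z^\perp,p^\perp)\geq d_{\partial M}(z^\perp,\partial\Gamma)\geq 8h$$
by the defining condition of $\Gamma_{-2h}(8h)$.

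For (c), let $\gamma$ be a minimizing geodesic in $\widetilde M$ from $z$ to $p$, and split into cases according to whether $\gamma$ stays inside the boundary tubular neighborhood of $\partial M$ of width $\delta_{ex}=5h$. If it does, the second part of Lemma \ref{distances} yields
$$8h\leq d_{\partial M}(z^\perp,p^\perp)\leq (1+15K_1 h)\,\widetilde d(z,p),$$
from which $\widetilde d(z,p)\geq 7h$ as soon as $h\leq 1/(105K_1)$. If $\gamma$ exits the tubular neighborhood, it must do so on the $M$ side, since $\partial\widetilde M$ sits exactly at $\rho=-\delta_{ex}$; by continuity $\gamma$ then passes through some $q\in M$ with $\rho(q)=5h$, and the $1$-Lipschitz property of the signed normal coordinate $\rho$ gives the cheap estimate
$$\widetilde d(z,p)\geq\widetilde d(z,q)\geq|\rho(z)-\rho(q)|=7h.$$

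The main obstacle I anticipate is precisely the second sub-case: once the minimizing geodesic leaves the boundary strip, Lemma \ref{distances} is silent, so an independent mechanism is needed to rule out a short path. The global $1$-Lipschitz property of $\rho$ supplies this cheaply, and the three numerical choices built into the statement (depth $-2h$ of $z$, separation $8h$ from $\partial\Gamma$, extension width $5h$) are calibrated so that both branches of the case split deliver exactly the required $7h$, the only residual smallness condition on $h$ being $h\leq 1/(105K_1)$.
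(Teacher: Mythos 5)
Your proof is correct and is essentially the paper's argument unwound. The paper argues by contradiction: if $\widetilde d(z,y)<7h$, then (implicitly, by exactly the $1$-Lipschitz property of $\rho$ you invoke in your second case) the minimizing geodesic from $z$ cannot leave the collar $\{|\rho|\leq 5h\}$, so the second part of Lemma \ref{distances} gives $d_{\partial M}(z^\perp,y^\perp)<7h(1+15K_1h)<8h$, contradicting the defining condition $d_{\partial M}(z^\perp,\partial\Gamma)\geq 8h$ of $\Gamma_{-2h}(8h)$ together with $y^\perp\notin\Gamma$; your direct two-case split merely makes both halves of that implication explicit.
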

\begin{proof}
Let $y$ be a point in $\partial (M\cup \Omega_{\Gamma})-\partial\widetilde{M}$ realizing the distance to $z$. Suppose $\widetilde{d}(z,y)<7h$. Then the minimizing geodesic of $\widetilde{M}$ from $z$ to $y$ lies in the boundary normal (tubular) neighborhood of $\partial M$ of width $5h$. Hence Lemma \ref{distances} implies that
$$d_{\partial M}(z^{\perp},y^{\perp})\leqslant (1+15K_1 h)\widetilde{d}(z,y) < 7h(1+15K_1h)\, .$$
However, we know $d_{\partial M}(z^{\perp},y^{\perp})\geqslant 8h$ by the definition (\ref{Gammabh}). Hence we get a contradiction for sufficiently small $h$ only depending on $K_1$.
\end{proof}

\begin{figure}[h]
\includegraphics[scale=0.45]{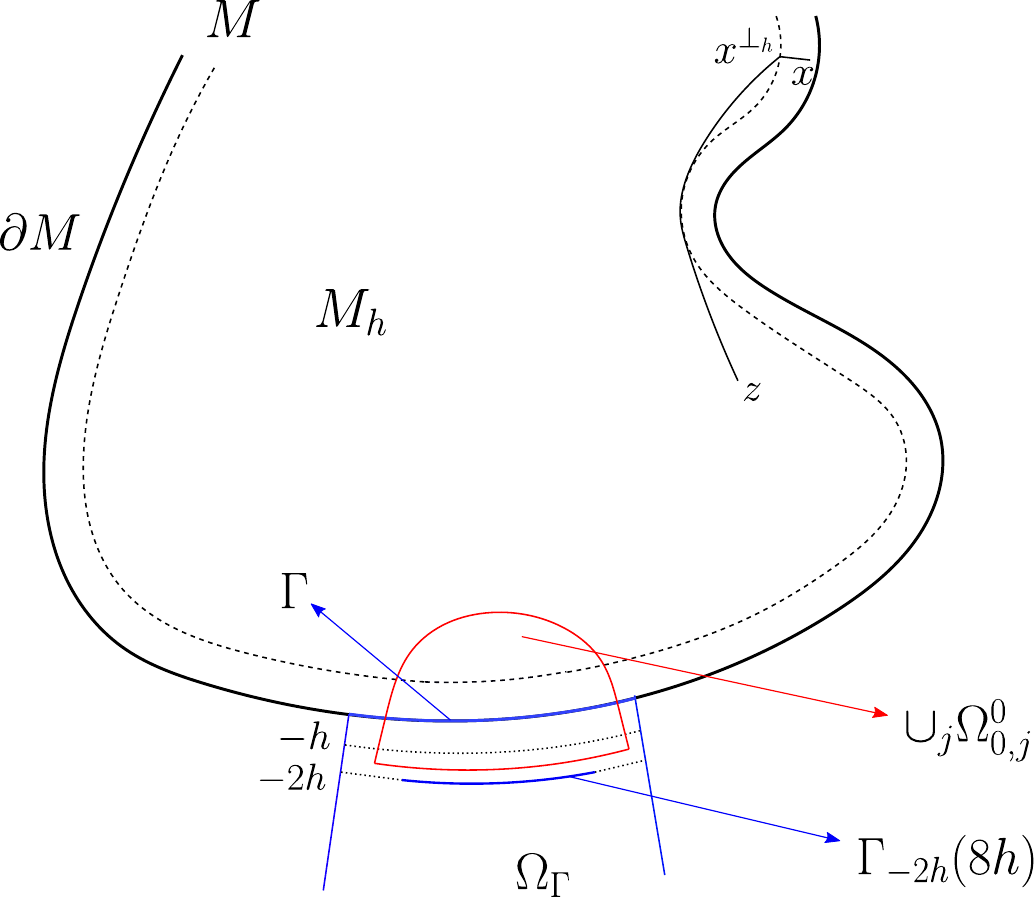}
\caption{Domains for the initial step. Enclosed by the red solid line is the domain we work in, and it is close to $\Gamma$.}
\label{figure0}
\end{figure}

\smallskip
\textbf{Initial Step.} As the initial step, we propagate the unique continuation from outside the manifold $M$ to a region close to $\Gamma$ in $M$.

Consider the function $\xi:[0,+\infty)\to\mathbb{R}$ defined by
\begin{equation}\label{xidef}
\xi(x)=\frac{(h-x)^3}{h^3}, \textrm{ for } x\in [0,h],
\end{equation}
and $\xi(x)=0$ for $x>h$. The function $\xi(x)$ on negative numbers can be defined in any way so that $\xi(x)\geq 1$ for $x<0$, and $\xi(x)$ is smooth on $(-\infty,h)$. 
The function $\xi(x)$ is of $C^{2,1}$ on $\mathbb{R}$ and monotone decreasing on $[0,+\infty)$. Let $\{z_{0,j}\}_{j=1}^{J(0)}$ be an $h$-net in $\Gamma_{-2h}(8h)$: that is, for any $z\in \Gamma_{-2h}(8h)$, there exists some $z_{0,j}$ such that $\widetilde{d}(z,z_{0,j})<h$. We define
\begin{equation}\label{psi0}
\psi_{0,j}(x,t)=\bigg(\Big(1-\xi \big(6h-\widetilde{d}(x,z_{0,j})\big)\Big)T-\widetilde{d}(x,z_{0,j})\bigg)^2-t^2,
\end{equation}
and consider the following domains (see Figure \ref{figure0}):
\begin{equation}\label{Omega00}
\Omega^0_{0,j} =\big\{(x,t)\in \widetilde{M}\times[-T,T]: \psi_{0,j}(x,t) > h^2,\; \rho(x)>-\frac{3}{2}h \big\}.
\end{equation}
Note that in general, the domain characterized by $\psi_{0,j}(x,t) > h^2$ has two connected components. Here we define $\Omega^0_{0,j}$ to be the connected component characterized by $\big(1-\xi(6h-\widetilde{d}(x,z_{0,j}))\big)T-\widetilde{d}(x,z_{0,j})>0$.\footnote{\,Throughout the proof, whenever we define a domain using level sets of a similar function, we exactly mean this one type of connected component.}
Observe that in this connected component, it satisfies that $\widetilde{d}(x,z_{0,j})<6h$ due to the definition of the function $\xi$.

Then we define
\begin{equation}\label{Upsilon}
\Upsilon=\{x\in \Omega_{\Gamma}: -2h\leqslant \rho(x)\leqslant -h\}\times [-T,T],
\end{equation} 
and
\begin{equation}\label{Omega0j}
\Omega_{0,j} =\big\{(x,t)\in  \Omega^0_{0,j}-\Upsilon: \psi_{0,j}(x,t) > 4h^2 \big\}.
\end{equation} 

Now we prove that the conditions assumed in Theorem \ref{global} satisfy for $\psi_{0,j}$, $\Omega_{0,j}^0$, $\Omega_{0,j}$, $\Upsilon$, $\psi_{max,0}=(T-h)^2$, and therefore Theorem \ref{global} applies. A stability estimate will be derived at the end of the proof.

\medskip
\noindent (1) We show that $\psi_{0,j}$ is of $C^{2,1}$ and non-characteristic in $\Omega^0_{0,j}$. Indeed, for any $(x,t)\in \Omega^0_{0,j}$, we have $\widetilde{d}(x,z_{0,j})< 6h$ by the definition of $\psi_{0,j}$. Hence any minimizing geodesic of $\widetilde{M}$ from $z_{0,j}$ to $x$ must not intersect $\partial \widetilde{M}$; otherwise the length of such geodesic would exceed $6h$ due to the condition that $\rho(x)>-3h/2$. Furthermore, by our choice $h<\min\{r_0/10,\pi/12K_1\}$ and (\ref{CATchoice}), the minimizing geodesic from $z_{0,j}$ to any $x\in \widetilde{B}_{6h}(z_{0,j})$ is unique and no conjugate points can occur. Therefore $\widetilde{d}(\cdot,z_{0,j})$ is a $C^{2,1}$ geodesic distance function in $\Omega^0_{0,j}$, which shows that $\psi_{0,j}$ is of $C^{2,1}$ in $\Omega^0_{0,j}$. Moreover, since $\widetilde{d}(x,z_{0,j})>h/2$ for any $(x,t)\in \Omega^0_{0,j}$ by definition, the $C^{2,1}$-norms of $\widetilde{d}(\cdot,z_{0,j})$ and $\psi_{0,j}$ are uniformly bounded in $\Omega^0_{0,j}$ due to (\ref{dC21}).

Next we prove that $\psi_{0,j}$ is non-characteristic in $\Omega^0_{0,j}$. For any $(x,t)\in \Omega^0_{0,j}$,
$$\nabla_x \psi_{0,j}=2\Big(\big(1-\xi(6h-\widetilde{d}(x,z_{0,j}))\big)T-\widetilde{d}(x,z_{0,j})\Big) \big(\xi^{\prime}T\nabla_x\widetilde{d}(x,z_{0,j})-\nabla_x\widetilde{d}(x,z_{0,j}) \big).$$
Note that $\xi^{\prime}$ is evaluated at $6h-\widetilde{d}(x,z_{0,j})$ in the formula above. Since $\xi^{\prime}\leqslant 0$, then
$$\big|\xi^{\prime}T\nabla_x\widetilde{d}(x,z_{0,j})-\nabla_x\widetilde{d}(x,z_{0,j}) \big|\geqslant  |\nabla_x\widetilde{d}(x,z_{0,j})|=1.$$
Hence,
\begin{eqnarray*}
p \big((x,t),\nabla \psi_{0,j} \big) &=& \sum_{k,l=1}^n \widetilde{g}^{kl} (\partial_{x_k}\psi_{0,j})( \partial_{x_l}\psi_{0,j})-|\partial_t\psi_j|^2 =|\nabla_x \psi_{0,j}|^2-|\partial_t \psi_{0,j}|^2 \nonumber\\
&\geqslant& 4\Big(\big(1-\xi(6h-\widetilde{d}(x,z_{0,j}))\big)T-\widetilde{d}(x,z_{0,j})\Big)^2-4t^2 \nonumber \\
&=& 4\psi_{0,j}^2(x,t) > 4h^2.
\end{eqnarray*}

\noindent (2) The extended function $\widetilde{u}$ defined by (\ref{extensionu}) vanishes on $\Upsilon$. We claim that $\emptyset\neq\{(x,t)\in \Omega^0_{0,j}: \psi_{0,j}(x,t) > (T-h)^2\}\subset \Upsilon$. Indeed, for any $(x,t)$ in the set, it satisfies that $\widetilde{d}(x,z_{0,j})<h$, which indicates $\rho(x)<-h$. On the other hand, Sublemma \ref{sublemmainitial} implies that $x\in \Omega_{\Gamma}$, and therefore $(x,t)\in \Upsilon$. For the non-emptyness, consider the point $x_j\in \Gamma_{-5h/4}(0)$ such that $\widetilde{d}(x_j,z_{0,j})=3h/4$ (i.e. $x_j$ is the projection of $z_{0,j}$ onto $\Gamma_{-5h/4}(0)$). By definition, we have $\psi_{0,j}(x_j,0)=(T-3h/4)^2>(T-h)^2$. This also shows that $(x_j,0)\in \Omega_{0,j}^0$ by definition when $T>2h$, which yields the non-emptyness.

\noindent (3) We show that $dist_{\widetilde{M}\times \mathbb{R}}(\partial \Omega^0_{0,j},\Omega_{0,j})>0$. It suffices to prove $\overline{\Omega}_{0,j}\subset \Omega^0_{0,j}$. For any $(x,t)\in \Omega^0_{0,j}$, we have $\widetilde{d}(x,z_{0,j})< 6h$ by the definition of $\psi_{0,j}$, which implies that $\Omega^0_{0,j}\subset M\cup \Omega_{\Gamma}$ due to Sublemma \ref{sublemmainitial}. 
This indicates that the boundaries of $\Omega_{0,j}^0,\Omega_{0,j}$ are determined only by $\psi_{0,j}$ and $\rho(x)$. Since we know $\rho(x)>-h$ for any $(x,t)\in \Omega_{0,j}$ by definition, then clearly $\overline{\Omega}_{0,j}\subset \Omega^0_{0,j}$.

\noindent (4) We claim that $\cup_{j=1}^{J(0)} \Omega_{0,j}$ is connected and therefore its closure is connected. Take two reference points $z_{0,j_1},z_{0,j_2}$ satisfying $\widetilde{d}(z_{0,j_1},z_{0,j_2})<3h$. Consider $(z_{0,j_1}^{\perp},0)\in \partial M\times [-T,T]$. Directly checked by the definition of $\Omega_{0,j}$, this point $(z_{0,j_1}^{\perp},0)$ is in both $\Omega_{0,j_1}$ and $\Omega_{0,j_2}$. 
In particular, this shows $\Omega_{0,j_1}\cap\Omega_{0,j_2}\neq\emptyset$ if $\widetilde{d}(z_{0,j_1},z_{0,j_2})<3h$. Since each $\Omega_{0,j}$ is path connected, so is $\Omega_{0,j_1}\cup\Omega_{0,j_2}$. The claim follows from the fact that for any two points in the $h$-net $\{z_{0,j}\}$, we can find a chain of $\{z_{0,j}\}$ such that every pair of adjacent points in this chain has distance less than $3h$. 

\medskip
In order to propagate further in subsequent steps, we need to estimate how much $\cup_{j} \Omega_{0,j}$ covers in the original manifold $M$.
\begin{sublemmainitial2}\label{sublemmainitial2}
$\big(\bigcup_{b\in [0,2h]}\Gamma_b(8h)\big)\times [-T+6h,T-6h] \subset \bigcup_{j=1}^{J(0)} \Omega_{0,j}.$
\end{sublemmainitial2}
\begin{proof}
For any $(x,t)$ in the left-hand set, there exists $j_0$ such that $\widetilde{d}(x,z_{0,j_0})< 5h$ due to the definition of $h$-net, which indicates that the $\xi$ term in $\psi_{0,j_0}$ (\ref{psi0}) vanishes. Thus
$$\psi_{0,j_0}(x,t)=(T-\widetilde{d}(x,z_{0,j_0}))^2-t^2 > (T-5h)^2-(T-6h)^2>5h^2,$$
where we used $T>8h$. This shows that $(x,t)$ is in both $\Omega^0_{0,j_0}$ and $\Omega_{0,j_0}$.
\end{proof}

\medskip
\textbf{Subsequent Steps.}
After the initial step, the reference set is moved to $\Gamma_{h}(8h)$ and unique continuation is propagated up to $\Gamma_{2h}(8h)$. Let $\{z_{1,j}\}$ be an $h$-net in $\Gamma_h(10h)\subset M_h$ with respect to $d_h$. Note that here the range of the $j$ index is different from that of the $j$ index in the initial step, and a precise notation would be $\{z_{1,j}\}_{j=1}^{J(1)}$. We omit this dependence on the step number to keep the notations short. Set $T_1=T-6h$ and $\rho_0=\min\{i_0/2,r_0/2,r_g/4,\pi/6K_1\}$. We divide into Case 1 and Case 2 depending on if $T$ is larger than $\rho_0$.

\begin{figure}[h]
\includegraphics[scale=0.45]{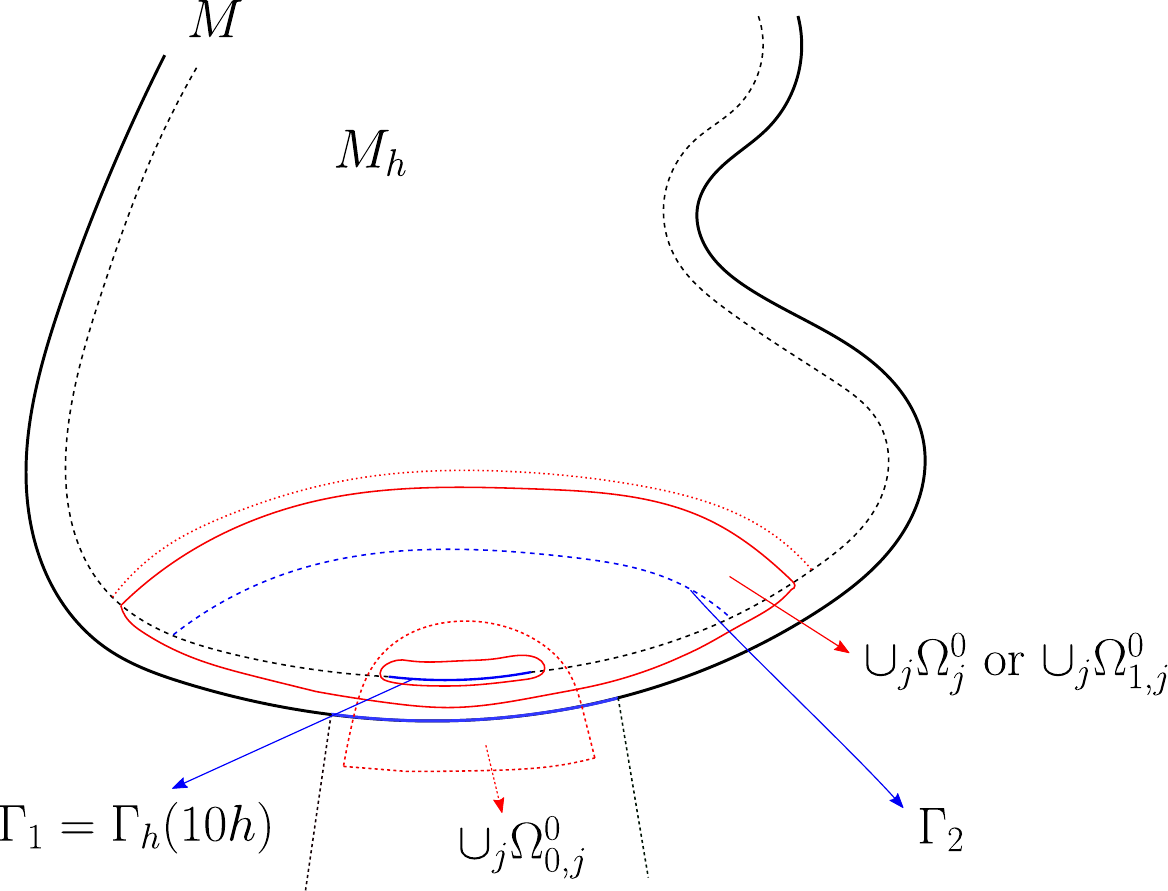}
\caption{Domains for Case 1 or the first step in Case 2. Enclosed by the red solid lines is the domain we work in, and its boundary consists of two disjoint parts. This domain never reaches outside distance $\rho_0$, which is marked by the upper red dotted line.
 The blue dashed line $\Gamma_2$ is the reference set for the second step in Case 2.}
\label{figure1}
\end{figure}

\smallskip
\textbf{Case 1:} $T\leqslant \rho_0=\min\big\{i_0/2,r_0/2,r_g/4,\pi/6K_1\big\}$.

\smallskip
For any $(x,t)\in M\times [-T_1,T_1]$, we define the following $C^{2,1}$ functions
\begin{equation}\label{psi}
\psi_j(x,t)=\bigg(\Big(1-\xi\big(d(x,\partial M)\big)\Big)T_1-d_h^s(x,z_{1,j})\bigg)^2-t^2,
\end{equation}
and consider the domains\footnote{\,the connected component characterized by $\big(1-\xi(d(x,\partial M))\big)T_1-d_h^s(x,z_{1,j})>0$.}
\begin{equation}\label{Omega0}
\Omega^0_{j} =\big\{(x,t)\in M\times [-T_1,T_1]: \psi_j(x,t) > 8T^2 h \big\}-\{x: d_h^s(x,z_{1,j})\leqslant \frac{h}{2}\}\times [-T_1,T_1].
\end{equation}
Observe that $\xi(d(x,\partial M))<1$ in $\Omega_j^0$ and hence $\Omega_j^0$ never intersect with $\partial M$ at any time. For any $(x,t)\in \Omega_{j}^0$, we have $h/2<d_h^s(x,z_{1,j})<T_1\leqslant \rho_0-6h$ by definition. Then Lemma \ref{dhs}(4) indicates that $h/4<d_h(x,z_{1,j})<\min\{i_0/2,r_0/2,\pi/6K_1\}$, and hence Lemma \ref{dd} applies.

Then we define
\begin{equation}\label{Omegajcase1}
\Omega_j=\big\{(x,t)\in \Omega^0_j-\cup_j \overline{\Omega}_{0,j}: \psi_j(x,t) > 9T^2 h \big\}.
\end{equation}

Now we prove that the conditions assumed in Theorem \ref{global} satisfy for $\psi_{j}$, $\Omega_{j}^0$, $\Omega_{j}$, $\psi_{max}=(T_1-3h/4)^2$, together with relevant functions and domains in the initial step. The relevant domains are illustrated in Figure \ref{figure1}.

\medskip
First we show that $\psi_j$ is non-characteristic at any $(x,t)\in \Omega^0_j$. For $x\in M-M_h$,
$$\nabla_x\psi_j=2\Big( \big(1-\xi(d(x,\partial M))\big)T_1-d_h^s(x,z_{1,j})\Big)\big(-\xi^{\prime}T_1\nabla_x d(x,\partial M) - \nabla_x d_h^s(x,z_{1,j})\big).$$
Note that $\xi^{\prime}$ is evaluated at $d(x,\partial M)$ in the formula above.

For $x\in M-M_h$ with $d(x,\partial M_h)\geqslant a_T h^3$, the vectors $\nabla_x d_h(x,z_{1,j})$ and $\nabla_x d_h^s(x,z_{1,j})$ only differ by a small component $C(n,K_1,K_2) h^2$ due to (\ref{bdhcloseness}). In particular, $\langle \nabla_x d_h(x,z_{1,j}),$ $\nabla_x d_h^s(x,z_{1,j})\rangle >0$ for sufficiently small $h$ depending on $n,K_1,K_2$. Hence by the definition of $d_h$ (\ref{dh}),
\begin{equation*}\label{opposite}
\langle \nabla_x d(x,\partial M),\nabla_x d_h^s(x,z_{1,j})\rangle=-h\langle \nabla_x d_h(x,z_{1,j}),\nabla_x d_h^s(x,z_{1,j})\rangle<0.
\end{equation*}
Then by Lemma \ref{dd} and $\xi^{\prime}\leqslant 0$, we have
$$\big|-\xi^{\prime}T_1\nabla_x d(x,\partial M) - \nabla_x d_h^s(x,z_{1,j})\big|\geqslant |\nabla_x d_h^s(x,z_{1,j})|>1-2h.$$

For $x\in M-M_h$ with $d(x,\partial M_h)< a_T h^3$, we have $|\xi^{\prime}(d(x,\partial M))|< 3a_T^2 h^{3}\leqslant 3T^{-1} h^3$ at such points by definitions (\ref{xidef}) and (\ref{aT}). Therefore for any $x\in M-M_h$ and sufficiently small $h$, we have
\begin{eqnarray}\label{dpsi}
|\nabla_x\psi_j|&>& 2|(1-\xi)T_1-d_h^s|(1-2h-3h^3) \nonumber \\
&>& 2|(1-\xi)T_1-d_h^s|(1-3h).
\end{eqnarray}
On the other hand, if $x\in M_h$, then $\xi$ term vanishes and the estimate above holds. Hence for any $(x,t)\in\Omega^0_j$,
\begin{eqnarray}\label{ppsi}
p \big((x,t),\nabla \psi_j \big) &=&|\nabla_x \psi_j|^2-|\partial_t \psi_j|^2 \nonumber\\
&>& 4\big((1-\xi)T_1-d_h^s \big)^2(1-3h)^2-4t^2 \nonumber \\
&>& 4\psi_j(x,t) - 24T^2 h > 8T^2 h.
\end{eqnarray}
This shows that $\psi_j$ is non-characteristic at any $(x,t)\in\Omega^0_j$. 

It is straightforward to show the connectedness of $(\cup_j \overline{\Omega}_j) \cup (\cup_j\overline{\Omega}_{0,j})$ in the same way as we did for $\cup_j\Omega_{0,j}$ in the initial step. The other conditions assumed in Theorem \ref{global} follow from Sublemma \ref{sublemma0} below and Sublemma \ref{sublemmainitial2}.

\begin{sublemma0}\label{sublemma0}
For sufficiently small $h<1/8$ depending on $K_1$, we have
$$\emptyset\neq \big\{(x,t)\in \Omega_j^0: \psi_j(x,t)>(T_1-\frac{3}{4}h)^2 \big\}\subset \big(\bigcup_{b\in [0,2h]}\Gamma_b(8h)\big)\times [-T_1,T_1],$$
and $dist_{\widetilde{M}\times \mathbb{R}}(\partial \Omega^0_j,\Omega_j)>0$.
\end{sublemma0}
\begin{proof}
The non-emptyness follows from definition. For any $(x,t)$ in the left-hand set, we know $d_h^s(x,z_{1,j})<3h/4$ by definition. Hence it suffices to show that
\begin{equation}\label{sublemma0d}
\big\{x: d_h^s(x,z_{1,j})\leqslant \frac{3}{4}h \big\}\subset \bigcup_{b\in [0,2h]}\Gamma_b(8h).
\end{equation}

For any $x$ in the left-side set in (\ref{sublemma0d}), Lemma \ref{dhs}(4) indicates that $d_h(x,z_{1,j})<h$ and hence $\rho(x)<2h$. This checks the condition on $\rho(x)$ in (\ref{Gammabh}). We proceed to check the rest of the conditions in (\ref{Gammabh}).

If $x\in M_h$, then by Lemma \ref{distances},
\begin{eqnarray*}
d_{\partial M}(x^{\perp},z_{1,j}^{\perp})&\leqslant& (1+15K_1 h)\widetilde{d}(x,z_{1,j}) \\
&\leqslant& (1+15K_1 h)d_h(x,z_{1,j})<h(1+15K_1 h)\, .
\end{eqnarray*}

If $x\in M-M_h$, then $d_h(x^{\perp_h},z_{1,j})<d_h(x,z_{1,j})<h$ by definition (\ref{dh}). Hence,
\begin{eqnarray*}
d_{\partial M}(x^{\perp},z_{1,j}^{\perp}) &=&d_{\partial M}((x^{\perp_h})^{\perp},z_{1,j}^{\perp}) \\
&\leqslant& (1+15K_1 h)d_h(x^{\perp_h},z_{1,j})<h(1+15K_1 h)\, ,
\end{eqnarray*}
where we used the fact that $(x^{\perp_h})^{\perp}=x^{\perp}$.

Therefore in either case, for sufficiently small $h$ depending only on $K_1$, we have $d_{\partial M}(x^{\perp},z_{1,j}^{\perp})$ $<2h$. Then the fact that $d_{\partial M}(z_{1,j}^{\perp},\partial \Gamma)\geqslant 10h$ yields $x^{\perp}\in \Gamma$ and $d_{\partial M}(x^{\perp},\partial \Gamma)> 8h$. This completes the proof of (\ref{sublemma0d}) and consequently the first statement of the sublemma.

For the second statement, it suffices to prove $\overline{\Omega}_j \subset \Omega_j^0$. For any $(x,t)\in \overline{\Omega}_j$, clearly we have $\psi_j(x,t)\geqslant 9T^2 h>8T^2h$ and $(x,t)\notin \cup_{j}\Omega_{0,j}$ by definition (\ref{Omegajcase1}). To show $(x,t)\in \Omega_j^0$, we only need to show $(x,t)\notin \{x:d_h^s(x,z_{1,j})\leqslant h/2\}\times [-T_1,T_1]$. This is a direct consequence of the fact that a larger cylinder $\{x:d_h^s(x,z_{1,j})\leqslant 3h/4\}\times [-T_1,T_1]$ is strictly contained in the open set $\cup_{j}\Omega_{0,j}$, due to (\ref{sublemma0d}) and Sublemma \ref{sublemmainitial2}. An explicit lower bound for the distance between their boundaries is estimated in Lemma \ref{mindistance}.
\end{proof}

\smallskip
\noindent\textbf{Error estimate for Case 1.} We prove that $\overline{\Omega}=(\cup_j \overline{\Omega}_j) \cup (\cup_j\overline{\Omega}_{0,j})$ almost covers the domain of influence in the original manifold $M$. More precisely, we prove that there exists $C^{\prime}=C^{\prime}(T,K_1)$ such that $\Omega(C^{\prime}h)\subset \overline{\Omega}$. Since $\Omega(C^{\prime}h)\subset M\times [-T,T]$, it suffices to show that $M\times [-T,T]-\overline{\Omega}\subset M\times [-T,T]-\Omega(C^{\prime}h)$.

For any $(x,t)\in M\times [-T,T]-\overline{\Omega}$, by the definitions (\ref{psi}), (\ref{Omega0}), (\ref{Omegajcase1}), we know that one of the following two situations must happen:\\
(1) $d(x, \partial M)< h$; \\
(2) $x\in M_h$ and $d_h^s(x,z_{1,j})> T_1-\sqrt{t^2+9T^2 h}$ for any $z_{1,j}$. 

\smallskip
We analyze these two situations separately as follows.

\noindent \textbf{(1)} By virtue of Sublemma \ref{sublemmainitial2} and the definition (\ref{Gammabh}), the situation (1) implies that $x^{\perp}\notin \Gamma$, or $x^{\perp}\in \Gamma$ and $d_{\partial M}(x^{\perp},\partial \Gamma)<8h$, or $|t|> T-6h$. The condition $x^{\perp}\notin \Gamma$ indicates that $d(x,\partial M-\Gamma)<h$. If $x^{\perp}\in \Gamma$ and $d_{\partial M}(x^{\perp},\partial \Gamma)<8h$, then by the triangle inequality,
$$d(x,\partial\Gamma)\leqslant d(x,x^{\perp})+d(x^{\perp},\partial \Gamma)\leqslant h+d_{\partial M}(x^{\perp},\partial \Gamma)<9h,$$
which yields $d(x,\partial M-\Gamma)<9h$ due to $\partial \Gamma\subset \partial M-\Gamma$. If $|t|>T-6h$, then the following inequality is trivially satisfied:
$$T-|t|-\sqrt{6h}<6h-\sqrt{6h}<0\leqslant d(x,\Gamma).$$
Note that if $\Gamma=\partial M$, the first two possibilities automatically do not occur and hence only the last inequality above is valid under the first situation.

\noindent \textbf{(2)} By Lemma \ref{dhs}(4), the situation (2) implies that $d_h(x,z_{1,j})> T_1-|t|-3T\sqrt{h}-2h^2$ for $x\in M_h$ and any $z_{1,j}$. Since $\{z_{1,j}\}$ is an $h$-net in $\Gamma_{h}(10h)$ with respect to $d_h$, we have
$$d_h(x,\Gamma_{h}(10h))> T_1-|t|-3T\sqrt{h}-h-2h^2.$$
Then we apply Lemma \ref{distances} after replacing $M,\widetilde{M}$ with $M_h,M$:
$$d(x,\Gamma_{h}(10h))(1+6K_1 h)\geqslant d_h(x,\Gamma_{h}(10h))>T_1-|t|-3T\sqrt{h}-h-2h^2,$$
where we used the fact that the second fundamental form of $\partial M_h$ is bounded by $2K_1$ due to Lemma \ref{riccati}. Hence by the triangle inequality,
$$d(x,\Gamma_0(10h))>(T_1-|t|-3T\sqrt{h}-h-2h^2)(1+6K_1 h)^{-1}-h.$$

For any $y\in \Gamma-\Gamma_0(10h)$, $y$ lies in the boundary normal neighborhood of $\partial \Gamma$ in $\Gamma$ due to $10h<i_b(\overline{\Gamma})$. Hence $d(y,\Gamma_0(10h))\leqslant d_{\partial M}(y,\Gamma_0(10h))\leqslant 10h$. Then,
\begin{eqnarray*}
d(x,y)&\geqslant& d(x,\Gamma_0(10h))-d(y,\Gamma_0(10h)) \\
&>& (T-|t|-3T\sqrt{h}-7h-2h^2)(1+6K_1 h)^{-1}-11h,
\end{eqnarray*}
where we used $T_1=T-6h$. Hence we arrive at
\begin{equation*}\label{type2}
d(x,\Gamma) > T-|t|-C(T,K_1)\sqrt{h}.
\end{equation*}

Finally we combine these two situations together, and we have proved that $(x,t)\in M\times [-T,T]-\Omega(Ch)$ for $C=\max\{C(T,K_1)^2,9\}$ by definition (\ref{Omegaht}). Therefore, there exists $C^{\prime}=C^{\prime}(T,K_1)$ such that $\Omega(C^{\prime}h)\subset \overline{\Omega}$, and a stability estimate can be obtained on $\Omega(C^{\prime}h)$ from Theorem \ref{global}. The stability estimate will be derived at the end of the proof.

\medskip
\textbf{Case 2:} $T>\rho_0=\min\big\{i_0/2,r_0/2,r_g/4,\pi/6K_1 \big\}$.

\begin{figure}[h]
\includegraphics[scale=0.45]{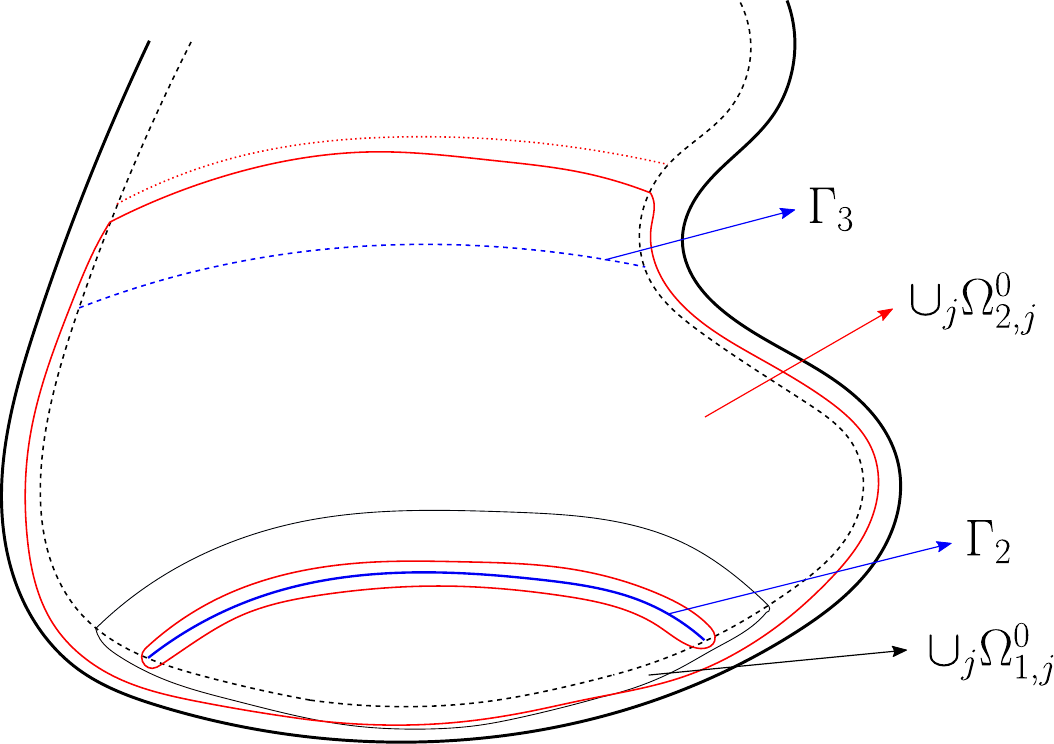}
\caption{Domains for the second step in Case 2. Enclosed by the red solid lines is the domain we work in. The blue dashed line $\Gamma_3$ is the reference set for the third step. From here, the procedure is entirely done in $M$.}
\label{figure2}
\end{figure}

\smallskip
As Lemma \ref{dd} is only valid within the injectivity radius, we define the procedure step by step and each step is done within the injectivity radius. Recall that $\{z_{1,j}\}$ be an $h$-net in $\Gamma_h(10h)\subset M_h$ with respect to $d_h$, and $T_1=T-6h$. For the first step, we define functions $\psi_{1,j}$ by adding to (\ref{psi}) another term associated with $T_1$:
\begin{equation}\label{psi1}
\psi_{1,j}(x,t)=\bigg(\Big(1-\xi \big(d(x,\partial M)\big)-\xi \big(\rho_0-d_h^s(x,z_{1,j})\big)\Big)T_1-d_h^s(x,z_{1,j})\bigg)^2-t^2,
\end{equation}
and consider the domains
\begin{equation}\label{Omega01}
\Omega^0_{1,j} =\big\{(x,t)\in M\times [-T_1,T_1]: \psi_{1,j}(x,t) > 8T^2 h \big\}-\{x: d_h^s(x,z_{1,j})\leqslant \frac{h}{2}\}\times [-T_1,T_1].
\end{equation}
One can compare these definitions here with those in Case 1. Note that the regions $\Omega_{1,j}^0$ stay within half the injectivity radius due to the definition of the function $\xi$. The gradient of $\psi_{1,j}$ has the following form:
\begin{eqnarray*}
\nabla_x\psi_{1,j}&=&2\Big(\big(1-\xi(d(x,\partial M))-\xi(\rho_0-d_h^s(x,z_{1,j}))\big)T_1-d_h^s(x,z_{1,j})\Big) \\
&&\big(-\xi^{\prime}T_1\nabla_x d(x,\partial M) +\xi^{\prime}T_1\nabla_x d_h^s(x,z_{1,j})-\nabla_x d_h^s(x,z_{1,j})\big).
\end{eqnarray*}
The vector part of $\nabla_x \psi_{1,j}$ consists of $\nabla_x d(x,\partial M)$ and $\nabla_x d_h^s(x,z_j)$, the same as in Case 1. Furthermore, the form for the vector part is the same as that in Case 1 up to multiplication by a positive function, since $\xi^{\prime}\leqslant 0$. Hence one obtains the same lower bounds for the length of the gradient and the principle symbol as (\ref{dpsi}) and (\ref{ppsi}). It follows that $\psi_{1,j}$ is non-characteristic in $\Omega^{0}_{1,j}$. And we define $\psi_{max,1}$ and $\Omega_{1,j}$ the same as in Case 1 (see Figure \ref{figure1}). More precisely, define $\psi_{max,1}=(T_1-3h/4)^2$ and
\begin{eqnarray}\label{1other}
\Omega_{1,j}=\big\{(x,t)\in \Omega^0_{1,j}-\cup_j \overline{\Omega}_{0,j}: \psi_{1,j}(x,t) > 9T^2 h \big\}.
\end{eqnarray}
Since (\ref{sublemma0d}) is still valid, Sublemma \ref{sublemma0} holds for $\psi_{1,j},\Omega_{1,j}^0,\Omega_{1,j}$. Hence Theorem \ref{global} applies to the first step. We stop the procedure right after the first step if $T_1-\rho_0-3T\sqrt{h}\leqslant 2h$.

For the second step, we need to choose a new set of reference points. Observe that the first step propagates past the level set $\Gamma_2:=\{x\in M_h: d_h(x,\Gamma_{h}(10h))=\rho_0-4h\}$ due to Lemma \ref{dhs}(4) and the procedure stopping criterion $T_1-\rho_0-3T\sqrt{h}>2h$. We choose the new reference points $\{z_{2,j}\}$ as an $h$-net in $\Gamma_2$ with respect to $d_h$. At $\Gamma_2$, the square of the maximal time allowed is $(T_1-\rho_0+4h)^2-9T^2h$, and we set the time range $T_2$ for the second step as $T_2=T_1-\rho_0-3T\sqrt{h}$. The procedure stopping criterion indicates that $T_2>2h$. Then we define the functions
$$\psi_{2,j}(x,t)=\bigg(\Big(1-\xi \big(d(x,\partial M)\big)-\xi \big(\rho_0-d_h^s(x,z_{2,j})\big)\Big)T_2-d_h^s(x,z_{2,j})\bigg)^2-t^2.$$
To apply Theorem \ref{global}, we need to ensure that small neighborhoods around the new reference points are contained in the regions already propagated by the unique continuation in the first step. To that end, we define $\psi_{max,2}=(T_2-a_T h)^2$, where $a_T=\min\{1,T^{-1}\}$, and
$$\Omega^{0}_{2,j}=\big\{(x,t)\in M\times [-T_2,T_2]: \psi_{2,j}(x,t) > 8T^2h \big\}-\{x: d_h^s(x,z_{2,j})\leqslant \frac{1}{2}a_T h \}\times [-T_2,T_2],$$
$$\Omega_{2,j}=\big\{(x,t)\in\Omega^{0}_{2,j}-\big((\cup_j \overline{\Omega}_{1,j})\cup (\cup_j \overline{\Omega}_{0,j})\big): \psi_{2,j}(x,t) > 9T^2 h \big\}.$$
These domains are illustrated in Figure \ref{figure2}. The specific choice of $\psi_{max,2}$ is justified in Sublemma \ref{sublemma2} a bit later, to ensure that $\emptyset\neq\{(x,t)\in \Omega^{0}_{2,j}: \psi_{2,j}(x,t)>\psi_{max,2}\}\subset (\cup_j \overline{\Omega}_{1,j})\cup (\cup_j \overline{\Omega}_{0,j})$.

\smallskip
Now we define the remaining steps iteratively. We define the reference sets as
$$\Gamma_{i}=\big\{x\in M_h: d_h(x, \Gamma_1)=(i-1)(\rho_0-4h) \big\}, \quad i\geqslant 2,$$
where $\Gamma_1=\Gamma_{h}(10h)\subset M_h$. The reference points $\{z_{i,j}\}$ are defined as an $h$-net in $\Gamma_i$ with respect to $d_h$. Note that the range of $j$ index for each step $i$ is different, and the notation $\{z_{i,j}\}$ here is short for $\{z_{i,j}\}_{j=1}^{J(i)}$.
We define the $C^{2,1}$ functions $\psi_{i,j}$ as follows.
$$\psi_{i,j}(x,t)=\bigg(\Big(1-\xi \big(d(x,\partial M)\big)-\xi \big(\rho_0-d_h^s(x,z_{i,j})\big)\Big)T_{i}-d_h^s(x,z_{i,j})\bigg)^2-t^2,$$
where $T_{i}=T_{i-1}-\rho_0-3T\sqrt{h}$ with $T_1=T-6h$. We stop the procedure at the $i$-th step if $T_{i+1}\leqslant 2h$ or $\Gamma_{i+1}=\emptyset$. The regions $\Omega^{0}_{i,j}$ and $\Omega_{i,j}$ for $i\geqslant 2$ are defined as\footnote{\,the connected component characterized by $\big(1-\xi(d(x,\partial M))-\xi(\rho_0-d_h^s(x,z_{i,j}))\big)T_{i}-d_h^s(x,z_{i,j})>0$.}
$$\Omega^{0}_{i,j}=\big\{(x,t)\in M\times [-T_i,T_i]:\psi_{i,j}(x,t) >8T^2h \big\}-\{x: d_h^s(x,z_{i,j})\leqslant \frac{1}{2}a_T h\}\times [-T_i,T_i].$$
$$\Omega_{i,j}=\big\{(x,t)\in \Omega^{0}_{i,j}-\cup_{l=0}^{i-1}\cup_{j} \overline{\Omega}_{l,j}: \psi_{i,j}(x,t) > 9T^2 h \big\},$$
where $a_T=\min\{1,T^{-1}\}$ in (\ref{aT}). It follows that $\psi_{i,j}$ is non-characteristic in $\Omega^{0}_{i,j}$ in the same way as for $\psi_{1,j}$. 
Due to Sublemma \ref{sublemma2} below, Theorem \ref{global} applies with $\psi_{max,i}=(T_i-a_T h)^2$.

\begin{figure}[h]
\includegraphics[width=7.5cm, height=9cm]{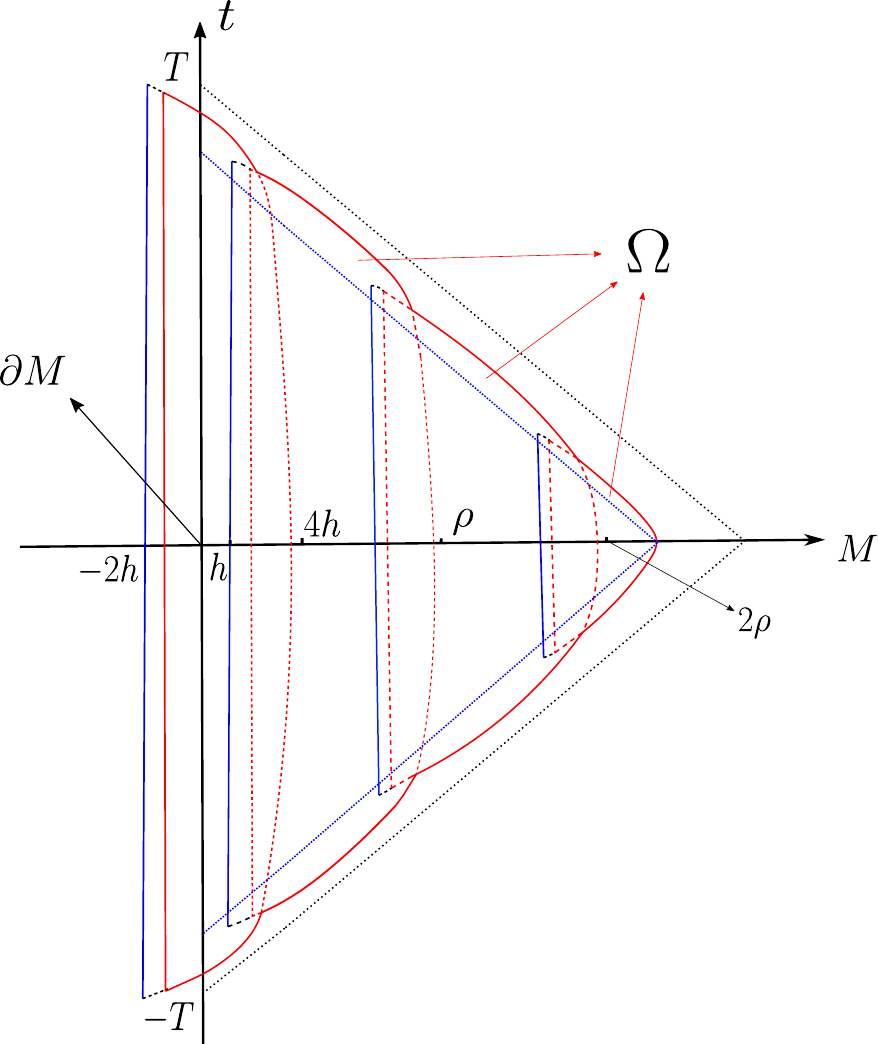}
\caption{The procedure of a three-step propagation besides the initial step. The red solid lines enclose the whole region $\Omega=\cup_{i,j}\Omega_{i,j}$ propagated by the unique continuation. The black dotted line represents the optimal region, while the blue dotted line represents the actual region we can estimate.}
\label{figure3}
\end{figure}

\begin{sublemma1}\label{sublemma1}
For $i\geqslant 2$ and any $z\in \Gamma_i$, we have $d_h(z,\Gamma_{i-1})=\rho_0-4h$.
\end{sublemma1}
\begin{proof}
Let $z_1\in \Gamma_1$ be a point in $\Gamma_1$ such that $d_h(z,z_1)=d_h(z,\Gamma_1)$. Take a minimizing geodesic of $M_h$ from $z$ to $z_1$ and the geodesic intersects with $\Gamma_{i-1}$ at $z_{i-1}\in\Gamma_{i-1}$. This geodesic has length $(i-1)(\rho_0-4h)$, and its segment from $z_{i-1}$ to $z_1$ has length at least $(i-2)(\rho_0-4h)$ by definition. Hence $d_h(z,\Gamma_{i-1})\leqslant d_h(z,z_{i-1})\leqslant \rho_0-4h$. On the other hand, for any $z^{\prime}\in\Gamma_{i-1}$, we have $d_h(z,z^{\prime})\geqslant d_h(z,\Gamma_1)-d_h(z^{\prime},\Gamma_1)=\rho_0-4h$, which shows $d_h(z,\Gamma_{i-1})\geqslant \rho_0-4h$.
\end{proof}

\begin{sublemma2}\label{sublemma2}
For $i\geqslant 2$ and sufficiently small $h<\min\{1/2,T/4\}$ depending on $n,K_1,i_0$, we have $dist_{\widetilde{M}\times \mathbb{R}} (\partial \Omega^{0}_{i,j},\Omega_{i,j})>0$, and
$$\emptyset\neq \big\{(x,t)\in \Omega^{0}_{i,j}: \psi_{i,j}(x,t)>(T_i-a_T h)^2 \big\}\subset \cup_{l=0}^{i-1}\cup_j\overline{\Omega}_{l,j}.$$

\end{sublemma2}
\begin{proof}
We prove the following stronger statement:
\begin{equation}\label{dinclusion}
\{x: d_h^s(x,z_{i,j}) \leqslant a_T h\}\times [-T_i-h,T_i+h] \subset \cup_{l=0}^{i-1}\cup_j\overline{\Omega}_{l,j}.
\end{equation}
More precisely, for any $(x,t)$ in the left-hand set, we prove that if $(x,t)\notin \cup_{l=0}^{i-2}\cup_j\overline{\Omega}_{l,j}$, then $(x,t)\in \cup_j \Omega_{i-1,j}$.

By Sublemma \ref{sublemma1} and the fact that $\{z_{i-1,j}\}$ is an $h$-net in $\Gamma_{i-1}$, we can find some $z_{i-1,j_0}$ such that $d_h(z_{i,j},z_{i-1,j_0})<\rho_0-3h$. Then for any $(x,t)$ in the left-hand set in (\ref{dinclusion}), Lemma \ref{dhs}(2) implies that for sufficiently small $h$ depending on $n,K_1,i_0$,
\begin{equation}\label{sl}
d_h^s(x,z_{i-1,j_0}) < d_h^s(x,z_{i,j})+(\rho_0-3h)(1+CnK_1^2 h^6) < \rho_0-\frac{3}{2}h,
\end{equation}
which indicates that $\xi(\rho_0-d_h^s(x,z_{i-1,j_0}))$ vanishes.

We claim that $(x,t)\in \Omega_{i-1,j_0}$. To prove this, by the definition of $\psi_{i-1,j},\Omega_{i-1,j}$ and the condition that $(x,t)\notin \cup_{l=0}^{i-2}\cup_j\overline{\Omega}_{l,j}$, we only need to show that
$$\psi_{i-1,j_0}(x,t)=\Big( \big(1-\xi(d(x,\partial M))\big)T_{i-1}-d_h^s(x,z_{i-1,j_0})\Big)^2-t^2 > 9T^2 h.$$
Since $|t|\leqslant T_i+h$, it is enough to show
$$\big(1-\xi(d(x,\partial M)\big)T_{i-1}-d_h^s(x,z_{i-1,j_0}) > T_{i}+h+3T\sqrt{h}.$$
Now since $d_h^s(x,z_{i,j})\leqslant h/T$, by the definition of $d_h$ and Lemma \ref{dhs}(4) we have
$$d(x,\partial M)\geqslant h-d_h(x,z_{i,j})h> h-(\frac{h}{T}+\frac{2h^2}{T})h>h-\frac{2h^2}{T},$$
which implies by the definition of $\xi$ (\ref{xidef}),
$$\xi(d(x,\partial M))< \xi(h-\frac{2h^2}{T}) = \frac{8 h^3}{T^3}.$$
Since $T_i=T_{i-1}-\rho_0-3T\sqrt{h}$ by definition, we have by (\ref{sl}),
\begin{eqnarray*}
\big(1-\xi(d(x,\partial M))\big)T_{i-1}-d_h^s(x,z_{i-1,j_0}) &>& T_{i-1}-\xi(d(x,\partial M)) T_{i-1}-\rho_0+\frac{3}{2}h \\
&>& T_i+3T\sqrt{h}+\frac{3}{2}h-\frac{8 h^3}{T^3}T \\
&>& T_i+3T\sqrt{h}+h.
\end{eqnarray*}
This proves $(x,t)\in \Omega_{i-1,j_0}$ and hence (\ref{dinclusion}). 

The inclusion (\ref{dinclusion}) shows that $\{x: d_h^s(x,z_{i,j}) \leqslant a_T h/2\}\times [-T_i,T_i]$ is strictly contained in $\cup_{l=0}^{i-1}\cup_j\overline{\Omega}_{l,j}$, which implies that $\overline{\Omega}_{i,j}\subset \Omega_{i,j}^0$. An explicit lower bound for the distance between their boundaries is estimated in Lemma \ref{mindistance}.

For the second statement of the sublemma, by (\ref{dinclusion}),
\begin{eqnarray*}
\{\psi_{i,j}(x,t)>(T_i-a_T h)^2\}\subset \{d_h^s(x,z_{i,j}) < a_T h\}\times (-T_i,T_i) \subset \cup_{l=0}^{i-1}\cup_j\overline{\Omega}_{l,j}.
\end{eqnarray*}
The non-emptyness directly follows from the definition of $\Omega^{0}_{i,j}$.
\end{proof}

\smallskip
\noindent\textbf{Error estimate for Case 2.} Finally we show that $\overline{\Omega}=\cup_{i\geqslant 0}\cup_{j} \overline{\Omega}_{i,j}$ almost covers the domain of influence in the original manifold $M$ (see Figure \ref{figure3}). More precisely, we prove that there exists $C^{\prime}=C^{\prime}(T,D,K_1,i_0,r_0,r_g)$ such that $\Omega(C^{\prime}h)\subset \overline{\Omega}$. The idea of the proof is similar to that for Case 1, and we omit the parts of the proof identical to Case 1.

For any $(x,t)\in M\times [-T,T]-\overline{\Omega}$, one of the following two situations must happen:\\
(1) $d(x, \partial M)< h$; \\
(2) $x\in M_h$ and $d_h^s(x,z_{i,j})> \big(1-\xi(\rho_0-d_h^s(x,z_{i,j}))\big)T_i-\sqrt{t^2+9T^2 h}$ for any $z_{i,j}\,(i\geqslant 1)$. \\
The situation (1) implies that $d(x,\partial M-\Gamma)<9h$ or $d(x,\Gamma)>T-|t|-\sqrt{6h}$ by the same argument as for Case 1.

\smallskip
Now we focus on the situation (2) when $x\in M_h$. Lemma \ref{dhs}(4) yields that for any $z_{i,j}\,(i\geqslant 1)$,
\begin{equation}\label{errorcase2}
d_h(x,z_{i,j})> \big(1-\xi(\rho_0-d_h^s(x,z_{i,j}))\big)T_i-|t|-3T\sqrt{h}-2h^2.
\end{equation}
Let $z_1\in \Gamma_1$ be a point in $\Gamma_1$ such that $d_h(x,z_1)=d_h(x,\Gamma_1)$, and take a minimizing geodesic of $M_h$ from $x$ to $z_1$. Observe that this minimizing geodesic intersects with each $\Gamma_i$ at most once; otherwise it would fail to minimize the distance $d_h(x,\Gamma_1)$. Furthermore, due to the continuity of the distance function $d_h(\cdot,\Gamma_1)$, if the minimizing geodesic intersects with $\Gamma_i$, then it intersects with $\Gamma_l$ for all $1\leqslant l<i$.
Suppose the minimizing geodesic intersects with $\Gamma_i$ at $z_i\in\Gamma_i$ for $1\leqslant i\leqslant m$, and the intersection does not occur at any nonempty $\Gamma_i$ for $i>m$. Then by Sublemma \ref{sublemma1}, we have
\begin{eqnarray}\label{dhGamma1}
d_h(x,\Gamma_1)=d(x,z_1)&=&d_h(x,z_m)+\sum_{i=1}^{m-1}d_h(z_i,z_{i+1}) \nonumber \\
&\geqslant& d_h(x,z_{m})+(m-1)(\rho_0-4h).
\end{eqnarray}
We claim that $d_h(x,z_m)\leqslant \rho_0-3h$. Suppose not, and by the inequality above, we have $d_h(x,\Gamma_1)>m(\rho_0-4h)$. This implies that $\Gamma_{m+1}\neq\emptyset$ and any minimizing geodesic from $x$ to $\Gamma_1$ must intersect with $\Gamma_{m+1}$, which is a contradiction.

Since $\Gamma_m\neq\emptyset$ by assumption, the step $m$ of our procedure takes place as long as $T_m>2h$ by our stopping criterion. However if $T_m\leqslant 2h$, the procedure stops at some previous step.

\smallskip
\noindent \textbf{(i)} $T_m>2h$.

On $\Gamma_m$, we can find some $z_{m,j}$ such that $d_h(z_m,z_{m,j})<h$ since $\{z_{m,j}\}$ is an $h$-net. Then it follows that $d_h(x,z_{m,j})<\rho_0-2h$. Lemma \ref{dhs}(4) indicates that $d_h^s(x,z_{m,j})<\rho_0-h$. Hence $\xi(\rho_0-d_h^s(x,z_{m,j}))$ in (\ref{errorcase2}) vanishes. Then by (\ref{dhGamma1}),
\begin{eqnarray*}
d_h(x,\Gamma_1)&>& d_h(x,z_{m,j})-h+(m-1)(\rho_0-4h) \\
&>& T_m-|t|-3T\sqrt{h}-h-2h^2+(m-1)(\rho_0-4h) \\
&=& T_1-|t|-3mT\sqrt{h}-h-4(m-1)h-2h^2,
\end{eqnarray*}
where we used $T_m=T_1-(m-1)(\rho_0+3T\sqrt{h})$ by the definition of $T_i$. 

\smallskip
\noindent \textbf{(ii)} $T_m\leqslant 2h$.

From $T_m=T_1-(m-1)(\rho_0+3T\sqrt{h})$, we have
$$T_1\leqslant (m-1)(\rho_0+3T\sqrt{h})+2h.$$
Hence by (\ref{dhGamma1}), we still get a similar estimate as the previous situation:
\begin{eqnarray*}
d_h(x,\Gamma_1)\geqslant (m-1)(\rho_0-4h)&\geqslant& T_1-3(m-1)T\sqrt{h}-2h-4(m-1)h \\
&\geqslant& T_1-|t|-3(m-1)T\sqrt{h}-2h-4(m-1)h.
\end{eqnarray*}

\smallskip
From here, one can follow the rest of the estimates for Case 1 and obtains
$$d(x,\Gamma)>T-|t|-C(m,T,K_1)\sqrt{h}.$$
Combining these situations together, we have proved that $(x,t)\in M\times [-T,T]-\Omega(Ch)$ for $C=\max\{C(m,T,K_1)^2,9\}$ by definition (\ref{Omegaht}). Therefore, there exists $C^{\prime}=C^{\prime}(m,T,K_1)$ such that $\Omega(C^{\prime}h)\subset \overline{\Omega}$. 

The only part left is to estimate the upper bound for $m$. By assumption, $\Gamma_m\neq\emptyset$ and hence $\Gamma_m$ must be taken before $d_h(\cdot,\Gamma_1)$ reaches outside the diameter of $M_h$. Due to Lemma \ref{distances} for $M_h,M$, the diameter of $M_h$ is bounded by $6 D/5$ for sufficiently small $h$ depending only on $K_1$. Thus by the definition of $\Gamma_i$, we have
$$m\leqslant\big[\frac{6D}{\rho_0}\big]+1,$$
where $\rho_0=\min\big\{i_0/2,r_0/2,r_g/4,\pi/6K_1 \big\}$ depends only on $n,\|R_M\|_{C^1},\|S\|_{C^1},$ $i_0,r_0$.

\medskip
\noindent\textbf{Stability estimate.} With all the functions and domains we have constructed, the only part left is to apply Theorem \ref{global}. From the error estimate above, we have proved that there exists $C^{\prime}=C^{\prime}(T,D,K_1,i_0,r_0,r_g)$ such that $\Omega(C^{\prime}h)\subset \overline{\Omega}=\cup_{i\geqslant 0}\cup_{j} \overline{\Omega}_{i,j}$, where $r_g$ is a constant depending only on $n,\|R_M\|_{C^1},\|S\|_{C^1},i_0$. Recall that $\widetilde{u}$ is an extension of $u$ to $\widetilde{M}$ defined by (\ref{extensionu}). 
Theorem \ref{global} yields the following stability estimate on $\overline{\Omega}$ and hence on $\Omega(C^{\prime}h)$.
$$\|u\|_{L^2(\Omega(C^{\prime}h))}\leqslant \|\widetilde{u}\|_{L^2(\overline{\Omega})}\leqslant C \frac{\|\widetilde{u}\|_{H^1(\Omega^0)}}{\Big(\log \big(1+\frac{\|\widetilde{u}\|_{H^1(\Omega^0)}}{\|P\widetilde{u}\|_{L^2(\Omega^0)}}\big)\Big)^{\frac{1}{2}}}, $$
where $\Omega^0=\cup_{i\geqslant 0}\cup_{j} \Omega_{i,j}^0$. During the initial step, we have show $\Omega^0_{0,j}\subset M\cup \Omega_{\Gamma}$, and $\Omega^0_{i,j}$ is defined in $M\times [-T,T]$ for all $i\geqslant 1$. Hence $\Omega^0\subset (M\cup \Omega_{\Gamma})\times [-T,T]$. Since the function $x\mapsto x(\log (1+x))^{-1/2}$ is non-decreasing on $[0,+\infty)$, we have
$$\|u\|_{L^2(\Omega(C^{\prime}h))}\leqslant C \frac{\|\widetilde{u}\|_{H^1((M\cup\Omega_{\Gamma})\times[-T,T])}}{\Big(\log \big(1+\frac{\|\widetilde{u}\|_{H^1((M\cup\Omega_{\Gamma})\times [-T,T])}}{\|P\widetilde{u}\|_{L^2((M\cup\Omega_{\Gamma})\times[-T,T])}}\big)\Big)^{\frac{1}{2}}}.$$
Therefore, the desired stability estimate follows from Lemma \ref{extension} after replacing $h$ by $h/C^{\prime}$. 
The number of domains in each step is not consequential to the estimate as long as relevant quantities of $\psi_{i,j}$ are uniformly bounded. The dependency of the constant is calculated in the Appendix \ref{constants}. 

The second statement of the theorem is due to the following interpolation formula for bounded domains with locally Lipschitz boundary:
$$\|u\|_{H^{1-\theta}}\leqslant \|u\|_{L^2}^{\theta}\|u\|_{H^1}^{1-\theta},\quad \theta\in (0,1).$$
This concludes the proof of Theorem \ref{main1}.

\begin{remark}
If we define $d_h$ (\ref{dh}) with $h^{-2}$ scaling in the boundary neighborhood and require $h<T^{-1}$, then the level sets of $\psi_j$ (\ref{psi}) automatically do not intersect with $\partial M$ even without the $\xi\big(d(x,\partial M)\big)$ term. However, the extra condition $h<T^{-1}$ is not ideal and we want to choose the parameter $h$ as large as possible for a large $T$, considering the stability estimate grows exponentially in $h$. In addition, we frequently used the number $a_T=\min\{1,T^{-1}\}$ exactly for the same purpose.
\end{remark}

\begin{remark}
In the definition of $\Omega_{i,j}^0$ for Case 2, we removed the region where points are $a_T h/2$-close to the reference points, and this region is contained in the set propagated by the unique continuation from previous steps by Sublemma \ref{sublemma2}. The $h^{-1}$ scaling in the definition of $d_h$ (\ref{dh}) directly affects the order of this number $a_T h/2$. Without the scaling the order of this number would be of $h^2$.
\end{remark}

\subsection{Applications of the quantitative unique continuation}\hfill

\smallskip
Due to the trace theorem, Theorem \ref{main1} yields the following estimate on the initial value.

\begin{initial}\label{initial}
Let $M\in \mathcal{M}_n(D,K_1,K_2,i_0,r_0)$ be a compact Riemannian manifold with smooth boundary $\partial M$, and let $\Gamma$ (possibly $\Gamma=\partial M$) be a connected open subset of $\partial M$ with smooth boundary. Suppose $u\in H^2(M\times[-T,T])$ is a solution of the wave equation $Pu=0$. Assume the Cauchy data satisfy
$$u|_{\partial M\times [-T,T]}\in H^{2,2}(\partial M \times [-T,T]),\quad \frac{\partial u}{\partial \mathbf{n}} \in H^{2,2}(\partial M \times [-T,T]).$$
If
$$\|u\|_{H^1(M\times[-T,T])}\leqslant \Lambda_0,\quad \|u\|_{H^{2,2}(\Gamma\times [-T,T])}+\big\|\frac{\partial u}{\partial \mathbf{n}}\big\|_{H^{2,2}(\Gamma\times [-T,T])}\leqslant \varepsilon_0,$$
then for sufficiently small $h$, we have
$$\|u(x,0)\|_{L^2(\Omega(2h,0,3))} \leqslant C_3^{\frac{1}{3}}h^{-\frac{2}{9}}\exp(h^{-C_4 n}) \frac{\Lambda_0+h^{-\frac{1}{2}}\varepsilon_0}{\big(\log (1+h+h^{\frac{3}{2}}\frac{\Lambda_0}{\varepsilon_0})\big) ^{\frac{1}{6}}}\, ,$$
where $C_3,C_4$ are constants independent of $h$, and their dependency on geometric parameters is stated in Theorem \ref{main1}. For a fixed $t\in [-T,T]$, the domain $\Omega(h,t,m)$ is defined as follows:
\begin{equation}\label{Omegahtm}
\Omega(h,t,m)=\big\{x\in M: T-|t|-d(x,\Gamma) >h^{\frac{1}{m}},\; d(x,\partial M-\Gamma)>h^{\frac{1}{m}}\big\}.
\end{equation}
\end{initial}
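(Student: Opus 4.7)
The plan is to derive the estimate from the $L^2$-bound of Theorem \ref{main1} by combining a time cutoff, the Sobolev interpolation $\|v\|_{H^{2/3}}\leqslant\|v\|_{L^2}^{1/3}\|v\|_{H^1}^{2/3}$, and the trace theorem at $t=0$; the factor $h^{-2/9}$ will arise from the scaling of a spacetime slab of thickness $\tau\asymp h^{1/3}$.

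To begin, I would choose $\tau=\tfrac12\bigl((2h)^{1/3}-\sqrt h\bigr)$, which is positive and of order $h^{1/3}$ for small $h$. Writing $A=\Omega(2h,0,3)$, a direct check of \eqref{Omegaht} and \eqref{Omegahtm} shows $A\times[-\tau,\tau]\subset\Omega(h)$: for $x\in A$ and $|t|\leqslant\tau$ one has $T-|t|-d(x,\Gamma)\geqslant(2h)^{1/3}-\tau>\sqrt h$ and $d(x,\partial M-\Gamma)>(2h)^{1/3}>h$. Applying Theorem \ref{main1} with parameter $h$ and using $Pu=0$ (so the denominator inside the logarithm collapses to $h^{-3/2}\varepsilon_0$), one obtains
$$\|u\|_{L^2(A\times[-\tau,\tau])}\leqslant F,\qquad F:=C_3\exp(h^{-C_4n})\frac{\Lambda_0+h^{-1/2}\varepsilon_0}{\bigl(\log(1+h+h^{3/2}\Lambda_0/\varepsilon_0)\bigr)^{1/2}}.$$

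Next, fix $\chi\in C_c^\infty(-1,1)$ with $\chi\equiv 1$ on $[-1/2,1/2]$, and set $v(x,t)=\chi(t/\tau)u(x,t)$, extended by zero to $A\times\mathbb{R}$; then $v(\cdot,0)=u(\cdot,0)$ on $A$. Since $\|\chi(\cdot/\tau)\|_{C^1}\leqslant C\tau^{-1}$ and the hypothesis $\|u\|_{H^1(M\times[-T,T])}\leqslant\Lambda_0$ gives in particular the trivial bound $F\leqslant C\Lambda_0$, the cutoff satisfies $\|v\|_{L^2(A\times\mathbb{R})}\leqslant F$ and $\|v\|_{H^1(A\times\mathbb{R})}\leqslant C\tau^{-1}\Lambda_0$. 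The Sobolev interpolation then yields
$$\|v\|_{H^{2/3}(A\times\mathbb{R})}\leqslant\|v\|^{1/3}_{L^2}\|v\|^{2/3}_{H^1}\leqslant C\tau^{-2/3}F^{1/3}\Lambda_0^{2/3}.$$
Since $2/3>1/2$, the trace $H^{2/3}(A\times\mathbb{R})\to H^{1/6}(A)\hookrightarrow L^2(A)$ is bounded with a constant depending only on the fixed geometry of $A\subset M$ (controlled uniformly via the metric bounds of Section \ref{subsection-extension} applied in local coordinate charts), so
$$\|u(\cdot,0)\|_{L^2(A)}=\|v(\cdot,0)\|_{L^2(A)}\leqslant C\|v\|_{H^{2/3}(A\times\mathbb{R})}\leqslant C\tau^{-2/3}F^{1/3}\Lambda_0^{2/3}.$$
With $\tau\asymp h^{1/3}$ this gives $\tau^{-2/3}\asymp h^{-2/9}$, and using $\Lambda_0^{2/3}\leqslant(\Lambda_0+h^{-1/2}\varepsilon_0)^{2/3}$ together with $(\Lambda_0+h^{-1/2}\varepsilon_0)^{1/3}(\Lambda_0+h^{-1/2}\varepsilon_0)^{2/3}=\Lambda_0+h^{-1/2}\varepsilon_0$ produces the claimed bound, after absorbing the factor $1/3$ from $F^{1/3}$ in the exponential $\exp(h^{-C_4n})$ into a redefinition of the absolute constant $C_4$. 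The main delicate point is the uniform-in-$A$ trace constant, which is handled by patching standard Euclidean trace estimates over a finite cover of local coordinate charts with uniformly bounded geometry.
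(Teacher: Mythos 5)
Your proposal arrives at the same estimate and uses the same raw ingredients as the paper's proof, but reorganizes them. The paper first applies the $H^{2/3}$-bound of Theorem~\ref{main1} (the second statement with $\theta=1/3$) on $\Omega(h)$, and then invokes a scaled trace theorem on the thin slab $\Omega(2h,0,3)\times(-t_0,t_0)$ (quoting Theorem~6.6.1 of Bergh--L\"ofstr\"om for the explicit $t_0^{-2/3}$ factor). You instead use only the $L^2$-bound of Theorem~\ref{main1}, introduce a time cutoff $v=\chi(t/\tau)u$, interpolate $\|v\|_{H^{2/3}}\leqslant\|v\|_{L^2}^{1/3}\|v\|_{H^1}^{2/3}$ by hand (using $\|u\|_{H^1(M\times[-T,T])}\leqslant\Lambda_0$ for the $H^1$ side), and then apply an \emph{unscaled} trace. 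This is a modest but genuine reorganization: your version is more self-contained, makes the origin of the $h^{-2/9}$ factor transparent (the cutoff produces $\tau^{-1}$ of order $h^{-1/3}$, raised to the interpolation exponent $2/3$), and exposes the lossy step $\Lambda_0^{2/3}\leqslant(\Lambda_0+h^{-1/2}\varepsilon_0)^{2/3}$; the paper's version is shorter because it outsources the interpolation and the $t_0$-scaling to cited results. Both approaches share the same implicit assumption that the trace/interpolation constants on the $h$-dependent domain $\Omega(2h,0,3)$ are uniform in $h$, which neither proof verifies in detail.

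One small slip, although it does not affect the argument: your parenthetical claim that ``$F\leqslant C\Lambda_0$'' is false, since the $\exp(h^{-C_4 n})$ factor makes $F$ blow up as $h\to 0$. This is harmless because you never use that inequality --- the bound $\|v\|_{H^1(A\times\mathbb{R})}\leqslant C\tau^{-1}\Lambda_0$ follows directly from the hypothesis $\|u\|_{H^1(M\times[-T,T])}\leqslant\Lambda_0$ and the $C^1$-norm of the cutoff, without needing to compare $F$ with $\Lambda_0$.
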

\begin{proof}
Observe that $\Omega(2h,0,3)\times (-t_0,t_0)\subset \Omega(h)$ with $t_0=(\sqrt[3]{2}-1)\sqrt[3]{h}$ by definition. Then we take $\theta=1/3$ in Theorem \ref{main1} and apply the trace theorem (Theorem 6.6.1 in \cite{BL}): there exists a constant $C$ such that
\begin{eqnarray*}
\|u(x,0)\|_{L^2(\Omega(2h,0,3))} &\leqslant& C t_0^{-\frac{2}{3}}\|u(x,t)\|_{H^{\frac{2}{3}}(\Omega(2h,0,3)\times(-t_0,t_0))} \\
&\leqslant& 4C h^{-\frac{2}{9}}\|u(x,t)\|_{H^{\frac{2}{3}}(\Omega(h))}.
\end{eqnarray*}
\vspace{-4.5mm}
\end{proof}

\begin{remark}
Note that the constant $C_3$ in Corollary \ref{initial} is not exactly the same as the constant $C_3$ in Theorem \ref{main1}. However, they depend on the same set of geometric parameters. In this paper, we keep the same notation for constants if operations do not introduce any new parameter.
\end{remark}

The following independent result gives an explicit estimate on the Hausdorff measure of the boundary of the domain of influence, which shows that the region Corollary \ref{initial} does not cover has a uniformly controlled small volume.

\begin{area}\label{area}
Let $M$ be a compact Riemannian manifold with smooth boundary. For any measurable subset $\Gamma\subset \partial M$ and any $t\geqslant 0$, the following explicit estimate applies:
$$\vol_{n-1} \big(\partial M(\Gamma,t)\big)<C_5 \big(n,\|R_M\|_{C^1},\|S\|_{C^1},i_0,\vol(M),\vol(\partial M) \big),$$
where $M(\Gamma,t)$ is defined in (\ref{def-Mt}). As a consequence, the estimate above implies the following volume estimate due to the Co-Area formula. Namely, for any $t,\gamma\geqslant 0$, we have
$$\vol_n \big(M(\Gamma,t+\gamma)-M(\Gamma,t)\big)< C_5 \big(n,\|R_M\|_{C^1},\|S\|_{C^1},i_0,\vol(M),\vol(\partial M) \big)\gamma.$$
\end{area}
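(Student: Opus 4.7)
The plan is to first prove the pointwise level-set bound $vol_{n-1}(\partial M(\Gamma,t))<C_5$, and then obtain the shell estimate as an immediate Co-Area consequence. Since $M(\Gamma,t)=\{x\in M:d(x,\Gamma)<t\}$ is open in $M$ and $d(\cdot,\Gamma)$ is continuous, the topological boundary in $M$ satisfies $\partial M(\Gamma,t)=\Sigma_t:=\{x\in M:d(x,\Gamma)=t\}$. I split $\Sigma_t=\Sigma_t^{\mathrm{int}}\sqcup \Sigma_t^{\partial}$ with $\Sigma_t^{\partial}=\Sigma_t\cap\partial M$; the latter contributes at most $vol_{n-1}(\partial M)$, so the task reduces to bounding $vol_{n-1}(\Sigma_t^{\mathrm{int}})$.

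My strategy is a tube-versus-shell comparison. I fix a uniform $\epsilon_0=\epsilon_0(n,K_1,i_0)>0$ small enough for the constructions below. On the one hand, because $d(\cdot,\Gamma)$ is $1$-Lipschitz, the $\epsilon_0$-tubular neighborhood $T_{\epsilon_0}(\Sigma_t^{\mathrm{int}})$ of $\Sigma_t^{\mathrm{int}}$ in $M$ is contained in the shell $\{x\in M:|d(x,\Gamma)-t|<\epsilon_0\}$, whence $vol\bigl(T_{\epsilon_0}(\Sigma_t^{\mathrm{int}})\bigr)\leq vol(M)$. On the other hand, I will establish a lower tube inequality
\[
vol\bigl(T_{\epsilon_0}(\Sigma_t^{\mathrm{int}})\bigr)\;\geq\;c(n,K_1,i_0)\,\epsilon_0\,vol_{n-1}(\Sigma_t^{\mathrm{int}}).
\]
Dividing yields $vol_{n-1}(\Sigma_t^{\mathrm{int}})\leq vol(M)/(c\,\epsilon_0)$, and combined with the boundary contribution this produces the desired constant $C_5$ depending only on $n,\|R_M\|_{C^1},\|S\|_{C^1},i_0,vol(M),vol(\partial M)$.

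The main obstacle is the lower tube inequality, which I expect to be the hard step because $d(\cdot,\Gamma)$ is only $C^{1,1}$ off the cut locus of $\overline\Gamma$, and the cut locus may carry positive $(n-1)$-measure. My plan is to exploit semiconcavity: under $\|R_M\|\leq K_1^2$, the Hessian comparison theorem makes $d(\cdot,\Gamma)$ semiconcave in $\mathrm{int}(M)$ with a constant depending only on $K_1$. Standard results on semiconcave functions then imply that $\Sigma_t^{\mathrm{int}}$ is $(n-1)$-rectifiable, that $|\nabla d(\cdot,\Gamma)|=1$ at $\mathcal H^{n-1}$-almost every point of $\Sigma_t^{\mathrm{int}}$, and that the gradient flow $\Phi_s$ of $d(\cdot,\Gamma)$ is well-defined and bi-Lipschitz on $\Sigma_t^{\mathrm{int}}\times(-\epsilon_0,\epsilon_0)$. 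The Jacobi/Riccati comparison controlled by $K_1$ and by $i_0$ then bounds the Jacobian of $\Phi_s$ from below by a uniform positive constant. Applying the area formula to $\Phi_s$ and integrating over $s\in(-\epsilon_0,\epsilon_0)$ yields the claimed lower bound on $vol(T_{\epsilon_0})$.

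Finally, the volumetric consequence follows from the Federer Co-Area formula applied to the $1$-Lipschitz function $d(\cdot,\Gamma)$, which has $|\nabla d(\cdot,\Gamma)|=1$ almost everywhere on $M$:
\[
vol_n\bigl(M(\Gamma,t+\gamma)-M(\Gamma,t)\bigr)\;=\;\int_t^{t+\gamma}vol_{n-1}(\Sigma_s)\,ds\;\leq\;C_5\,\gamma,
\]
where the last inequality is the pointwise bound applied under the integral sign.
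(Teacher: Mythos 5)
Your overall strategy (level-set bound via a tube-vs.-shell comparison, then Co-Area for the shell estimate) is the right skeleton, and the final Co-Area step is fine. But the main step — the lower tube inequality via a bi-Lipschitz gradient flow of uniform width $\epsilon_0$ — has a genuine gap, and the missing idea is precisely where the paper's proof spends its effort.

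The claim that the gradient flow $\Phi_s$ of $d(\cdot,\Gamma)$ is well-defined and bi-Lipschitz on $\Sigma_t^{\mathrm{int}}\times(-\epsilon_0,\epsilon_0)$ for a \emph{uniform} $\epsilon_0(n,K_1,i_0)$ is not true, for two reasons. First, the decomposition $\Sigma_t = \Sigma_t^{\mathrm{int}} \sqcup \Sigma_t^{\partial}$ is too coarse: $\Sigma_t^{\mathrm{int}}$ contains points whose minimizing geodesic to $\Gamma$ reaches $\partial M$ after an arbitrarily short time (this is the quantity $l(x)$ in the paper's proof). For such $x$, the backward flow is not a free interior geodesic flow beyond time $l(x)$; the geodesic develops boundary segments, the Hessian comparison argument does not apply, and the semiconcavity-based Lipschitz bound fails. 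There is no $\epsilon_0 > 0$, bounded below in terms of $n,K_1,i_0$, for which the flow stays interior for all of $\Sigma_t^{\mathrm{int}}$ — indeed, for $\Gamma$ a small patch near a thin part of $M$, essentially all of $\Sigma_t^{\mathrm{int}}$ has small $l(x)$. Second, even away from $\partial M$, semiconcavity of $d(\cdot,\Gamma)$ gives only a one-sided bound on $\nabla^2 d$: the forward (increasing-$t$) flow is Lipschitz, but it can compress to zero Jacobian at cut points of $\Gamma$, and those are not controlled by $i_0$ or $K_1$; so "bi-Lipschitz with a uniform lower Jacobian bound" is an overclaim. (Also the semiconcavity constant from Hessian comparison degenerates like $1/d$ near $\Gamma$, which is why the paper's Lemma~\ref{areaLipschitz} carries the explicit factor $3/\epsilon$ rather than a uniform constant.)

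The paper's proof fixes exactly this. Instead of splitting by interior/boundary, it stratifies $\Sigma_t$ by $l(x)$ into dyadic pieces $\Sigma_t(\epsilon) = \{x : \epsilon/2 < l(x) \le \epsilon\}$ and, separately, the far-from-boundary piece $\{l(x) > \epsilon_0\}$. On each piece it runs the \emph{backward} flow only for time $\epsilon/4 < l(x)$, so the geodesics remain interior and Lemma~\ref{areaLipschitz} (Lipschitz forward flow via semiconcavity, via Petrunin) applies. The crucial extra input, absent from your proposal, is Lemma~\ref{geodiff}: near the boundary the geodesic shadow $U(\Sigma_t(\epsilon))\cap\Sigma_{t'}$ is trapped in a $C\epsilon^2$-neighborhood of $\partial M$, so its volume is $\lesssim \epsilon^2\, vol(\partial M)$ rather than $vol(M)$. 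This gives $vol_{n-1}(\Sigma_t(\epsilon)) \lesssim \epsilon\, vol(\partial M)$, which is summable over dyadic $\epsilon$; only the far-from-boundary piece is bounded by $vol(M)/\epsilon_0$. Your one-shot tube argument cannot reproduce this because for the near-boundary pieces the flow time must shrink with $\epsilon$, and without the $C\epsilon^2$-thinness estimate the bound would not be summable. To salvage your approach you would need to reinstate the $l(x)$-stratification and the quadratic-approach-to-boundary estimate; at that point you are essentially reproducing the paper's proof.

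Minor point: your Co-Area step is correct, and restricting attention to $\Sigma_t = \Sigma_t^{\mathrm{int}}$ is fine since the level set meets $\partial M$ in a set of zero $(n-1)$-measure for a.e.\ $t$, and the $\partial M$-part of $\partial M(\Gamma,t)$ is trivially bounded by $vol(\partial M)$.
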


\begin{proof}
Denote the level set of the distance function by $\Sigma_t=\{x \in \textrm{int}(M): d(x,\Gamma)=t\}$. For any point in $\Sigma_t$, there exists a minimizing geodesic from the point to the subset $\Gamma$. These minimizing geodesics do not intersect with $\Sigma_t$ except at the initial points by definition. Moreover, they do not intersect each other in the interior of $M$, as geodesics would fail to minimize distance past a common point in the interior of $M$. Define $l(x)$ to be the infimum of the distances between a point $x\in \Sigma_t$ and the first intersection points with the boundary along all minimizing geodesics from $x$ to $\Gamma$, and to be infinity if any minimizing geodesic from $x$ to $\Gamma$ does not intersect $\partial M-\overline{\Gamma}$. 

For sufficiently small $\epsilon>0$ chosen later, denote 
$$\Sigma_t(\epsilon)=\big\{x\in \Sigma_t: \frac{\epsilon}{2}<l(x)\leqslant \epsilon \big\}.$$
Denote by $U(\Sigma_t(\epsilon))$ the set of all points on all minimizing geodesics from $\Sigma_t(\epsilon)$ to $\Gamma$ and consider the set $U(\Sigma_t(\epsilon))\cap \Sigma_{t^{\prime}}$ for $t^{\prime}\in [t-\epsilon/4,t)$. Clearly the set $U(\Sigma_t(\epsilon))\cap \Sigma_{t^{\prime}}$ does not intersect with $\partial M$ by definition. Furthermore, it is contained in the $C(n,\|R_M\|_{C^1},\|S\|_{C^1})\epsilon^2$-neighborhood of the boundary $\partial M$ if $\epsilon$ is not greater than $\epsilon_0(n,\|R_M\|_{C^1},\|S\|_{C^1},i_0)$, due to Lemma \ref{geodiff}. 

Since the distance function $d(\cdot,\Gamma)$ is Lipschitz with the Lipschitz constant 1, it is differentiable almost everywhere by Rademacher's theorem and its gradient has length at most $1$. The existence of minimizing geodesics from $\Gamma$ yields that the gradient of $d(\cdot,\Gamma)$ has length at least 1 wherever it exists. 
Hence the gradient of $d(\cdot,\Gamma)$ has unit length almost everywhere. We apply the Co-Area formula (e.g. Theorem 3.1 in \cite{F}) to the sets $U(\Sigma_t(\epsilon))\cap \Sigma_{t^{\prime}}$ with the distance function $d(\cdot,\Gamma)$. Then by Lemma \ref{areaLipschitz} and Lemma \ref{geodiff}, we have
\begin{eqnarray*}
\frac{\epsilon}{4} \vol_{n-1}(\Sigma_t(\epsilon)) &<& 5^{n-1} \int_{t-\epsilon/4}^t \vol_{n-1}\big(U(\Sigma_t(\epsilon))\cap \Sigma_{t^{\prime}}\big) dt^{\prime} \\
&=& 5^{n-1} \vol_n \Big( \bigcup_{t^{\prime}\in [t-\epsilon/4,t)} \big(U(\Sigma_t(\epsilon))\cap \Sigma_{t^{\prime}}\big) \Big) \\
&<& 5^{n-1} C(n,\|R_M\|_{C^1},\|S\|_{C^1})\epsilon^2 \vol(\partial M).
\end{eqnarray*}
Then for $\epsilon\leqslant \epsilon_0$ we get
$$\vol_{n-1}(\Sigma_t(\epsilon))<C(n,\|R_M\|_{C^1},\|S\|_{C^1},\vol(\partial M))\epsilon.$$
Hence we have an estimate on the measure of $U_t(\epsilon_0):=\{x\in \Sigma_t: l(x)\leqslant \epsilon_0\}$:
\begin{eqnarray*}
\vol_{n-1}(U_t(\epsilon_0))&=&\vol_{n-1} \big(\bigcup_{k=0}^{\infty}\Sigma_t(\epsilon_0 2^{-k}) \big)=\sum_{k=0}^{\infty} \vol_{n-1} \big(\Sigma_t(\epsilon_0 2^{-k}) \big) \\
&<& C(n,\|R_M\|_{C^1},\|S\|_{C^1},\vol(\partial M))\epsilon_0\sum_{k=0}^{\infty}2^{-k} \\
&<& C(n,\|R_M\|_{C^1},\|S\|_{C^1},i_0,\vol(\partial M)).
\end{eqnarray*}
\indent As for the other part $\Sigma_t-U_t(\epsilon_0)$, if $t>\epsilon_0$, the minimizing geodesics from the points of $\Sigma_t-U_t(\epsilon_0)$ to $\Gamma$ do not intersect the boundary within distance $\epsilon_0$. By the same argument as above, we can control the measure in question in terms of the volume of the manifold:
$$ \frac{\epsilon_0}{2} \vol_{n-1}(\Sigma_t-U_t(\epsilon_0)) < 5^{n-1}\vol(M),$$
which implies that 
$$\vol_{n-1}(\Sigma_t-U_t(\epsilon_0)) < C(n,\|R_M\|_{C^1},\|S\|_{C^1},i_0,\vol(M)).$$ 
Since the part of $\partial M(\Gamma,t)$ on the boundary is bounded by $\vol(\partial M)$, the measure estimate for $\partial M(\Gamma,t)$ follows. 

If $t\leqslant \epsilon_0$, the domain of influence is contained in the boundary normal neighborhood of width $t$. The minimizing geodesics from points of $\Sigma_t-U_t(\epsilon_0)$ to $\Gamma$ do not intersect the boundary within distance $t/2$. Then by the same argument as before, we have
$$\frac{t}{2} \vol_{n-1}(\Sigma_t-U_t(\epsilon_0))< 5^{n-1}\vol(\partial M)t,$$
which completes the measure estimate for $\partial M(\Gamma,t)$. 

The $n$-dimensional volume estimate directly follows from the measure estimate for $\partial M(\Gamma,t)$ and the Co-Area formula.
\end{proof}

Due to the Sobolev embedding theorem and Corollary \ref{initial}, we next prove Proposition \ref{wholedomain}.

\begin{proof}[Proof of Proposition \ref{wholedomain}]
Due to Corollary \ref{initial}, we only need an estimate in $M(\Gamma,T)-\Omega(2h,0,3)$. By the definition (\ref{Omegahtm}) and Proposition \ref{area}, we have
\begin{eqnarray*}
\vol\big(M(\Gamma,T)-\Omega(2h,0,3)\big)<\vol\big(M(\Gamma,T)-M(\Gamma,T-(2h)^{\frac{1}{3}}\big)+\vol(\partial M)(2h)^{\frac{1}{3}} < C h^{\frac{1}{3}}.
\end{eqnarray*}
Since $u(x,0)\in H^1(M)$, by the Sobolev embedding theorem we have for $n\geqslant 3$,
$$\|u(x,0)\|_{L^{\frac{2n}{n-2}}(M)}\leqslant C\|u(x,0)\|_{H^1(M)}\leqslant C\Lambda;$$
for $n=2$,
$$\|u(x,0)\|_{L^6(M)}\leqslant C\|u(x,0)\|_{W^{1,\frac{3}{2}}(M)}\leqslant C\Lambda.$$
Hence H\"older's inequality gives an estimate on the $L^2$-norm of $u(x,0)$ over $M(\Gamma,T)-\Omega(2h,0,3)$.
Then the proposition follows from Corollary \ref{initial}, and the regularity result for the wave equation (e.g. Theorem 2.30 in \cite{KKL}): namely,
$$\max_{t\in [-T,T]}\|u(x,t)\|_{H^1(M)}\leqslant C(T) \|u(x,0)\|_{H^1(M)}.$$
This proves Proposition \ref{wholedomain}.
\end{proof}

\section{Fourier coefficients and the multiplication by an indicator function}\label{section-projection}

In this section, we present the essential step of our reconstruction method where we compute how the Fourier coefficients of a function (with respect to the basis of eigenfunctions) change, when the function is multiplied by an indicator function of a union of balls with center points on the boundary. This step is based on the stability estimate for the unique continuation we have obtained in Section \ref{section-uc}. The results in this section will be applied to study the stability of the manifold reconstruction from boundary spectral data in the next section.

\smallskip
Let $M$ be a compact Riemannian manifold with smooth boundary $\partial M$. Given a small number $\eta>0$, we choose subsets of $\partial M$ in the following way. Suppose $\{\Gamma_i\}_{i=1}^N$ are disjoint open connected subsets of $\partial M$ satisfying
$$\partial M =\bigcup_{i=1}^N \overline{\Gamma}_i, \quad \textrm{diam}(\Gamma_i)\leqslant \eta,$$
where the diameter is measured with respect to the distance of $M$. Assume that every $\Gamma_i$ contains a ball (of $\partial M$) of radius $\eta/6$. Without loss of generality, we assume every $\partial\Gamma_i$ is smooth embedded and admits a boundary normal neighborhood of width $\eta/10$. This is because one always has the choice to propagate the unique continuation from the smaller ball of radius $\eta/6$. An error of order $\eta$ does not affect our final result. 

Let $\alpha=(\alpha_0,\alpha_1,\cdots,\alpha_N)$ with $\alpha_k\in [\eta, D]\cup\{0\}\; (k=0,\cdots,N)$ be a multi-index, where $D$ is the upper bound for the diameter of $M$. Set $\Gamma_0=\partial M$. We define the domain of influence associated with $\alpha$ by
\begin{equation}\label{Malpha}
M_{\alpha}:=\bigcup_{k=0}^N M(\Gamma_k, \alpha_k)=\bigcup_{k=0}^N \big\{ x \in M: d(x,\Gamma_k) < \alpha_k \big\}.
\end{equation}
We will only be concerned with (nonempty) domains of influence with the initial time range $\alpha_k\geqslant \eta$. Hence for sufficiently small $\eta$ explicitly depending on geometric parameters, Proposition \ref{wholedomain} applies with $h<\eta/100$, since $i_b(\overline{\Gamma}_k)\geqslant \eta/10$ for all $k\geqslant 1$ by assumption.

\smallskip
We are given a function $u\in H^3(M)$ with
\begin{equation*}\label{aprior}
\|u\|_{L^2(M)}=1,\;\;\|u\|_{H^3(M)}\leqslant \Lambda.
\end{equation*}

\begin{u0}\label{def-u0}
For a small parameter $\gamma\in (0,N^{-2})$, we can construct a function $u_0\in H^3(M)$ such that
$$u_0 |_{M_{\alpha}}=0, \;\;\; u_0 |_{M^c_{\alpha+\gamma}}=u, \;\;\; \|u_0\|_{L^2(M)}\leqslant 1,$$
\begin{equation}\label{Hnorm}
\|u_0\|_{H^s(M)}\leqslant C_0 \Lambda \gamma^{-s}, \textrm{ for }s\in [1,3],
\end{equation}
where $\alpha+\gamma=(\alpha_0+\gamma,\alpha_1+\gamma,\cdots,\alpha_N+\gamma)$, and $C_0$ is a constant explicitly depending on geometric parameters. 
\end{u0}
\begin{proof}
Let $\{x_l\}$ be a maximal $\gamma/2$-separated set in $M$, and $\{\phi_l\}$ be a partition of unity subordinate to the open cover $\{B_{\gamma/2}(x_l)\}$ of $M$ such that $\|\phi_l\|_{C^s}\leqslant C \gamma^{-s}$. Then the desired function $u_0$ can be defined as
\begin{equation}\label{def-u0-partition}
u_0(x)=\sum_{\textrm{supp}(\phi_l)\cap M_{\alpha}=\emptyset} \phi_l(x) u(x)\, ,\quad x\in M.
\end{equation}
The first three conditions are clearly satisfied. 

To prove the $H^s$-norm condition, we only need to show that the number of nonzero terms in the sum (\ref{def-u0-partition}) is uniformly bounded. Given an arbitrary point $x\in M$, any $B_{\gamma/2}(x_l)$ with $\phi_l(x)\neq 0$ is contained in $B_{\gamma}(x)$. By the definition of a $\gamma/2$-separated set, $\{B_{\gamma/4}(x_l)\}$ do not intersect with each other. Hence it suffices to estimate the number of disjoint balls of radius $\gamma/4$ in a ball of radius $\gamma$. For sufficiently small $\gamma$, the volume of a ball of radius $\gamma$ is bounded from both sides by $C\gamma^n$, which yields that the maximal number of balls is bounded by a constant independent of $\gamma$. To obtain an explicit estimate, it is convenient to work in a Riemannian extension of $M$, for instance in $\widetilde{M}$ defined in Lemma \ref{extensionmetric}. Then an explicit estimate for the maximal number follows from Lemma \ref{distances} and (\ref{Jacobian}).
\end{proof}

Note that due to Proposition \ref{area}, we have
\begin{equation}\label{layervolume}
\vol(M_{\alpha+\gamma}-M_{\alpha})< (N+1)C_5\gamma < 2C_5\gamma^{\frac{1}{2}}.
\end{equation}

\subsection{Approximation results with spectral data without error}\hfill

\smallskip
Suppose the first $J$ Neumann boundary spectral data $\{\lambda_j,\varphi_j|_{\partial M}\}_{j=1}^{J}$ are known without error. Let $u\in H^3(M)$ be a given function with $\|u\|_{L^2(M)}=1$ and $\|u\|_{H^3(M)}\leqslant \Lambda$. Let $u_0$ be defined in Lemma \ref{def-u0}. We define $u_{J}$ to be the projection of $u_0$ onto the first $J$ eigenspaces $\mathcal{V}_{J}= \textrm{span}\{\varphi_1,\cdots,\varphi_{J}\}\subset C^{\infty}(M)$ with respect to the $L^2(M)$-norm:
\begin{equation}\label{u2}
u_{J}=\sum_{j=1}^{J}\langle u_0,\varphi_j\rangle \varphi_j \in \mathcal{V}_{J}.
\end{equation}

We consider the following initial value problem for the wave equation with the Neumann boundary condition:
\begin{eqnarray*}
\partial_t^2 W -\Delta_g W &=& 0,\quad \textrm{  on } \textrm{int}(M)\times \mathbb{R},  \\
\frac{\partial W}{\partial\mathbf{n}}\big|_{\partial M\times \mathbb{R}} &=& 0, \quad \partial_t W|_{t=0}=0,\\
W|_{t=0}&=& v.
\end{eqnarray*}
Denote by $W(v)$ the solution of the wave equation above with the initial value $v$. Then we define $\mathcal{U}$ to be the set of initial values $v\in\mathcal{V}_{J}$ for which the corresponding waves $W(v)$ are small at all $\Gamma_{k}\times[-\alpha_k,\alpha_k]$: namely,
\begin{equation}\label{Udef}
\mathcal{U}(J,\Lambda,\gamma,\varepsilon_1)=\bigcap_{k=0}^ N \big\{v\in \mathcal{V}_{J}: \|v\|_{H^1(M)}\leqslant 3C_0 \Lambda\gamma^{-3},\; \|W(v)\|_{H^{2,2}(\Gamma_{k}\times [-\alpha_k,\alpha_k])}\leqslant \varepsilon_1 \big\}.
\end{equation}
When the parameters $J,\Lambda,\gamma,\varepsilon_1$ are clearly specified in a certain context, we denote this set by simply $\mathcal{U}$ for short. 

Note that since functions in $\mathcal{V}_J$ are smooth on $M$, the wave $W(v)$ for $v\in \mathcal{V}_J$ is also smooth and hence its $H^{2,2}$-norm is well-defined. Given the Fourier coefficients of $v\in \mathcal{V}_{J}$, the conditions of $\mathcal{U}$ can be checked only using the boundary spectral data. In fact, if a function $v$ has the form $v=\sum_{j=1}^{J}v_j \varphi_j$, then $\|v\|_{H^1(M)}=\sum_{j=1}^{J}(1+\lambda_j)v_j^2$, and the wave $W(v)$ over $\partial M$ is given by
\begin{equation}\label{waveboundary}
W(v)(x,t) \big|_{\partial M\times \mathbb{R}}= \sum_{j=1}^{J} v_j \cos(\sqrt{\lambda_j} t) \varphi_j(x)|_{\partial M}.
\end{equation}

For convenience, we use the following equivalent Sobolev norm (e.g. Theorem 2.22 in \cite{KKL}) for a function $v\in H^s(M)$ with the Fourier expansion $v=\sum_{j=1}^{\infty}v_j\varphi_j$:
\begin{equation}\label{Hkdef}
\|v\|_{H^{s}(M)}^2=\sum_{j=1}^{\infty} (1+\lambda_j^s) v_j^2,\textrm{ for }s\in [1,3].
\end{equation}

\begin{smallinitial}\label{smallinitial}
Let $u\in H^3(M)$ be a given function with $\|u\|_{H^3(M)}\leqslant \Lambda$, and $u_0,u_{J}$ be defined in Lemma \ref{def-u0} and (\ref{u2}). Then for any $\varepsilon_1>0$, there exists $J_0=J_0(D,\Lambda,\gamma,\varepsilon_1)$ such that $u_{J}\in \mathcal{U}(J,\Lambda,\gamma,\varepsilon_1)$ for any $J\geqslant J_0$.
\end{smallinitial}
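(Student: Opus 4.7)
The plan is to verify the two defining conditions of $\mathcal{U}(J,\Lambda,\gamma,\varepsilon_1)$ for $v=u_J$ when $J$ is sufficiently large. The first, $\|u_J\|_{H^1(M)}\le 3C_0\Lambda\gamma^{-3}$, is immediate: since $u_J$ is the $L^2$-orthogonal projection of $u_0$ onto $\mathcal{V}_J$, the spectral characterization of the $H^1$-norm (\ref{Hkdef}) gives $\|u_J\|_{H^1(M)}\le \|u_0\|_{H^1(M)}\le C_0\Lambda\gamma^{-1}\le 3C_0\Lambda\gamma^{-3}$ by Lemma \ref{def-u0} and $\gamma<1$, uniformly in $J$.

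The heart of the argument is the wave-smallness condition on each $\Gamma_k\times[-\alpha_k,\alpha_k]$. If $\alpha_k=0$ the interval degenerates and the condition is vacuous, so assume $\alpha_k\ge\eta$. By the construction in Lemma \ref{def-u0}, $u_0$ vanishes identically on the open set $M(\Gamma_k,\alpha_k)\subset M_\alpha$, that is, in a full neighborhood of $\Gamma_k$ of width $\alpha_k$. By finite speed of propagation (the domain-of-dependence principle) for the Neumann wave equation, this forces $W(u_0)\equiv0$ on $\Gamma_k\times[-\alpha_k,\alpha_k]$. Hence on that cylinder $W(u_J)$ equals the Fourier tail
\[
W(u_J-u_0)(x,t)=-\sum_{j>J}\langle u_0,\varphi_j\rangle\cos(\sqrt{\lambda_j}\,t)\,\varphi_j(x),
\]
and it suffices to bound this tail in the $H^{2,2}$-norm by $\varepsilon_1$. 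To do so I would combine the trace inequality $\|\cdot|_{\partial M}\|_{H^2(\partial M)}\le C\|\cdot\|_{H^{5/2}(M)}$, the wave-equation identity $\partial_t^2W=\Delta W$ (to convert boundary time derivatives into spatial ones), and hidden-regularity estimates for the Neumann wave equation as in \cite{LT}, which together bound the $H^{2,2}$-norm on the cylinder by a weighted $\ell^2$-tail of $\{\langle u_0,\varphi_j\rangle\}_{j>J}$ whose summation is controlled by the classical $H^3$-bound on $u_0$ from Lemma \ref{def-u0}. Since the tail decays to zero as $J\to\infty$, we obtain the required $J_0=J_0(D,\Lambda,\gamma,\varepsilon_1)$, with $D$ entering through the time range $\alpha_k\le D$.

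The main obstacle is the tension between classical and spectral Sobolev regularity of $u_0$: because $u_0$ need not satisfy the Neumann boundary condition on those $\Gamma_{k'}$ with $\alpha_{k'}=0$, its classical $H^3(M)$ bound does not translate directly into spectral $H^{5/2+\delta}$-summability of its Fourier coefficients, which would be the naive hypothesis for the $H^{2,2}$-trace bound above. This difficulty is resolved by again invoking finite speed of propagation: the trace on $\Gamma_k\times[-\alpha_k,\alpha_k]$ depends only on $u_J$ restricted to $M(\Gamma_k,\alpha_k)$, where the relevant boundary compatibility conditions for $u_0$ are trivially met because $u_0$ vanishes there to infinite order. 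Alternatively, one may apply the Lasiecka-Triggiani hidden regularity directly to $\Delta^m u_0$ (with spectral projections) to bypass the need for spectral Sobolev bounds on $u_0$ beyond those supplied by its classical $H^3$-norm, thus closing the argument without imposing extra boundary conditions.
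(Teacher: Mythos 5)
Your overall strategy matches the paper's: you verify the $H^1$-condition by the spectral characterization \eqref{Hkdef} and monotonicity of the projection, and you verify the $H^{2,2}$-condition by noting $W(u_0)\equiv 0$ on $\Gamma_k\times(-\alpha_k,\alpha_k)$ by finite speed of propagation, so $W(u_J)$ on that cylinder equals the tail $W(u_J-u_0)$, which you then control by the trace theorem and spectral decay. That is exactly the paper's argument; the paper uses $H^{11/4}(M)\to H^2(\partial M)$ (rather than the borderline $H^{5/2}$), pairs $\lambda_j^{11/4}$ against $\lambda_J^{1/4}\cdot\lambda_j^{3}$ to extract a $\lambda_J^{-1/4}$ decay factor, and treats $\partial_t W$ and $\partial_t^2 W$ by the same mechanism applied to the factor $\cos(\sqrt{\lambda_j}\,t)$, all relying on the norm equivalence \eqref{Hkdef} for $s\in[1,3]$ cited from \cite{KKL}.

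Where your write-up goes astray is the paragraph about the ``main obstacle,'' and in particular the first proposed resolution. The identity
\[
W(u_J)|_{\Gamma_k\times[-\alpha_k,\alpha_k]}=-\sum_{j>J}\langle u_0,\varphi_j\rangle\cos(\sqrt{\lambda_j}\,t)\,\varphi_j(x)\big|_{\Gamma_k}
\]
shows that what you must bound is a trace of a Fourier tail whose coefficients $\langle u_0,\varphi_j\rangle$ are \emph{global} quantities: finite speed of propagation tells you $W(u_0)$ vanishes on the cylinder, but it does not localize or alter the coefficients $\langle u_0,\varphi_j\rangle$, which are computed by integrating $u_0\varphi_j$ over all of $M$. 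In particular the decay of $\sum_{j>J}\lambda_j^{11/4}\langle u_0,\varphi_j\rangle^2$ cannot be inferred from the fact that $u_0$ vanishes to infinite order on $M(\Gamma_k,\alpha_k)$: if $u_0$ failed to satisfy compatibility conditions elsewhere on $\partial M$, the tail would decay slowly \emph{everywhere}, including on $\Gamma_k$. So the first ``resolution'' conflates the local support of $W(u_0)$ with the global spectral content of $u_0$. The second alternative (hidden regularity for $\Delta^m u_0$) is not developed enough to evaluate. The paper sidesteps this discussion entirely: it simply takes \eqref{Hkdef} as the relevant (asserted, cited) equivalence of norms for $s\in[1,3]$, so the bound $\sum_j\lambda_j^3\langle u_0,\varphi_j\rangle^2\lesssim\|u_0\|_{H^3}^2\leq C_0^2\Lambda^2\gamma^{-6}$ from Lemma \ref{def-u0} is available at once, and there is no further obstacle. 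Your core argument is therefore correct and identical to the paper's, but the additional ``obstacle/resolution'' layer you added is unnecessary and, as written, not sound.
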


\begin{proof}
Assume $J$ is sufficiently large such that $\lambda_J>1$. Suppose $u_0,u_J$ have expansions:
$$u_0=\sum_{j=1}^{\infty} d_j \varphi_j, \quad u_{J}=\sum_{j=1}^{J} d_j \varphi_j \in \mathcal{V}_{J}. $$
By (\ref{Hkdef}) we know
\begin{eqnarray}\label{H3}
\|u_0\|_{H^3(M)}^2 \geqslant \sum_{j=J+1}^{\infty} d_j^2 \lambda_j^3 \geqslant \lambda_{J} \sum_{j=J+1}^{\infty} d_j^2 \lambda_j^2,
\end{eqnarray}
and hence by (\ref{Hnorm}),
\begin{equation}\label{H2error}
\|u_0-u_{J}\|_{H^2(M)}^2 =  \sum_{j=J+1}^{\infty} (1+\lambda_j^2) d_j^2 \leqslant 2\sum_{j=J+1}^{\infty} \lambda_j^2 d_j^2 \leqslant 2C_0^2 \Lambda^2 \lambda_{J}^{-1}\gamma^{-6}.
\end{equation}
As a consequence, $u_{J}$ satisfies the $H^1$-norm condition of $\mathcal{U}$ (\ref{Udef}):
\begin{eqnarray*}
\|u_{J}\|_{H^1(M)} &\leqslant& \|u_0\|_{H^1(M)}+\|u_0-u_{J}\|_{H^1(M)} \\
&\leqslant& C_0\Lambda \gamma^{-1}+\sqrt{2}C_0 \Lambda \lambda_{J}^{-1/2} \gamma^{-3} < 3C_0\Lambda \gamma^{-3}.
\end{eqnarray*}

Next we show that $u_{J}$ also satisfies the $H^{2,2}$-norm condition of $\mathcal{U}$ (\ref{Udef}) for sufficiently large $J$. This condition is trivially satisfied when $\alpha_k=0$.
Due to the finite speed propagation of waves, the condition $u_0 |_{M_{\alpha}}=0$ implies that $W(u_0)|_{\Gamma_k\times (-\alpha_k,\alpha_k)}=0$ for all $k$ with $\alpha_k\neq 0$. Thus it suffices to show that $W(u_0)-W(u_{J})$ has small $H^{2,2}$-norm on $\partial M\times [-D,D]$. 

Since $u_0\in H^3(M)$, the regularity result for the wave equation (e.g. Theorem 2.45 in \cite{KKL}) shows that
$$W(u_0)\big|_{M\times [-D,D]} \in C([-D,D];H^3(M))\cap C^3([-D,D];L^2(M)).$$
Hence from (\ref{waveboundary}), we have
$$\big(W(u_0)-W(u_{J})\big)(x,t) \big|_{\partial M\times [-D,D]}=\sum_{j=J+1}^{\infty} d_j\cos(\sqrt{\lambda_j}t)\varphi_j(x)|_{\partial M}.$$
Then the trace theorem and (\ref{Hkdef}) imply that
\begin{eqnarray*}
\|W(u_0)-W(u_{J})\|^2_{H^{2}(\partial M)} &\leqslant& C\|W(u_0)-W(u_{J})\|^2_{H^{\frac{11}{4}}(M)} \\
&=& C\sum_{j=J+1}^{\infty}(1+\lambda_j^{\frac{11}{4}})d_j^2\cos^2(\sqrt{\lambda_j}t) \\
&\leqslant& 2C\sum_{j=J+1}^{\infty} d_j^2 \lambda_j^{\frac{11}{4}}\leqslant C(\Lambda)\lambda_{J}^{-\frac{1}{4}}\gamma^{-6},
\end{eqnarray*}
where the last inequality is due to a similar estimate to (\ref{H3}).
For the time derivatives, the trace theorem and (\ref{Hkdef}) imply
\begin{eqnarray*}
\|\partial_t^2 W(u_0)-\partial_t^2 W(u_{J})\|^2_{L^2(\partial M)} &\leqslant& C \|\partial_t^2 W(u_0)-\partial_t^2 W(u_{J})\|^2_{H^{\frac{3}{4}}(M)} \\
&=& C\sum_{j=J+1}^{\infty}(1+\lambda_j^{\frac{3}{4}})d_j^2\lambda_j^2 \cos^2(\sqrt{\lambda_j}t) \\
&\leqslant& 2C\sum_{j=J+1}^{\infty}d_j^2\lambda_j^{\frac{11}{4}} \leqslant C(\Lambda)\lambda_{J}^{-\frac{1}{4}}\gamma^{-6}.
\end{eqnarray*}
Similarly by using (\ref{H3}),
\begin{eqnarray*}
\|\partial_t W(u_0)-\partial_t W(u_{J})\|^2_{L^2(\partial M)} \leqslant C(\Lambda)\lambda_{J}^{-1}\gamma^{-6}.
\end{eqnarray*}
Hence by the definition of $H^{2,2}$-norm (\ref{H21}),
\begin{eqnarray*}
\|W(u_0)-W(u_{J})\|^2_{H^{2,2}(\partial M\times [-D,D])} &\leqslant& 2D\,C(\Lambda)(2\lambda_{J}^{-\frac{1}{4}}\gamma^{-6}+\lambda_{J}^{-1}\gamma^{-6}) \\
&\leqslant& C(D,\Lambda)\lambda_{J}^{-\frac{1}{4}}\gamma^{-6}.
\end{eqnarray*}
Therefore for all $k=0,\cdots,N$ with $\alpha_k\neq 0$, we have
\begin{eqnarray*}
\|W(u_J)\|^2_{H^{2,2}(\Gamma_k\times [-\alpha_k,\alpha_k])} = \|W(u_0)-W(u_J)\|^2_{H^{2,2}(\Gamma_k\times [-\alpha_k,\alpha_k])} \leqslant C(D,\Lambda)\lambda_{J}^{-\frac{1}{4}}\gamma^{-6}.
\end{eqnarray*}
For any $\varepsilon_1>0$, choose sufficiently large $J$ such that $\lambda_{J}\geqslant C(D,\Lambda)\gamma^{-24}\varepsilon_1^{-8}$ and the lemma follows.
\end{proof}

\begin{remark}
The choice of $J_0$ in Lemma \ref{smallinitial} also depends on geometric parameters, which is brought in when applying the trace theorem. Those relevant parameters are part of the parameters we considered in Section \ref{section-uc}, so we omit them in this section for brevity. The same goes for the next two propositions, where the dependency on geometric parameters is brought in when applying Proposition \ref{wholedomain}.
\end{remark}

We prove the following approximation result for finite spectral data.

\begin{projection}\label{projection}
Let $u\in H^3(M)$ be a given function with $\|u\|_{L^2(M)}=1$ and $\|u\|_{H^3(M)}\leqslant \Lambda$. Let $\alpha=(\alpha_0,\cdots,\alpha_N)$, $\alpha_k\in [\eta, D]\cup\{0\}$ be given, and $M_{\alpha}$ be defined in (\ref{Malpha}). Then for any $\varepsilon>0$, there exists sufficiently large $J=J(D,N,\Lambda,\eta,\varepsilon)$, such that by only knowing the first $J$ Neumann boundary spectral data $\{\lambda_j,\varphi_j|_{\partial M}\}_{j=1}^J$ and the first $J$ Fourier coefficients $\{a_j\}_{j=1}^J$ of $u$, we can find $\{b_j\}_{j=1}^{J}$ and $u^a=\sum_{j=1}^{J}b_j \varphi_j$, such that
$$\|u^a-\chi_{M_{\alpha}}u\|_{L^2(M)} <\varepsilon,$$
where $\chi$ denotes the characteristic function.
\end{projection}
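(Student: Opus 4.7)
The natural approximation target is the $L^2$-orthogonal projection $\Pi_J(\chi_{M_\alpha}u)=\sum_{j=1}^J\langle\chi_{M_\alpha}u,\varphi_j\rangle\,\varphi_j$ onto $\mathcal V_J$, which tends to $\chi_{M_\alpha}u$ in $L^2(M)$ as $J\to\infty$ by Parseval. Its Fourier coefficients are not directly accessible from the given data, so the plan is to produce an adequate substitute as the minimizer of a convex optimization over the admissible set $\mathcal U(J,\Lambda,\gamma,\varepsilon_1)$ of (\ref{Udef}), combined with the computable Fourier truncation $\Pi_Ju=\sum_{j=1}^J a_j\varphi_j$.

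First I would invoke Lemma \ref{def-u0} to build the cut-off $u_0\in H^3(M)$ for a small parameter $\gamma$ to be chosen. The Sobolev embedding applied on the thin layer $M_{\alpha+\gamma}\setminus M_\alpha$, whose volume is $O(\gamma^{1/2})$ by (\ref{layervolume}), yields $\|u_0-u\|_{L^2(M)}^2\le\|u\|_{L^2(M_\alpha)}^2+C\Lambda^2\gamma^{1/n}$, and hence by Bessel the element $u_J:=\Pi_Ju_0$ of (\ref{u2}) satisfies $\|u_J-\Pi_Ju\|_{L^2(M)}^2\le\|u\|_{L^2(M_\alpha)}^2+C\Lambda^2\gamma^{1/n}$. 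By Lemma \ref{smallinitial}, $u_J\in\mathcal U$ once $J\ge J_0(D,\Lambda,\gamma,\varepsilon_1)$, so $\mathcal U\ne\emptyset$. The crucial point is that the constraints defining $\mathcal U$ are computable from the given data alone: the $H^1$-bound through the equivalent norm (\ref{Hkdef}), and the $H^{2,2}$-bound on $W(v)$ through the explicit boundary series (\ref{waveboundary}). I therefore set
$$w^*:=\arg\min_{v\in\mathcal U}\|v-\Pi_Ju\|_{L^2(M)},\qquad u^a:=\Pi_Ju-w^*,$$
which is computable from the first $J$ Fourier coefficients of $u$ and the first $J$ boundary spectral data.

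For the error bound I would split orthogonally,
$$\|u^a-\chi_{M_\alpha}u\|_{L^2(M)}^2=\|\Pi_Ju-w^*-u\|_{L^2(M_\alpha)}^2+\|\Pi_Ju-w^*\|_{L^2(M_\alpha^c)}^2.$$
The $M_\alpha$ piece is handled by the triangle inequality, using $\|\Pi_Ju-u\|_{L^2(M)}\le\Lambda\lambda_J^{-3/2}$ (from (\ref{Hkdef}) and the $H^3$-bound on $u$) and $\|w^*\|_{L^2(M_\alpha)}$, the latter made small by applying Proposition \ref{wholedomain} to the wave $W(w^*)$ on each slab $\Gamma_k\times[-\alpha_k,\alpha_k]$ with $\alpha_k\ne0$ and summing over $k$; the two defining bounds of $\mathcal U$ supply precisely the quantitative inputs required by that proposition. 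The hard part is the $M_\alpha^c$ piece, because the minimization is performed in the global $L^2(M)$-norm and the natural upper bound on $\|w^*-\Pi_Ju\|_{L^2(M)}^2$ already contains the uncontrollable mass $\|u\|_{L^2(M_\alpha)}^2$. The device is to couple the minimality inequality $\|w^*-\Pi_Ju\|^2_{L^2(M)}\le\|u\|^2_{L^2(M_\alpha)}+C\Lambda^2\gamma^{1/n}$ with the Pythagorean lower bound
$$\|w^*-\Pi_Ju\|^2_{L^2(M_\alpha)}\ge\|\Pi_Ju\|^2_{L^2(M_\alpha)}-2\|w^*\|_{L^2(M_\alpha)}\,\|\Pi_Ju\|_{L^2(M_\alpha)},$$
and with the convergence $\|\Pi_Ju\|^2_{L^2(M_\alpha)}\to\|u\|^2_{L^2(M_\alpha)}$ as $J\to\infty$; after subtraction the two copies of $\|u\|^2_{L^2(M_\alpha)}$ cancel, leaving an estimate of the form $\|\Pi_Ju-w^*\|^2_{L^2(M_\alpha^c)}=O(\gamma^{1/n})+O(\|w^*\|_{L^2(M_\alpha)})+o_J(1)$. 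The conclusion then closes by first fixing $\gamma$ so small that $C\Lambda^2\gamma^{1/n}<\varepsilon^2/9$, then $\varepsilon_1$ so small that Proposition \ref{wholedomain} forces $\|w^*\|_{L^2(M_\alpha)}<\varepsilon/9$, and finally $J\ge J_0$ large enough to absorb the remaining Fourier tails.
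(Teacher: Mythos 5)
Your proposal is correct and follows essentially the same route as the paper: construct $u_0$ and $u_J=\Pi_J u_0$ via Lemma \ref{def-u0} and Lemma \ref{smallinitial}, minimize the $L^2$-distance to the (truncated) data over $\mathcal U$, control the minimizer on $M_\alpha$ via Proposition \ref{wholedomain}, and cancel the uncontrolled mass $\|u\|^2_{L^2(M_\alpha)}$ by combining the minimality upper bound with a reverse triangle/Pythagorean lower bound. The only cosmetic difference is that you minimize $\|v-\Pi_J u\|_{L^2(M)}$ whereas the paper minimizes $\|w-u\|_{L^2(M)}$; since $w\in\mathcal V_J$ these differ by the constant $\|u-\Pi_J u\|^2$ and have the same minimizer, and the final candidate $u^a=\Pi_J u - w^*=\sum_{j\le J}(a_j-c_j)\varphi_j$ coincides with the paper's.
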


\begin{proof}
We consider the following minimization problem in $\mathcal{U}(J,\Lambda,\gamma,\varepsilon_1)$ (denoted by $\mathcal{U}$ from now on) defined in (\ref{Udef}), where the parameters $J,\gamma,\varepsilon_1$ will be determined later. Let $u_{min}\in\mathcal{U}$ be the solution of the minimization problem
\begin{equation}\label{minimization}
\|u_{min}-u\|_{L^2(M)}=\min_{w\in \mathcal{U}} \|w-u\|_{L^2(M)}.
\end{equation}
Observe that given the first $J$ Fourier coefficients of $u$, finding the minimum of the norm $\|w-u\|_{L^2(M)}$ is equivalent to finding the minimum of a polynomial in terms of the ($J$ number of) Fourier coefficients of $w$. Since the conditions of $\mathcal{U}$ (\ref{Udef}) can be checked with finite boundary spectral data by (\ref{waveboundary}) and (\ref{Hkdef}), the minimization problem transforms into a polynomial minimization problem in a bounded domain in $\mathbb{R}^J$ (the space of Fourier coefficients). Hence the Fourier coefficients of the minimizer $u_{min}$ are solvable by only using the finite spectral data.

\smallskip
Next, we investigate what properties this minimizer $u_{min}$ satisfies. By Proposition \ref{wholedomain} and the fact that the Neumann boundary condition is imposed, $w\in\mathcal{U}$ implies that $\|w\|_{L^2(M(\Gamma_k,\alpha_k))}< \varepsilon_2(h,\Lambda,\eta,\gamma,\varepsilon_1)$ for all $k=0,1,\cdots,N$ with $\alpha_k\neq 0$, where
\begin{equation}\label{epsilon20}
\varepsilon_2=C_3^{\frac{1}{3}}h^{-\frac{2}{9}}\exp(h^{-C_4 n}) \frac{\Lambda\gamma^{-3}+h^{-\frac{1}{2}}\varepsilon_1}{\big(\log (1+h^{\frac{3}{2}}\gamma^{-3}\frac{\Lambda}{\varepsilon_1})\big) ^{\frac{1}{6}}}+C_5\Lambda\gamma^{-3} h^{\frac{1}{3n+3}}.
\end{equation}
Hence,
$$\|w\|_{L^2(M_{\alpha})} < (N+1)\varepsilon_2.$$
Then for any $w\in \mathcal{U}$ and in particular for $w=u_{min}$,
\begin{eqnarray} \label{lower}
\|w-u\|_{L^2(M)}^2 &=&  \|w-u\|_{L^2(M_{\alpha})}^2+  \|w-u\|_{L^2(M_{\alpha}^c)}^2 \nonumber \\
&>& \|u\|^2_{L^2(M_{\alpha})}-4N\varepsilon_2 +  \|w-u\|^2_{L^2(M_{\alpha}^c)}.
\end{eqnarray}

On the other hand the following estimate holds for $u_{J}$:
\begin{eqnarray*}
\|u_{J}-u\|_{L^2(M)}^2 &\leqslant& (\|u_{J}-u_0\|_{L^2(M)} + \|u_0-u\|_{L^2(M)})^2 \\
&\leqslant& \|u_{J}-u_0\|^2_{L^2(M)}+4\|u_{J}-u_0\|_{L^2(M)}  + \|u_0-u\|^2_{L^2(M)} \\
&\leqslant & C(\Lambda) \lambda_{J}^{-\frac{1}{2}}\gamma^{-2}+ \|u\|^2_{L^2(M_{\alpha})}+ \|u_0-u\|^2_{L^2(M_{\alpha+\gamma}-M_{\alpha})} ,
\end{eqnarray*}
where the last inequality is due to an estimate for $\|u_{J}-u_0\|_{L^2}$ similar to \eqref{H2error}, and the definition of $u_0$. The definition of partition of unity in (\ref{def-u0-partition}), the Sobolev embedding theorem (see the proof of Proposition \ref{wholedomain}) and (\ref{layervolume}) yield that
$$\|u_0-u\|_{L^2(M_{\alpha+\gamma}-M_{\alpha})}\leqslant \|u\|_{L^2(M_{\alpha+\gamma}-M_{\alpha})} < 2C_5\Lambda \gamma^{\frac{1}{2\max\{n,3\}}}.$$
Hence,
$$\|u_{J}-u\|_{L^2(M)}^2 < C(\Lambda) \lambda_{J}^{-\frac{1}{2}}\gamma^{-2}+ \|u\|^2_{L^2(M_{\alpha})}+ 4C_5^2\Lambda^2 \gamma^{\frac{1}{n+1}}.$$
For sufficiently large $J=J(D,\Lambda,\gamma,\varepsilon_1)$, we have $u_{J}\in \mathcal{U}$ by Lemma \ref{smallinitial}. This indicates that the minimizer $u_{min}$ also satisfies
\begin{equation} \label{upper}
\|u_{min}-u\|_{L^2(M)}^2 < C(\Lambda) \lambda_{J}^{-\frac{1}{2}}\gamma^{-2}+ \|u\|^2_{L^2(M_{\alpha})}+ 4C_5^2\Lambda^2 \gamma^{\frac{1}{n+1}}.
\end{equation}

Combining the two inequalities (\ref{lower}) and (\ref{upper}), we have
$$\|u_{min}-u\|^2_{L^2(M_{\alpha}^c)} < 4N\varepsilon_2 +C(\Lambda) \lambda_{J}^{-\frac{1}{2}}\gamma^{-2}+ 4C_5^2\Lambda^2\gamma^{\frac{1}{n+1}}.$$
The fact that $\|u_{min}\|_{L^2(M_{\alpha})}< N\varepsilon_2$ implies that
\begin{eqnarray*}
\|\chi_{M_{\alpha}}u-(u-u_{min})\|^2_{L^2(M)}&=&\|u_{min}-\chi_{M_{\alpha}^c}u\|^2_{L^2(M)} \\
&=& \|u_{min}-\chi_{M_{\alpha}^c}u\|^2_{L^2(M_{\alpha}^c)}+\|u_{min}\|^2_{L^2(M_{\alpha})} \\
&<& 4N\varepsilon_2 +C(\Lambda) \lambda_{J}^{-\frac{1}{2}}\gamma^{-2}+4C_5^2\Lambda^2\gamma^{\frac{1}{n+1}} +4N^2\varepsilon_2^2 .
\end{eqnarray*}

From our discussion at the beginning of this proof, we know the Fourier coefficients of $u_{min}$ is solvable. Suppose we have found a minimizer $u_{min}=\sum_{j=1}^{J} c_j \varphi_j$. Since the first $J$ Fourier coefficients of $u$ are given as $a_j$, we can replace the function $u-u_{min}$ in the last inequality by $\sum_{j=1}^J a_j \varphi_j-u_{min}$ and the error in $L^2$-norm is controlled by $\Lambda\lambda_{J}^{-1/2}$. Hence by the Cauchy-Schwarz inequality, we obtain
\begin{equation}\label{ualast}
\big\|\chi_{M_{\alpha}}u-\sum_{j=1}^{J}(a_j-c_j)\varphi_j \big\|^2_{L^2(M)} < 8N\varepsilon_2+8N^2\varepsilon_2^2 +C(\Lambda) \lambda_{J}^{-\frac{1}{2}}\gamma^{-2}+ 8C_5^2\Lambda^2\gamma^{\frac{1}{n+1}},
\end{equation}
which makes $u^a :=\sum_{j=1}^{J} b_j \varphi_j$ with $b_j=a_j-c_j$ our desired function. 

Finally, we determine the relevant parameters. For any $\varepsilon>0$, we first choose and fix $\gamma$ such that the last (\ref{ualast}) term $8C_5^2\Lambda^2\gamma^{\frac{1}{n+1}}= \varepsilon^2/4$, and choose sufficiently large $J$ such that the third term is small than $\varepsilon^2/4$. Then we choose $\varepsilon_2$ so that the first two terms $8N\varepsilon_2+8N^2\varepsilon_2^2=\varepsilon^2/4$. Next we determine $\varepsilon_1$. We choose and fix $h<\eta/100$ such that the second term in (\ref{epsilon20}) is equal to $\varepsilon_2/2$, and choose $\varepsilon_1$ such that the first term in (\ref{epsilon20}) is equal to $\varepsilon_2/2$. By Lemma \ref{smallinitial}, there exists sufficiently large $J$ such that $u_{J}\in \mathcal{U}$, which validates all the estimates. The proposition is proved.
\end{proof}

\subsection{Approximation results with spectral data with error} \hfill

\smallskip
Now suppose that not only do we not know all the spectral data, we also only know them up to an error. More precisely, suppose we are given a set of data $\{\lambda^a_j,\varphi^a_j|_{\partial M}\}$ which is a $\delta$-approximation of the Neumann boundary spectral data, where $\lambda_j^a\in \mathbb{R}_{\geqslant 0}$ and $\varphi_j^a|_{\partial M}\in C^2(\partial M)$. By Definition \ref{deferror}, there exists a choice of Neumann boundary spectral data $\{\lambda_j,\varphi_j|_{\partial M}\}_{j=1}^{\infty}$, such that for all $j\leqslant \delta^{-1}$,
\begin{equation}\label{error-condition}
\big|\sqrt{\lambda_j}-\sqrt{\lambda_j^a}\big|<\delta,\quad \|\varphi_j - \varphi_j^a \|_{C^{0,1}(\partial M)}+ \big\|\nabla_{\partial M}^2 (\varphi_j- \varphi_j^a)|_{\partial M} \big\|< \delta.
\end{equation} 

Since $\varphi_j^{a}\in C^2(\partial M)$ by assumption, the bound on the $C^{0,1}$-norm above yields
\begin{equation}\label{close-C1}
 \|\varphi_j - \varphi_j^a \|_{C^{0}(\partial M)}+ \big|\nabla(\varphi_j - \varphi_j^a)|_{\partial M} \big|<\delta, \; \textrm{ for }j\leqslant \delta^{-1}.
\end{equation}
In a local coordinate $(x^1,\cdots,x^{n-1})$ on $\partial M$, for any $f\in C^2(\partial M)$, we have the formula
$$\big(\nabla_{\partial M}^2 f\big)(\frac{\partial}{\partial x^{k}},\frac{\partial}{\partial x^{l}})=\frac{\partial^2 f}{\partial x^{k} \partial x^l} -\sum_{i=1}^{n-1} \Gamma_{kl}^i \frac{\partial f}{\partial x^i}, \quad k,l=1,\cdots,n-1.$$
Furthermore, we can choose to work in the geodesic normal coordinate. Then the norm of the second covariant derivative (the Hessian), the formula above and (\ref{coorb}) yield a bound $C\delta$ on the second derivative of $(\varphi_j- \varphi_j^a)|_{\partial M}$:
\begin{equation}\label{close-C2}
\big|\frac{\partial^2}{\partial x^k \partial x^l} (\varphi_j- \varphi_j^a)|_{\partial M}\big| < C\delta, \; \textrm{ for }j\leqslant \delta^{-1}, \;\, k,l=1,\cdots,n-1.
\end{equation}

We prove the following approximation result analogous to Proposition \ref{projection}.

\begin{measureerror}\label{measureerror}
Let $u\in H^3(M)$ be a given function with $\|u\|_{L^2(M)}=1$ and $\|u\|_{H^3(M)}\leqslant \Lambda$. Let $\alpha=(\alpha_0,\cdots,\alpha_N)$, $\alpha_k\in [\eta, D]\cup\{0\}$ be given, and $M_{\alpha}$ be defined in (\ref{Malpha}). Then for any $\varepsilon>0$, there exists sufficiently large $J=J(D,N,\Lambda,\eta,\varepsilon)$ such that the following holds. \\
There exists $\delta=\delta(D, \vol(\partial M),N,\Lambda,J,\eta,\varepsilon)\leqslant J^{-1}$ such that by knowing a $\delta$-approximation $\{\lambda^a_j,\varphi^a_j|_{\partial M}\}$ of the Neumann boundary spectral data, and knowing the first $J$ Fourier coefficients $\{a_j\}_{j=1}^J$ of $u$, we can find $\{b_j\}_{j=1}^{J}$ and $u^a=\sum_{j=1}^{J}b_j \varphi_j$, such that
$$\|u^a-\chi_{M_{\alpha}}u\|_{L^2(M)} <\varepsilon.$$
Here the known Fourier coefficients of $u$ are with respect to $\{\varphi_j\}$ which is a choice of orthonormalized eigenfunctions satisfying (\ref{error-condition}) for $\{\lambda^a_j,\varphi^a_j|_{\partial M}\}$.
\end{measureerror}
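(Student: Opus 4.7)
The plan is to mimic the proof of Proposition \ref{projection} step by step, replacing every quantity that depends on the unknown true spectral data by its computable approximate counterpart, and controlling the error by a bridge lemma. For $v=\sum_{j=1}^{J}v_j\varphi_j\in\mathcal{V}_{J}$ I would introduce the approximate wave trace
\[
W^{a}(v)(x,t)\big|_{\partial M}:=\sum_{j=1}^{J}v_j\cos(\sqrt{\lambda_j^a}\,t)\,\varphi_j^a(x)\big|_{\partial M},
\]
the approximate Sobolev norm $\|v\|_{H^s_a}^{2}:=\sum_j(1+(\lambda_j^a)^s)v_j^2$, and the approximate admissible set
\[
\mathcal{U}^a=\bigcap_{k=0}^{N}\Big\{v\in\mathcal{V}_{J}:\|v\|_{H^1_a}\leqslant 3C_0\Lambda\gamma^{-3}+\mu,\ \|W^a(v)\|_{H^{2,2}(\Gamma_k\times[-\alpha_k,\alpha_k])}\leqslant \varepsilon_1+\mu\Big\},
\]
for a small slack $\mu>0$ to be chosen. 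Membership in $\mathcal{U}^a$ and the $\ell^2$-distance to $\mathbf{a}=(a_1,\ldots,a_J)$ are polynomial (in)equalities in $(v_1,\ldots,v_J)$ whose coefficients are computable purely from the $\delta$-approximation, so the minimization problem $\|\mathbf{v}^a_{min}-\mathbf{a}\|_{\ell^2}=\min_{\mathbf{v}\in\mathcal{U}^a}\|\mathbf{v}-\mathbf{a}\|_{\ell^2}$ can be solved from the data; its minimizer defines $b_j:=a_j-(v^a_{min})_j$ and $u^a=\sum_{j=1}^{J}b_j\varphi_j$.

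The core technical step is a bridge lemma: for $\delta$ small enough (depending on $J,\Lambda,\gamma,\mu$ and the data in the statement), one has $\mathcal{U}(J,\Lambda,\gamma,\varepsilon_1)\subset\mathcal{U}^a$, and every $v\in\mathcal{U}^a$ meets the conditions of $\mathcal{U}(J,\Lambda,\gamma,\varepsilon_1+2\mu)$ with $H^1$-bound $3C_0\Lambda\gamma^{-3}+2\mu$. To establish it, observe that any $v\in\mathcal{V}_J$ with $\|v\|_{H^1_a}\leqslant 3C_0\Lambda\gamma^{-3}+\mu$ has $|v_j|\leqslant 3C_0\Lambda\gamma^{-3}+\mu$ for each $j$, and use \eqref{error-condition}, \eqref{close-C1}, \eqref{close-C2} to estimate, for $|t|\leqslant D$,
\[
\big|\partial_t^\ell\partial_x^m\big(W(v)-W^a(v)\big)(x,t)\big|\leqslant \sum_{j=1}^{J}|v_j|\big(D^{1-\ell}\lambda_j^{\ell/2}+1\big)\cdot C\delta\,,
\]
for $\ell+m\leqslant 2$. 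Integrating in $t$ and using $\lambda_j\leqslant \lambda_J$ for $j\leqslant J$ (and Weyl's law if one needs an explicit bound on $\lambda_J$ in terms of $J$), the $H^{2,2}$-discrepancy between $W(v)$ and $W^a(v)$ is at most $C(D,N,J,\Lambda,\gamma)\delta$; choosing $\delta$ small enough in terms of $\mu$ yields both inclusions. The analogous bound for $|\lambda_j-\lambda_j^a|\leqslant\delta(2\sqrt{\lambda_J}+\delta)$ gives the $H^1_a$ bridge.

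With the bridge in hand, the proof of Proposition \ref{projection} transfers verbatim. For the upper bound: by Lemma \ref{smallinitial} and part one of the bridge, $u_J\in\mathcal{U}^a$ for $J$ large, so $\|u^a_{min}-u\|_{L^2(M)}$ inherits the estimate \eqref{upper} up to a $\lambda_J^{-1/2}\gamma^{-2}$ tail and an $O(\mu)$ term coming from the slack. For the lower bound: part two of the bridge shows $u^a_{min}$ satisfies the genuine Cauchy-data bound $\|W(u^a_{min})\|_{H^{2,2}(\Gamma_k\times[-\alpha_k,\alpha_k])}\leqslant\varepsilon_1+2\mu$, so Proposition \ref{wholedomain} delivers $\|u^a_{min}\|_{L^2(M(\Gamma_k,\alpha_k))}<\varepsilon_2$ with $\varepsilon_2$ as in \eqref{epsilon20}, and the chain of inequalities leading to \eqref{ualast} goes through. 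Choosing the parameters in the same order as in Proposition \ref{projection}, namely $\gamma$ first (so that $8C_5^2\Lambda^2\gamma^{1/(n+1)}<\varepsilon^2/5$), then $\varepsilon_2$ (forcing $h$ and $\varepsilon_1$), then $J$ large enough both for Lemma \ref{smallinitial} and to kill the $\lambda_J^{-1/2}$ tail, then $\mu$, and finally $\delta$ small enough to activate the bridge, produces $\|u^a-\chi_{M_\alpha}u\|_{L^2(M)}<\varepsilon$.

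The main obstacle is Step two, precisely the quantitative dependence of the admissible $\delta$ on $J$: two $t$-derivatives in the $H^{2,2}$-norm produce a factor $\lambda_J$, which under Weyl's law grows polynomially in $J$, while the triangle inequality across $J$ modes introduces another factor $J$; the final threshold has the form $\delta\leqslant C(D,\mathrm{vol}(\partial M),N,\Lambda,\eta,\varepsilon)J^{-p(n)}$, which is precisely the dependency recorded in the statement (and explains why $\delta$ must be allowed to depend on $J$ rather than merely on the target accuracy $\varepsilon$).
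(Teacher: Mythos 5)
Your proposal follows essentially the same route as the paper's proof: introduce an approximate admissible set $\mathcal{U}^a$ defined purely from the $\delta$-approximate spectral data, prove a two-sided comparison between $\mathcal{U}$ and $\mathcal{U}^a$ (what you call the bridge lemma, which the paper carries out as the error estimates \eqref{H1error} and \eqref{H21error}), and then transfer the minimization argument of Proposition \ref{projection} with the error parameter $\varepsilon_2$ replaced by a $\delta$-perturbed version. The only cosmetic difference is that you parametrize the widening of $\mathcal{U}^a$ by a free slack $\mu$ and absorb the $J$, $\lambda_J$ dependence into a qualitative "$\delta$ small enough" statement, whereas the paper writes the slack directly as the explicit polynomial error bounds $3\lambda_J^{1/2}\delta$ and $C_0'J\lambda_J^{3/2}\delta$ (and also carries along the constraint $\sum_j v_j^2\leqslant 1$ to keep constants small); your diagnosis that the final threshold is of the form $\delta\lesssim J^{-p(n)}$ matches the paper's bookkeeping.
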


\begin{proof}
Since we only know an approximation of the boundary spectral data, an error appears when we determine if a function belongs to the space $\mathcal{U}$ (\ref{Udef}) in the minimization problem (\ref{minimization}). The norms appeared in the conditions of $\mathcal{U}$ can be written in terms of the Fourier coefficients and boundary spectral data. However in this case, the actual spectral data are unknown and we can only check these norm conditions with a given approximation of the spectral data. First we need to estimate how these conditions change when the spectral data are perturbed. 

For a function $v(x)=\sum_{j=1}^{J}v_j\varphi_j(x)$ with $\sum_{j=1}^{J}v_j^2\leqslant 1$, the error for the $H^1$-norm condition of $\mathcal{U}$ is
\begin{equation}\label{H1error}
\Big|\|v\|^2_{H^1(M)}- \sum_{j=1}^{J} (1+\lambda_j^a)v_j^2\Big| =\sum_{j=1}^{J}|\lambda_j-\lambda_j^a| v_j^2 < (2\sqrt{\lambda_J}+\delta)\delta.
\end{equation}

For the $H^{2,2}$-norm condition of $\mathcal{U}$, from (\ref{waveboundary}) we know
$$
W(v)(x,t)|_{\partial M\times \mathbb{R}}= \sum_{j=1}^{J} v_j \cos(\sqrt{\lambda_j} t) \varphi_j(x)|_{\partial M}. 
$$
To check if this condition is satisfied, we can only use the approximate spectral data:
$$W^a (v)(x,t)|_{\partial M\times \mathbb{R}}= \sum_{j=1}^{J} v_j \cos(\sqrt{\lambda^a_j} t) \varphi_j^a(x)|_{\partial M}. $$
In fact, we are only concerned with a finite time range $t\in [-D,D]$. Since 
$$\big|\cos(\sqrt{\lambda_j} t)-\cos(\sqrt{\lambda_j^a} t)\big|\leqslant |\sqrt{\lambda_j} t-\sqrt{\lambda_j^a} t| <D \delta,$$
we have the following estimate on the error:
\begin{eqnarray*}
\|W(v)-W^a(v)\|_{H^2(\partial M)} &\leqslant& \|\sum_{j=1}^{J} v_j \cos(\sqrt{\lambda_j} t) \varphi_j-\sum_{j=1}^{J} v_j \cos(\sqrt{\lambda_j^a} t) \varphi_j\|_{H^2(\partial M)} \\
&+& \|\sum_{j=1}^{J} v_j \cos(\sqrt{\lambda_j^a} t) \varphi_j-\sum_{j=1}^{J} v_j \cos(\sqrt{\lambda_j^a} t) \varphi_j^a\|_{H^2(\partial M)} \\
&\leqslant&  D\delta \sum_{j=1}^{J}|v_j|\|\varphi_j\|_{H^2(\partial M)}+ \sum_{j=1}^{J} |v_j|\|\varphi_j-\varphi_j^a\|_{H^2(\partial M)} \\
&<& D\delta \sum_{j=1}^{J}\|\varphi_j\|_{H^2(\partial M)} +CJ\delta \sqrt{\vol(\partial M)}\; ,
\end{eqnarray*}
where the last inequality is due to (\ref{close-C1}) and (\ref{close-C2}). By the trace theorem and (\ref{Hkdef}), we know
$$\|\varphi_j\|_{H^2(\partial M)}^2\leqslant C\|\varphi_j\|_{H^3(M)}^2=C(1+\lambda_j^3),$$
and hence we obtain
\begin{equation*}
\|W(v)-W^a(v)\|_{H^2(\partial M)} < C(D,\vol(\partial M))J\lambda_{J}^{\frac{3}{2}}\delta.
\end{equation*}
Similarly for the time derivatives, we have
\begin{equation*}
\|\partial_t W(v)-\partial_t W^a(v)\|_{L^2(\partial M)} < C(D,\vol(\partial M))J\lambda_{J}\delta,
\end{equation*}
and 
$$\|\partial_t^2 W(v)-\partial_t^2 W^a(v)\|_{L^2(\partial M)} < C(D,\vol(\partial M))J\lambda_{J}^{\frac{3}{2}}\delta.$$
Therefore by definition (\ref{H21}), for some $C_0^{\prime}=C_0^{\prime}(D,\vol(\partial M))$, we have
\begin{equation}\label{H21error}
\|W(v)-W^a(v)\|_{H^{2,2}(\partial M\times [-D,D])}< C_0^{\prime}J\lambda_{J}^{\frac{3}{2}}\delta.
\end{equation}

\smallskip
Now following the proof of Proposition \ref{projection}, we still consider the minimization problem (\ref{minimization}), however in a perturbed space of $\mathcal{U}$. We define an approximate space $\mathcal{U}^a$ of $\mathcal{U}$ as follows:
\begin{eqnarray*}
\mathcal{U}^{a}=\bigcap_{k=0}^ N \Big\{v=\sum_{j=1}^{J}v_j\varphi_j :& \sum_{j=1}^{J}v_j^2\leqslant 1,\; 
\sum_{j=1}^{J} (1+\lambda_j^a)v_j^2\leqslant 9C_0^2 \Lambda^2\gamma^{-6}+3\lambda_J^{\frac{1}{2}}\delta,\\
&\|W^a (v)\|_{H^{2,2}(\Gamma_{k}\times [-\alpha_k,\alpha_k])}\leqslant \varepsilon_1+C_0^{\prime}J\lambda_{J}^{\frac{3}{2}}\delta \, \Big\}.
\end{eqnarray*}
Clearly this space $\mathcal{U}^a$ can be determined with only Fourier coefficients and the given approximation $\{\lambda^a_j,\varphi^a_j|_{\partial M}\}$ of the boundary spectral data. Then we consider the minimization problem (\ref{minimization}) with the space $\mathcal{U}$ replaced by $\mathcal{U}^a$. Hence this perturbed minimization problem is solvable by only using the given approximation of the spectral data.

By Lemma \ref{smallinitial}, there exists sufficiently large $J$ such that $u_J\in\mathcal{U}$, and it follows from (\ref{H1error}) and (\ref{H21error}) that $u_{J}\in \mathcal{U}^a$. Then one can follow the rest of the proof for Proposition \ref{projection}. The only part changed is $\varepsilon_2$, since the actual $H^1$ and $H^{2,2}$ norms of $v\in\mathcal{U}^a$ differ from the original conditions of $\mathcal{U}$. More precisely, for any $v\in\mathcal{U}^a$, again by (\ref{H1error}) and (\ref{H21error}), we have
$$\|v\|_{H^1(M)}< \sqrt{9C_0^2 \Lambda^2\gamma^{-6}+6\lambda_J^{\frac{1}{2}}\delta}<3C_0 \Lambda\gamma^{-3}+3\lambda_J^{\frac{1}{4}}\sqrt{\delta},$$
$$\|W (v)\|_{H^{2,2}(\Gamma_{k}\times [-\alpha_k,\alpha_k])} < \varepsilon_1+2C_0^{\prime}J\lambda_{J}^{\frac{3}{2}}\delta.$$
Therefore following the proof of Proposition \ref{projection}, for $\delta<\lambda_J^{-1}$, one obtains an estimate almost the same as (\ref{ualast}) with $\varepsilon_2(\delta)$:
\begin{equation}\label{ualasterror}
\big\|\chi_{M_{\alpha}}u-\sum_{j=1}^{J}(a_j-c_j)\varphi_j \big\|^2_{L^2(M)} < 8N\varepsilon_2(\delta)+8N^2\varepsilon_2^2(\delta) +C(\Lambda) \lambda_{J}^{-\frac{1}{2}}\gamma^{-2}+ 8C_5^2\Lambda^2\gamma^{\frac{1}{n+1}},
\end{equation}
where $c_j$ is the $j$-th Fourier coefficient of a minimizer, and
\begin{equation*}
\varepsilon_2(\delta)=C_3^{\frac{1}{3}}h^{-\frac{2}{9}}\exp(h^{-C_4 n}) \frac{\Lambda\gamma^{-3}+h^{-\frac{1}{2}}(\varepsilon_1+2C_0^{\prime}J\lambda_{J}^{\frac{3}{2}}\delta)}{\bigg(\log \big(1+h^{\frac{3}{2}}\gamma^{-3}\frac{\Lambda}{\varepsilon_1+2C_0^{\prime}J\lambda_{J}^{\frac{3}{2}}\delta}\big)\bigg) ^{\frac{1}{6}}}+C_5\Lambda\gamma^{-3} h^{\frac{1}{3n+3}}.
\end{equation*}

Finally we determine the relevant parameters. For any $\varepsilon>0$, we first choose and fix $\gamma,\varepsilon_2(0),\varepsilon_1$ such that the right hand side of $(\ref{ualasterror})$ with $\delta=0$ is equal to $3\varepsilon^2/4$ in the same way as in Proposition \ref{projection}. By Lemma \ref{smallinitial} we choose and fix sufficiently large $J$ such that $u_{J}\in \mathcal{U}$, which validates all the estimates if we restrict $\delta\leqslant J^{-1}$. At last we choose sufficiently small $\delta<\lambda_J^{-1}$ such that
$$N\varepsilon_2(\delta)+N^2\varepsilon_2^2(\delta)-N\varepsilon_2(0)-N^2\varepsilon_2^2(0)<\frac{\varepsilon^2}{32},$$
and then the proposition follows.
\end{proof}

\begin{remark}\label{projection-partial}
We point out that in Proposition \ref{projection} and \ref{measureerror}, it suffices to know the boundary data on $\cup_{\alpha_i>0} \Gamma_i$ to obtain the estimate for $M_{\alpha}$ with $\alpha_0=0$. This may be useful when only partial boundary spectral data (measured only on a part of the boundary) are known.
\end{remark}

\section{Approximations to boundary distance functions} \label{section-appro}

Let $M$ be a compact Riemannian manifold with smooth boundary $\partial M$. For $x\in M$, the \emph{boundary distance function} $r_x:\partial M\to \mathbb{R}$ is defined by
$$r_x(z)=d(x,z),\quad z\in\partial M.$$
Then the boundary distance functions define a map $\mathcal{R}: M\to L^{\infty}(\partial M)$ by $\mathcal{R}(x)=r_x$. It is known that the map $\mathcal{R}$ is a homeomorphism and the metric of the manifold can be reconstructed from its image $\mathcal{R}(M)$ (e.g. Section 3.8 in \cite{KKL}). Furthermore, the reconstruction is stable (Theorem \ref{2007}). Therefore, to construct a stable approximation of the manifold from boundary spectral data, we only need to construct a stable approximation to the boundary distance functions $\mathcal{R}(M)$. In this section, we construct an approximation to the boundary distance functions through slicing procedures.

\medskip
Given $\eta>0$, let $\{\Gamma_i\}_{i=1}^N$ be a partition of the boundary $\partial M$ into disjoint open connected subsets satisfying the assumptions at the beginning of Section \ref{section-projection}: $\textrm{diam}(\Gamma_i)\leqslant \eta$ and every $\Gamma_i$ contains a ball (of $\partial M$) of radius $\eta/6$, where the diameter is measured with respect to the distance of $M$. We can also choose $\Gamma_i$ to be the closure of these open sets. For example, one can choose $\Gamma_i$ to be the Voronoi regions corresponding to a maximal $\eta/2$-separated set on $\partial M$ with respect to the intrinsic distance $d_{\partial M}$ of $\partial M$. It is straightforward to check that these Voronoi regions satisfy our assumptions with 
\begin{equation}\label{boundN}
N\leqslant C(n,\vol(\partial M))\eta^{-n+1}.
\end{equation}

The approximation results in Section \ref{section-projection} enable us to approximate the volume on $M$ by only knowing an approximation of the Neumann boundary spectral data.

\begin{volume}\label{volume}
Let $\alpha=(\alpha_0,\cdots,\alpha_N)$, $\alpha_k\in [\eta, D]\cup\{0\}$ be given, and $M_{\alpha}$ be defined in (\ref{Malpha}). Then for any $\varepsilon>0$, there exists sufficiently small $\delta=\delta(\eta,\varepsilon)$, such that by only knowing a $\delta$-approximation $\{\lambda^a_j,\varphi^a_j|_{\partial M}\}$ of the Neumann boundary spectral data, we can compute a number $vol^{a}(M_{\alpha})$ satisfying 
$$\big|vol^{a}(M_{\alpha})-\vol(M_{\alpha}) \big|<\varepsilon.$$
\end{volume}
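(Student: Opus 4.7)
The plan is to apply Proposition~\ref{measureerror} with the specific test function $u = \varphi_1$. Since $\lambda_1 = 0$ and $M$ is connected, the first eigenspace is one-dimensional and $\varphi_1$ is (up to sign) the constant function $vol(M)^{-1/2}$, so $\|\varphi_1\|_{L^2(M)} = 1$ and, by the definition of the $H^3$-norm in \eqref{Hkdef}, $\|\varphi_1\|_{H^3(M)} = 1$; we may take $\Lambda = 1$. The decisive advantage of this choice is that the Fourier coefficients $a_j = \langle \varphi_1,\varphi_j\rangle$ are simply $\delta_{1j}$, known without having to extract them from any measurement. Proposition~\ref{measureerror} then provides, for any $\varepsilon' > 0$, a number $J = J(D,N,1,\eta,\varepsilon')$ and coefficients $\{b_j\}_{j=1}^J$ computable from a sufficiently small $\delta_1$-approximation of the boundary spectral data such that $u^a := \sum_{j=1}^J b_j\varphi_j$ satisfies $\|u^a - \chi_{M_\alpha}\varphi_1\|_{L^2(M)} < \varepsilon'$.

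To convert this into a volume estimate, observe $\chi_{M_\alpha}\varphi_1 = vol(M)^{-1/2}\chi_{M_\alpha}$, so $\|\chi_{M_\alpha}\varphi_1\|_{L^2(M)}^2 = vol(M_\alpha)/vol(M)$, while by Parseval $\|u^a\|_{L^2(M)}^2 = \sum_{j=1}^J b_j^2$. Both quantities are bounded by $1 + O(\varepsilon')$, and the identity $|a^2 - b^2| = (a+b)|a-b|$ combined with $\big|\|u^a\|_{L^2} - \|\chi_{M_\alpha}\varphi_1\|_{L^2}\big| \leq \varepsilon'$ yields
$$\Big|\sum_{j=1}^J b_j^2 - \frac{vol(M_\alpha)}{vol(M)}\Big| \leq 3\varepsilon'.$$
To recover the normalization $vol(M)$ from the available data, I exploit that $\varphi_1|_{\partial M}$ is the \emph{constant} $vol(M)^{-1/2}$: by \eqref{close-C1}, $\varphi_1^a|_{\partial M}$ approximates this constant in $C^0$-norm within $\delta_1$. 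Hence $c^a := vol(\partial M)^{-1}\int_{\partial M}\varphi_1^a\, dS$ satisfies $|c^a - vol(M)^{-1/2}| < \delta_1$, and setting $vol^a(M) := (c^a)^{-2}$ gives $|vol^a(M) - vol(M)| \leq C(vol(M))\,\delta_1$ for $\delta_1$ small enough.

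Finally, I define $vol^a(M_\alpha) := vol^a(M)\sum_{j=1}^J b_j^2$, and the triangle inequality yields
$$|vol^a(M_\alpha) - vol(M_\alpha)| \leq 2|vol^a(M) - vol(M)| + 3\,vol(M)\,\varepsilon' \leq 2C(vol(M))\,\delta_1 + 3\,vol(M)\,\varepsilon'.$$
The claim follows by first fixing $\varepsilon' > 0$ small enough to make the second term less than $\varepsilon/2$, which determines $J$, then invoking Proposition~\ref{measureerror} with this $J$ to pick $\delta_1 = \delta_1(\eta,\varepsilon',J) \leq J^{-1}$ small enough to simultaneously deliver the $\varepsilon'$-approximation and reduce the first term below $\varepsilon/2$. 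Since $J$ depends on $(\eta,\varepsilon')$ and $\varepsilon'$ depends on $\varepsilon$, the final $\delta$ depends on $(\eta,\varepsilon)$ as claimed (the dependencies on the geometric parameters $D,vol(\partial M),N,\Lambda$ are absorbed into the class of manifolds, and $N$ is controlled by $\eta$ through \eqref{boundN}). There is essentially no analytic obstacle left: the hard work was already executed in Proposition~\ref{measureerror} and ultimately in the quantitative unique continuation (Theorem~\ref{main1}); the content of this lemma is the recognition that $\varphi_1$ is simultaneously the unique eigenfunction with trivially known Fourier coefficients and the right multiplier to linearize the $L^2$-norm of $\chi_{M_\alpha}$ into a volume.
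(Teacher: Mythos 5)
Your proposal is correct and follows essentially the same route as the paper: take $u=\varphi_1$ (the only eigenfunction with trivially known Fourier coefficients $(1,0,\dots)$), apply Proposition~\ref{measureerror} to approximate $\chi_{M_\alpha}\varphi_1$ in $L^2$, read off $vol(M_\alpha)/vol(M)$ from $\sum_j b_j^2$, and recover the normalization factor $vol(M)$ from the $C^0$-closeness of $\varphi_1^a|_{\partial M}$ to the constant $vol(M)^{-1/2}$. The only cosmetic difference is that the paper uses $\|\varphi_1^a\|_{C^0(\partial M)}^{-2}$ rather than your boundary average $(c^a)^{-2}$ to reconstruct $vol(M)$; both are valid.
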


\begin{proof}
Recall that $\varphi_1=\vol(M)^{-1/2}$ on $M$ and it follows that
$$\|\chi_{M_{\alpha}}\varphi_1\|^2_{L^2(M)}=\frac{\vol(M_\alpha)}{\vol(M)}.$$
Since the eigenspace with respect to $\lambda_1=0$ is 1-dimensional, the Fourier coefficients of $\varphi_1$ with respect to any choice of orthonormalized Neumann eigenfunctions are $(1,0,\cdots,0,\cdots)$.
Apply Proposition \ref{measureerror} to $u=\varphi_1$, and we obtain the Fourier coefficients of $u^{a}=\sum_{j=1}^{J}b_j\varphi_j$ for sufficiently large $J$, and that the $L^2$-norm of $u^a$ approximates $\|\chi_{M_{\alpha}}\varphi_1\|_{L^2(M)}$. Therefore $\sum_{j=1}^{J}b_j^2$ approximates $\vol(M_\alpha)/\vol(M)$, and equivalently $\vol(M)\sum_{j=1}^{J}b_j^2$ approximates $\vol(M_\alpha)$. If $\vol(M)$ is known, then $\vol(M)\sum_{j=1}^{J}b_j^2$ is the number we are looking for.

However, we do not exactly know $\vol(M)$ since we do not exactly know the first eigenfunction; we only know an approximation of $\vol(M)$ in terms of the first approximate eigenfunction $\varphi_1^a$. More precisely,
$$\delta>\|\varphi_1-\varphi_1^a\|_{C^0(\partial M)} \geqslant \big| \vol(M)^{-\frac{1}{2}}- \|\varphi_1^a\|_{C^0(\partial M)}\big|. $$
Hence an approximate volume can be defined in the following way:
$$vol^{a}(M_{\alpha}):=\|\varphi^a_1\|^{-2}_{C^0(\partial M)}\sum_{j=1}^{J}b_j^2\, ,$$
and then it satisfies the statement of the lemma.
\end{proof}

Besides the conditions we discussed earlier for the partition $\{\Gamma_i\}$, we need to further restrict the choice of the partition. We start with the following independent lemma regarding the \emph{boundary distance coordinates}. One may refer to Section 2.1.21 in \cite{KKL} for a brief introduction on this subject. This type of coordinates will be used to reconstruct the inner part (bounded away from the boundary) of the manifold.

\begin{coordinate}\label{coordinate}
Let $M\in \mathcal{M}_n(D,K_1,K_2,i_0)$. Then there exist a constant $L$ and boundary points $\{z_i\}_{i=1}^L$, $z_i\in \partial M$ such that the following two properties hold.

\noindent (1) \,For any $x\in M$ with $d(x,\partial M)\geqslant i_0/2$, there exist $n$ boundary points $\{z_{i_1(x)},\cdots,$ $z_{i_n(x)}\}\subset \{z_i\}_{i=1}^L$, such that the distance functions $\big(d(\cdot,z_{i_1(x)}),\cdots,$ $d(\cdot,z_{i_n(x)})\big)$ define a bi-Lipschitz local coordinate in a neighborhood of $x$.

\noindent (2) \,The map $\Phi_L:M\to \mathbb{R}^{L}$ defined by 
$$\Phi_L(x)=\big(d(x,z_1),\cdots,d(x,z_L)\big)$$ 
is bi-Lipschitz on $\{x\in M: d(x,\partial M)\geqslant i_0/2\}$, where the Lipschitz constant and $L$ depend only on $n,D,K_1,K_2,i_0,\vol(\partial M)$. 

\smallskip
Furthermore, the boundary points $\{z_i\}_{i=1}^L$ can be chosen as any $r_L$-maximal separated set on $\partial M$, where $r_L<i_0/8$ is a constant depending only on $n,D,K_1,K_2,i_0$.
\end{coordinate}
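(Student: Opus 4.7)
\textbf{Proof plan for Lemma \ref{coordinate}.}

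The plan is to produce boundary distance coordinates in two stages: first a pointwise construction giving the local bi-Lipschitz charts claimed in (1), then a separation argument upgrading these to the global bi-Lipschitz statement (2). Choose the net $\{z_i\}_{i=1}^L$ to be any maximal $r_L$-separated set in $\partial M$, where $r_L<i_0/8$ is to be fixed. By Lemma \ref{distances} and a standard volume comparison in small geodesic balls of $\partial M$ (combined with the curvature bound $\|R_{\partial M}\|\leqslant K_1^2$), the cardinality satisfies $L\leqslant C(n,K_1)\,vol(\partial M)\,r_L^{-(n-1)}$.

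First I would prove (1). Fix $x\in M$ with $d(x,\partial M)\geqslant i_0/2$, pick an orthonormal basis $(v_1,\ldots,v_n)$ of $T_xM$, shoot the geodesics $s\mapsto \exp_x(sv_j)$, and let $\tilde z_j\in \partial M$ be the first boundary hit (which exists with arrival time $\leqslant D$). Because $\widetilde B_{i_0/2}(x)$ lies in a normal-coordinate ball, the segments are minimizing at least up to distance $i_0/2$, so the unit vector $\nu_j=-\nabla d(\,\cdot\,,\tilde z_j)|_x$ (the initial direction of a minimizing geodesic from $x$ to $\tilde z_j$) makes an angle with $v_j$ controlled quantitatively by $D$, $K_1$, $K_2$, $i_0$ via Jacobi-field / Rauch comparison on the segment of length $\leqslant D$. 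In particular the Gram determinant $|\det\langle\nu_j,\nu_k\rangle|$ admits a uniform lower bound $c_0>0$ depending only on $n,D,K_1,K_2,i_0$. Now, away from the cut locus, $\nabla d(\,\cdot\,,z)|_x$ depends continuously on $z\in\partial M$ with a modulus controlled by the same parameters (this follows from the uniform bound on the Hessian of the distance function provided by the Hessian comparison theorem, already used in Lemma \ref{dhsC21}). Therefore, for $r_L$ small enough depending only on $n,D,K_1,K_2,i_0$, replacing each $\tilde z_j$ by a nearest point $z_{i_j(x)}$ of the $r_L$-net perturbs the vectors $\nu_j$ by less than $c_0/(2n)$, so the perturbed vectors still form a basis with Gram determinant $\geqslant c_0/2$. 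Writing the differential of $y\mapsto \bigl(d(y,z_{i_1(x)}),\ldots,d(y,z_{i_n(x)})\bigr)$ in any normal chart at $x$ and applying the quantitative inverse function theorem then produces a bi-Lipschitz local coordinate on a ball around $x$ whose radius and bi-Lipschitz constant depend only on $n,D,K_1,K_2,i_0$.

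For (2), the upper Lipschitz bound is immediate: each $d(\,\cdot\,,z_i)$ is $1$-Lipschitz, so $\Phi_L$ is Lipschitz with constant $\sqrt L$. For the lower bound let $x,y\in K:=\{d(\,\cdot\,,\partial M)\geqslant i_0/2\}$. If $d(x,y)$ is smaller than the uniform local radius $\rho_0$ from the previous paragraph, the local bi-Lipschitz estimate already gives $|\Phi_L(x)-\Phi_L(y)|\geqslant c\,d(x,y)$ using the $n$ coordinates selected for $x$. If $d(x,y)\geqslant \rho_0$, extend any minimizing geodesic from $y$ through $x$ until it first meets $\partial M$ at $\tilde z$; for $x,y\in K$ we can do this extension while remaining minimizing over a length of at least $\min\{i_0/2,r_0/2\}$, which suffices because $\rho_0$ is bounded above by this quantity in terms of the same parameters. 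Then $|d(y,\tilde z)-d(x,\tilde z)|=d(x,y)+\text{(length of extension)}-d(x,\tilde z)\geqslant \tfrac12 d(x,y)$ after accounting for possible non-minimality via the triangle inequality. Choosing $z_i$ in the net within $r_L$ of $\tilde z$ costs at most $2r_L$, so $|\Phi_L(x)-\Phi_L(y)|\geqslant |d(x,z_i)-d(y,z_i)|\geqslant \tfrac12 d(x,y)-2r_L\geqslant \tfrac14 d(x,y)$ provided $r_L\leqslant \rho_0/16$. Taking $r_L$ equal to the minimum of the radii obtained in the two parts concludes the proof; $r_L$ and $L$ then depend only on $n,D,K_1,K_2,i_0,vol(\partial M)$.

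The main obstacle is the quantitative step inside (1): showing that the gradients $\nu_j$ of the boundary distance functions evaluated at $x$ span $T_xM$ with an explicit lower bound on the Gram determinant, uniformly over $\mathcal{M}_n(D,K_1,K_2,i_0)$. Controlling this requires carefully tracking, via Jacobi-field comparison along geodesic segments of length up to $D$, how much the initial direction of a \emph{minimizing} geodesic from $x$ to the first boundary hit in direction $v_j$ can drift from $v_j$ itself, and then pairing this with the continuous dependence of $\nabla d(\,\cdot\,,z)|_x$ on $z$ to absorb the perturbation caused by passing from $\tilde z_j$ to a nearby net point. The remaining ingredients (volume count of the net, upper Lipschitz bound, and the global lower bound via geodesic extension) are comparatively routine.
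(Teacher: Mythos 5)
Your proposal takes a substantially different route from the paper—shooting geodesics in orthonormal directions for (1) and a geodesic--extension argument for (2)—in an attempt to make the constants explicit, but both core steps have gaps. In part (1) the key claim is that the gradient $\nu_j=-\nabla d(\cdot,\tilde z_j)|_x$ makes a controlled angle with $v_j$ ``via Jacobi-field/Rauch comparison.'' This does not follow: $\nu_j$ is the initial direction of a \emph{minimizing} geodesic from $x$ to $\tilde z_j$, which may have nothing to do with the geodesic $s\mapsto\exp_x(sv_j)$ once that geodesic passes its cut distance from $x$ (and since $d(x,\tilde z_j)\geqslant d(x,\partial M)\geqslant i_0/2$ and $s_j$ can be as long as $D$ or even unbounded if the geodesic is trapped, this is the generic situation). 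Rauch/Jacobi-field estimates compare vector fields \emph{along a single geodesic}; they do not bound the angle between the initial directions of two distinct geodesics joining $x$ to $\tilde z_j$, so there is no mechanism forcing a lower bound on the Gram determinant of the $\nu_j$. The paper instead starts from the \emph{nearest} boundary point $z$ to $x$—whose minimizing geodesic is normal to $\partial M$ and along which $z$ is non-conjugate to $x$—and then chooses the other $n-1$ points on the boundary near $z$ so that their $\partial M$-exponential preimages at $z_1$ span $T_{z_1}\partial M$; the uniform size of the resulting neighborhood is imported from \cite[Lemma 4]{KKL2}, not derived by a direct Jacobi-field computation.

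The argument for (2) has a similar problem. After extending the minimizing geodesic from $y$ through $x$, you write $|d(y,\tilde z)-d(x,\tilde z)|=d(x,y)+\text{(length of extension)}-d(x,\tilde z)$, which implicitly assumes the extended curve is minimizing from $y$ to $\tilde z$. There is no reason for this: the geodesic from $y$ through $x$ can stop minimizing at or before $x$, and the extension from $x$ to its first boundary hit need not be short nor minimizing, so $d(y,\tilde z)$ and $d(x,\tilde z)$ can collapse onto each other and the triangle-inequality repair you invoke does not give a uniform $\tfrac12 d(x,y)$ lower bound. (Also note the extension of length $\leqslant\min\{i_0/2,r_0/2\}$ typically does not reach $\partial M$ when $d(x,\partial M)\geqslant i_0/2$, except in the single direction toward the nearest boundary point.) The paper avoids exactly this difficulty with a compactness argument: it fixes a finer and finer net, uses pre-compactness of $\mathcal{M}_n$ and the injectivity of the boundary distance map $\mathcal{R}$ (Lemma 3.30 in \cite{KKL}) to derive a uniform, though non-explicit, lower Lipschitz bound. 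If you want to salvage an explicit version of (2), you would need a quantitative version of the injectivity of $\mathcal{R}$ on the inner region, which is precisely what is hard and what the paper declines to make explicit.
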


\begin{proof}
Given $x\in M$ with $d(x,\partial M)\geqslant i_0/2$, let $z\in \partial M$ be a nearest boundary point: i.e. $d(x,z)=d(x,\partial M)$. Then it follows that $z$ is not conjugate to $x$ along the minimizing geodesic from $x$ to $z$. That is to say, the differential $d\exp_x |_v$ is non-degenerate, where $\exp_x$ denotes the exponential map of $M$ and $v=\exp_x^{-1}(z)$. Hence by the Inverse Function Theorem, there exists a neighborhood of $(x,v)\in TM$ (with respect to the Sasaki metric on the tangent bundle), such that the exponential map is a diffeomorphism to a neighborhood of $z$. Furthermore, one can find a uniform radius $r_1$ depending on $n,D,K_1,K_2,i_0$ for the size of these neighborhoods (Lemma 4 in \cite{KKL2}).

We take $\{z_i\}$ to be an $r_2$-net on $\partial M$ (with respect to the intrinsic distance $d_{\partial M}$ of $\partial M$), where the parameter $r_2<r_1/8$ is determined later. By definition, there exists $z_1\in \{z_i\}$ such that $d_{\partial M}(z,z_1)<r_2$. Then we search for $n-1$ points $z_2,\cdots,z_n$ such that ${}_{\partial M}{\exp}_{z_1}^{-1}(z_j)$ (for $j=2,\cdots,n$) form a basis in $T_{z_1}(\partial M)$, where  ${}_{\partial M}{\exp}$ denotes the exponential map of $\partial M$. We claim that this is possible for sufficiently small $r_2$ explicitly depending on $r_1,n,K_1$. This claim can be proved as follows. Take $v_2,\cdots,v_n$ to be an orthonormal basis of $T_{z_1}(\partial M)$, and consider the points $z_{j}^{\prime}={}_{\partial M}{\exp}_{z_1} (s v_j)\in \partial M$ for a fixed $s\in (r_1/4,r_1/2)$. By definition of $r_2$-net, there exists points $z_2,\cdots,z_n\in \{z_i\}$ such that $d_{\partial M}(z_j^{\prime},z_j)<r_2$ (for $j=2,\cdots,n$). We consider the triangle with the vertices $z_1,z_j^{\prime},z_j$. Since the lengths of the sides $z_1 z_j^{\prime}$ and $z_1 z_j$ are at least $r_1/8$, then for suffciently small $r_2$ explicitly depending on $K_1$, the angle of the triangle at $z_1$ is small (by Toponogov's Theorem) and therefore ${}_{\partial M}{\exp}_{z_1}^{-1}(z_j)$ (for $j=2,\cdots,n$) also form a basis. Then by the same argument as Lemma 2.14 in \cite{KKL}, one can show $z_1,z_2,\cdots,z_n$ are the desired boundary points, from which a boundary distance coordinate is admitted in a neighborhood of $x$.

From now on, we choose $\{z_i\}_{i=1}^L$ to be a maximal $r_2$-separated set on $\partial M$, which is indeed an $r_2$-net by maximality. The cardinality $L$ of this net is bounded by $C(n,\vol(\partial M))r_2^{-n+1}$. The bi-Lipschitzness of the boundary distance coordinates follows from the fact that the differential of the exponential map is uniformly bounded in the relevant domain by a constant depending on $n,D,K_1,K_2,i_0$ (Lemma 3 and Proposition 1 in \cite{KKL2}). This concludes the proof for the first part of the lemma.

\smallskip
Next we prove the second part of the lemma. We claim that there exists $r_3>0$, such that $\Phi_L$ with respect to any maximal $r_3$-separated set on $\partial M$ is bi-Lipschitz on $\{x\in M: d(x,\partial M)\geqslant i_0/2\}$. Note that $\Phi_L$ is automatically Lipschitz with the Lipschitz constant $\sqrt{L}$ by the triangle inequality. Suppose there exist a sequence of manifolds $M_k\in \mathcal{M}_n(D,K_1,i_0)$ and points $x_k,y_k\in \{x\in M_k: d(x,\partial M_k)\geqslant i_0/2\}$, such that 
$$\frac{|\Phi_{L,k}(x_k)-\Phi_{L,k}(y_k)|}{d_{M_k}(x_k,y_k)}\to 0,\textrm{ as }k\to \infty,$$ 
where $\Phi_{L,k}$ is defined with respect to some maximal $1/k$-separated set on $\partial M_k$. The pre-compactness of $\mathcal{M}_n(D,K_1,i_0)$ (Theorem 3.1 in \cite{AKKLT}) yields that there exists a converging subsequence of $M_k$ to a limit $M$ in $C^1$-topology. 
We choose converging subsequences of $x_k,y_k$ to limit points $x,y\in M$. The assumption implies that $\Phi_{L}(x)=\Phi_{L}(y)$ with respect to a dense subset of $\partial M$. Due to the fact that the boundary distance map $\mathcal{R}$ is a homeomorphism (Lemma 3.30 in \cite{KKL}), it follows that $x=y$. Moreover, we have $d(x,\partial M)\geqslant i_0/2$. However, for sufficiently large $k$ such that $x_k,y_k\in B_{r_1}(x)$, the points $x_k,y_k$ lie in the same boundary distance coordinate neighborhood by the first part of the lemma, on which $\Phi_{L,k}$ is locally bi-Lipschitz with a uniformly bounded Lipschitz constant. This is a contradiction to the assumption. Therefore there exists some $r_3>0$ depending on $n,D,K_1,i_0$, such that $\Phi_L$ with respect to any maximal $r_3$-separated set on $\partial M$ is bi-Lipschitz.

Finally, we further restrict $\{z_i\}_{i=1}^L$ to be a maximal $\min\{r_1,r_2,r_3\}$-separated set on $\partial M$. Hence the cardinality $L$ satisfies
$$L\leqslant C(n,\vol(\partial M))\min\{r_1,r_2,r_3\}^{-n+1},$$ 
which depends only on $n,D,K_1,K_2,i_0,vol(\partial M)$. We denote $r_L=\min\{r_1,r_2,r_3\}$ which depends on $n,D,K_1,K_2,i_0$.
\end{proof}

\smallskip
\noindent \textbf{Choice of partition.}
Let $\eta>0$ be given. We choose boundary points $\{z_i\}_{i=1}^N$ and a partition $\{\Gamma_i\}_{i=1}^N$ of $\partial M$ as follows. Let $\{z_1,\cdots,z_L\}$ be the boundary points determined in Lemma \ref{coordinate}, and then we add $N-L$ number of boundary points such that $\{z_{1},\cdots,z_{N}\}$ is a maximal $\eta/2$-separated set on $\partial M$. This is possible 
because $\{z_1,\cdots,z_L\}$ can be chosen as any $r_L$-maximal separated set on $\partial M$, with $r_L$ being a uniform constant independent of $\eta$.
We take $\{\Gamma_i\}_{i=1}^N$ to be a partition of $\partial M$ (e.g. Voronoi regions corresponding to $\{z_i\}_{i=1}^N$) satisfying the assumptions at the beginning of this section: $\textrm{diam}(\Gamma_i)\leqslant \eta$, $z_i\in \Gamma_i$, and every $\Gamma_i$ contains a ball (of $\partial M$) of radius $\eta/6$. The cardinality $N$ of the partition is bounded above by (\ref{boundN}).

\begin{def-section5}\label{Def-Mbeta}
Let $\eta>0$ be given. For multi-indices $\beta$ of the form $\beta=(\beta_0,\beta_1,\cdots,\beta_N)$ with $\beta_0\in\{0,1\},\,\beta_1,\cdots,\beta_N \in\mathbb{N}$, we consider the following two types of sub-domains (see Figure \ref{slicing}).

\smallskip
(1) \,Given a multi-index $\beta=(0,\beta_1,\cdots,\beta_N)$, we define a slicing of the manifold by
\begin{equation}\label{Mbeta}
M_{\beta}^{\ast}=\bigcap_{i:\,\beta_i>0} \big\{x\in M: \, d(x,\Gamma_i)\in [\beta_i\eta-2\eta,\beta_i\eta) \,\big\}.
\end{equation}
We also consider the following modified multi-index by setting specific components zero: 
$$\beta\langle l\rangle:=(0,\beta_1,\cdots,\beta_L,0,\cdots,0,\beta_l,0,\cdots,0),\quad l\in \{L+1,\cdots,N\}.$$

(2) \,Given a multi-index $\beta=(1,\beta_1,\cdots,\beta_N)$, we define a modified multi-index by
$$\beta[k,i]:=(1,0,\cdots,0, \beta_k,0,\cdots,0,\beta_i,0,\cdots,0),\quad k\neq i.$$
In other words, $\beta[k,i]$ can only have nonzero $k$-th and $i$-th components besides the $0$-th component. Then we define the following sub-domain:
\begin{equation}\label{Mki}
M_{\beta[k,i]}^{\ast}=\big\{x\in M: \, d(x,\partial M)\geqslant \beta_k\eta-2\eta,\, d(x,\Gamma_k)<\beta_k\eta,\, d(x,\Gamma_i)\in [\beta_i\eta-2\eta,\beta_i\eta)\, \big\}.
\end{equation}
\end{def-section5}

\begin{figure}[h]
\includegraphics[scale=0.45]{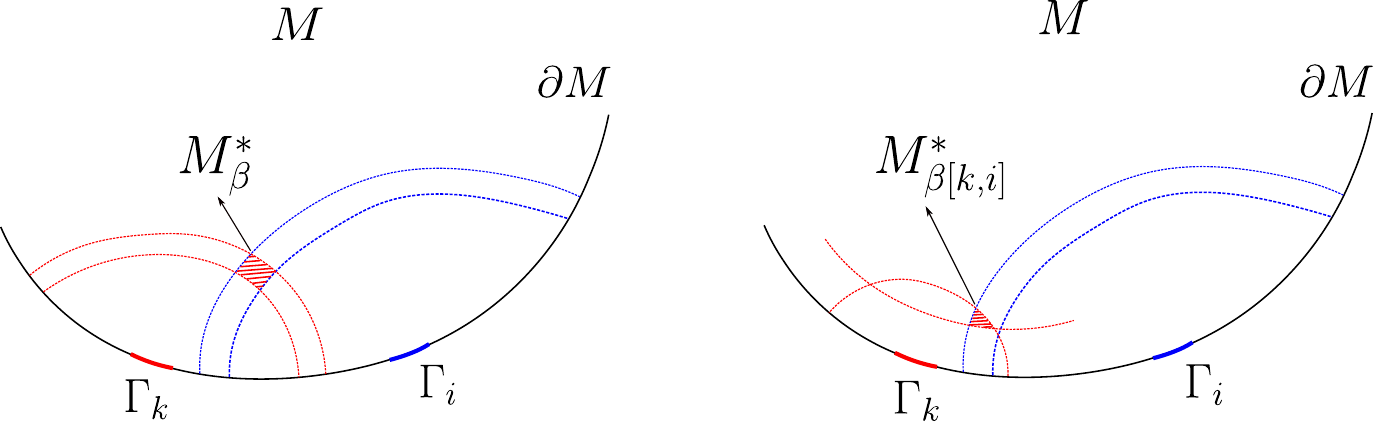}
\caption{Sub-domains from two subsets of the boundary. The former type is used to reconstruct the inner part of the manifold, while the latter type is used to reconstruct the boundary normal neighborhood.}
\label{slicing}
\end{figure}

By definition (\ref{Mbeta}), we only slice the manifold from $\Gamma_i$ if $\beta_i > 0$. Hence $M_{\beta}^{\ast}\subset  M_{\beta\langle l\rangle}^{\ast}$ for any $l\in \{L+1,\cdots,N\}$. Since the diameter of the manifold is bounded above by $D$, it suffices to consider a finite number of choices $\beta_i\leqslant 2+D/\eta$ for each $\beta_i$. Notice that we always use a fixed number (independent of $\eta$) of $\Gamma_i$ to slice the manifold. This keeps the total number of slicings from growing too large as $\eta$ gets small.
 
Similar to Lemma \ref{volume}, we can also evaluate approximate volumes for $\vol(M_{\beta\langle l\rangle}^{\ast})$, $\vol(M_{\beta[k,i]}^{\ast})$, and the error can be made as small as needed given sufficient boundary spectral data.

\begin{volumebeta}\label{volumebeta}
Let $\eta>0$ be given, and $M_{\beta\langle l\rangle}^{\ast},M_{\beta[k,l]}^{\ast}$ be defined in Definition \ref{Def-Mbeta}. Then for any $\varepsilon>0$, there exists sufficiently small $\delta=\delta(\eta,\varepsilon)$, such that by only knowing a $\delta$-approximation $\{\lambda^a_j,\varphi^a_j|_{\partial M}\}$ of the Neumann boundary spectral data, we can compute numbers $vol^{a}(M_{\beta\langle l \rangle}^{\ast})$, $vol^a(M_{\beta[k,i]}^{\ast})$ satisfying 
$$\big|vol^{a}(M_{\beta\langle l \rangle}^{\ast})-\vol(M_{\beta\langle l \rangle}^{\ast}) \big|<2^{L+1}\varepsilon,\; \textrm{ for any }l\in \{L+1,\cdots, N\},$$
and
$$\big|vol^{a}(M_{\beta[k,i]}^{\ast})-\vol(M_{\beta[k,i]}^{\ast}) \big|<4\varepsilon, \;\textrm{ for any }i\neq k,$$
where $L$ is a uniform constant independent of $\eta$ determined in Lemma \ref{coordinate}.
\end{volumebeta}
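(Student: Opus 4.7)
The plan is to express $M_{\beta\langle l\rangle}^{\ast}$ and $M_{\beta[k,i]}^{\ast}$ as finite Boolean combinations of single-ball domains of influence of the form $M(\Gamma_j,\cdot)$, reduce their volumes by a two-step inclusion--exclusion to signed sums of volumes of the union-type domains $M_\alpha$ from (\ref{Malpha}), and then invoke Lemma \ref{volume} on each piece with sufficiently small $\delta$.

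The first step rewrites the shells. For each index $i$ with $\beta_i>0$, set $A_i:=M(\Gamma_i,\beta_i\eta)$ and $C_i:=M(\Gamma_i,\beta_i\eta-2\eta)$, with the convention $C_i=\emptyset$ whenever $\beta_i\eta-2\eta\leqslant 0$. These are domains of influence, $C_i\subset A_i$, and $\chi_{\{d(\cdot,\Gamma_i)\in[\beta_i\eta-2\eta,\beta_i\eta)\}}=\chi_{A_i}-\chi_{C_i}$. Letting $I:=I_{\beta\langle l\rangle}=\{i:(\beta\langle l\rangle)_i>0\}$, so $|I|\leqslant L+1$, and expanding the product gives
$$\chi_{M_{\beta\langle l\rangle}^{\ast}}=\prod_{i\in I}(\chi_{A_i}-\chi_{C_i})=\sum_{S\subset I}(-1)^{|S|}\chi_{E_S},\quad E_S:=\bigcap_{i\in S}C_i\cap\bigcap_{i\in I\setminus S}A_i,$$
so that $vol(M_{\beta\langle l\rangle}^{\ast})=\sum_{S}(-1)^{|S|}\,vol(E_S)$ is a signed sum of at most $2^{L+1}$ intersection volumes.

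The second step converts each intersection volume into union volumes accessible to Lemma \ref{volume}. The dual inclusion--exclusion identity
$$vol\Big(\bigcap_{j\in J}B_j\Big)=\sum_{\emptyset\neq T\subset J}(-1)^{|T|+1}\,vol\Big(\bigcup_{j\in T}B_j\Big)$$
is applied to the $A_i,C_i$ defining $E_S$. Each union $\bigcup_{j\in T}B_j$ coincides with an $M_\alpha$ for the multi-index $\alpha$ supported on $T$, with radii chosen from $\{\beta_j\eta,\beta_j\eta-2\eta\}\subset[\eta,D]\cup\{0\}$, so Lemma \ref{volume} applies to it. Thus $vol(E_S)$ is itself a signed combination of at most $2^{L+1}$ quantities $vol(M_\alpha)$, each approximable with arbitrarily small error from a sufficiently accurate $\delta$-approximation of the boundary spectral data.

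Choosing $\delta=\delta(\eta,\varepsilon)$ small enough so that every underlying $vol(M_\alpha)$ in the double expansion is approximated within $\varepsilon/2^{L+1}$ gives $|vol^{a}(E_S)-vol(E_S)|<\varepsilon$, and assembling the signed $E_S$ contributions yields the bound $|vol^{a}(M_{\beta\langle l\rangle}^{\ast})-vol(M_{\beta\langle l\rangle}^{\ast})|<2^{L+1}\varepsilon$. The second assertion is handled by the same strategy applied to $M_{\beta[k,i]}^{\ast}$ after writing the boundary-distance constraint as a complement, $\chi_{\{d(\cdot,\partial M)\geqslant\beta_k\eta-2\eta\}}=1-\chi_{M(\Gamma_0,\beta_k\eta-2\eta)}$; the outer product then has only three binomial factors and produces exactly four intersection pieces, giving the bound $4\varepsilon$. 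The proof is essentially bookkeeping, and the only point worth isolating is that the number of elementary $vol(M_\alpha)$ evaluations depends only on the fixed constant $L$ from Lemma \ref{coordinate} rather than on $\eta$ — this is precisely why $\beta\langle l\rangle$ restricts to at most $L+1$ active slicings, ensuring the combinatorial complexity stays bounded uniformly in $\eta$.
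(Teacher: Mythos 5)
Your proof is correct, and the underlying strategy — reduce $M_{\beta\langle l\rangle}^{\ast}$ and $M_{\beta[k,i]}^{\ast}$ to combinations of the union-type domains $M_\alpha$ (exploiting that any union of such domains is itself an $M_\alpha$), then invoke Lemma~\ref{volume} piecewise — is the same as the paper's. Where you differ is in the bookkeeping: you first expand $\chi_{M_\beta^{\ast}}=\prod_i(\chi_{A_i}-\chi_{C_i})$ into $2^{|I|}$ signed intersection pieces $E_S$, and then convert each intersection volume to union volumes by the dual inclusion--exclusion formula, yielding on the order of $2^{|I|}(2^{|I|}-1)$ elementary $vol(M_\alpha)$ terms. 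The paper instead applies the three explicit identities $vol(\Omega_1-\Omega_2)=vol(\Omega_1\cup\Omega_2)-vol(\Omega_2)$, $vol(\Omega_1\cap\Omega_2)=vol(\Omega_1)+vol(\Omega_2)-vol(\Omega_1\cup\Omega_2)$ and $vol(M_\alpha\cup M_{\alpha'})=vol(M_{\alpha_{\max}})$ recursively, collapsing nested unions as it goes, and so arrives at exactly $2^{L+1}$ (resp.\ exactly $4$) terms. Your precision budget of $\varepsilon/2^{L+1}$ per $M_\alpha$ absorbs your extra terms, so the claimed bounds still hold; but in the $M_{\beta[k,i]}^{\ast}$ case, the phrase ``four intersection pieces, giving the bound $4\varepsilon$'' silently elides the further (up to $7$-term) union expansion of each piece — the $4\varepsilon$ bound requires the observation that $7\cdot\varepsilon/2^{L+1}<\varepsilon$ whenever $L\geqslant 2$, which you should state since $L$ is the implicit guarantee here (it holds because $L\geqslant n\geqslant 2$). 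The paper's route avoids this dependence by keeping the term count at exactly four. Both approaches equally rely on the key structural point you correctly emphasize: only at most $L+1$ of the $\Gamma_i$ ever appear in a single $\beta\langle l\rangle$, so the combinatorics stay $\eta$-uniform.
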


\begin{proof}
Observe that for any $\beta=(0,\beta_1,\cdots,\beta_N)$ with $\beta_1,\cdots,\beta_N>0$, the sub-domain $M_{\beta}^{\ast}$ can be obtained as a finite number of unions, intersections and complements of the sub-domains $M_{\alpha}$ of the form (\ref{Malpha}) with $\alpha_0=0$. More precisely,
\begin{eqnarray*}
M_{\beta}^{\ast} &=& \bigcap_{i=1}^N \big( M(\Gamma_i,\beta_i\eta)- M(\Gamma_i,\beta_i\eta-2\eta) \big) \\
&=& \bigcap_{i=1}^N M(\Gamma_i,\beta_i\eta) -\bigcup_{i=1}^N M(\Gamma_i,\beta_i\eta-2\eta).
\end{eqnarray*}
Then the volume of $M_{\beta}^{\ast}$ can be written in terms of the volumes of $M_{\alpha}$ with $\alpha_0=0$ through the following operations. For any $n$-dimensional Hausdorff measurable subset $\Omega_1,\Omega_2\subset M$,
$$\vol(\Omega_1-\Omega_2)=\vol(\Omega_1\cup \Omega_2)-\vol(\Omega_2);$$
$$\vol(\Omega_1\cap\Omega_2) = \vol(\Omega_1)+\vol(\Omega_2)-\vol(\Omega_1\cup\Omega_2).$$
Moreover, for any multi-indices $\alpha, \alpha^{\prime}$,
$$\vol(M_{\alpha}\cup M_{\alpha^{\prime}})=\vol(M_{\alpha_{max}}), \;\textrm{ where } (\alpha_{max})_i=\max\{\alpha_i,\alpha_i^{\prime}\}.$$
Therefore the approximate volume $vol^a(M_{\beta}^{\ast})$ for $M_{\beta}^{\ast}$ can be defined by replacing the volumes of $M_{\alpha}$ in the expansion with the approximate volume $vol^a(M_{\alpha})$. 

On the other hand, for a multi-index of the form $\beta[k,i]$, we have
\begin{eqnarray*}
M_{\beta[k,i]}^{\ast} = M(\Gamma_k,\beta_k\eta)\cap M(\Gamma_i,\beta_i\eta)-M(\partial M,\beta_k\eta-2\eta)\cup M(\Gamma_i,\beta_i\eta-2\eta).
\end{eqnarray*}
Recall that the volume information from the whole boundary $\partial M$ is incorporated in the $\alpha_0$ component of the multi-index $\alpha$. Thus the volume of $M_{\beta[k,l]}^{\ast}$ can be written in terms of the volumes of $M_{\alpha}$ with $\alpha_0\geqslant 0$.

For a multi-index of the form $\beta\langle l\rangle$, the total number of volume terms of $M_{\alpha}$ in $\vol(M_{\beta\langle l \rangle}^{\ast})$ is at most $2^{L+1}$. For a multi-index of the form $\beta[k,i]$, the total number of volume terms of $M_{\alpha}$ in $\vol(M_{\beta[k,i]}^{\ast})$ is at most 4. Then the error estimates directly follow from Lemma \ref{volume}.
\end{proof}

Now we are in place to define an approximation to the boundary distance functions $\mathcal{R}(M)$. We consider the following candidate.

\begin{def-section5}\label{Rast}
Let $\eta,\,\varepsilon>0$ be given. For a multi-index $\beta=(\beta_0,\beta_1,\cdots,\beta_N)$ with $\beta_0\in\{0,1\},\,\beta_1,\cdots,\beta_N \in\mathbb{N}_+$, if either of the following two situations happens, we associate with this $\beta$ a piecewise constant function $r_{\beta}\in L^{\infty}(\partial M)$ defined by
$$r_{\beta}(z)=\beta_i \eta, \;\textrm{ if }z\in \Gamma_i.$$

\begin{enumerate}[(1)]
\item $\beta_0=0$; $\beta_i\eta>i_0/2$ for all $i=1,\cdots,N$, and $vol^a(M_{\beta\langle l\rangle}^{\ast})\geqslant \varepsilon$ for all $l=L+1,\cdots,N$.

\item $\beta_0=1$; there exists $k\in \{1,\cdots,N\}$, such that $\beta_k\eta\leqslant i_0/2$ and $vol^a(M_{\beta[k,i]}^{\ast})\geqslant \varepsilon$ for all $i=1,\cdots,N$ with $i\neq k$.
\end{enumerate}

We test all multi-indices $\beta$ up to $\beta_i\leqslant 2+D/\eta$ for each $\beta_i$, and denote the set of all functions $r_{\beta}$ chosen this way by $\mathcal{R}^{\ast}_{\varepsilon}$. 
\end{def-section5}

Intuitively, the first situation in Definition \ref{Rast} describes a small neighborhood in the interior of the manifold away from the boundary. The second situation describes a small neighborhood near the boundary with the help of the boundary normal neighborhood. We prove that $\mathcal{R}^{\ast}_{\varepsilon}$ is an approximation to the boundary distance functions $\mathcal{R}(M)$ for sufficiently small $\varepsilon$.

\begin{approximation}\label{approximation}
Let $M\in \mathcal{M}_n(D,K_1,K_2,i_0,r_0)$. For any $\eta>0$, there exists $\varepsilon=\varepsilon(\eta)$ and sufficiently small $\delta=\delta(\eta)$, such that by only knowing a $\delta$-approximation $\{\lambda^a_j,\varphi^a_j|_{\partial M}\}$ of the Neumann boundary spectral data, we can construct a set $\mathcal{R}^{\ast}_{\varepsilon}\subset L^{\infty}(\partial M)$ such that
$$d_H (\mathcal{R}^{\ast}_{\varepsilon},\mathcal{R}(M)) \leqslant C_6\sqrt{\eta},$$
where $d_H$ denotes the Hausdorff distance between subsets of the metric space $L^{\infty}(\partial M)$, and the constant $C_6$ depends only on $n,D,K_1,K_2,i_0,\vol(\partial M)$.
\end{approximation}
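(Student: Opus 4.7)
The strategy is a two-sided Hausdorff comparison between $\mathcal{R}^\ast_\varepsilon$ and $\mathcal{R}(M)$. I first fix $\varepsilon = \varepsilon(\eta)$ of polynomial order in $\eta$ (with exponent depending on $n$ and the bi-Lipschitz constant from Lemma \ref{coordinate}), and then use Lemma \ref{volumebeta} to pick $\delta = \delta(\eta)$ so that every $vol^a(M^\ast_{\beta\langle l\rangle})$ and $vol^a(M^\ast_{\beta[k,i]})$ differs from the true volume by less than $\varepsilon/3$. With this calibration, the threshold $vol^a \geq \varepsilon$ forces the actual volume to be positive (so the slicing set is non-empty), while $vol \geq 2\varepsilon$ forces $vol^a \geq \varepsilon$ (so genuine points are captured).

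For the inclusion of $\mathcal{R}^\ast_\varepsilon$ in the $C_6\sqrt{\eta}$-neighborhood of $\mathcal{R}(M)$, take $r_\beta \in \mathcal{R}^\ast_\varepsilon$ and consider the two situations of Definition \ref{Rast} separately. In Situation (1) non-emptiness of $M^\ast_{\beta\langle l\rangle}$ for every $l \in \{L+1,\ldots,N\}$ produces points $x_l \in M^\ast_{\beta\langle l\rangle}$, all satisfying the common $L$ annular constraints $|d(\cdot,\Gamma_i)-\beta_i\eta|\leq 2\eta$ for $i=1,\ldots,L$. The bi-Lipschitz bound on $\Phi_L$ in $\{d(\cdot,\partial M)\geq i_0/2\}$ (Lemma \ref{coordinate}) then forces $d(x_l,x_{l'})\leq C\eta$, so setting $x := x_{L+1}$ and using $\textrm{diam}(\Gamma_j)\leq \eta$ yields $|r_x(z)-r_\beta(z)|\leq C\eta$ for every $z\in\partial M$. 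Situation (2) is analogous in the boundary normal neighborhood: the annulus from $\Gamma_k$ with $\beta_k\eta\leq i_0/2$ determines $\rho = d(\cdot,\partial M)$ within $O(\eta)$, and the other annular constraints determine the tangential projection $x^\perp$. Here the tangential determination is only sharp to $O(\sqrt{\eta})$ in the worst case, because $s\mapsto \sqrt{\rho^2+s^2}$ has a square-root inverse near $s=0$; this is precisely where $\sqrt{\eta}$ enters $C_6\sqrt{\eta}$.

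For the reverse inclusion, given $x \in M$ I set $\beta_i := \lfloor d(x,\Gamma_i)/\eta\rfloor + 1$ so that $d(x,\Gamma_i)\in [\beta_i\eta-2\eta,\beta_i\eta)$. If $d(x,\partial M)\geq i_0/2+3\eta$ then $\beta_i\eta > i_0/2$ for all $i$ and Situation (1) applies with $\beta_0=0$; if $d(x,\partial M)\leq i_0/2-3\eta$ and $x^\perp\in \Gamma_k$ then $\beta_k\eta\leq i_0/2$ and Situation (2) applies with $\beta_0=1$; the transition layer in between has $n$-volume $O(\eta)$ and is absorbed by snapping $x$ to the closer interior or boundary case at $L^\infty$-cost $O(\eta)$. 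The decisive step is verifying $vol(M^\ast_{\beta\langle l\rangle})\geq 2\varepsilon$ and $vol(M^\ast_{\beta[k,i]})\geq 2\varepsilon$: a small ball $B_{c\eta}(x)$ lies inside the slicing set (each annulus has radial width $2\eta$), and the bi-Lipschitz boundary distance coordinates combined with the curvature bounds give $vol(B_{c\eta}(x))\geq c'\eta^n$, which dominates $\varepsilon$. The hardest part is this volume lower bound near the boundary, where gradients of several $d(\cdot,z_i)$ become nearly parallel and the naive bi-Lipschitz constant of $\Phi_L$ degenerates. There one replaces $\Phi_L$ locally by the boundary normal coordinate and checks that $\rho=d(\cdot,\partial M)$ together with $n-1$ suitably chosen boundary distance functions to points far from $x^\perp$ forms a non-degenerate coordinate system with Jacobian quantitatively controlled by $K_1,K_2,i_0$; once this is in place, both inclusions close and the Hausdorff bound $d_H(\mathcal{R}^\ast_\varepsilon,\mathcal{R}(M))\leq C_6\sqrt{\eta}$ follows.
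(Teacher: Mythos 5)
Your proposal follows essentially the same route as the paper: calibrate $\varepsilon\sim\eta^{n}$ against the ball volume $vol(B_{c\eta})\geqslant c_n\eta^n$ and choose $\delta$ via Lemma \ref{volumebeta}; get one inclusion by fitting a ball of radius $\sim\eta$ inside the slicing sets; get the other by picking points in the nonempty slices and controlling their mutual distances with the bi-Lipschitz map $\Phi_L$ in the interior and with the $O(\sqrt{\eta})$ diameter of the near-boundary cap $\{d(\cdot,\partial M)\geqslant\beta_k\eta-2\eta,\ d(\cdot,\Gamma_k)<\beta_k\eta\}$, which is exactly where the $\sqrt{\eta}$ in the statement comes from.

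Two small corrections. First, the choice $\beta_i=\lfloor d(x,\Gamma_i)/\eta\rfloor+1$ places $d(x,\Gamma_i)$ in the upper half $[\beta_i\eta-\eta,\beta_i\eta)$ of the annulus, so when $d(x,\Gamma_i)$ is close to $\beta_i\eta$ the ball $B_{c\eta}(x)$ does not stay inside $\{d(\cdot,\Gamma_i)<\beta_i\eta\}$; one must instead use the other admissible index so that $d(x,\Gamma_i)\in[\beta_i\eta-3\eta/2,\beta_i\eta-\eta/2)$, i.e.\ sits in the middle of the annulus (the paper also first replaces $x$ by a point $x'$ with $d(x',\partial M)\geqslant\eta$). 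Second, the difficulty you flag as hardest --- a degenerating Jacobian of $\Phi_L$ near $\partial M$ in the volume lower bound --- does not actually arise: the lower bound $vol(B_{\eta/2}(x'))\geqslant c_n\eta^n$ is a plain curvature-comparison estimate once the center has been pushed distance $\eta$ into the interior, and no coordinate system is needed for it; $\Phi_L$ enters only in the opposite inclusion, and only on the set $\{d(\cdot,\partial M)\gtrsim i_0\}$ where it is uniformly bi-Lipschitz.
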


\begin{proof}
Let $\eta<\min\{1,i_0/8\}$. Given any $x\in M$, take a point $x^{\prime}\in M$ such that $d(x,x^{\prime})\leqslant\eta$ and $d(x^{\prime},\partial M)\geqslant \eta$. Clearly there exist positive integers $\beta_i>0$ such that $d(x^{\prime},\Gamma_i)\in [\beta_i\eta-2\eta,\beta_i\eta)$ for all $i=1,\cdots,N$. In fact, there are two choices for each $\beta_i$, and we choose the one satisfying $d(x^{\prime},\Gamma_i)\in [\beta_i\eta-3\eta/2,\beta_i\eta-\eta/2)$ for all $i$. In particular, we see that each $\beta_i$ satisfies $\beta_i\eta-2\eta\leqslant D$.

If $\beta_i\eta>i_0/2$ for all $i=1,\cdots,N$, then we consider the multi-index $\beta=(0,\beta_1,\cdots,\beta_N)$. It follows from the triangle inequality that $B_{\eta/2}(x^{\prime})\subset M_{\beta}^{\ast}$. Since $B_{\eta/2}(x^{\prime})$ does not intersect $\partial M$, we have $\vol(M_{\beta}^{\ast})> \vol(B_{\eta/2}(x^{\prime}))\geqslant c_n\eta^n$ for sufficiently small $\eta$, which implies that $\vol(M_{\beta\langle l\rangle}^{\ast})>c_n \eta^n$ for all $l=L+1,\cdots,N$. We denote 
\begin{equation}\label{epsilon-star}
\varepsilon_{\ast}=c_n\eta^n/2, 
\end{equation}
and set $\varepsilon=2^{-L-1}\varepsilon_{\ast}$ in Lemma \ref{volumebeta}. Then we consider the set of functions $\mathcal{R}^{\ast}_{\varepsilon_{\ast}}$. Since $vol^a(M_{\beta\langle l\rangle}^{\ast})>c_n\eta^n-\varepsilon_{\ast}=\varepsilon_{\ast}$ by Lemma \ref{volumebeta}, we have $r_{\beta}\in \mathcal{R}^{\ast}_{\varepsilon_{\ast}}$ by the first situation in Definition \ref{Rast}. Then by the condition $\textrm{diam}(\Gamma_i)\leqslant \eta$ and the triangle inequality, we have 
\begin{equation}\label{xtobeta}
\|r_x-r_{\beta}\|_{L^{\infty}(\partial M)}\leqslant \|r_x-r_{x^{\prime}}\|_{L^{\infty}(\partial M)}+\|r_{x^{\prime}}-r_{\beta}\|_{L^{\infty}(\partial M)}\leqslant\eta+2\eta=3\eta.
\end{equation}

If there exists $k\in \{1,\cdots,N\}$ such that $\beta_k\eta\leqslant i_0/2$, then we consider the multi-index $\beta=(1,\beta_1,\cdots,\beta_N)$. Without loss of generality, assume $k$ is the index such that $\beta_k=\min_{i> 0} \beta_i$.
Hence
$$d(x^{\prime},\partial M)= \min \big\{d(x',\Gamma_1),\cdots,d(x',\Gamma_N) \big\}\geqslant \beta_k\eta-3\eta/2,$$
which shows $x^{\prime}\in M_{\beta[k,i]}^{\ast}$ for all $i=1,\cdots,N$ with $i\neq k$ by definition (\ref{Mki}). Moreover, we also have $B_{\eta/2}(x^{\prime})\subset M_{\beta[k,i]}^{\ast}$ for all $i$. Thus by choosing the same $\varepsilon_{\ast}$ and $\varepsilon$ as the previous case, we have $r_{\beta}\in \mathcal{R}^{\ast}_{\varepsilon_{\ast}}$ by the second situation in Definition \ref{Rast}, and (\ref{xtobeta}) still holds. This concludes the proof for one direction.

\smallskip
On the other hand, given any $r_{\beta}\in \mathcal{R}^{\ast}_{\varepsilon_{\ast}}$, Definition \ref{Rast} and Lemma \ref{volumebeta} indicates that either $\vol(M_{\beta\langle l\rangle}^{\ast})>0$ for all $l=L+1,\cdots,N$, or there exists $k$ such that $\vol(M_{\beta[k,i]}^{\ast})>0$ for all $i$. Recall that $\beta_1,\cdots,\beta_N>0$ by definition.

\noindent \textbf{(i)} The first situation allows us to pick an arbitrary point $x_l$ in every $M_{\beta\langle l\rangle}^{\ast}$. Then by $\textrm{diam}(\Gamma_i)\leqslant\eta$ and the triangle inequality, we have
\begin{equation}\label{tril}
\|r_{\beta}-r_{x_l}\|_{L^{\infty}(\Gamma_1\cup\cdots\cup \Gamma_L\cup\Gamma_l)}\leqslant 3\eta, \;\textrm{ for any }l\in \{L+1,\cdots,N\}.
\end{equation}
Notice that all $x_l$ are in fact bounded away from the boundary. More precisely, for any $x_l$, we know from Definition \ref{Rast} that
$$d(x_l,\Gamma_i)\geqslant \beta_i\eta-2\eta > i_0/2-2\eta>i_0/4, \;\textrm{ for all }i=1,\cdots, L.$$
Since the boundary points $\{z_i\}_{i=1}^L$ can be chosen as an $r_L$-maximal separated set on $\partial M$, where $r_L<i_0/8$ is a uniform constant independent of $\eta$ (Lemma \ref{coordinate}), we have for any $x_l$,
$$d(x_l,\partial M)>i_0/8.$$
Hence for any other $j\in \{L+1,\cdots,N\}$ with $j\neq l$, Lemma \ref{coordinate} yields that 
$$d(x_l,x_j)\leqslant C(n,D,K_1,i_0)|\Phi_L(x_l)-\Phi_L(x_j)| \leqslant C\sqrt{L}\,\eta,$$
where $\Phi_L(\cdot)=\big(d(\cdot,z_1),\cdots,d(\cdot,z_L)\big)$. Then it follows from the triangle inequality and (\ref{tril}) that
$$\|r_{\beta}-r_{x_l}\|_{L^{\infty}(\Gamma_j)}\leqslant \|r_{\beta}-r_{x_j}\|_{L^{\infty}(\Gamma_j)}+\|r_{x_j}-r_{x_l}\|_{L^{\infty}(\Gamma_j)} \leqslant  (C\sqrt{L}+3)\eta.$$
Thus by ranging $j\neq l$ over $\{L+1,\cdots,N\}$, we obtain
\begin{equation*}\label{rbeta}
\|r_{\beta}-r_{x_l}\|_{L^{\infty}(\partial M)}\leqslant (C\sqrt{L}+3)\eta.
\end{equation*}

\noindent \textbf{(ii)} The second situation allows us to pick an arbitrary point $x_i$ in every $M_{\beta[k,i]}^{\ast}$. Observe from Definition \ref{Rast} that for any $x_i$, we have 
$$d(x_i,\partial M)\leqslant d(x_i,\Gamma_k) < \beta_k\eta\leqslant i_0/2.$$
The fact that $d(x,\Gamma_k)\geqslant d(x,\partial M)$ implies that
$$\|r_{\beta}-r_{x_i}\|_{L^{\infty}(\Gamma_k\cup\Gamma_i)}\leqslant 2\eta.$$
For any other $j\in\{1,\cdots,N\}$ with $j\neq k,i$, we have
$$d(x_i,x_j)\leqslant C\sqrt{\eta}.$$
This is due to the fact that the diameter of the sub-domain $\{x\in M: \, d(x,\partial M)\geqslant \beta_k\eta-2\eta,\, d(x,\Gamma_k)<\beta_k\eta \}$ for $\beta_k\eta\leqslant i_0/2$ is bounded above by $C\sqrt{\eta}$.
Hence by ranging $j\neq k,i$ over $\{1,\cdots,N\}$, we obtain
\begin{equation*}\label{rbeta}
\|r_{\beta}-r_{x_i}\|_{L^{\infty}(\partial M)}\leqslant C\sqrt{\eta}+2\eta.
\end{equation*}
\vspace{-4mm}
\end{proof}

\begin{remark}\label{remark-thirdlog}
We only used a fixed number (independent of $\eta$) of subsets of the boundary to slice the manifold, so that the total number of slicings does not grow too large as $\eta$ gets small. To reconstruct the inner part of the manifold, we used $L+1$ subsets with $L$ being a uniform constant (however not explicit). Near the boundary, we took advantage of the boundary normal neighborhood and essentially only used two subsets. Instead if we use all $N$ subsets to slice the manifold, it would result in a third logarithm in Theorem \ref{stability}. 
\end{remark}

\begin{remark}\label{approximation-partial}
By virtue of Remark \ref{projection-partial}, the approximate volume for $M_{\alpha}$ with $\alpha_0=0$ in Lemma \ref{volume} can be found by only knowing the boundary data on $\cup_{\alpha_i>0} \Gamma_i$. This implies that the approximate volume for $M_{\beta}^{\ast}$ (with $\beta_0=0$) in Lemma \ref{volumebeta} can be found by only knowing the boundary data on $\cup_{\beta_i>0} \Gamma_i$. Thus in a similar but simpler way as Definition \ref{Rast} and Proposition \ref{approximation}, one can define an approximation to $\mathcal{R}(M)$ restricted on a part of the boundary using partial boundary spectral data. Furthermore in the case of partial data, a similar calculation as in Appendix \ref{constants} yields a $\log$-$\log$-$\log$ estimate on the stability of the reconstruction of $\mathcal{R}(M)$.
\end{remark}

The following result shows that the reconstruction of a manifold from $\mathcal{R}(M)$ is stable.

\begin{2007}(Theorem 1 in \cite{KKL2})\label{2007}
Let $M$ be a compact Riemannian manifold with smooth boundary. Suppose $\mathcal{R}^{\ast}$ is an $\eta$-approximation to the boundary distance functions $\mathcal{R}(M)$ for sufficiently small $\eta$. Then one can construct a finite metric space $X$ directly from $\mathcal{R}^{\ast}$ such that
$$d_{GH}(M,X)<C_7(n,D,K_1,K_2,i_0)\, \eta^{\frac{1}{36}},$$
where $d_{GH}$ denotes the Gromov-Hausdorff distance between metric spaces.
\end{2007}

\medskip
Finally we prove the main results Theorem \ref{stability} and Theorem \ref{Cor1}.

\begin{proof}[Proof of Theorem \ref{stability}]
The estimate directly follows from Proposition \ref{approximation} and Theorem \ref{2007}. The dependency of constants is derived in Appendix \ref{constants}.

The only part left is to find an upper bound for $\vol(\partial M),\vol(M)$ in terms of other geometric parameters. Due to Corollary 2(b) in \cite{KKL2}, the (intrinsic) diameter of $\partial M$ is uniformly bounded by a constant depending on $n,D,\|R_M\|_{C^1},\|S\|_{C^2},i_0$, however not explicitly. Then by the volume comparison theorem for $\partial M$, $\vol(\partial M)$ is uniformly bounded by the same set of parameters. As for $\vol(M)$, the manifold $M$ is covered by harmonic coordinate charts with the total number of charts bounded (not explicitly) by a constant depending on $n,D,\|R_M\|_{C^1},\|S\|_{C^2},i_0$ (Theorem 3 in \cite{KKL2}). Away from the boundary, the volumes of balls of a small radius are uniformly bounded. Near the boundary, we can use the boundary normal neighborhood of $\partial M$ since $\vol(\partial M)$ is already shown to be bounded. Hence $\vol(M)$ is uniformly bounded by the same set of parameters.
\end{proof}

\begin{proof}[Proof of Theorem \ref{Cor1}]
We take the first $\delta^{-1}$ Neumann boundary spectral data of $M_2$, and by Definition \ref{deferror}, this set of finite data (without error) is a $\delta$-approximation of the Neumann boundary spectral data of $M_2$. By Proposition \ref{approximation}, we can construct an approximation to $\mathcal{R}(M_2)$. On the other hand, the finite spectral data of $M_2$ is $\delta$-close to the Neumann boundary spectral data of $M_1$ by Definition \ref{deferror}, since the Neumann boundary spectral data of $M_1$ and $M_2$ are $\delta$-close by assumption. Then from the pull-back of the finite spectral data of $M_2$ via the boundary isometry, we can construct an approximation to $\mathcal{R}(M_1)$. Since the boundary isometry (diffeomorphism) preserves Riemannian metrics on the boundaries, the pull-back of the finite spectral data via the boundary isometry produces an isometric approximation to the boundary distance functions. Hence Theorem \ref{Cor1} follows from Corollary 1 in \cite{KKL2}.
\end{proof}

\section{Technical lemmas}\label{auxiliary}

This section contains the proofs of several lemmas used in Section \ref{section-uc}. Some of the lemmas in this section, especially Lemma \ref{dd}, are important technical results, and we prove them here without interrupting the structure of the main proof. Some other lemmas are known facts. We did not find precise references for them, so we present short proofs here.

\begin{riccati}\label{riccati}
Let $(M,g)\in \mathcal{M}_n(D,K_1,K_2,i_0)$. Denote by $S_{\rho}$ the second fundamental form of the equidistant hypersurface in $M$ defined by the level set $d(\cdot,\partial M)=\rho$ for $\rho<i_0$. Then there exists a uniform constant $r_b$ explicitly depending only on $K_1,i_0$, such that for any $\rho\leqslant r_b$, we have $\|S_{\rho}\|\leqslant 2K_1$.

Moreover, if the metric components satisfy (\ref{coorb}) with respect to a coordinate chart in a ball $U$ of $\partial M$, then the metric components with respect to the boundary normal coordinate in $U\times [0,r_b]$ satisfy
$$\|g_{ij}\|_{C^1}\leqslant C(n,\|R_M\|_{C^1},\|S\|_{C^1}),\; \|g_{ij}\|_{C^4}\leqslant C(n,K_1,K_2,i_0),\; \forall \, 1\leqslant i,j\leqslant n.$$
\end{riccati}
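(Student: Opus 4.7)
The plan is to run the standard boundary Riccati comparison for the shape operator and then integrate the Riccati ODE tangentially and radially to transfer bounds from the boundary into the interior tube. Throughout, work in boundary normal coordinates $(x^1,\dots,x^{n-1},\rho)$, where $g=d\rho^{2}+g_{\alpha\beta}(x_T,\rho)\,dx^{\alpha}dx^{\beta}$ and, as recalled in the paper, $\tfrac12\partial_\rho g_{\alpha\beta}=(S_\rho)_{\alpha\beta}$, with $S_\rho$ the second fundamental form of the level set $\{d(\cdot,\partial M)=\rho\}$ (identified with its shape operator via $g_\rho$). The Riccati equation along the normal geodesic is
\[
\partial_\rho S_\rho + S_\rho^{2} + R_\rho = 0,
\]
where $R_\rho$ denotes the tangential $(1,1)$-tensor obtained from the curvature tensor, so that $\|R_\rho\|\leqslant \|R_M\|\leqslant K_1^{2}$.

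First I would prove the bound $\|S_\rho\|\leqslant 2K_1$. Setting $s(\rho)=\|S_\rho\|$ (operator norm), the Riccati equation gives the differential inequality $|s'|\leqslant s^{2}+K_1^{2}$ with $s(0)\leqslant K_1$. Comparison with the scalar ODE $y'=y^{2}+K_1^{2}$, $y(0)=K_1$, whose explicit solution is $y(\rho)=K_1\tan\!\bigl(K_1\rho+\pi/4\bigr)$, yields $s(\rho)\leqslant 2K_1$ as long as $K_1\rho+\pi/4\leqslant \arctan 2$. This fixes the uniform width $r_b=\min\!\bigl\{i_0,\,K_1^{-1}(\arctan 2-\pi/4)\bigr\}$, explicit in $K_1,i_0$, and the condition $r_b\leqslant i_0$ guarantees the boundary normal coordinate is well-defined on $U\times[0,r_b]$.

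For the $C^{1}$ bound on $g_{ij}$, I integrate $\partial_\rho g_{\alpha\beta}=2(S_\rho)_{\alpha\beta}$ from the boundary. The $C^{0}$ bound on $g_{\alpha\beta}$ follows from $(\ref{coorb})$ and the bound on $S_\rho$; the normal derivative $\partial_\rho g_{\alpha\beta}$ is controlled by $S_\rho$; the components $g_{i\rho}$ are either $0$ or $1$. For the tangential derivative $\partial_\gamma g_{\alpha\beta}$, I differentiate the Riccati equation in a tangential direction to obtain a linear ODE of the form
\[
\partial_\rho(\partial_\gamma S_\rho)+\{S_\rho,\partial_\gamma S_\rho\}=-\partial_\gamma R_\rho,
\]
with initial datum $\partial_\gamma S_\rho|_{\rho=0}$ controlled by $\|S\|_{C^{1}}$ and tangential Christoffel symbols, and forcing term bounded by $\|\nabla R_M\|$ plus lower-order $S$-polynomials. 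A Gronwall argument on $[0,r_b]$ then yields a bound on $\partial_\gamma S_\rho$, and integrating once more in $\rho$ gives the claimed $C^{1}$ bound on $g_{\alpha\beta}$, depending only on $n,\|R_M\|_{C^1},\|S\|_{C^1}$ (the tangential Christoffels on $\partial M$ are absorbed into $\|S\|_{C^1}$ and the metric bound $(\ref{coorb})$).

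The $C^{4}$ estimate is obtained by iterating exactly the same scheme. Differentiating the Riccati equation $k$ times in mixed tangential and radial directions produces a hierarchy of linear ODEs for $\nabla^{k}S_\rho$, whose forcing terms are universal polynomials in lower-order derivatives of $S_\rho$ and in $\nabla^{j}R_M$ for $j\leqslant k$. Radial derivatives are eliminated using the Riccati equation itself, so a derivative $\partial^{a_T}_{x_T}\partial^{a_\rho}_\rho g_{\alpha\beta}$ with $a_T+a_\rho\leqslant 4$ is ultimately expressed in terms of boundary data $\nabla^{\leqslant 3}S$, curvature and its covariant derivatives $\nabla^{\leqslant 2}R_M$, the metric, and $\bigl(\nabla^{j}S_\rho\bigr)_{j\leqslant 3}$ evaluated at intermediate $\rho$. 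Plugging in the assumed bounds $\|R_M\|_{C^{5}}\leqslant K_1^{2}+K_2$, $\|S\|_{C^{4}}\leqslant K_1+K_2$ and the already-established control on $S_\rho$ and $g_{\alpha\beta}$, a bookkeeping of constants produces the $C^{4}$ estimate $\|g_{ij}\|_{C^{4}}\leqslant C(n,K_1,K_2,i_0)$. The main (though essentially bookkeeping) obstacle is handling the mixed tangential/normal derivatives carefully, which is why the higher-regularity assumptions in $(\ref{boundedgeometry})$ are phrased with one or two extra derivatives beyond what one would naively expect.
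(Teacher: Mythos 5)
Your argument is correct and follows the same overall blueprint as the paper — Riccati bound on the shape operator first, then tangential differentiation of the Riccati equation plus Gronwall/ODE estimates for the higher-order metric bounds. The one genuine difference is in the first step. You apply a comparison principle directly to $s(\rho)=\|A_\rho\|_{\mathrm{op}}$ (identifying all tangent spaces along the normal geodesic via parallel transport, which is legitimate because $\partial_\rho$ is parallel, so $T\Sigma_\rho=\partial_\rho^\perp$ is preserved and the restricted inner product is fixed). Since $A_\rho$ is self-adjoint, $\|A_\rho^2\|=\|A_\rho\|^2$, and the Lipschitz function $s$ satisfies $s'\leqslant s^2+K_1^2$ a.e., so comparison with $y'=y^2+K_1^2$, $y(0)=K_1$, gives $s\leqslant 2K_1$ on $[0,(\arctan 2-\pi/4)/K_1]$. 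The paper instead works with the scalar $\kappa_V(\rho)=\langle S_\rho V,V\rangle$ for a \emph{parallel-transported} (not eigen-) $V$, for which $\langle S_\rho^2V,V\rangle$ is only bounded below by $\kappa_V^2$; this forces a bootstrap argument (introducing $\rho_{\max}$ and a two-sided derivative bound $|\kappa_V'|\leqslant 5K_1^2$) and yields the slightly different constant $r_b=\min\{i_0/2,(10K_1)^{-1}\}$. Your operator-norm comparison sidesteps that bootstrap entirely and is arguably cleaner; both give a uniform $r_b$ depending only on $K_1,i_0$, which is all that matters downstream. The second part (tangential differentiation of the Riccati equation, Gronwall on $[0,r_b]$, iteration to $C^4$ using the hypotheses $\|R_M\|_{C^5}$, $\|S\|_{C^4}$) matches the paper's proof essentially verbatim.
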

\begin{proof}
At an arbitrary point $z\in \partial M$, take an arbitrary unit vector $V$ in $T_z (\partial M)$ and extend it to $V(\rho)\in T_{\gamma_{z,\textbf{n}}(\rho)} M$ ($\rho<i_0$) via the parallel translation along $\gamma_{z,\textbf{n}}$, where $\gamma_{z,\textbf{n}}$ denotes the geodesic of $M$ from $z$ with the initial normal vector $\textbf{n}$ at $z$. We still use the notation $S_{\rho}$ to denote the shape operator of the equidistant hypersurface with distance $\rho$ from $\partial M$. Consider the following function
$$\kappa_V (\rho)=\langle S_{\rho} (V(\rho)),V(\rho) \rangle_{g}.$$
The bound on the second fundamental form of $\partial M$ indicates $|\kappa_V(0)|\leqslant K_1$. For convenience, we omit the evaluation at $\rho$ and use $V$ to denote the vector field $V(\rho)$. 

Since $V$ is a parallel vector field with respect to the normal vector field $\frac{\partial}{\partial \rho}$ (or simply $\partial_{\rho}$), we have
\begin{equation*}
\frac{d }{d\rho} \kappa_V = \langle  \nabla_{\partial_{\rho}} (S_{\rho} V) ,V \rangle+ \langle S_{\rho} V, \nabla_{\partial_{\rho}} V\rangle = \langle  (\nabla_{\partial_{\rho}} S_{\rho}) V,V \rangle.
\end{equation*}
Then the Riccati equation (e.g. Theorem 2 in \cite{PP}, p44) leads to the following formula:
\begin{equation}\label{kappa-riccati}
\frac{d }{d\rho} \kappa_V  =  -\langle  S_{\rho}^2 V,V \rangle +R_M(V,\partial_{\rho},V,\partial_{\rho}) .
\end{equation}
Due to the fact that $S_{\rho}$ is symmetric and $|V|=1$, we have
$$\langle  S_{\rho}^2 V,V \rangle = |S_{\rho} V|^2 \geqslant |\langle  S_{\rho} V,V \rangle|^2 .$$
Hence,
\begin{equation}\label{riccati-scalar-upper}
\frac{d }{d\rho} \kappa_V (\rho) \leqslant -\kappa_V^2 (\rho) + K_1^2\, .
\end{equation}

On the other hand, we need a lower bound for $d\kappa_V/d\rho$. This is possible because we \emph{a priori} know the solution of the Riccati equation exists up to $i_0$, and the equidistant hypersurfaces vary smoothly in a neighborhood of $\partial M$. This implies that there exists a positive number $\rho_{max}\leqslant i_0/2$ satisfying
$$\rho_{max}=\sup \big\{\rho\in [0,\frac{i_0}{2}]: \|S_{\tau}\|\leqslant 2K_1 \textrm{ for all }\tau\in [0,\rho] \big\}.$$
Hence for any $\rho\in [0,\rho_{max}]$, we have $|S_{\rho} V|\leqslant 2K_1$ as the condition above is a closed condition. Then from (\ref{kappa-riccati}),
\begin{equation}\label{riccati-scalar-lower}
\frac{d }{d\rho} \kappa_V (\rho) \geqslant -4K_1^2-K_1^2=-5K_1^2\, .
\end{equation}
Combining (\ref{riccati-scalar-upper}) and (\ref{riccati-scalar-lower}), we have
$$\big| \frac{d }{d\rho} \kappa_V (\rho) \big| \leqslant 5K_1^2\, , \quad \rho\in [0,\rho_{max}].$$
Thus for any $\rho\leqslant \min\{\rho_{max},(10K_1)^{-1}\}$, we have $|\kappa_V(\rho)|\leqslant 3K_1/2$. Since $z$ and $V$ are arbitrary, this shows $\|S_{\rho}\|\leqslant 3K_1/2$. 

We claim that the uniform constant $r_b$ can be chosen as $r_b=\min\{i_0/2,(10K_1)^{-1}\}$. This choice is obviously justified if $\rho_{max}=i_0/2$. Now if $\rho_{max}<i_0/2$, we prove that $\rho_{max}> (10K_1)^{-1}$. Suppose otherwise, and it implies that $\|S_{\rho}\|\leqslant 3K_1/2$ satisfies for any $\rho\leqslant \rho_{max}$. We know the solution of the Riccati equation exists in a neighborhood of $\rho_{max}$, and therefore there exists a larger $\rho>\rho_{max}$ satisfying the condition for $\rho_{max}$ since $\rho_{max}< i_0/2$ by assumption. This contradicts to the maximality of $\rho_{max}$. As a consequence, our estimate holds up to $\rho\leqslant (10K_1)^{-1}$ in this case. On the other hand, the fact that $(10K_1)^{-1}<\rho_{max}<i_0/2$ justifies our choice of $r_b$ in this case. This completes the proof for the first part of the lemma.

\smallskip
For the second part, we consider the matrix Riccati equation in the boundary normal coordinate. This time we use the Lie derivative version of the Riccati equation (e.g. Proposition 7(3) in \cite{PP}, p47). 
The components of the shape operator are denoted by $S_{\alpha}^{l}=\sum_{\beta=1}^{n-1} g^{\beta l}S_{\alpha \beta}$, where $S_{\alpha \beta}$ denotes the components of the second fundamental form of the equidistant hypersurfaces. Here the evaluation at $\rho$ is omitted. Then the Riccati equation has the following form:
$$\frac{d}{d \rho} S_{\alpha\beta}=\sum_{\gamma,l=1}^{n-1} g_{\gamma l}S_{\alpha}^{\gamma}S_{\beta}^{l}+R_M \big(\frac{\partial}{\partial x^{\alpha}}, \frac{\partial}{\partial \rho}, \frac{\partial}{\partial x^{\beta}}, \frac{\partial}{\partial \rho} \big).$$
By definition we have the equation on the distortion of metric:
$$\frac{d}{d \rho} g_{\alpha\beta}=2S_{\alpha\beta}.$$
Due to the first part of the lemma, $d g_{\alpha\beta}/d\rho$ is uniformly bounded. As a consequence, $g_{\alpha\beta}$ is uniformly bounded since it is bounded in the coordinate chart on $\partial M$. The tangential derivatives of $g_{\alpha\beta}$ are estimated as follows.

The Riccati equation can be written in terms of $(S_{\alpha\beta})$ and $(g_{\alpha\beta})$ using the formula for the matrix inverse. We differentiate these two equations with respect to all tangential directions $x^1,\cdots,x^{n-1}$, and we get a system of first-order ODE with the variable $\textbf{v}$:
$$\textbf{v}(\rho)=\big(\cdots,\frac{\partial g_{\alpha\beta}}{\partial x_T}(\rho),\cdots,\frac{\partial S_{\gamma l}}{\partial x_T}(\rho),\cdots \big), \quad \alpha,\beta,\gamma,l=1\cdots,n-1,$$
where $x_T$ ranges over all tangential directions $x^1,\cdots,x^{n-1}$. This system of equations can be written in the following form:
$$\frac{d}{d \rho} \textbf{v}=B_1\textbf{v}+B_2\textbf{v}+\nabla R_M^{\ast}.$$
The matrix $B_1$ is obtained by differentiating the term of the $S^2$ form in the Riccati equation, and only consists of components of the second fundamental form $(S_{\alpha\beta})$ and the metric $(g_{\alpha\beta})$. The matrix $B_2$ is obtained by differentiating the curvature term, and only consists of components of the curvature tensor and $(g_{\alpha\beta})$. The vector $\nabla R_M^{\ast}$ absorbs all the remaining terms and is considered as a constant vector. More precisely, the vector $\nabla R_M^{\ast}$ is made up of components of the covariant derivative $\nabla R_M$, and components of $R_M$, $(S_{\alpha\beta})$, $(g_{\alpha\beta})$.

Due to the first part of the lemma, the components $(S_{\alpha\beta})$ and $(g_{\alpha\beta})$ are uniformly bounded in the boundary normal neighborhood of width $r_b$. Then it follows that the components $(g^{\alpha\beta})$ are also uniformly bounded. This implies that the matrices $B_1,B_2$ have norms bounded above by $C(n,K_1)$, and the vector $\nabla R_M^{\ast}$ has length bounded above by $C(n,K_1,\|\nabla R_M\|)$. The initial condition $|\textbf{v}(0)|$ is bounded above by $n, \|\nabla S\|$. Then the standard theory of ODE yields a bound for $|\textbf{v}|$ and hence for all components of $\textbf{v}$. In particular, $\partial g_{\alpha\beta}/\partial x_T$ are uniformly bounded, which implies that $\|g_{ij}\|_{C^1}\leqslant C(n,\|R_M\|_{C^1},\|S\|_{C^1})$ for all $1\leqslant i,j\leqslant n$.

We keep differentiating the matrix Riccati equation with respect to $x_T$ and $\rho$ up to the fourth order. By the same argument, all relevant coefficients of that system of ODE are uniformly bounded by $\|R_M\|_{C^4}$, $(S_{\alpha\beta})$, $(g_{\alpha\beta})$ and previous lower order estimates. Since the initial condition at $\rho=0$ is bounded by $n, \|g_{\alpha\beta}(0)\|_{C^4}$, $\|S\|_{C^4}$ and $\|R_M\|_{C^3}$, the $C^4$ estimate for the metric components directly follows from (\ref{coorb}).
\end{proof}

\begin{CATradius}\label{CATradius}
(1) \,For any $M\in \mathcal{M}_n(K_1)$, we have $r_{\textrm{CAT}}(M)>0$.

Assume further $M\in \mathcal{M}_n(D,K_1,K_2,i_0)$. The submanifold $M_h$ is defined in Definition \ref{Mhdh}. Suppose $\widetilde{M}$ is an extension of $M$ satisfying Lemma \ref{extensionmetric}(1-3) with the extension width $\delta_{ex}$. Then \\
(2) \,for sufficiently small $h,\delta_{ex}$ explicitly depending on $K_1,K_2,i_0$, we have 
$$r_{\textrm{CAT}}(M_h)\geqslant \min \big\{C(n,\|R_M\|_{C^1},\|S\|_{C^1}),r_{\textrm{CAT}}(M) \big\},$$
$$r_{\textrm{CAT}}(\widetilde{M})\geqslant\min \big\{C(n,\|R_M\|_{C^1},\|S\|_{C^1}),\frac{i_0}{4},\frac{r_{\textrm{CAT}}(M)}{2} \big\};$$
(3) \,for sufficiently small $h,\delta_{ex}$, we have
$$r_{\textrm{CAT}}(M_h)\geqslant \min\big\{\frac{2}{3}r_{\textrm{CAT}}(M),\frac{\pi}{2K_1}\big\},\quad r_{\textrm{CAT}}(\widetilde{M})\geqslant \min\big\{\frac{2}{3}r_{\textrm{CAT}}(M),\frac{\pi}{2K_1}\big\}.$$
\end{CATradius}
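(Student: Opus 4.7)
The plan is to use local CAT$(K)$ theory from the Characterization Theorem \cite{ABB2} and Theorem 4.3 in \cite{AB} combined with compactness. At every $x\in M$, the Characterization Theorem provides a neighborhood $U_x$ carrying a local CAT$(K_1^2)$ structure, hence unique minimizers and $r_{\textrm{CAT}}(y)\geqslant \epsilon_x>0$ throughout $U_x$. Compactness of $M$ together with a Lebesgue-number argument on a finite subcover yields a uniform $\epsilon>0$ with $r_{\textrm{CAT}}(y)\geqslant\epsilon$ for all $y\in M$.

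\textbf{Part (2).} For $M_h$, Lemma \ref{riccati} bounds $\|S_{\partial M_h}\|\leqslant 2K_1$ while the curvature is inherited from $M$, so the Characterization Theorem yields an explicit local CAT-radius depending on $n$, $\|R_M\|_{C^1}$, $\|S\|_{C^1}$, contributing the first term in the minimum. For the second term: whenever $x,y\in M_h$ satisfy $d_{M_h}(x,y)<r_{\textrm{CAT}}(M)$ and the $M$-minimizing geodesic between them lies in $M_h$, we have $d_M(x,y)=d_{M_h}(x,y)$, forcing every $d_{M_h}$-minimizer to coincide with the unique $d_M$-minimizer. The argument for $\widetilde{M}$ is analogous, using Lemma \ref{extensionmetric}(3) for the curvature and second-fundamental-form bounds. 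The factor $i_0/4$ controls the size of a boundary normal neighborhood in which Lemma \ref{distances} relates $\widetilde{M}$-geodesics to their projections onto $M$ with distortion at most $1+O(K_1\delta_{ex})$, absorbed by the factor $r_{\textrm{CAT}}(M)/2$ for sufficiently small $\delta_{ex}$.

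\textbf{Part (3).} I would argue by compactness-contradiction. If the claim failed, we could extract $h_k\downarrow 0$ and $x_k,y_k$ (in $M_{h_k}$ or $\widetilde{M}$) at distance less than $\min\{\tfrac{2}{3}r_{\textrm{CAT}}(M),\pi/(2K_1)\}$ admitting two distinct minimizing geodesics. Using the uniform bounded geometry and Arzel\`a--Ascoli on arc-length parametrizations, one extracts a limiting pair $x_\infty,y_\infty\in M$ with $d_M(x_\infty,y_\infty)\leqslant \tfrac{2}{3}r_{\textrm{CAT}}(M)<r_{\textrm{CAT}}(M)$ and two limit minimizing geodesics of $M$. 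If the limits are distinct, this contradicts $r_{\textrm{CAT}}(M)$; if they coincide, then for large $k$ both pre-limit geodesics lie in a ball of radius $<\pi/(2K_1)$ where the Characterization Theorem produces a CAT$(K_1^2)$ structure, forcing uniqueness and a contradiction.

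\textbf{Main obstacle.} The delicate step is the compactness argument in part (3) together with the near-boundary analysis for $\widetilde{M}$ in part (2): one must verify that sequences of $d_{M_{h_k}}$-minimizers converge to a genuine $d_M$-minimizer rather than to a broken curve along $\partial M$. This requires uniform $C^{1,1}$-regularity of the geodesics, supplied by the uniform second-fundamental-form bound of Lemma \ref{riccati} together with the product-metric structure of $\widetilde{g}$ in the boundary normal neighborhood.
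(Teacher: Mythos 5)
The main gap is in part (2), which is the quantitative core of the lemma and on which your part (3) also relies. You assert that the Characterization Theorem of \cite{ABB2} ``yields an explicit local CAT-radius depending on $n,\|R_M\|_{C^1},\|S\|_{C^1}$''. It does not: that theorem is purely qualitative, giving each point \emph{some} neighborhood with an upper curvature bound but no control on its size. The quantitative version (Theorem 4.3 in \cite{AB}) states that the ball of radius $\min\{\pi/2\sqrt{K},r_{\textrm{CAT}}\}$ is CAT$(K)$ --- but that radius is expressed in terms of $r_{\textrm{CAT}}$ of the very space in question, which is exactly the quantity you are trying to bound from below, so invoking it here is circular. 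Your fallback for the second term in the minimum, which conditions on the $M$-minimizer between $x,y\in M_h$ lying in $M_h$, does not cover the generic case where minimizers touch $\partial M$ or $\partial M_h$. The mechanism the paper actually uses, and which is absent from your proposal, is this: take the \emph{closest} pair $p,q$ admitting two minimizers; by the first variation formula the two minimizers form a closed geodesic; this closed geodesic, viewed as a closed $C^{1,1}$ curve in $M$, has geodesic curvature explicitly bounded by $C(n,\|R_M\|_{C^1},\|S\|_{C^1})$ via the Riccati estimates of Lemma \ref{riccati}; and a closed curve of bounded geodesic curvature contained in a CAT$(K_1)$ ball has length bounded below by an explicit constant (Corollary 1.2(c) in \cite{AB}). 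That length lower bound is what produces the explicit term. For $\widetilde{M}$ one additionally pushes the closed geodesic inward along the normal fibres into $M$ before applying the same bound.

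The same circularity resurfaces in your part (3): in the case where the two limit geodesics coincide, you claim a ball of radius $\pi/(2K_1)$ carries a CAT$(K_1^2)$ structure ``by the Characterization Theorem'', which again is not what that theorem provides; moreover the two competing geodesics are minimizers of $M_{h_k}$, whose CAT-ball radius is only controlled through part (2) and may be far smaller than $\tfrac{2}{3}r_{\textrm{CAT}}(M)$, so uniqueness at that scale does not follow. The paper instead exploits the closed-geodesic structure: near a point $y$ on the limit curve away from $p$, the two branches of the closed geodesic of $M_{h_k}$ become arbitrarily close while the angle between them at $p_k$ remains exactly $\pi$, contradicting the local CAT comparison at the small but uniform scale supplied by part (2). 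Your part (1) and the Arzel\`a--Ascoli skeleton of part (3) are essentially correct and match the paper, but without the closed-geodesic and curve-length argument the explicit bounds in (2), and hence the contradiction in (3), are not established.
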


\begin{proof}
Due to the Characterization Theorem in \cite{ABB2}, any point $x\in M$ has an open ball $U_x$ such that $U_x$ has curvature bounded above by $K_1^2$ in the sense of Alexandrov. In particular, for any point $p,q\in U_x$ satisfying $d_{U_x}(p,q)<\pi/K_1$, there is a unique minimizing geodesic in $U_x$ (not necessarily a minimizer of $M$) connecting $p$ and $q$ (e.g. Theorem 9.8 in \cite{AKP}). 

\smallskip
\noindent (1) Suppose $r_{\textrm{CAT}}(M)=0$, and there exist sequences of points $p_i,q_i$, such that there are two minimizing geodesics of $M$ joining each pair of points $p_i,q_i$ with $d(p_i,q_i)\to 0$. By the compactness of $M$, we can find converging subsequences of points, still denoted by $p_i$ and $q_i$. Let $x$ be their limit point. For sufficiently large $i$, there are two minimizing geodesics of $M$ connecting $p_i,q_i$ and they both lie in $U_x$, which is a contradiction to the property of $U_x$.

\smallskip
\noindent (2) Given an arbitrary point $p\in M_h$, suppose $q\in M_h$ is a point such that there are two minimizing geodesics of $M_h$ connecting $p,q$. 
Without loss of generality, assume $d_h(p,q)<\min\{\pi/2K_1,r_{\textrm{CAT}}(M)\}$. 
We choose $h$ sufficiently small such that $\|S_{\partial M_h}\|\leqslant 2\|S\|$ and $\|S_{\partial M_h}\|_{C^1}\leqslant 2\|S\|_{C^1}$.
Recall that no conjugate points occur along geodesics (of $M_h$) of length less than $\pi/2K_1$ (Corollary 3 in \cite{ABB2}).
Furthermore, we consider $p,q$ to be the closest pair: $d_h(p,q)=r_{\textrm{CAT}}(M_h)$.
Then by the first variation formula (e.g. Proposition 3 in \cite{ABB2}), the two geodesics connecting $p,q$ form a closed geodesic of $M_h$.  It is known that geodesics on manifolds with smooth boundary are of $C^{1,1}$. Hence their geodesic curvature exists almost everywhere and is bounded by $C(n,\|R_M\|_{C^1},\|S\|_{C^1})$ due to (\ref{acceleration}). Now consider these two geodesics of $M_h$ connecting $p,q$ as a closed $C^{1,1}$-curve of $M$, and it lies in the ball of $M$ centered at $p$ of the radius $\min\{\pi/2K_1,r_{\textrm{CAT}}(M)\}$, which is CAT$(K_1)$ due to Theorem 4.3 in \cite{AB}.
Hence by Corollary 1.2(c) in \cite{AB}, the length of this closed curve is bounded below by $C(n,\|R_M\|_{C^1},\|S\|_{C^1})$, and therefore $d_h(p,q)$ is bounded below by $C(n,\|R_M\|_{C^1},\|S\|_{C^1})$.

Next we derive a lower bound for $r_{\textrm{CAT}}(\widetilde{M})$. Suppose $p,q\in \widetilde{M}$ is the closest pair of points such that there are two minimizing geodesics of $\widetilde{M}$ joining $p,q$. Assume $\widetilde{d}(p,q)<\min\{\pi/4K_1,i_0/4,$ $r_{\textrm{CAT}}(M)/2\}$. Then we immediately see that at least one of these two geodesics intersects $\widetilde{M}-M$.
This implies that both geodesics lie in the boundary normal (tubular) neighborhood of $\partial M$ by assumption. 
Furthermore, the two geodesics connecting $p,q$ form a closed geodesic of $\widetilde{M}$ by the first variation formula. 
We move inwards this closed geodesic along the family of geodesics normal to $\partial M$ by distance $\delta_{ex}<i_0/2$. This process results in a closed $C^{1,1}$-curve of $M$ contained in the boundary normal neighborhood.
For sufficiently small $\delta_{ex}$ depending on $K_1,K_2$, this closed $C^{1,1}$-curve of $M$ has length at most $3\widetilde{d}(p,q)$ and its geodesic curvature is bounded by $C(n,\|R_M\|_{C^1},\|S\|_{C^1})$ almost everywhere. 
Hence this closed curve of $M$ lies in a ball of $M$ of the radius $\min\{\pi/2K_1,r_{\textrm{CAT}}(M)\}$ (which is CAT$(K_1)$),
and therefore its length is bounded below by $C(n,\|R_M\|_{C^1},\|S\|_{C^1})$ by Corollary 1.2(c) in \cite{AB}. This shows that the length of the original closed geodesic of $\widetilde{M}$ is bounded below by $C(n,\|R_M\|_{C^1},\|S\|_{C^1})$, which gives the lower bound for $\widetilde{d}(p,q)$.

\smallskip
\noindent (3) Here we only prove for $M_h$; the proof for $\widetilde{M}$ is the same. Suppose not, and we can find $p_i,q_i\in M_{h_i}$ ($h_i\to 0$) such that there are two minimizing geodesics of $M_{h_i}$ connecting each pair $p_i,q_i$ with $d_{h_i}(p_i,q_i)<\min\{2r_{\textrm{CAT}}(M)/3,$ $\pi/2K_1\}$. Moreover, we can assume $q_i$ is the closest point from $p_i$ such that this happens, and therefore the two geodesics connecting $p_i,q_i$ form a closed geodesic of $M_{h_i}$. 
Thus we have a sequence of closed $C^1$-curves with lengths less than $4r_{\textrm{CAT}}(M)/3$. This sequence of closed curves also has lengths uniformly bounded away from $0$ due to $(2)$. Hence by the Arzela-Ascoli Theorem, we can find a subsequence converging to a limit closed curve in $M$ of nonzero length (not necessarily of $C^1$). Let $p,q\in M$ be the limit points of $p_i,\,q_i$. Since $d_{h_i}$ converges to $d$ (Lemma \ref{distances}), then the lower semi-continuity of length yields that the limit closed curve has length at most $2d(p,q)$.

Consider the segment of the limit closed curve from $p$ to $q$ and the other segment from $q$ to $p$. Both segments must have lengths at least the distance $d(p,q)$. Since the limit closed curve has length at most $2d(p,q)$, each segment is a minimizing geodesic of $M$. If these two segments do not coincide, then we get two minimizing geodesics of $M$ from $p$ to $q$ of lengths at most $2r_{\textrm{CAT}}(M)/3$, which contradicts the condition for $r_{\textrm{CAT}}(M)$. If the two segments coincide, we pick a point $y\in M$ on the limit curve close to $p$, and consider points $y_1,y_2$ on the closed geodesic of $M_{h_i}$ near (fixed) $y$ at opposite sides from $p_i$. For sufficiently large $i$, the points $y_1,y_2$ can be arbitrarily close in $M_{h_i}$ and meanwhile bounded away from $p_i$. However, the angle between the geodesic segment of $M_{h_i}$ from $p_i$ to $y_1$ and the segment from $p_i$ to $y_2$ is always $\pi$, since the curve in question is a closed $C^1$-curve. This is a contradiction to the local CAT condition for $M_{h_i}$ combined with $(2)$.
\end{proof}

\begin{dhs}\label{dhs}
Let $h$ be sufficiently small determined at the beginning of Section \ref{subsection3.4}. Let $d_h^s(\cdot,z)$ (Definition \ref{definition-dhs}) be the smoothening of the function $d_h(\cdot,z)$ (Definition \ref{Mhdh}) with the smoothening radius $r=a_T h^3$, where $a_T=\min\{1,T^{-1}\}$. Then the following properties are satisfied for $z,z_1,z_2\in M_h$, $x\in M$ and $x_1,x_2\in \widetilde{M}$.\\
(1) \,$|d_h(x_1,z_1)-d_h(x_1,z_2)|\leqslant d_h(z_1,z_2).$\\
(2) \,$|d_h^s(x,z_1)-d_h^s(x,z_2)|\leqslant (1+CnK_1^2 h^6)d_h(z_1,z_2).$ \\
(3) \,For sufficiently small $h$ only depending on $K_1$, we have
$$|d_h(x_1,z)-d_h(x_2,z)|<\frac{3}{2}h^{-1}\widetilde{d}(x_1,x_2).$$
(4) \,For sufficiently small $h$ depending on $n,K_1,i_0$, if $d_h(x,z)<i_0$, then 
$$|d_h^s(x,z)-d_h(x,z)|<2a_T h^2.$$
\end{dhs}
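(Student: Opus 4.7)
My plan is to prove the four statements in order, with (1) and (2) being essentially triangle-inequality calculations, (3) a Lipschitz estimate that I will handle by splitting into cases based on whether each argument lies in $M_h$ or not, and (4) a smoothing estimate that combines (3) with a volume comparison on $\widetilde{M}$.

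For (1), when $x_1\in M_h$, both quantities $d_h(x_1,z_1)$ and $d_h(x_1,z_2)$ are intrinsic distances on the Riemannian submanifold $M_h$, so the inequality is immediate from the triangle inequality on $M_h$. When $x_1\in\widetilde{M}-M_h$, the extension formula (\ref{dh}) shows that $d_h(x_1,z_1)-d_h(x_1,z_2)=d_h(x_1^{\perp_h},z_1)-d_h(x_1^{\perp_h},z_2)$ because the additive $h^{-1}\widetilde{d}(x_1,x_1^{\perp_h})$ term cancels, and since $x_1^{\perp_h}\in\partial M_h\subset M_h$ the intrinsic triangle inequality applies. For (2), I write
\[
d_h^s(x,z_1)-d_h^s(x,z_2)=c_nr^{-n}\int_{\widetilde{M}}k_1\!\left(\tfrac{\widetilde{d}(y,x)}{r}\right)\bigl(d_h(y,z_1)-d_h(y,z_2)\bigr)\,dy,
\]
bound the integrand pointwise using (1), and then use Bishop--Gromov volume comparison applied to the ball $\widetilde{B}_r(x)\subset \widetilde{M}$. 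By Lemma \ref{extensionmetric}(3), $\|R_{\widetilde{M}}\|\leqslant 2K_1^2$, so the Riemannian volume form in a normal chart at $x$ differs from the Euclidean one by $1+O(nK_1^2 r^2)$. Combined with the normalization (\ref{normalization}) this gives $c_nr^{-n}\int_{\widetilde{M}}k_1(\widetilde{d}(y,x)/r)\,dy\leqslant 1+CnK_1^2 r^2=1+CnK_1^2 a_T^2 h^6$, which yields the claimed bound (since $a_T\leqslant 1$).

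For (3), I split into cases. When $x_1,x_2\in M_h$, the intrinsic triangle inequality on $M_h$ gives $|d_h(x_1,z)-d_h(x_2,z)|\leqslant d_h(x_1,x_2)$, and then the analogue of Lemma \ref{distances} applied to the pair $M_h\subset\widetilde{M}$ (which uses that $\|S_{\partial M_h}\|\leqslant 2K_1$ from Lemma \ref{riccati} and that the extension width of $M_h$ in $\widetilde{M}$ is $h+\delta_{ex}=6h$) yields $d_h(x_1,x_2)\leqslant(1+CK_1 h)\widetilde{d}(x_1,x_2)$, which is bounded by $\frac{3}{2}h^{-1}\widetilde{d}(x_1,x_2)$ for $h$ small. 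When $x_1,x_2\in\widetilde{M}-M_h$, I use (\ref{dh}) to rewrite the difference as
\[
\bigl(d_h(x_1^{\perp_h},z)-d_h(x_2^{\perp_h},z)\bigr)+h^{-1}\bigl(\widetilde{d}(x_1,x_1^{\perp_h})-\widetilde{d}(x_2,x_2^{\perp_h})\bigr),
\]
bound the first term by the $M_h$-case applied to the projections (with $\widetilde{d}(x_1^{\perp_h},x_2^{\perp_h})\leqslant(1+Ch)\widetilde{d}(x_1,x_2)$ from the boundary-normal geometry), and bound the second term by $h^{-1}\widetilde{d}(x_1,x_2)$ since the projection onto $\partial M_h$ along normal geodesics is 1-Lipschitz in the normal coordinate up to a factor $1+O(h)$. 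The dominant $h^{-1}$ contribution gives the $\frac{3}{2}h^{-1}$ constant after absorbing lower-order factors. The mixed case (one argument in $M_h$, the other outside) is handled analogously by inserting $x_2^{\perp_h}$.

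For (4), the assumption $d_h(x,z)<i_0$ guarantees that $d_h(y,z)$ is well-defined for all $y$ in the support of the mollifier around $x$ (which has radius $r=a_T h^3$). Writing
\[
d_h^s(x,z)-d_h(x,z)=c_nr^{-n}\int_{\widetilde{M}}k_1\!\left(\tfrac{\widetilde{d}(y,x)}{r}\right)\bigl(d_h(y,z)-d_h(x,z)\bigr)\,dy,
\]
I apply (3) pointwise to get $|d_h(y,z)-d_h(x,z)|\leqslant\frac{3}{2}h^{-1}\widetilde{d}(y,x)\leqslant\frac{3}{2}h^{-1}r=\frac{3}{2}a_T h^2$ on the support of the mollifier, and then use the same volume comparison as in (2) to conclude $|d_h^s(x,z)-d_h(x,z)|\leqslant(1+CnK_1^2 h^6)\cdot\frac{3}{2}a_T h^2<2a_T h^2$ for $h$ small.

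The most delicate point is (3), specifically controlling the mixed case and ensuring the $h^{-1}$ factor from the normal scaling in (\ref{dh}) is the only term that grows as $h\to 0$; everything else must remain $(1+O(h))$, which requires Lemma \ref{riccati} for the geometry of $\partial M_h$ and Lemma \ref{extensionmetric} for the curvature control on $\widetilde{M}$.
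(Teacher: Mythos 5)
Your proof is correct and follows essentially the same route as the paper: (1) and (3) via case analysis on whether the points lie in $M_h$ using Lemma \ref{distances} for $M_h\subset\widetilde M$, (2) and (4) via the Jacobian estimate for the exponential map (which is really a Rauch/sectional-curvature estimate rather than Bishop--Gromov, which concerns Ricci lower bounds). One small imprecision: the displayed equality in your (4) silently assumes $c_n r^{-n}\int_{\widetilde M}k_1(\widetilde d(y,x)/r)\,dy=1$; the true identity carries an extra term $\bigl(c_n r^{-n}\int k_1-1\bigr)\,d_h(x,z)$, which is $O(nK_1^2 h^6\, d_h(x,z))$ and is negligible precisely because of the hypothesis $d_h(x,z)<i_0$ --- so that hypothesis is needed to control the normalization defect, not merely to make $d_h(\cdot,z)$ well-defined.
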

\begin{proof}
(1) directly follows from the definition of $d_h$.\\
(2) Let $r=a_T h^3$. Observe that the ball of radius $h^3$ centered at any $x\in M$ does not intersect $\partial \widetilde{M}$, and hence the distance function $\widetilde{d}(\cdot,x)$ for $x\in M$ is simply a geodesic distance function. Due to (\ref{curvatureextended}), the Jacobian $J_x(v)$ of the exponential map $\exp_x(v)$ of $\widetilde{M}$ at $v\in \mathcal{B}_{r}(0)\subset T_x\widetilde{M}$ satisfies
\begin{equation}\label{Jacobian}
|J_x(v)-1| \leqslant CnK_1^2|v|^2 \leqslant CnK_1^2 h^6.
\end{equation}
Then it follows from (\ref{normalization}) that
\begin{eqnarray}\label{normalizationestimate}
\int_{\widetilde{M}} k_1\big(\frac{\widetilde{d}(y,x)}{r}\big)dy&=& 
\int_{\mathcal{B}_{r}(0)\subset T_x{\widetilde{M}}} k_1\big(\frac{|v|}{r}\big)J_x(v)dv \nonumber \\
&\leqslant& (1+CnK_1^2h^6) \int_{\mathbb{R}^n} k_1\big(\frac{|v|}{r}\big)dv.
\end{eqnarray}
This inequality (\ref{normalizationestimate}), (\ref{normalization}) and (1) yield (2). 

\smallskip
\noindent (3) Recall that the second fundamental form of $\partial M_h$ is bounded by $2K_1$ due to Lemma \ref{riccati}, and $\widetilde{M}$ can be considered as an extension of $M_h$ by gluing a collar of width $6h$. If $x_1,x_2\in M_h$, then Lemma \ref{distances} applies by replacing $M$ with $M_h$ and we have
\begin{equation}\label{dhprojection}
|d_h(x_1,z)-d_h(x_2,z)|\leqslant d_h(x_1,x_2)\leqslant (1+36K_1h)\widetilde{d}(x_1,x_2)\, .
\end{equation}

If $x_1,x_2\in \widetilde{M}-M_h$, then Lemma \ref{distances} yields
$$d_h(x_1^{\perp_h},x_2^{\perp_h})\leqslant (1+36K_1h)\widetilde{d}(x_1,x_2).$$
Then by the definition of $d_h$ (\ref{dh}) and (\ref{dhprojection}), we have
\begin{eqnarray*}
|d_h(x_1,z)-d_h(x_2,z)|&\leqslant& |d_h(x_1^{\perp_h},z)-d_h(x_2^{\perp_h},z)|+ h^{-1}|\widetilde{d}(x_1,x_1^{\perp_h})-\widetilde{d}(x_2,x_2^{\perp_h})| \\
&\leqslant& d_h(x_1^{\perp_h},x_2^{\perp_h}) + h^{-1}|\widetilde{d}(x_1,\partial M_h)-\widetilde{d}(x_2,\partial M_h)| \\
&\leqslant& (1+36K_1h)\widetilde{d}(x_1,x_2)+h^{-1}\widetilde{d}(x_1,x_2).
\end{eqnarray*}
Thus the desired estimate follows for sufficiently small $h$ only depending on $K_1$.

If $x_1\in \widetilde{M}-M_h,\,x_2\in M_h$, then similarly we have
\begin{eqnarray*}
|d_h(x_1,z)-d_h(x_2,z)|&\leqslant& |d_h(x_1^{\perp_h},z)-d_h(x_2,z)|+ h^{-1}\widetilde{d}(x_1,x_1^{\perp_h}) \\
&\leqslant& d_h(x_1^{\perp_h},x_2) + h^{-1}\widetilde{d}(x_1,\partial M_h) \\
&\leqslant& (1+36K_1h)\widetilde{d}(x_1,x_2) +h^{-1}\widetilde{d}(x_1,x_2),
\end{eqnarray*}
and the same estimate follows.

\smallskip
\noindent (4) In view of (\ref{Jacobian}) and (\ref{normalizationestimate}), the Jacobian only generates error terms of order at least $h^6$. Hence we only need to prove that for any point $y$ in the ball (of $\widetilde{M}$) of the smoothening radius $a_T h^3$ around the center $x\in M$, it satisfies that $|d_h(y,z)-d_h(x,z)|<3a_T h^2/2$, which is guaranteed by (3). 
\end{proof}

\begin{closeness-curve}\label{closeness-curve}
Let $\gamma_1,\gamma_2: [0,l]\to \mathbb{R}^n$ be two $C^{1,1}$ curves. If $\|\gamma_1-\gamma_2\|_{C^0}\leqslant \epsilon<l^2/4$ and $\|\gamma_i^{\prime\prime}\|_{L^{\infty}}\leqslant \kappa$ for $i=1,2$, then $\|\gamma_1^{\prime}-\gamma_2^{\prime}\|_{C^0}\leqslant C(\kappa)\sqrt{\epsilon}$.
\end{closeness-curve}

\begin{proof}
Since $\gamma_i$ is of $C^{1,1}$, $\gamma_i^{\prime}$ is absolute continuous. Hence Taylor's theorem with the integral form of the remainder applies:
$$\gamma_i (s_2)=\gamma_i (s_1)+\gamma_i^{\prime}(s_1)(s_2-s_1)+\int_{s_1}^{s_2} \gamma_i^{\prime\prime}(\tau)(s_2-\tau)d\tau, \quad \forall \; 0\leqslant s_1<s_2\leqslant l.$$
From $\|\gamma_i^{\prime\prime}\|_{L^{\infty}}\leqslant \kappa$, we have
$$\big|\gamma_i (s_2)-\gamma_i (s_1)-\gamma_i^{\prime}(s_1)(s_2-s_1) \big|\leqslant \frac{\kappa}{2}(s_2-s_1)^2.$$
We add this inequality for $\gamma_1,\gamma_2$ together:
$$\big|(\gamma_1 (s_2)-\gamma_2 (s_2))-(\gamma_1 (s_1)-\gamma_2 (s_1))-(\gamma_1^{\prime}(s_1)-\gamma_2^{\prime}(s_1))(s_2-s_1) \big|\leqslant \kappa(s_2-s_1)^2.$$
Then by $\|\gamma_1-\gamma_2\|_{C^0}\leqslant \epsilon$,
$$|\gamma_1^{\prime}(s_1)-\gamma_2^{\prime}(s_1)|\leqslant \frac{2\epsilon}{s_2-s_1}+\kappa (s_2-s_1).$$
Take $s_2-s_1=\sqrt{\epsilon}$ if exists, and we have
$$|\gamma_1^{\prime}(s_1)-\gamma_2^{\prime}(s_1)|\leqslant (\kappa+2)\sqrt{\epsilon}.$$
Since $\sqrt{\epsilon}<l/2$, we can find $s_2=s_1+\sqrt{\epsilon}$ for any $s_1\in [0,l/2]$. For $s_1\in (l/2,l]$, one can repeat the whole process backwards. Hence the estimate above holds for all $s_1\in[0,l]$, which proves the lemma.
\end{proof}

\begin{dd}\label{dd}
Let $h$ be sufficiently small determined at the beginning of Section \ref{subsection3.4}. Let $d_h^s(\cdot,z)$ (Definition \ref{definition-dhs}) be the smoothening of the function $d_h(\cdot,z)$ (Definition \ref{Mhdh}) with the smoothening radius $r=a_T h^3$, where $a_T=\min\{1,T^{-1}\}$. Then for sufficiently small $h$ depending on $n,K_1,K_2$, given any $x\in M$ and $z\in M_h$ satisfying $h/4 \leqslant d_h (x,z)\leqslant \min\{i_0/2,r_0/2,\pi/6K_1\}$, we have
$$|\nabla_x d_h^s(x,z)|> 1-2h.$$
\end{dd}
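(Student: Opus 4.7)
The goal is to exhibit, at each admissible $x$, a unit vector $v\in T_x\widetilde M$ with $\partial_v d_h^s(x,z)\leq -(1-2h)$, so that $|\nabla_x d_h^s(x,z)|\geq |\partial_v d_h^s(x,z)|>1-2h$. The choice of $v$ depends on whether $x$ lies in $M_h$ or in the outer strip $M-M_h$, but in both regimes $v$ can be selected so that $\partial_v d_h(y,z)\lesssim -1$ holds pointwise for $y$ in the relevant portion of the smoothing ball $\widetilde B_r(x)$, with $r=a_T h^3$. The conclusion then follows by integrating against the mollifier and using the Jacobian bound from \eqref{Jacobian}--\eqref{normalizationestimate}.

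\textbf{Case 1: $x\in M_h$ with $\widetilde d(x,\partial M_h)>r$.} Under the hypothesis $d_h(x,z)\leq \min\{i_0/2,r_0/2,\pi/6K_1\}$, the CAT radius bound \eqref{CATchoice} together with the absence of conjugate points along geodesics of length less than $\pi/2K_1$ (Corollary 3 in \cite{ABB2}) guarantees that the unit-speed minimizing geodesic $\gamma_{x,z}$ of $M_h$ from $x$ to $z$ is unique and stays in the interior of $M_h$. Set $v=\gamma_{x,z}'(0)$, the unit tangent at $x$ pointing toward $z$. For each $y\in \widetilde B_r(x)$, Lemma \ref{dhs}(3) gives $|d_h(y,z)-d_h(x,z)|\leq 2h^{-1}r=2a_Th^2$, so $y$ remains well within the CAT radius of $z$ and admits a unique minimizing geodesic to $z$. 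By the standard Jacobi field estimate for the one-parameter family of geodesics from $z$ to points $\exp_x(ru)$, the initial tangent at $y$ of that geodesic differs from the parallel transport $P_y v$ (along the radial geodesic from $x$) by $O(K_1 r/d_h(x,z)) = O(K_1 h^2)$, where the lower bound $d_h(x,z)\geq h/4$ in the hypothesis is crucial. Consequently $\partial_{P_y v}d_h(y,z)\leq -1+CK_1 h^2$ pointwise, and integrating against the mollifier (with the Jacobian correction bounded by $CnK_1^2 h^6$) yields $\partial_v d_h^s(x,z)\leq -1+CK_1 h^2$, which is $<-(1-2h)$ for sufficiently small $h$ depending on $n,K_1,K_2$.

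\textbf{Case 2: $x\in M-M_h$, or $x\in M_h$ with $\widetilde d(x,\partial M_h)\leq r$.} In this regime the smoothing ball $\widetilde B_r(x)$ either lies entirely in, or straddles, the outer strip. In the boundary-normal coordinates of $\partial M_h$ (valid on $\widetilde B_r(x)$ since $r\ll i_0$), let $v$ be the inward unit normal to $\partial M_h$ at $x^{\perp_h}$. By the definition \eqref{dh}, for every $y\in (M-M_h)\cap \widetilde B_r(x)$ one has $\partial_v d_h(y,z)=-h^{-1}$ identically. A simple volume-comparison argument shows that at least a fixed positive fraction $c_0(n)>0$ of $\mathrm{vol}(\widetilde B_r(x))$ lies in $M-M_h$ whenever $|\widetilde d(x,\partial M_h)|\leq r/2$, and the whole ball lies in $M-M_h$ whenever $x\in M-M_h$ with $\widetilde d(x,\partial M_h)>r$. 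On the $M_h$-portion of the ball the contribution is bounded in magnitude by $1$ (since $|\nabla d_h|=1$ there), and critically the inward-normal component of $\nabla d_h$ inside $M_h$ carries the \emph{same sign} as the $-h^{-1}$ contribution outside (both reflect $d_h$ decreasing toward $z$), so the two combine constructively rather than cancel. Hence $\partial_v d_h^s(x,z)\leq -c_0 h^{-1}+1$, which is $<-(1-2h)$ for small $h$ by a wide margin.

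\textbf{Main obstacle.} The most delicate step is the Jacobi-field comparison in Case 1. One must show that, uniformly in $y\in \widetilde B_r(x)$, the initial tangent at $y$ of the minimizing geodesic to $z$ stays within $O(h^2)$ of the parallel transport $P_y v$ along the radial geodesic from $x$; the classical bound is of the form $C\|R_M\|\cdot r/d_h(x,z)$, so the hypothesis $d_h(x,z)\geq h/4$ and the choice $r=a_T h^3$ are precisely what keep this error within the $2h$ margin. The other notable technicality is checking the sign consistency in the transition zone of Case 2, where the magnitude of $\nabla d_h$ jumps from $1$ to $\geq h^{-1}$ across $\partial M_h$; here the key observation is that the jump is entirely in the normal component and occurs in a consistent direction, so after mollification one obtains a net gradient whose magnitude is dominated by the larger of the two regimes rather than being averaged down.
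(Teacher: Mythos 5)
The proposal misidentifies the key difficulty and contains a genuine error in Case 1 that undermines the argument.

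\textbf{The error in your Case 1.} You claim that for $x\in M_h$ with $\widetilde d(x,\partial M_h)>r$, the CAT radius bound \eqref{CATchoice} together with the absence of conjugate points ensures that the minimizing geodesic of $M_h$ from $x$ to $z$ \emph{stays in the interior of $M_h$}. This is false. Uniqueness of the minimizer and absence of conjugate points say nothing about whether the minimizer touches $\partial M_h$; indeed, for a manifold with boundary, minimizing geodesics between interior points generically do touch the boundary. What determines whether the geodesic stays interior is where $z$ sits relative to $\partial M_h$, which is why the paper's Case~1 is indexed by the condition $d_h(z,\partial M_h)>\rho_0$ (a condition on $z$, not on $x$). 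Once the minimizer can touch $\partial M_h$, the function $d_h(\cdot,z)$ on $M_h$ is only $C^{1,1}$, the geodesic is only $C^{1,1}$ with (one-sided) acceleration jumps at switch points, and your smooth Jacobi-field estimate $\partial_{P_yv}d_h(y,z)\leqslant -1+CK_1h^2$ is not available. The paper handles exactly this situation in its Case~3 by a CAT-space argument: uniform $C^0$-closeness of geodesics (Corollary~8.2.6 in \cite{AKP}) plus bounded second derivatives in $L^\infty$ (Lemma~\ref{riccati}, inequality~\eqref{acceleration}), feeding into the elementary square-root interpolation Lemma~\ref{closeness-curve}. This yields only an $O(h^{3/2})$ bound on $|\nabla_v d_h(\exp_x(v),z)-\nabla_x d_h(x,z)|$, not the $O(h^2)$ you claim — the square root loss is precisely the price of the $C^{1,1}$ regularity. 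For this lemma $O(h^{3/2})$ still beats $2h$, but the mechanism is entirely different from what you describe and the claim as you state it is wrong.

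\textbf{The gap in your Case 2.} You restrict attention to $x\in M-M_h$ or $x\in M_h$ with $\widetilde d(x,\partial M_h)\leqslant r$, and you invoke a fixed positive fraction $c_0(n)$ of $\widetilde B_r(x)$ lying in $M-M_h$. But you only justify this for $\widetilde d(x,\partial M_h)\leqslant r/2$; when $\widetilde d(x,\partial M_h)$ is between $r/2$ and $r$ (on either side), the outer fraction can be arbitrarily small, so your lower bound $\partial_v d_h^s(x,z)\leqslant -c_0 h^{-1}+1$ does not hold. In that transition zone you must also confront the possibility that geodesics from points of the smoothing ball to $z$ touch $\partial M_h$, which brings you back to the broken mechanism from your Case~1. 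The paper's Cases~3(ii)–(iii) address this by splitting the integral into $A_1$ (over $M_h$) and $A_2$ (over $M-M_h$), proving the near-nonnegativity of the \emph{normal} component $\langle A_1,\mathbf n_x\rangle\geqslant -Ch^{3/2}$ via \eqref{smallnormal} (which itself rests on $\langle\nabla_x d_h(x,z),\mathbf n_x\rangle\geqslant 0$ on $\partial M_h$, a fact you assert informally as ``sign consistency'' but never verify), and then arguing case-by-case depending on whether the outer fraction is $\geqslant h$ or $<h$. Your sketch assumes the constructive combination without establishing either the sign of the $M_h$-side normal component or the tangential smallness of the $M-M_h$-side contribution, both of which the paper proves explicitly.

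In short, the decomposition you use (based only on $x$'s distance to $\partial M_h$) is too coarse to detect the actual source of difficulty — geodesics in $M_h$ touching $\partial M_h$ — and the smooth Jacobi-field argument you rely on fails precisely in that regime. The CAT-space $C^1$-convergence machinery (Lemmas~\ref{closeness-curve}, \ref{CATradius} and the results of \cite{ABB,AB,AKP}) is not an optional refinement but the essential tool; without it the proof does not go through.
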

\begin{proof}
Let $r=a_T h^3$. By the definition (\ref{dhsdef}), we have
\begin{eqnarray*}
\nabla_x d_h^s(x,z)&=&c_{n}r^{-n}\int_{\widetilde{M}}\nabla_x k_1\big(\frac{\widetilde{d}(y,x)}{r}\big)d_h(y,z)dy \\
&=& c_{n}r^{-n} \int_{\widetilde{B}_r(x)\subset \widetilde{M}} k_1^{\prime}\frac{1}{r} \big(\frac{-\exp_{x}^{-1}(y)}{\widetilde{d}(y,x)} \big) d_h(y,z) dy,
\end{eqnarray*}
where $\exp_x$ denotes the exponential map of $\widetilde{M}$ at $x\in M$. Now we change to the geodesic normal coordinate of $\widetilde{M}$ around $x$, and identify vectors in the tangent space $T_x\widetilde{M}$ with points in $\mathbb{R}^n$:
\begin{eqnarray*}
\nabla_x d_h^s(x,z) 
&=&c_{n}r^{-n} \int_{\mathcal{B}_{r}(0)\subset T_x \widetilde{M}} k_1^{\prime}\frac{1}{r} \frac{(-v)}{|v|} d_h(\exp_x(v),z) J_x(v) dv \\
&=& c_{n}r^{-n} \int_{\mathcal{B}_{r}(0)\subset T_x \widetilde{M}}  -\nabla_v \Big(k_1\big(\frac{|v|}{r}\big)\Big) d_h(\exp_x(v),z) J_x(v) dv \\
&=& c_{n}r^{-n} \int_{\mathcal{B}_{r}(0)\subset T_x \widetilde{M}}  k_1\big(\frac{|v|}{r}\big) \nabla_v\big(d_h(\exp_x(v),z) J_x(v)\big) dv,
\end{eqnarray*}
where $J_x(v)$ denotes the Jacobian of $\exp_x$ at $v$. Here we have used integration by parts in the last equality.

It is known that $|\nabla_v J_x(v)|\leqslant C(n,K_1,K_2)|v|\leqslant C(n,K_1,K_2)h^3$ due to the $C^1$-estimate for the metric components (Lemma 8 in \cite{HV}) and Lemma \ref{extensionmetric}(3). 
Then by (\ref{normalization}), we have
\begin{eqnarray*}
&& \bigg| \, c_{n}r^{-n} \int_{\mathcal{B}_{r}(0)}  k_1\big(\frac{|v|}{r}\big) d_h(\exp_x(v),z) \big(\nabla_v J_x(v)\big) dv \, \bigg| \\ &\leqslant& c_{n}r^{-n} \int_{\mathcal{B}_{r}(0)}  k_1\big(\frac{|v|}{r}\big) \frac{\pi}{4K_1} C(n,K_1,K_2)h^3 dv \leqslant C(n,K_1,K_2)h^3.
\end{eqnarray*}
Hence we only need to estimate the lower bound for the length of the dominating term
\begin{equation}\label{ddA0}
A_0=c_{n}r^{-n} \int_{\mathcal{B}_{r}(0)\subset T_x \widetilde{M}}  k_1\big(\frac{|v|}{r}\big) \big(\nabla_v d_h(\exp_x(v),z)\big) J_x(v) dv.
\end{equation}

We start by considering the following two simple cases.

\smallskip
\textbf{Case 1:}  $d_h(z,\partial M_h)>\min\{i_0/2,r_0/2,\pi/6K_1\}$.

In this case, we know $x\in M_h$ and no geodesic from $z$ to $x$ intersects with $\partial M_h$ in this case. Then the distance function $d_h(\cdot,z)$ in the relevant domain is simply a geodesic distance function with the second derivative bounded by $5/h$ for sufficiently small $h$ depending on $K_1$ (e.g. Theorem 27 in \cite{PP}, p175). Since the exponential map and its inverse are uniformly bounded up to $C^2$ in the relevant domain for sufficiently small $h$ depending on $K_1,K_2$, we have
$$\big|\nabla_v d_h(\exp_x(v),z)-\nabla_v d_h(\exp_x(v),z)|_{v=0}\big|\leqslant Ch^{-1}|v| \leqslant Ch^2.$$
Note that vectors in $T_v(T_x \widetilde{M})$ are identified with vectors in $T_x \widetilde{M}$. Observe that at $v=0$, we know 
$$\nabla_v d_h(\exp_x(v),z) \big|_{v=0}=(d\exp_x |_{v=0})^{-1} \nabla_x d_h(x,z)=\nabla_x d_h(x,z).$$ 
Hence by the Jacobian estimate (\ref{Jacobian}) and the normalization (\ref{normalization}), we obtain
\begin{eqnarray*}
|\nabla_x d_h^s(x,z)-\nabla_x d_h(x,z)|&\leqslant & |A_0-\nabla_x d_h(x,z)|+C(n,K_1,K_2)h^3 \\
&\leqslant& Ch^2+C(n,K_1,K_2)h^3.
\end{eqnarray*}
which gives the desired lower bound for $|\nabla_x d_h^s(x,z)|$ for sufficiently small $h$, due to $|\nabla_x d_h(x,z)|$ $=1$. 

\smallskip
\textbf{Case 2:}  $x\in M-M_h$ and $\widetilde{d}(x,\partial M_h)>r$.

In this case, the gradient $\nabla_x d_h(x,z)=h^{-1}\nabla_x \widetilde{d}(x,\partial M_h)$ by the definition of $d_h$ (\ref{dh}). The second derivative of $\widetilde{d}(\cdot,\partial M_h)$ is bounded by the bound $2K_1$ on the second fundamental forms of the equidistant hypersurfaces from $\partial M$ in the boundary normal neighborhood of $\partial M$ (Lemma \ref{riccati}). Hence we have
\begin{equation}\label{bdhcloseness-0}
\big|\nabla_v d_h(\exp_x(v),z)-\nabla_v d_h(\exp_x(v),z)|_{v=0}\big|\leqslant C(K_1)h^{-1}|v| \leqslant C(K_1) h^{2}.
\end{equation}
Then the same argument as Case 1 shows that
\begin{equation}\label{bdhcloseness}
|\nabla_x d_h^s(x,z)-\nabla_x d_h(x,z)|\leqslant C(K_1) h^{2}+C(n,K_1,K_2)h^3,
\end{equation}
which yields a lower bound considering $|\nabla_x d_h(x,z)|=h^{-1}$.

\medskip
The general case when $x$ is close to $\partial M_h$ requires more careful treatment. We spend the rest of the proof to address it.

\smallskip
\textbf{Case 3:} $x\in M-M_h$ with $\widetilde{d}(x,\partial M_h)\leqslant r$ or $x\in M_h$.

Since $d_h(x,z)\leqslant \min\{r_0/2,\pi/6K_1\}$ is bounded by the radius of radial uniqueness (\ref{CATchoice}), the gradient $|\nabla_x d_h(x,z)|$ equals to $1$ or $h^{-1}$ depending on whether $x$ is in $M_h$. It is known that geodesics of $M_h$ are of $C^{1,1}$ and the second derivative of a geodesic exists except at countably many switch points (switching between interior segments and boundary segments) where both one-sided second derivatives exist (e.g. Section 2 in \cite{ABB}). Furthermore, the second derivative exists and vanishes at intermittent points which are the accumulation points of switch points. It was also proved that if the endpoints of a family of geodesics converge, then the geodesics converge uniformly in $C^1$ (the first Lemma in Section 4 of \cite{ABB}). However, the estimates in the said work were done in terms of an extrinsic parameter (depending on how a manifold is embedded in the ambient space), and we show the following modification in terms of intrinsic parameters.

The manifold $M_{h}$ has curvature bounded above by $4K_1^2$ locally in the sense of Alexandrov due to the Characterization Theorem in \cite{ABB2}. Furthermore by Theorem 4.3 in \cite{AB} and (\ref{CATchoice}), for any $z\in M_h$, the ball of $M_h$ around $z$ of the radius $\min\{2r_0/3,\pi/4K_1\}$ is a metric space of curvature bounded above by $4K_1^2$ . Denote by $\gamma_{x},\gamma_{y}$ the minimizing geodesics of $M_h$ from $x,y\in M_h$ to $z$. Denote the length of $\gamma_x$ by $L_x$ (i.e. $L_x=d_h(x,z)$). The geodesics $\gamma_x,\gamma_y$ are parametrized in the arclength parameter on $[0,L_x],[0,L_y]$ respectively. Without loss of generality, assume $L_x\leqslant L_y$. Hence
$$d_h(\gamma_y(L_x),\gamma_x(L_x))=d_h(\gamma_y(L_x),\gamma_y(L_y))=L_y-L_x\leqslant d_h(x,y),$$
where we used $\gamma_x(L_x)=\gamma_y(L_y)=z$. Then Corollary 9.13 in \cite{AKP} shows that if $d_h(x,z)\leqslant \pi/6K_1$ and $d_h(x,y)$ is sufficiently small depending on $K_1$, we have 
$$\|\gamma_{x}-\gamma_{y}\|_{C^0([0,L_x])} < 2d_h(x,y),$$ 
where the $C^0$-norm is the uniform norm with respect to $d_h$. This leads to $\|\gamma_{x}-\gamma_{y}\|_{C^0([0,L_x])} < C\widetilde{d}(x,y)$ if $\widetilde{d}(x,y)$ is sufficiently small by (\ref{dhprojection}). On the other hand, due to Lemma \ref{riccati} and (\ref{acceleration}), the second derivatives of $\gamma_{x},\gamma_{y}$ are bounded by $C(n,K_1,K_2)$ whenever they exist in the boundary normal coordinate of $\partial M_h$, and both one-sided second derivatives respect the same bound at switch points.

We lift the part of the curves $\gamma_{x},\gamma_{y}$ near $x,y$ onto the tangent space $T_x \widetilde{M}$. Without loss of generality, assume all of $\gamma_x,\gamma_y$ lie in the image of $\exp_x$. Since the exponential map and its inverse are uniformly bounded up to $C^2$, the properties stated above satisfied by $\gamma_x,\gamma_y$ are also satisfied by their lifts: namely, if $\widetilde{d}(x,y)$ is sufficiently small depending on $K_1$,
$$\|\exp_x^{-1}\circ \gamma_x-\exp_x^{-1}\circ \gamma_y\|_{C^0([0,L_x])}<C\widetilde{d}(x,y);$$
and the second derivatives of $\exp_x^{-1}\circ \gamma_x,\exp_x^{-1}\circ \gamma_y$ are uniformly bounded by $C(n,K_1,K_2)$ in $L^{\infty}$-norm. Here the $C^0$-norm is the uniform norm with respect to the Euclidean distance in $T_x \widetilde{M}$. Hence Lemma \ref{closeness-curve} applies:
\begin{equation}\label{closeness-lift}
\|(\exp_x^{-1}\circ \gamma_x)^{\prime}-(\exp_x^{-1}\circ \gamma_y)^{\prime}\|_{C^0([0,L_x])}<C(n,K_1,K_2)\sqrt{\widetilde{d}(x,y)}\, .
\end{equation}
At the starting point $y=\gamma_y(0)$ of $\gamma_y$, we know $\gamma_y^{\prime}(0)=-\nabla_y d_h(y,z)$ and hence
$$(\exp_x^{-1}\circ \gamma_y)^{\prime}(0)=(d\exp_x |_{v})^{-1} \gamma_y^{\prime}(0)=-\nabla_v d_h(\exp_x(v),z), $$
where $v=\exp_x^{-1}(y)$. At the starting point $x=\gamma_x(0)$ of $\gamma_x$, we simply have $(\exp_x^{-1}\circ \gamma_x)^{\prime}(0)=-\nabla_x d_h(x,z)$ by definition. Thus for sufficiently small $h$ depending on $K_1$, if $y\in M_h$ and $\widetilde{d}(x,y)\leqslant h^3$, the estimate (\ref{closeness-lift}) at starting points gives
\begin{equation}\label{closeness}
|\nabla_v d_h(\exp_x(v),z)-\nabla_x d_h(x,z)| < C\sqrt{\widetilde{d}(x,y)} \leqslant C(n,K_1,K_2)h^{\frac{3}{2}}.
\end{equation}

The difference between this case and Case 1 is that the formula for $\nabla_x d_h^s(x,z)$ (at the beginning of the proof) may split into two parts: the integral over points in $M_h$ and over points in $M-M_h$. The key observation is that in a small neighborhood intersecting $\partial M_h$, the gradient $\nabla_x d_h(x,z)$ for $x\in M-M_h$ is essentially normal to $\partial M_h$, which has almost the same direction as the normal component (with respect to $\partial M_h$) of $\nabla_x d_h(x,z)$ for $x\in M_h$. A precise version of this observation will be shown later. The $h^{-1}$ scaling in the definition of $d_h$ (\ref{dh}) plays a crucial role in obtaining the desired lower bound.

Denote the part of the integral $A_0$ (\ref{ddA0}) over points in $M_h$ by $A_1$, and the part of $A_0$ over points in $M-M_h$ by $A_2$. We divide Case 3 into the following three situations depending on where $x$ lies.

\smallskip
\textbf{Case 3(i):} $x\in M_h$ and $\widetilde{d}(x,\partial M_h)>r$.

In this case, the integral $A_0$ only involves points in $M_h$ and $A_0=A_1$. Then the same argument as Case 1 and (\ref{closeness}) imply that
\begin{equation*}\label{dhslower}
|\nabla_x d_h^s(x,z)-\nabla_x d_h(x,z)|<C(n,K_1,K_2)h^{\frac{3}{2}}.
\end{equation*}

\textbf{Case 3(ii):} $x\in \partial M_h$.

Denote by $\textbf{n}_x\in T_x (\widetilde{M})$ the outward-pointing unit vector normal to $\partial M_h$. The estimate (\ref{closeness}) yields the closeness between normal components:
$$\big|\langle \nabla_v d_h(\exp_x(v),z),\textbf{n}_x \rangle -\langle \nabla_x d_h(x,z),\textbf{n}_x \rangle \big|< Ch^{\frac{3}{2}}, \textrm{ if }\exp_x(v)\in M_h.$$
Since clearly $\langle \nabla_x d_h(x,z),\textbf{n}_x \rangle\geqslant 0$ for $x\in \partial M_h$, we have 
\begin{equation}\label{smallnormal}
\langle \nabla_v d_h(\exp_x(v),z),\textbf{n}_x \rangle >-Ch^{\frac{3}{2}}, \textrm{ if }\exp_x(v)\in M_h,
\end{equation}
which implies that $\langle A_1,\textbf{n}_x \rangle > -Ch^{\frac{3}{2}}$.

On the other hand, we replace the evaluation at $v=0$ in the estimate (\ref{bdhcloseness-0}) with $v=\exp_x^{-1}(x^{\prime})$ for an arbitrary point $x^{\prime}\in M-M_h$ close to $x$. Then consider their normal components similarly. Since $\nabla_x d_h(x^{\prime},z)$ can be arbitrarily close to $h^{-1}\textbf{n}_x$ and the exponential map only changes the inner product by a higher order $C(K_1)r^2$ term, we have
\begin{equation}\label{largenormal}
\langle \nabla_v d_h(\exp_x(v),z),\textbf{n}_x \rangle \geqslant h^{-1}-Ch^{2}, \textrm{ if }\exp_x(v)\in M-M_h.
\end{equation}
Furthermore by (\ref{bdhcloseness-0}), the tangential component of $\nabla_v d_h(\exp_x(v),z)$ can only have length at most $Ch^{2}$ if $\exp_x(v)\in M-M_h$. This implies that $|A_2-\langle A_2,\textbf{n}_x \rangle \textbf{n}_x|<Ch^{2}$.

\smallskip
\textbf{(1)} If $c_{n}r^{-n} \int_{\{v\in \mathcal{B}_r(0): \,\exp_x(v)\in M-M_h\}}  k_1\big(\frac{|v|}{r}\big) dv \geqslant h$, then (\ref{largenormal}) yields that $\langle A_2,\textbf{n}_x \rangle \geqslant 1-Ch^{3}$.
Thus by (\ref{smallnormal}),
$$|A_0|\geqslant |\langle A_0,\textbf{n}_x \rangle| =|\langle A_1+A_2,\textbf{n}_x \rangle| > 1-Ch^{\frac{3}{2}}-Ch^{3}.$$

\textbf{(2)} If $c_{n}r^{-n} \int_{\{v\in \mathcal{B}_r(0): \,\exp_x(v)\in M-M_h\}}  k_1\big(\frac{|v|}{r}\big) dv < h$, then by (\ref{closeness}) and (\ref{normalization}), we have
\begin{eqnarray*}
|A_1| &>& \bigg| \, c_{n}r^{-n} \int_{\{v\in \mathcal{B}_r(0): \,\exp_x(v)\in M_h\}}  k_1\big(\frac{|v|}{r}\big) \big(\nabla_x d_h(x,z)\big) J_x(v) dv \, \bigg| - Ch^{\frac{3}{2}} \\
&>& 1-h-Ch^{\frac{3}{2}}. 
\end{eqnarray*}

Observe that (\ref{largenormal}) implies that $\langle A_2,\textbf{n}_x \rangle > 0$ for sufficiently small $h$. If $\langle A_1,\textbf{n}_x \rangle \geqslant 0$, then 
\begin{eqnarray*}
|A_0|=|A_1+A_2|&\geqslant& \big|A_1+\langle A_2,\textbf{n}_x \rangle \textbf{n}_x \big|-\big|A_2-\langle A_2,\textbf{n}_x \rangle \textbf{n}_x \big| \\
&>& |A_1|-Ch^{\frac{3}{2}} > 1-h-Ch^{\frac{3}{2}}-Ch^{2}.
\end{eqnarray*}
If $\langle A_1,\textbf{n}_x \rangle < 0$, then $|\langle A_1,\textbf{n}_x \rangle|<Ch^{\frac{3}{2}}$ by (\ref{smallnormal}). This shows that $\big|A_1-\langle A_1,\textbf{n}_x \rangle \textbf{n}_x \big|>1-h-Ch^{\frac{3}{2}}$. Hence we have 
\begin{eqnarray*}
|A_0|&\geqslant& \big|A_1+A_2-\langle A_1+A_2,\textbf{n}_x \rangle \textbf{n}_x \big| \\
&\geqslant& \big|A_1-\langle A_1,\textbf{n}_x \rangle \textbf{n}_x \big|- \big|A_2-\langle A_2,\textbf{n}_x \rangle \textbf{n}_x \big| \\
&>& 1-h-Ch^{\frac{3}{2}}-Ch^{2}.
\end{eqnarray*}

\smallskip
\textbf{Case 3(iii):} $x\notin \partial M_h$ and $\widetilde{d}(x,\partial M_h)\leqslant r$.

In this case, we choose an arbitrary point $x_0\in \partial M_h$ such that $\widetilde{d}(x_0,x)\leqslant r$. By the triangle inequality, (\ref{closeness}) yields that
$$\big|\nabla_v d_h(\exp_x(v),z)-\nabla_{v} d_h(\exp_x(v),z)|_{v=v_0}\big| <C(n,K_1,K_2)h^{\frac{3}{2}}, \textrm{ if }\exp_x(v)\in M_h,$$
where $v_0=\exp_{x}^{-1}(x_0)$. Then we consider the normal component with respect to $(d\exp_{x}|_{v_0})^{-1} \textbf{n}_{x_0}$ $\in T_x(\widetilde{M})$ and replace the vector $\textbf{n}_x$ in Case 3(ii) with $(d\exp_{x}\big|_{v_0})^{-1} \textbf{n}_{x_0}$. Since $\langle \nabla_x d_h(x,z)|_{x=x_0},$ $\textbf{n}_{x_0} \rangle_{x_0}\geqslant 0$ with respect the inner product of $T_{x_0} \widetilde{M}$, after lifting the vectors onto $T_x \widetilde{M}$ via the exponential map, we have
\begin{eqnarray*}
\langle (d\exp_{x} |_{v_0})^{-1} \big(\nabla_x d_h(x,z) \big|_{x=x_0}\big), (d\exp_{x} |_{v_0})^{-1} (\textbf{n}_{x_0}) \rangle_x \geqslant -C(K_1)r^2.
\end{eqnarray*}
Then the rest of the argument in Case 3(ii) applies up to a higher order term as $d\exp_x|_{v_0}$ only changes the inner product by a higher order $C(K_1)r^2$ term.

\smallskip
Finally, combining all the cases together, we obtain
\begin{eqnarray*}
|\nabla_x d_h^s(x,z)|\geqslant |A_0|-C(n,K_1,K_2)h^3 >1-h-C(n,K_1,K_2)h^{\frac{3}{2}},
\end{eqnarray*}
and therefore the lemma follows.
\end{proof}

\begin{mindistance}\label{mindistance}
For $i\geqslant 1$ and sufficiently small $h$ depending on $n,T,K_1,i_0$, we have
$$dist_{\widetilde{M}\times \mathbb{R}}(\partial \Omega_{i,j}^0,\Omega_{i,j}) > \min\{\frac{h^3}{100},\frac{h^2}{20T}\}.$$
For $i=0$, we have
$$dist_{\widetilde{M}\times \mathbb{R}}(\partial \Omega_{0,j}^0,\Omega_{0,j}) > \frac{h^3}{6T^2}.$$
\end{mindistance}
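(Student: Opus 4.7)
My plan is to estimate the distance from each irreducible piece of $\partial \Omega_{i,j}^0$ to $\Omega_{i,j}$ separately, then take the minimum. Each piece corresponds to an inequality in the definition of $\Omega_{i,j}^0$ which is strictly strengthened in the definition of $\Omega_{i,j}$; I will convert each ``value gap'' into a distance lower bound by dividing by an appropriate Lipschitz constant.

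First I will collect Lipschitz estimates for the ingredients. Lemma \ref{dhs}(3) gives that $d_h^s(\cdot,z)$ is $(3/(2h))$-Lipschitz with respect to $\widetilde{d}$; the cutoff $\xi$ satisfies $\|\xi'\|_\infty\leqslant 3/h$; and $d(\cdot,\partial M)$ is $1$-Lipschitz. Applying the chain rule to
$$\psi_{i,j}(x,t)=f(x)^2-t^2,\qquad f=\bigl(1-\xi(d(\cdot,\partial M))-\xi(\rho_0-d_h^s)\bigr)T_i-d_h^s,$$
and using $|f|\leqslant T$ on $\Omega_{i,j}^0$, should yield $|\nabla_{x,t}\psi_{i,j}|\leqslant CT^2/h^2$ for an absolute constant $C$ (the dominant term being $2Tf\cdot\xi'\cdot\nabla d_h^s$). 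For $i=0$, the distance function $\widetilde{d}(\cdot,z_{0,j})$ appearing in $\psi_{0,j}$ is $1$-Lipschitz with no $h^{-1}$ scaling, so the analogous calculation improves the bound to $|\nabla_{x,t}\psi_{0,j}|\leqslant CT^2/h$. Case 1 of Section \ref{subsection3.4} uses a simpler $\psi_{j}$ without the $\xi(\rho_0-d_h^s)$ term, so the same bound applies there.

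For $i\geqslant 1$, the boundary $\partial\Omega_{i,j}^0$ has two relevant pieces: the level set $\{\psi_{i,j}=8T^2h\}$ and the cylinder $\{d_h^s(\cdot,z_{i,j})=a_Th/2\}$. The manifold boundary $\partial M$ is not attained, since $\xi(d(x,\partial M))=1$ there forces the inner factor $f$ nonpositive, contradicting the connected-component choice; the time endpoints $|t|=T_i$ are excluded because $\psi_{i,j}\leqslant 0$ there. On the other hand $\Omega_{i,j}\subset\{\psi_{i,j}>9T^2h\}$ by definition and $\Omega_{i,j}\subset\{d_h^s>a_Th\}$ by Sublemma \ref{sublemma2}. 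The mean value theorem then gives distance $\geqslant(T^2h)/(CT^2/h^2)=h^3/C>h^3/100$ to the first piece and distance $\geqslant(a_Th/2)/(3/(2h))=a_Th^2/3>h^2/(20T)$ to the second, and the minimum of these is the claim.

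For $i=0$, the same procedure applies: $\partial\Omega_{0,j}^0$ consists of $\{\psi_{0,j}=h^2\}$ and $\{\rho=-3h/2\}$, while $\Omega_{0,j}\subset\{\psi_{0,j}>4h^2\}\cap\{\rho>-h\}$ (the $\rho$-inclusion follows from excising $\Upsilon=\{-2h\leqslant\rho\leqslant-h\}$ together with $\Omega_{0,j}^0\subset\{\rho>-3h/2\}$). The $\psi$-gap of $3h^2$ produces distance $\geqslant 3h^2/(CT^2/h)>h^3/(6T^2)$, and the $\rho$-gap of $h/2$ dominates this for small $h$. The chief technical obstacle, and the source of the cubic-in-$h$ rather than quadratic-in-$h$ bound when $i\geqslant 1$, is that the $h^{-1}$ normal scaling built into $d_h$ propagates through $d_h^s$ and then, via $\xi'\cdot\nabla d_h^s$, produces an $h^{-2}$ factor in $|\nabla\psi_{i,j}|$; this scaling is unavoidable because it is precisely what secures the lower bound $|\nabla d_h^s|\geqslant 1-2h$ of Lemma \ref{dd}.
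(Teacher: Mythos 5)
Your proposal is correct and follows essentially the same route as the paper: both decompose $\partial \Omega_{i,j}^0$ into the $\psi_{i,j}$ level-set piece and the cylinder (resp.\ $\rho$) piece, and convert the value gaps $8T^2h\to 9T^2h$ and $a_Th/2\to a_Th$ into distance lower bounds via the Lipschitz bounds $|\nabla\psi_{i,j}|\lesssim T^2/h^2$ and $\mathrm{Lip}(d_h^s)\lesssim h^{-1}$, the paper merely writing the first estimate as an explicit difference of the defining expressions (and phrasing the second as a nested-cylinder containment) rather than as a gradient bound. The one slip is that Sublemma \ref{sublemma2} is stated only for $i\geqslant 2$, so for $i=1$ the containment of $\Omega_{1,j}$ away from the excised cylinder must instead be drawn from (\ref{sublemma0d}) and Sublemma \ref{sublemmainitial2} (giving $\Omega_{1,j}\subset\{d_h^s>3h/4\}$), exactly as the paper does; the resulting constants still satisfy the claimed minimum.
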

\begin{proof}
There are two types of boundaries involved. The first type is from the level sets of $d_h^s(\cdot,z_{i,j})$. For $i\geqslant 2$, the distance of the first type is from the the boundary of the cylinder $\{x: d_h^s(x,z_{i,j}) \leqslant  \frac{1}{2}\min\{1,T^{-1}\}h\}\times [-T_i,T_i]$ and the boundary of $\cup_{l=0}^{i-1}\cup_{j}\overline{\Omega}_{l,j}$. Since a larger cylinder $\{x: d_h^s(x,z_{i,j}) \leqslant \min\{1,T^{-1}\}h\}\times [-T_i-h,T_i+h]$ is also contained in $\cup_{l=0}^{i-1}\cup_{j}\overline{\Omega}_{l,j}$ due to (\ref{dinclusion}), then the distance of this type is bounded below by the distance between these two cylinders, which is bounded below by $\min\{1,T^{-1}\}h^2/20$ by Lemma \ref{dhs}(4,3) if $h<1/10$. 

For $i=1$, the distance of the first type is from the the boundary of the cylinder $\{x: d_h^s(x,z_{1,j}) \leqslant  h/2\}\times [-T_1,T_1]$ and the boundary of $\cup_{j}\Omega_{0,j}$. By (\ref{sublemma0d}) and Sublemma \ref{sublemmainitial2}, the cylinder $\{x: d_h^s(x,z_{1,j}) \leqslant  3h/4\}\times [-T_1,T_1]$ is contained in the open set $\cup_{j}\Omega_{0,j}$, and hence the distance between the boundary of the cylinder and that of $\cup_{j}\Omega_{0,j}$ is bounded away from 0. To obtain an explicit estimate, one can prove a slightly tighter estimate than Sublemma \ref{sublemmainitial2} if $T>10h$:
$$\big(\cup_{b\in [0,2h]}\Gamma_b(8h)\big)\times [-T+\frac{11}{2}h,T-\frac{11}{2}h] \subset \cup_j\Omega_{0,j}.$$
With (\ref{sublemma0d}), this shows that a larger cylinder $\{x: d_h^s(x,z_{1,j}) \leqslant  3h/4\}\times [-T_1-h/2,T_1+h/2]$ is contained in $\cup_{j}\Omega_{0,j}$. Then Lemma \ref{dhs}(4,3) yields a lower bound $h^2/40$ if $h<1/20$.

For $i\geqslant 1$, the other type of boundaries is generated by the level sets of $\psi_{i,j}$. Suppose boundary points $(x_1,t_1)$ and $(x_2,t_2)$ belong to $\{\psi_{i,j} =9T^2h\}$ and $\{\psi_{i,j} = 8T^2h\}$ respectively, and hence by the definition of $\psi_{i,j}$ we have
\begin{eqnarray*}
&&\Big(\big(1-\xi(d(x_1,\partial M))-\xi(\rho_0-d_h^s(x_1))\big)T_i-d_h^s(x_1)\Big)^2\\
&&-\Big(\big(1-\xi(d(x_2,\partial M))-\xi(\rho_0-d_h^s(x_2))\big)T_i-d_h^s(x_2)\Big)^2-t_1^2+t_2^2 = T^2 h.
\end{eqnarray*}
Then,
\begin{eqnarray*}
&&2T^2 \big|\xi(\rho_0-d_h^s(x_1))-\xi(\rho_0-d_h^s(x_2)) \big|+2T^2 \big|\xi(d(x_1,\partial M)) -\xi(d(x_2,\partial M)) \big| \\
&&+2T\big|d_h^s(x_1)-d_h^s(x_2) \big|+2T|t_1-t_2| >T^2 h.
\end{eqnarray*}
By the definition of $\xi$,
\begin{eqnarray*}
&&\frac{6T^2}{h}|d_h^s(x_1,z_{i,j})-d_h^s(x_2,z_{i,j})|+\frac{6T^2}{h}| d(x_1,\partial M)-d(x_2,\partial M)| \\
&&+2T|d_h^s(x_1,z_{i,j})-d_h^s(x_2,z_{i,j})|+2T|t_1-t_2| > T^2 h.
\end{eqnarray*}
Then it follows that at least one of the four absolute values must be larger than $h^2/24$ if $h<3T$, which implies that at least one of $|d_h(x_1,z_{i,j})-d_h(x_2,z_{i,j})|$, $|d(x_1,\partial M)-d(x_2,\partial M)|$ or $|t_1-t_2|$ is larger than $h^2/50$ by Lemma \ref{dhs}(4). Here we divided the smoothening radius by a constant to keep the error brought by the convolution relatively small. Since $d(x,\partial M)=\widetilde{d}(x,\partial M)$ for $x\in M$, Lemma \ref{dhs}(3) yields that at least one of $\widetilde{d}(x_1,x_2)$ or $|t_1-t_2|$ is larger than $h^3/100$ and hence the lemma follows.

Finally for the initial step $i=0$, the first type of boundary distance is from $\{\rho(x)=-3h/2\}$ and the boundary of $\Upsilon$, which is clearly bounded below by $h/2$. The second type of boundary distance is between level sets of $\psi_{0,j}$. One can follow the same argument as for $i\geqslant 1$ for this type of boundary distance, and obtain a lower bound $h^3/6T^2$.
\end{proof}

\begin{geodiff}\label{geodiff}
Suppose $\gamma(s)$ is a geodesic of $M$ satisfying $\gamma(0)\in \partial M$ and the initial vector $\gamma^{\prime}(0)\in T_{\gamma(0)}\partial M$. Then there exists a constant $\epsilon_0$ explicitly depending on $n,\|R_M\|_{C^1},\|S\|_{C^1},i_0$ such that for any $s\leqslant \epsilon_0$, we have $d(\gamma(s),\partial M) \leqslant C(n,\|R_M\|_{C^1},\|S\|_{C^1})s^2$.
\end{geodiff}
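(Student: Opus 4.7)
The plan is to introduce the boundary distance function $\rho(x) := d(x, \partial M)$ on the boundary normal neighborhood of $\partial M$ and derive the quadratic estimate by Taylor-expanding $f(s) := \rho(\gamma(s))$ at $s = 0$, exploiting that its first derivative vanishes by tangency and its second derivative is controlled by the second fundamental forms of the equidistant hypersurfaces.

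First I would set $\epsilon_0 := r_b/2$, where $r_b = r_b(K_1, i_0)$ is the width of the boundary normal neighborhood determined in Lemma \ref{riccati}. Since $\gamma$ has unit speed and $\gamma(0) \in \partial M$, for any $s \in [0, \epsilon_0]$ we have $d(\gamma(s), \partial M) \leqslant s \leqslant r_b/2$, so $\gamma(s)$ lies in the boundary normal neighborhood. There $\rho$ is smooth, it coincides with $d(\cdot, \partial M)$, and Lemma \ref{riccati} provides the uniform bound $\|S_r\| \leqslant 2K_1$ on the second fundamental form $S_r$ of each equidistant hypersurface $\Sigma_r := \{\rho = r\}$ with $r \leqslant r_b$.

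Next I would compute the first two derivatives of $f$. Clearly $f(0) = 0$. The first derivative $f'(s) = \langle \nabla \rho(\gamma(s)), \gamma'(s)\rangle$ vanishes at $s=0$ because $\nabla \rho(\gamma(0))$ equals the inward unit normal $\mathbf{n}_{\gamma(0)}$ while $\gamma'(0) \in T_{\gamma(0)}\partial M$ by assumption. Since $\gamma$ is a geodesic, $\nabla_{\gamma'} \gamma' = 0$, and hence $f''(s) = \mathrm{Hess}(\rho)(\gamma'(s), \gamma'(s))$. To bound this Hessian, observe that the integral curves of $\nabla \rho$ are unit-speed normal geodesics, so $\nabla_{\nabla \rho} \nabla \rho = 0$; combined with $|\nabla \rho|^2 \equiv 1$, this forces $\mathrm{Hess}(\rho)(\nabla \rho, \cdot) = 0$. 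On vectors tangent to $\Sigma_r$, $\mathrm{Hess}(\rho)$ agrees up to sign with the second fundamental form $S_r$. Decomposing $\gamma'(s)$ into its $\Sigma_r$-tangential and normal components at each point and using $\|S_r\| \leqslant 2K_1$, one obtains $|f''(s)| \leqslant 2K_1$ throughout $[0, \epsilon_0]$. Taylor's theorem with integral remainder then yields $|f(s)| \leqslant K_1 s^2$, which is the desired bound on $d(\gamma(s), \partial M)$.

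The main care needed is to guarantee that $\gamma$ stays within the boundary normal neighborhood where $\rho$ is smooth and coincides with $d(\cdot, \partial M)$; this is immediate from the unit-speed restriction $s \leqslant r_b/2$ together with the triangle inequality. One should also note that if $\gamma$ happens to run along $\partial M$ for an initial segment (a priori possible since $\gamma'(0) \in T_{\gamma(0)}\partial M$), then $\rho \circ \gamma \equiv 0$ on that segment and the claim holds trivially, so there is no loss of generality in assuming $\gamma(s) \in \mathrm{int}(M)$ for $s > 0$.
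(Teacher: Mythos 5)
Your proof is correct, but it takes a genuinely different route from the paper. The paper compares $\gamma$ with an auxiliary geodesic $\gamma_2$ of $\partial M$ having the same initial point and initial vector: working in boundary normal coordinates, it uses the two geodesic equations together with the uniform $C^1$ bounds on the metric components (from Lemma \ref{riccati}) to bound $|\partial_s^2\gamma^j|$ and $|\partial_s^2\gamma_2^j|$, whence $|\gamma^j(s)-\gamma_2^j(s)|\leqslant Cs^2$ by Taylor's theorem applied componentwise, and since $\gamma_2(s)\in\partial M$ this gives the claim. You instead Taylor-expand the single scalar function $f=\rho\circ\gamma$ intrinsically, using $f(0)=f'(0)=0$ and the identity $f''=\mathrm{Hess}(\rho)(\gamma',\gamma')$ on interior segments, with the Hessian controlled by the second fundamental form of the equidistant hypersurfaces via the first part of Lemma \ref{riccati}. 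Both arguments ultimately rest on Lemma \ref{riccati}, but yours avoids coordinates and the auxiliary curve entirely and yields the cleaner constant $K_1$ in place of $C(n,\|R_M\|_{C^1},\|S\|_{C^1})$; the paper's coordinate comparison has the minor advantage of producing, as a by-product, the estimate \eqref{acceleration} on geodesic accelerations in boundary normal coordinates, which is reused elsewhere (e.g.\ in the proof of Lemma \ref{CATradius}). Your closing remarks correctly dispose of the two points that need care: that $\gamma([0,\epsilon_0])$ stays in the normal neighborhood where $\rho$ is smooth, and that possible boundary segments of $\gamma$ (where the interior geodesic equation fails) contribute $f\equiv 0$ and hence cause no harm, matching the paper's ``without loss of generality'' reduction.
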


\begin{proof}
Without loss of generality, assume the geodesic $\gamma(s)$ lies entirely in the interior of $M$ except for the initial point. 
Consider another geodesic of $\partial M$ with the same initial point $\gamma(0)$ and the same initial vector $\gamma^{\prime}(0)$. We claim that the distance between this geodesic of $\partial M$ and $\gamma(s)$ is bounded above by $Cs^2$ for sufficiently small $s$. Clearly this claim yields the lemma.

Denote the geodesics of $M,\partial M$ in question with the arclength parametrization by $\gamma_1,\gamma_2$. Take $\epsilon_0<i_0$ and we consider the geodesics $\gamma_i(s)$ ($i=1,2$) in a $C^1$ boundary normal coordinate $(x^1,\cdots,x^n)$. Due to Lemma 8 in \cite{HV} and Lemma \ref{riccati}, within a uniform radius explicitly depending on $n,\|R_M\|_{C^1},\|S\|_{C^1},i_0$, the $C^1$-norm of metric components is uniformly bounded by a constant explicitly depending on $n,\|R_M\|_{C^1},\|S\|_{C^1}$. Since $\gamma_1,\gamma_2$ have the same initial point and the same initial vector, we know $\gamma_1^j(0)=\gamma_2^j(0)$ and $\partial_s \gamma_1^j(0)=\partial_s \gamma_2^j(0)$ for all $j=1,\cdots,n$, where $\gamma_i^j$ denotes the $j$-th component of $\gamma_i$ with respect to the coordinate $x^j$. The fact that $|\partial_s \gamma_1(s)|_{M}=|\partial_s \gamma_2(s)|_{\partial M}=1$ yields $|\partial_s \gamma_i^j(s)|\leqslant C$ for any $j$ due to the $C^0$ metric bound in bilinear form. Moreover, the geodesic equation in local coordinates has the following form:
$$\partial_s^2 \gamma^j+\sum_{k,l}\Gamma_{kl}^j (\partial_s \gamma^k)( \partial_s \gamma^l)=0,$$
and $\gamma_1,\gamma_2$ satisfy this equation with $\Gamma_{kl}^j$ of $M,\partial M$ respectively. Hence by applying the $C^1$ bound for metric components, we have an estimate for the second derivative:
\begin{equation}\label{acceleration}
|\partial_s^2 \gamma_i^j(s)| \leqslant C(n,\|R_M\|_{C^1},\|S\|_{C^1}), \; \textrm{ for all }j=1,\cdots,n.
\end{equation}
Since $\gamma_1,\gamma_2$ lie entirely in $\textrm{int}(M),\partial M$ by assumption, they are at least of $C^2$ and hence
\begin{eqnarray*}
|\gamma_1^j(s)-\gamma_2^j(s)|\leqslant\frac{s^2}{2}\sup_{s^{\prime}\in (0,s)} \big|\partial_s^2 \gamma_1^j(s^{\prime})-\partial_s^2 \gamma_2^j(s^{\prime}) \big| \leqslant C(n,\|R_M\|_{C^1},\|S\|_{C^1})s^2.
\end{eqnarray*}
This implies $d(\gamma_1(s),\gamma_2(s))\leqslant C(n,\|R_M\|_{C^1},\|S\|_{C^1})s^2$ due to the $C^0$ metric bound.
\end{proof}

\begin{areaLipschitz}\label{areaLipschitz}
Denote $A_t(\epsilon)=\{x\in \Sigma_t: l(x)>\epsilon\}$ and by $U(A_t(\epsilon))$ the set of all points on all minimizing geodesics from $A_t(\epsilon)$ to $\Gamma$. Then for sufficiently small $\epsilon$ explicitly depending on $K_1$ and any $t^{\prime}\in [t-\epsilon/2,t)$, we have 
$$\vol_{n-1}(A_t(\epsilon))<5^{n-1} \vol_{n-1}\big(U(A_t(\epsilon))\cap \Sigma_{t^{\prime}}\big).$$
\end{areaLipschitz}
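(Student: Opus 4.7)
The plan is to construct a Lipschitz surjection from a subset of $U(A_t(\epsilon))\cap \Sigma_{t'}$ onto $A_t(\epsilon)$ whose Lipschitz constant is uniformly bounded (in fact, by any constant less than $5$) once $\epsilon$ is small enough relative to $K_1$. The area comparison then follows from the standard Lipschitz-image inequality $\mathcal{H}^{n-1}(\Psi(E))\leqslant L^{n-1}\mathcal{H}^{n-1}(E)$, taking $L\leqslant 5$.

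First I would observe that at $\mathcal{H}^{n-1}$-a.e.\ point $x\in A_t(\epsilon)$ the 1-Lipschitz function $d(\cdot,\Gamma)$ is differentiable with unit gradient (as used in the proof of Proposition~\ref{area}), so a unique minimizing geodesic $\gamma_x$ from $x$ to $\Gamma$ exists and has initial velocity $-\nabla d(\cdot,\Gamma)|_x$. Since $l(x)>\epsilon>2(t-t')$, the initial segment of $\gamma_x$ of length $t-t'$ stays away from $\partial M$, and its endpoint $\Phi(x):=\gamma_x(t-t')$ lies in $\Sigma_{t'}\cap U(A_t(\epsilon))$, because the remainder of $\gamma_x$ is a minimizing geodesic from $\Phi(x)$ to $\Gamma$. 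This defines a Borel map $\Phi$ $\mathcal{H}^{n-1}$-a.e.\ on $A_t(\epsilon)$.

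Next I would set up an inverse map. At $y=\Phi(x)$ let $\nu(y):=-\gamma_x'(t-t')$ be the outward unit vector (pointing away from $\Gamma$); then $\Psi(y):=\exp_y\bigl((t-t')\nu(y)\bigr)=x$, and $\Psi$ is defined on $\Phi(A_t(\epsilon))\subset U(A_t(\epsilon))\cap \Sigma_{t'}$. To bound its Lipschitz constant, take a $C^1$ variation $y(s)$ in $\Sigma_{t'}$ through $y$ and consider the associated Jacobi field $J$ along $\gamma_x$: it satisfies $J(t-t')=v$ (tangent to $\Sigma_{t'}$), $J(0)=$ the derivative of $\Psi\circ y(\cdot)$, and both endpoints are orthogonal to $\gamma_x'$. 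The Jacobi equation $|J''|\leqslant K_1^2|J|$ combined with the shape-operator bound $|J'(t-t')|\leqslant |S_{\Sigma_{t'}}(v)|\leqslant C(K_1)|v|$, which comes from integrating the Riccati equation of Lemma~\ref{riccati} forward from $\Gamma\subset\partial M$ along the minimizing geodesic, gives $|J(0)|\leqslant e^{C(K_1)(t-t')}|J(t-t')|$. Hence $\Psi$ is Lipschitz with constant at most $e^{C(K_1)\epsilon/2}$, which is $\leqslant 5$ once $\epsilon\leqslant\epsilon_0(K_1)$.

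The final step is the Lipschitz image estimate:
$$\mathcal{H}^{n-1}(A_t(\epsilon))=\mathcal{H}^{n-1}(\Psi(\Phi(A_t(\epsilon))))\leqslant 5^{n-1}\,\mathcal{H}^{n-1}(\Phi(A_t(\epsilon)))\leqslant 5^{n-1}\,\mathcal{H}^{n-1}(U(A_t(\epsilon))\cap\Sigma_{t'}).$$
The main obstacle, beyond the routine Jacobi field calculation, is justifying the shape-operator bound for $\Sigma_{t'}$ along the image of $\Phi$: the level set $\Sigma_{t'}$ need not be globally smooth, but at each $y\in\Phi(A_t(\epsilon))$ the unique minimizing geodesic realizing $d(y,\Gamma)=t'$ hits $\Gamma$ transversally with no focal or cut phenomena, so the Riccati equation for the shape operator may be integrated along this geodesic starting from the boundary, where the curvature of $M$ and $\|S\|$ are controlled by the bounded-geometry assumptions.
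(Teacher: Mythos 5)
Your overall strategy coincides with the paper's: build a Lipschitz map from a subset of $U(A_t(\epsilon))\cap\Sigma_{t'}$ onto $A_t(\epsilon)$ with Lipschitz constant at most $5$, then apply the Lipschitz-image inequality for $(n-1)$-dimensional Hausdorff measure. Your $\Psi$ is exactly the paper's map $F$ restricted to $\Phi(A_t(\epsilon))$. The difference is in how the Lipschitz constant is bounded: you propose a Jacobi-field/Riccati computation, whereas the paper shows that $d(\cdot,\Gamma)$ is semi-concave with constant $3/\epsilon$ in the strip $\{t-\epsilon/2\leqslant d(\cdot,\Gamma)\leqslant t\}$ and invokes Petrunin's Lipschitz estimate for gradient flows of semi-concave functions, giving the constant $e^{3/2}<5$. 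The semi-concavity route has the advantage that it avoids any smoothness of $\Sigma_{t'}$.

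However, your key quantitative step is not correct: the bound $|S_{\Sigma_{t'}}(v)|\leqslant C(K_1)|v|$ does not hold. Lemma~\ref{riccati} controls the shape operator of equidistant hypersurfaces from $\partial M$ only within depth $r_b\sim\min\{i_0,1/K_1\}$, and, more fundamentally, integrating the Riccati equation forward from $\Gamma$ over a length $t'$ that may be comparable to the diameter yields solutions that grow without bound as one approaches a focal point of $\Gamma$. What you do know at $y\in\Phi(A_t(\epsilon))$ is that the geodesic through $y$ continues to minimize $d(\cdot,\Gamma)$ for at least another $t-t'\in(0,\epsilon/2]$ of arclength; the comparison theorem then gives $|S_{\Sigma_{t'}}|\lesssim K_1\coth\big(K_1(t-t')\big)\sim 1/(t-t')\sim 1/\epsilon$, not $C(K_1)$. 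This $1/\epsilon$ is precisely the paper's $3/\epsilon$ semi-concavity constant. Your conclusion "$\leqslant 5$ once $\epsilon\leqslant\epsilon_0(K_1)$" implicitly asserts that the Lipschitz constant of $\Psi$ tends to $1$ as $\epsilon\to 0$, which is false; it is the cancellation of the $O(1/\epsilon)$ curvature bound against the $O(\epsilon)$ flow time $t-t'$ that produces a constant ($e^{3/2}$) \emph{independent} of $\epsilon$, and this cancellation is what you actually need to exhibit. There is also a regularity issue left implicit: $d(\cdot,\Gamma)$ is not globally $C^2$, so a Jacobi-field argument must justify the local smoothness of $\Sigma_{t'}$ at the footpoints $\Phi(x)$; your remark about "no focal or cut phenomena" points in the right direction but does not close this, which is part of why the paper reaches for a non-smooth (semi-concavity/gradient-flow) argument instead.
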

\begin{proof}
We define a function $F: U(A_t(\epsilon))\cap \Sigma_{t^{\prime}}\to A_t(\epsilon)$ by mapping a point $x\in U(A_t(\epsilon))\cap \Sigma_{t^{\prime}}$ to the initial point of the particular minimizing geodesic containing $x$ from $A_t(\epsilon)$ to $\Gamma$. This function is well-defined since minimizing geodesics cannot intersect at $\Sigma_{t^{\prime}}$; otherwise they would fail to minimize length past an intersection point. To show the measure estimate in question, it suffices to show that $F$ is locally Lipschitz with a Lipschitz constant $5$ for sufficiently small $\epsilon$ depending on $K_1$. Since the measure in question is an $(n-1)$-dimensional Hausdorff measure, the Lipschitz continuity of $F$ implies the measure estimate with the constant $5^{n-1}$ (Section 5.5.2 in \cite{BBI}).

Here we show that the function $F$ is locally Lipschitz. For any point $y_0 \in U(A_t(\epsilon))\cap\{x: t-\epsilon/2\leqslant d(x,\Gamma)\leqslant t\}$, there exists $x_0\in U(A_t(\epsilon))\cap \Sigma_{t-\epsilon}$ such that $x_0$ lies on a minimizing geodsic from $y_0$ to $\Gamma$, which indicates $d(y_0,\Gamma)=d(y_0,x_0)+d(x_0,\Gamma)$. Observe that the geodesic segment from $y_0$ to $x_0$ does not intersect the boundary. Then there exists a small neighborhood of $y_0$, such that for any $y$ in this neighborhood, the minimizing geodesic from $x_0$ to $y$ does not intersect the boundary. Thus the distance function $d(\cdot,x_0)$ in the small neighborhood of $y_0$ is just a geodesic distance function with the second derivative bounded by $3/\epsilon$ for sufficiently small $\epsilon$ depending on $K_1$ (e.g. Theorem 27 in \cite{PP}, p175). Hence we have
\begin{eqnarray*}
d(y,\Gamma)&\leqslant& d(x_0,\Gamma)+d(y,x_0) \\
&=& d(y_0,\Gamma)-d(y_0,x_0)+d(y,x_0)\\
&\leqslant& d(y_0,\Gamma)+\nabla_y d(y_0,x_0)\cdot \exp^{-1}_{y_0} (y) + \frac{3}{2\epsilon} d(y,y_0)^2+o\big(d(y,y_0)^2\big).
\end{eqnarray*}
This shows that the distance function $d(\cdot,\Gamma)$ is a semi-concave function in $U(A_t(\epsilon))\cap\{x: t-\epsilon/2\leqslant d(x,\Gamma)\leqslant t\}$ for sufficiently small $\epsilon$ with the semi-concavity constant $3/\epsilon$. Now consider the gradient flow by the distance function $d(\cdot,\Gamma)$, and the function $F$ is simply the gradient flow restricted to this region $U(A_t(\epsilon))\cap\{x: t^{\prime}\leqslant d(x,\Gamma)\leqslant t\}$ for $t^{\prime}\in [t-\epsilon/2,t)$. By Lemma 2.1.4(i) in \cite{P}, the restricted gradient flow (or F) is locally Lipschitz with a Lipschitz constant $e^{3/2}<5$.
\end{proof}

\appendix

\section{Dependency of constants}\label{constants}

In this section, we show explicitly how the constant in Theorem \ref{stability} depends on geometric parameters. We first show the dependency of constants in Theorem \ref{main1}, and then trace the dependency through the proofs in Section \ref{section-projection}, \ref{section-appro}.

\medskip
For $i\geqslant 1$, the lower bounds (\ref{dpsi}) and (\ref{ppsi}) hold:
$$\min_{(x,t)\in \Omega_{i,j}^0}|\nabla_x\psi_{i,j}|>2T\sqrt{h}; \qquad \min_{(x,t)\in \Omega_{i,j}^0}p\big((x,t),\nabla \psi_{i,j} \big) > 8T^2 h.$$
From the definition (\ref{dhsdef}), Lemma \ref{extensionmetric}(3), (\ref{Jacobian}), (\ref{dC2}) and (\ref{dC21}) that for sufficiently small $h$ depending on $n,K_1,K_2,i_0$, we have
$$\|d_h^s(x,z_{i,j})\|_{C^0(\Omega_{i,j}^0)} < \rho_0<i_0; \quad \|\nabla_x d_h^s(x,z_{i,j})\|_{C^0(\Omega_{i,j}^0)}< 2h^{-1};$$
$$\|\nabla_x^2 d_h^s(x,z_{i,j})\|_{C^0(\Omega_{i,j}^0)}< C(n,i_0)h^{-6}; $$
$$\|\nabla_x^2 d_h^s(x,z_{i,j})\|_{Lip(\Omega_{i,j}^0)}< C(n,\|R_M\|_{C^1},i_0)h^{-9}.$$
On the other hand, the $C^{2,1}$-norm of $d(\cdot,\partial M)$ is bounded by $2\|S\|_{C^1}$ for sufficiently small $h$. Therefore by definition of $\psi_{i,j}$, for sufficiently small $h$ depending on $n,K_1,K_2,i_0$,
$$\|\psi_{i,j}\|_{C^{0}(\Omega_{i,j}^0)} < T^2; \quad \|\psi_{i,j}\|_{C^{1}(\Omega_{i,j}^0)} <  C(T)h^{-2};$$
$$\|\psi_{i,j}\|_{C^{2}(\Omega_{i,j}^0)} <  C(n,T,i_0)h^{-7}; \quad \|\psi_{i,j}\|_{C^{2,1}(\Omega_{i,j}^0)} <  C(n,T,\|R_M\|_{C^1},i_0)h^{-10}.$$

For $i=0$, we have
$$\min_{(x,t)\in \Omega_{0,j}^0}|\nabla_x\psi_{0,j}|>2h; \qquad \min_{(x,t)\in \Omega_{0,j}^0}p \big((x,t),\nabla \psi_{0,j} \big) > 4h^2,$$
and the bounds above for $\psi_{i,j}$ also hold for $\psi_{0,j}$.

Now we calculate the parameters in the table (4.3) in \cite{BKL2} for our case. The following notations (until (\ref{Ch})) were used in the paper \cite{BKL2} and are not used in our present paper. We write them here only for the convenience of the readers.
$$M_1\sim M_2\sim h^{-15},\; \lambda \sim h^{-15},\; R_1\sim h^{17},\; \epsilon_0\sim h^{34},$$
$$R_2\sim R \sim h^{51},\; r\sim h^{135},\; \delta\sim h^{138},$$
$$N=C \big(n,T,\|R_M\|_{C^1},i_0,\|g^{ij}\|_{C^1},\vol(M),\vol_{n-1}(\Gamma) \big)h^{-135(n+1)}.$$
$$c_{161}\sim c_{158}\sim c_{155,N}+c_{156}\sim c_{155,N}+c_{156}^{-1/(1-\alpha)}, \; \alpha^N=\frac{1}{2}.$$
The quotient $c_{155,j}/c_{155,j-1}$ is polynomial large in $h$, and $c_{156}\sim c_{106}/c_{131}$ is also polynomial large, where the exponents are explicit multiples of $n$. Therefore $c_{161}$ has at most exponential growth with an explicit exponent $C_4 n$ for some absolute constant $C_4$. Then we turn to the constant in our result:
\begin{eqnarray}\label{Ch}
C(h)&\sim& C \big(n,T,\|R_M\|_{C^1},\|S\|_{C^1},i_0,\vol(M),\vol_{n-1}(\Gamma) \big) h^{-C(n)h^{-C(n)}} \nonumber \\
&<& C_3 \big(n,T,\|R_M\|_{C^1},\|S\|_{C^1},i_0,\vol(M),\vol_{n-1}(\Gamma) \big) \exp(h^{-C_4 n}),
\end{eqnarray}
where we have used the fact that the $C^1$-norm of metric components is bounded by a constant depending on $n,\|R_M\|_{C^1},\|S\|_{C^1}$. The dependency on the diameter $D,r_0$ is introduced after replacing $h$ by $h/C^{\prime}$ during the last part of the proof of Theorem \ref{main1}.

\medskip
From here, we come back to the notations of our present paper. Next we show the dependency of $C_1$ and $C_2$ in Theorem \ref{stability}. The final parameter is $\eta$ in Proposition \ref{approximation} and we start from $\eta$ to work out the parameters $J,\delta$. The criteria for determining parameters are already described during the proofs of relevant lemmas and propositions in Section \ref{section-projection} and \ref{section-appro}. Let $\eta\in(0,1)$ be the parameter in Proposition \ref{approximation}. Then,
$$\Lambda=1,\quad N=C(n,\vol(\partial M))\eta^{-n+1};$$
$$\varepsilon(volume,M_{\beta}^{\ast})=\varepsilon_{\ast}=C\eta^n \; \textrm{(Proposition \ref{approximation})};$$
(Lemma \ref{volume} and Lemma \ref{volumebeta})
$$\varepsilon=\varepsilon(projection)=\frac{\varepsilon(volume,M_{\alpha})}{2\vol(M)}=\frac{\varepsilon(volume,M_{\beta}^{\ast})}{2^{L+1}2\vol(M)}=C(\vol(M),L)\eta^n.$$
The following three parameters are determined by (\ref{ualasterror}) in Proposition \ref{measureerror}:
\begin{equation}\label{Bgamma}
8C_5^2\Lambda^2\gamma^{\frac{1}{n+1}}=\frac{\varepsilon^2}{4} \Longrightarrow \gamma=\big(\frac{\varepsilon^2}{32C_5^2\Lambda^2}\big)^{n+1}=\big(\frac{\varepsilon^2}{32C_5^2}\big)^{n+1};
\end{equation}
$$C(\Lambda)\lambda_{J}^{-\frac{1}{2}}\gamma^{-2}\leqslant \frac{\varepsilon^2}{4} \Longrightarrow \lambda_{J}\geqslant 16C^2\gamma^{-4}\varepsilon^{-4};$$
$$8N\varepsilon_2(0)+8N^2\varepsilon_2^2(0)=\frac{\varepsilon^2}{4}<1 \Longrightarrow \varepsilon_2(0)=\frac{\varepsilon^2}{64N}.$$
By the formula for $\varepsilon_2(0)$ in (\ref{epsilon20}),
\begin{eqnarray}\label{newepsilon2}
\varepsilon_2(0)&=&C_3^{\frac{1}{3}}h^{-\frac{2}{9}}\exp(h^{-C_4 n}) \frac{\Lambda\gamma^{-3}+h^{-\frac{1}{2}}\varepsilon_1}{\big(\log (1+h^{\frac{3}{2}}\gamma^{-3}\frac{\Lambda}{\varepsilon_1})\big) ^{\frac{1}{6}}}+C_5\Lambda\gamma^{-3} h^{\frac{1}{3n+3}} \nonumber \\
&<& C_3^{\frac{1}{3}}\exp(h^{-C_4 n}) \frac{\gamma^{-3}h^{-1}}{\big(\log (1+h^{\frac{3}{2}}\gamma^{-3}\frac{1}{\varepsilon_1})\big) ^{\frac{1}{6}}}+C_5\gamma^{-3} h^{\frac{1}{3n+3}}.
\end{eqnarray}
We choose $h$ such that the second term in (\ref{newepsilon2}) equals to $\varepsilon_2(0)/2=\varepsilon^2/128N$:
\begin{equation}\label{Bh}
C_5\gamma^{-3}h^{\frac{1}{3n+3}}=\frac{\varepsilon_2(0)}{2} \Longrightarrow h=\big(\frac{\varepsilon_2(0)\gamma^3}{2C_5}\big)^{3n+3}=\big(\frac{\varepsilon^{6n+8}}{8^{3n+3}128NC_5^{6n+7}}\big)^{3n+3}.
\end{equation}
Then the first term of (\ref{newepsilon2}) being $\varepsilon_2(0)/2=\varepsilon^2/128N$ yields that
\begin{equation}\label{Bepsilon1}
\varepsilon_1=h^{\frac{3}{2}}\gamma^{-3}\exp\bigg(-\frac{\gamma^{-18}h^{-6} 128^6 N^6 C_3^{2}\exp(6h^{-C_4 n})}{\varepsilon^{12}}\bigg),
\end{equation}
which indicates the choice of $J$ by Lemma \ref{smallinitial}:
\begin{eqnarray}\label{BlambdaJ}
\lambda_{J}&\geqslant& C(D,\Lambda)\gamma^{-24}\varepsilon_1^{-8} \nonumber \\
&=&C(D)h^{-12}\exp\bigg(\frac{8 \gamma^{-18}h^{-6} 128^6 N^6 C_3^{2}\exp(6h^{-C_4 n})}{\varepsilon^{12}}\bigg).
\end{eqnarray}
For the choice of $\delta$, choose $N\varepsilon_2(\delta)+N^2\varepsilon_2^2(\delta)-N\varepsilon_2(0)-N^2\varepsilon_2^2(0)<\frac{\varepsilon^2}{32}$, or simply choose $N\varepsilon_2(\delta)-N\varepsilon_2(0)<\frac{\varepsilon^2}{64}$. By differentiating (\ref{newepsilon2}) with respect to $\varepsilon_1$,
$$\frac{C_3^{\frac{1}{3}}\exp(h^{-C_4 n})\gamma^{-6}\sqrt{h}}{\big(\log (1+h^{\frac{3}{2}}\gamma^{-3}\frac{1}{\varepsilon_1})\big)^2(\varepsilon_1^2+h^{\frac{3}{2}}\gamma^{-3}\varepsilon_1)}2C_0^{\prime}J\lambda_{J}^{\frac{3}{2}}\delta<\frac{\varepsilon^2}{64N}.$$
Hence it suffices to choose $\delta$ satisfying
\begin{equation}\label{delta}
\frac{C_3^{\frac{1}{3}}\exp(h^{-C_4 n})\gamma^{-3}}{h\varepsilon_1}C_0^{\prime}J\lambda_{J}^{\frac{3}{2}}\delta<\frac{\varepsilon^2}{128N}.
\end{equation}

From now on, we absorb polynomial terms into exponential terms and denote by $\sim$ if two quantities differ by a factor of some constant in the exponent. Inserting the choice of $\gamma$ (\ref{Bgamma}) to $\varepsilon_1$ (\ref{Bepsilon1}) and $\lambda_{J}$ (\ref{BlambdaJ}), we get
$$\varepsilon_1\sim \exp\big(-C_5^{36n+36}C_3^{2}\exp(h^{-C_4 n})\varepsilon^{-36n-48}N^6\big); $$
$$\lambda_{J}\sim C(D)\exp\big(C_5^{36n+36}C_3^{2}\exp(h^{-C_4 n})\varepsilon^{-36n-48}N^6\big).$$
By Weyl's asymptotic formula for eigenvalues: $\lambda_j\sim C(n,\vol(M))j^{2/n}$, we know
$$J\sim C(n,\vol(M))\lambda_{J}^{\frac{n}{2}},$$
and hence by (\ref{delta}), we have
\begin{eqnarray*}
\delta&\sim& \frac{1}{C_0^{\prime} C_3^{\frac{1}{3}}}\frac{\varepsilon^{6n+8} \exp(-h^{-C_4 n})\varepsilon_1}{C_5^{6n+6}NJ\lambda_{J}^{\frac{3}{2}}} \\
&\sim& C(D,\vol(\partial M)) C_3^{-\frac{1}{3}} \frac{\exp\big(- C_5^{36n+36}C_3^{2}\exp(h^{-C_4 n})\varepsilon^{-36n-48}N^6\big)}{J} \\
&\sim& C(n,D,\vol(M),\vol(\partial M))\exp\big(-n C_5^{36n+36}C_3^{2}\exp(h^{-C_4 n})\varepsilon^{-36n-48}N^6\big).
\end{eqnarray*}
The terms we need to estimate are $\exp(h^{-C_4 n})\varepsilon^{-36n-48}N^6$. By the choice of $h$ (\ref{Bh}), we get
$$\exp(h^{-C_4 n})\sim \exp\bigg(\Big(\frac{\varepsilon^{6n+8}}{NC_5^{6n+7}}\Big)^{-C_4n(3n+3)}\bigg),$$
which absorbs $\varepsilon^{-36n-48}N^6$. Then from
$$\varepsilon= C(\vol(M),L)\eta^n,\quad N\sim \eta^{-n+1},$$
it follows that
\begin{eqnarray*}
\delta &\sim& C(n,D,\vol(M),\vol(\partial M))\exp\Big(-C(C_3,C_4,C_5) \exp\big(C\varepsilon^{-C_4 n(3n+3)(6n+9)}\big)\Big) \\
&\sim& C(n,D,\vol(M),\vol(\partial M))\exp\Big(-C(C_3,C_4,C_5)\exp\big(C(L)\eta^{-C_2^{\prime}(n)}\big)\Big) \\
&\sim& \exp\Big(-\exp \big( C_1^{\prime}\eta^{-C_2^{\prime}} \big) \Big),
\end{eqnarray*}
where $C_1^{\prime}=C_1^{\prime} \big(n,D,\vol(M),\vol(\partial M),C_3,C_4,C_5,L \big)$ and $C_2^{\prime}=C_2^{\prime}(n)>1$. The dependency of $C_3,C_4,C_5$ is stated in Proposition \ref{wholedomain}, and the dependency of $L$ is stated in Lemma \ref{coordinate}. Therefore we obtain
$$\eta \sim  (C_1^{\prime})^{\frac{1}{C_2^{\prime}}}\Big( \log \big(|\log\delta| \big) \Big)^{-\frac{1}{C_2^{\prime}}},$$
and the dependency of constants in Theorem \ref{stability} follows from Proposition \ref{approximation} and Theorem \ref{2007}. More precisely, the constant $C_1=C_1(C_1^{\prime},C_2^{\prime},C_6,C_7)$ explicitly depends only on $n$,$D$,$\|R_M\|_{C^1}$,$\|S\|_{C^1}$,$i_0$,$r_0$,$\vol(M)$,$\vol(\partial M)$,$L$,$C_6$,$C_7$,
and the constant $C_2=C_2(C_2^{\prime})$ explicitly depends only on $n$. Note that the dependency of $L,C_6,C_7$ is not explicit.
The choice of the parameter $\delta$ depends on all present parameters including all curvature bounds assumed for $K_2$, and the choice of small $\eta$ in Theorem \ref{2007}.

We remark that one can obtain an explicit estimate without using the parameter $L$. To do this, one can use all $N$ number of $\Gamma_i$ to slice the manifold, and evaluate an approximate volume for $M_{\beta}^{\ast}$ similar to Lemma \ref{volumebeta}. The error of the approximate volume would be $2^N \varepsilon$, and the parameter $\varepsilon$ would be $\varepsilon=C2^{-N}\eta^n$.
In addition, the constant $C_6$ can be replaced by an absolute constant. However, the number $2^N$ grows exponentially in $\eta$.
This process results in an explicit estimate with three logarithms, and the constants explicitly depend only on $n$,$D$,$\|R_M\|_{C^1}$,$\|S\|_{C^1}$,$i_0$,$r_0$,$\vol(M)$,$\vol(\partial M)$.


\begin{thebibliography}{99}

\bibitem{A} G.~Alessandrini,~\emph{Stable determination of conductivity by boundary measurements},~Appl. Anal.~\textbf{27}~(1988),~153--172.

\bibitem{AlS}
G.~Alessandrini,~J.~Sylvester,~\emph{Stability for a multidimensional inverse spectral theorem.}
Comm. Part. Diff. Eq. {\bf 15} (1990),  711--736.

\bibitem{ABB} S.~Alexander,~I.~Berg,~R.~Bishop,~\emph{The Riemannian obstacle problem},~Illinois J. Math.~\textbf{31}~(1987),~167--184.
\bibitem{ABB2} S.~Alexander,~I.~Berg,~R.~Bishop,~\emph{Geometric curvature bounds in Riemannian manifolds with boundary},~Trans. Amer. Math. Soc.~\textbf{339}~(1993),~703--716.
\bibitem{AB} S.~Alexander,~R.~Bishop,~\emph{Comparison theorems for curves of bounded geodesic curvature in metric spaces of curvature bounded above},~Differ. Geom. Appl.~\textbf{6}~(1996),~67--86.
\bibitem{AKP} S. Alexander, V. Kapovitch, A. Petrunin, \emph{Alexandrov geometry: foundations}, arXiv:1903.08539v4.
\bibitem{AKKLT} M.~Anderson,~A.~Katsuda,~Y.~Kurylev,~M.~Lassas,~M.~Taylor,~\emph{Boundary regularity for the Ricci equation, geometric convergence, and Gel'fand's inverse boundary problem},~Invent. Math.~\textbf{158}~(2004),~261--321.

\bibitem{Astala} 
K. Astala, L. P$\ddot{\textrm{a}}$iv$\ddot{\textrm{a}}$rinta, \emph{Calderon's inverse conductivity problem in the plane}, Ann. of Math. {\bf 163} (2006), 265--299.

\bibitem{Bel1}
M. I. Belishev, \emph{An approach to multidimensional inverse problems for the wave equation.}  Dokl. Akad. Nauk SSSR {\bf 297} (1987) 524-527.

\bibitem{Bel2} 
M. I. Belishev, \emph{Recent progress in the boundary control method.} Inverse Problems {\bf 23} (2007), R1-R67.

\bibitem{Bel3} 
M. I. Belishev, \emph{
Boundary control and tomography of Riemannian manifolds (the BC-method)}. Uspekhi Mat. Nauk {\bf 72} (2017), no. 4(436), 3-66.

\bibitem{BK} M.~Belishev,~Y.~Kurylev,~\emph{To the reconstruction of a Riemannian manifold via its spectral data (BC-method)},~Commun. Part. Diff. Eq.~\textbf{17}~(1992),~767--804.

\bibitem{BD}
M. Bellassoued, D. Dos Santos Ferreira, \emph{Stability estimates for the anisotropic wave equation from the Dirichlet-to-Neumann map}, Inverse Probl. Imag.~\textbf{5}~(2011),~745--773.



\bibitem{BL} J.~Bergh,~J.~L$\ddot{\textrm{o}}$fstr$\ddot{\textrm{o}}$m,~\emph{Interpolation spaces: an introduction},~Springer,~1976.
\bibitem{BKL2} R.~Bosi,~Y.~Kurylev,~M.~Lassas,~\emph{Stability of the unique continuation for the wave operator via Tataru inequality and applications},~J. Diff. Eq.~\textbf{260}~(2016),~6451--6492.

\bibitem{BKL1} R.~Bosi,~Y.~Kurylev,~M.~Lassas,~\emph{Stability of the unique continuation for the wave operator via Tataru inequality: the local case},~J. Anal. Math.~\textbf{134}~(2018),~157--199.

\bibitem{BKL3} R.~Bosi,~Y.~Kurylev,~M.~Lassas,~\emph{Reconstruction and stability in Gel'fand's inverse interior spectral problem}, Anal. PDE. \textbf{15} (2022), 273--326.

\bibitem{BBI} D. Burago, Yu. Burago, S. Ivanov, \emph{A course in metric geometry}, Graduate Studies in Mathematics, \textbf{33}, Amer. Math. Soc., 2001.

\bibitem{Caday}
P. Caday, M. de Hoop, V. Katsnelson,  G. Uhlmann, \emph{Scattering control for the wave equation with unknown wave speed,}  Arch. Rational Mech. Anal. {\bf 231}  (2019), 409--464. 



\bibitem{Dahl}
M. Dahl, A. Kirpichnikova, M. Lassas, \emph{Focusing waves in unknown media by modified time reversal iteration.}
SIAM Journal on Control and Optimization {\bf 48} (2009),  839--858.

\bibitem{HoopOksanen1}
M.~de~Hoop, P.~Kepley,  L.~Oksanen, \emph{On the construction of virtual interior point source travel time distances from the hyperbolic Neumann-to-Dirichlet map}. {SIAM J. Appl. Math.} {\bf 76} (2016), 805--825.

\bibitem{HoopOksanen2}
M.~de~Hoop, P.~Kepley,  L.~Oksanen, \emph{Recovery of a smooth metric via wave field and coordinate transformation reconstruction.} SIAM J. Appl. Math. {\bf 78} (2018), 1931--1953.

\bibitem{DK} D. Deturck, J. Kazdan, \emph{Some regularity theorems in Riemannian geometry}, Ann. Scient. Ec. Norm. Sup. \textbf{14}~(1981), 249--260.

\bibitem{DosSantos}
D. Dos Santos Ferreira, C. Kenig, M. Salo, G. Uhlmann, \emph{Limiting Carleman weights and anisotropic inverse problems.} Invent. Math. {\bf 178}  (2009), 119--171.


\bibitem{E} J. Eichhorn, \emph{The boundedness of connection coefficients and their derivatives}, Math. Nachr. \textbf{152}~(1991), 145--158.

\bibitem{F} H. Federer,~\emph{Curvature measures},~Trans. Amer. Math. Soc.~\textbf{93}~(1959),~418--491.

\bibitem{FIKLN} C.~Fefferman,~S.~Ivanov,Y.~Kurylev,~M.~Lassas,~H. Naranayan,
~\emph{Reconstruction and interpolation of manifolds I: The geometric Whitney problem}, Found. Comp. Math.
\textbf{20} (2020), 1035--1133.

\bibitem{G} I.~Gel'fand,~\emph{Some aspects of functional analysis and algebra},~Proc. ICM.~\textbf{1}~(1954),~253--277.

\bibitem{Guillarmou}
C. Guillarmou, L. Tzou, \emph{Calderon inverse problem with partial data on Riemann surfaces,} Duke Math. J. {\bf 158} (2011), 83--120.

\bibitem{HV} E. Hebey, M. Vaugon, \emph{The best constant problem in the Sobolev embedding theorem for complete Riemannian manifolds}, Duke Math. J. \textbf{79} (1995), 235--279.

\bibitem{Isozaki}
H. Isozaki, \emph{Inverse spectral problems on hyperbolic manifolds and their applications to inverse boundary value problems in Euclidean space}, Amer. J. Math., {\bf 126} (2004), 1261--1313.

\bibitem{KKL} A.~Katchalov,~Y.~Kurylev,~M.~Lassas,~\emph{Inverse boundary spectral problems}, Monographs and Surveys in Pure and Applied Mathematics, \textbf{123}, Chapman Hall/CRC-press,~2001.

\bibitem{KKLM} A.~Katchalov,~Y.~Kurylev,~M.~Lassas, N. Mandache,~\emph{Equivalence of time-domain inverse problems and boundary spectral problems},~Inverse Probl.~\textbf{20}~(2004),~419--436.



\bibitem{KKL0} A.~Katsuda,~Y.~Kurylev,~M.~Lassas,~\emph{Stability and reconstruction in Gel'fand inverse boundary spectral problem},~New Analytic and Geometric Methods in Inverse Problems, 2004, 309--322.
\bibitem{KKL2} A.~Katsuda,~Y.~Kurylev,~M.~Lassas,~\emph{Stability of boundary distance representation and reconstruction of Riemannian manifolds},~Inverse Probl. Imag.~\textbf{1}~(2007),~135--157.


\bibitem{KenigSalo}
C.~Kenig,~M.~Salo,~\emph{The Calderon problem with partial data on manifolds and applications.} Anal. PDE \textbf{6} (2013), 2003--2048. 

\bibitem{KSU}
C.~Kenig,~J.~Sj\"ostrand,~G.~Uhlmann,~\emph{The Calderon problem with partial data.} Ann. of Math. \textbf{165} (2007),  567--591.

\bibitem{KRS}
H. Koch, A. R\"uland, M. Salo, \emph{On instability mechanisms for inverse problems.}
Ars Inveniendi Analytica (2021), no. 7, 93pp.

\bibitem{KrKL}
K.~Krupchyk,~Y.~Kurylev,~M.~Lassas,~\emph{Inverse spectral problems on a closed manifold}. J. Math. Pures Appl. {\bf 90} (2008), 42--59.

\bibitem{KOP} Y.~Kurylev,~L.~Oksanen,~G.~Paternain, \emph{Inverse problems for the connection Laplacian.} {J. Diff. Geom.} {\bf 110} (2018), 457--494.

\bibitem{LLT}
I. Lasiecka, J.-L. Lions, R. Triggiani, \emph{Nonhomogeneous boundary value problems for second order hyperbolic
operators.}
{J. Math. Pures Appl.} (9) {\bf 65} (1986), no. 2, 149--192.

\bibitem{LT}
I. Lasiecka, R. Triggiani, \emph{Regularity theory of hyperbolic equations with nonhomogeneous Neumann boundary conditions. II. General boundary data.} J. Diff. Eq. {\bf 94} (1991), 112--164. 


\bibitem{L} M. Lassas, \emph{Inverse problems for linear and non-linear hyperbolic equations.} Proceedings of the International Congress of Mathematicians -- Rio de Janeiro 2018. Vol. IV. Invited lectures, 3751--3771.

\bibitem{LL} C.~Laurent,~M.~Leautaud,~\emph{Quantitative unique continuation for operators with partially analytic coefficients. Application to approximate control for waves},~J. Eur. Math. Soc.~\textbf{21}~(2019),~957--1069.
\bibitem{M} N.~Mandache,~\emph{Exponential instability in an inverse problem for the Schr$\ddot{\textrm{o}}$dinger equation},~Inverse Probl.~\textbf{17}~(2001),~1435--1444.


\bibitem
{Nachman1}
A.~Nachman,~\emph{Reconstructions from boundary measurements,}
Ann. of
Math. {\bf 128} (1988), 531--576.



\bibitem{Nachman2} A.~Nachman,~\emph{Global uniqueness for a two-dimensional inverse boundary
value problem},
{Ann. of Math.} {\bf 143} (1996),  71--96.

%

\bibitem{NSU}
A.~Nachman,~J.~Sylvester,~G.~Uhlmann,~\emph{An $n$-dimensional Borg-Levinson theorem.}
Comm. Math. Phys. {\bf 115} (1988), 595--605.
 




\bibitem{PP} P. Petersen, \emph{Riemannian geometry}, 2nd edition, Springer, 2006.

\bibitem{P} A. Petrunin, \emph{Semiconcave functions in Alexandrov's Geometry}, Surv. Differ. Geom. \textbf{11} (2006), 137--202.
\bibitem{SU} P.~Stefanov,~G.~Uhlmann,~\emph{Stability estimates for the hyperbolic Dirichlet-to-Neumann map in anisotropic media},~J. Funct. Anal.~\textbf{154}~(1998),~330--357.

\bibitem{SU2}
P. Stefanov, G. Uhlmann, \emph{Stable determination of generic simple metrics from the hyperbolic
Dirichlet-to-Neumann map}, Int. Math. Res. Not. \textbf{17} (2005), 1047--1061.

\bibitem{Sylvester}  J.~Sylvester,
 \emph{An anisotropic inverse boundary value problem}, Comm. Pure Appl. Math. {\bf 43} (1990), 201--232.

\bibitem{SyUl}  J.~Sylvester,~G.~Uhlmann,~\emph{A global uniqueness theorem for an inverse boundary value problem.}
 Ann. of Math. (2) {\bf 125} (1987), 153--169.
 
 \bibitem{SyUl2}  J.~Sylvester,~G.~Uhlmann,~\emph{Inverse boundary value problems at the boundary -- continuous dependence}, Comm. Pure Appl. Math. {\bf 41} (1988), 197--219.
 

\bibitem{T} D.~Tataru,~\emph{Unique continuation for solutions to PDE's; between H$\ddot{o}$rmander's theorems and Holmgren's theorem},~Comm. PDE. ~\textbf{20}~(1995),~855--884.

\bibitem{Tataru-preprint}
D. Tataru, \emph{Carleman estimates, unique continuation and applications}, Lecture notes, unpublished, 1999, https://math.berkeley.edu/~tataru/ucp.html.


\bibitem{Tempany}
C. Tempany, N. McDannold, K. Hynynen, F. Jolesz,   \emph{Focused ultrasound surgery in oncology: overview and principles.} Radiology {\bf 259} (2011), 39--56. 


\bibitem{U}
G.~Uhlmann, \emph{Inverse boundary value problems for partialdifferential equations.} {Proceedings of the International Congress of Mathematicians,} Vol. III (Berlin,
1998). Doc. Math. 1998, 77--86.

\bibitem{W}
C. Western, D. Hristov, J. Schlosser, \emph{Ultrasound imaging in radiation therapy: from interfractional to intrafractional guidance.} Cureus {\bf 7} (2015), e280. 

\end{thebibliography}
\end{document}